\tikzstyle{format}=[rectangle,draw,thin,fill=white] 
\tikzstyle{test}=[diamond,aspect=2,draw,thin] 
\tikzstyle{point}=[coordinate,on grid,] 
\newcolumntype{L}[1]{>{\raggedright\arraybackslash}p{#1}}
\newcolumntype{C}[1]{>{\centering\arraybackslash}p{#1}}
\newcolumntype{R}[1]{>{\raggedleft\arraybackslash}p{#1}}
\newtheorem{theorem}{Theorem}[section] 
\newtheorem{lemma}[theorem]{Lemma}
\newtheorem{remark}[theorem]{Remark}
\newtheorem{example}[theorem]{Example}
\numberwithin{equation}{section}
\begin{document}
\begin{frontmatter}
\title{{\bf A second-order direct Eulerian GRP scheme for ten-moment Gaussian closure equations with source terms}}


\author[address1]{Jiangfu Wang}\ead{2101110034@stu.pku.edu.cn}
\author[address2,address1]{Huazhong Tang\corref{cor1}}\ead{hztang@math.pku.edu.cn}
%
\cortext[cor1]{Corresponding author.}
\address[address1]{Center for Applied Physics and Technology, HEDPS and LMAM, School of Mathematical Sciences, Peking University, Beijing, 100871, PR China.}
\address[address2]{Nanchang Hangkong University, Nanchang, 330000, Jiangxi Province, PR China.}

\begin{abstract}

This paper proposes a second-order accurate direct Eulerian generalized Riemann problem (GRP) scheme for the ten-moment Gaussian closure equations with source terms. The generalized Riemann invariants associated with the rarefaction waves, the contact discontinuity and the shear waves are given, and the 1D  exact Riemann solver is obtained.
After that, the generalized Riemann invariants and the Rankine-Hugoniot jump conditions are directly
used to resolve the left and right nonlinear waves (rarefaction wave and shock wave)  of the local GRP in Eulerian formulation,  and then the 1D direct Eulerian GRP scheme is derived. They are much more complicated, technical and nontrivial due to more physical variables and elementary waves.
 Some 1D and 2D numerical experiments are presented to check the accuracy and high resolution  of  the proposed GRP schemes, where the  2D   direct Eulerian GRP scheme is given  by using the Strang splitting method for simplicity.  It should be emphasized that several examples of 2D Riemann problems are constructed for the first time.

\vspace{2mm}
\end{abstract}

\begin{keyword}
Ten-moment equations; exact Riemann solver; generalized Riemann problem; generalized Riemann invariants; Rankine-Hugoniot jump conditions.
\end{keyword}

\end{frontmatter}
\section{Introduction}

It is well-known that the compressible Euler equations of gas dynamics can be derived from the Boltzmann equation \cite{levermore1996moment} by assuming local thermodynamic equilibrium. However, for many problems, such as collisionless plasma  \cite{dubroca2004magnetic,morreeuw2006electron,johnson2012ten,wang2018electron,dong2019global} and the non-equilibrium gas dynamics \cite{brown1995numerical}, the local thermodynamic equilibrium assumption does not hold and anisotropic effects are often present, so that the Euler equations are less suitable. The ten-moment Gaussian closure equations \cite{levermore1998gaussian} provide an alternative for such applications, where the pressure is described by an anisotropic and symmetric tensor.

Some numerical schemes have been proposed for solving the ten-moment equations in the past decades. A second-order upwind finite volume scheme was introduced in \cite{brown1995numerical}. First-order relaxation numerical schemes were employed to approximate the weak solutions of those equations, see \cite{berthon2006numerical, berthon2015entropy}. A Harten--Lax--van Leer-contact (HLLC) approximate Riemann solver was applied in \cite{sangam2008hllc} to solve the ten-moment equations coupled with magnetic field. In recent years, some high-order numerical methods were also developed, including high-order positivity-preserving discontinuous Galerkin (DG) methods \cite{meena2017positivity}, finite difference weighted essentially non-oscillatory (WENO) schemes \cite{meena2020positivity}, and high-order entropy stable finite difference methods   \cite{sen2018entropy} as well as DG methods \cite{biswas2021entropy}. Besides, a second-order robust monotone upwind scheme was formulated in \cite{meena2017robust}, and a second-order well-balanced (WB) scheme to handle equilibrium states was constructed in \cite{meena2018well}. Additionally, a robust finite volume scheme for the two-fluid ten-moment plasma flow equations was proposed in \cite{meena2019robust}. The high-order accurate positivity-preserving and well-balanced
discontinuous Galerkin schemes for ten-moment equations were also developed in \cite{wang2024high}, where a special modification to the numerical HLLC flux was imposed to enforce the well-balancedness and the geometric quasilinearization approach \cite{wu2021geometric} was applied to simplify the positivity analysis.

The generalized Riemann problem (GRP) scheme, which serves as an analytic second-order extension of the Godunov method, was initially developed for the study of compressible fluid dynamics \cite{ben1984second}. This approach employs a piecewise linear function to approximate the ``initial" data. Subsequently, it resolves a local GRP analytically at each cell interface to determine the numerical flux. For a detailed description, the readers are referred to \cite{ben2003generalized}. There are two versions of the original GRP scheme: the Lagrangian and the Eulerian. Typically, the Eulerian version is derived through a transformation based on the Lagrangian framework, which can be particularly intricate, especially when dealing with sonic and multi-dimensional scenarios. To circumvent those difficulties, second-order accurate direct Eulerian GRP schemes were respectively developed for the shallow water equations \cite{li2006generalized}, the Euler equations \cite{ben2006direct}, and a more general weakly coupled system \cite{ben2007hyperbolic}. Those schemes directly resolve the local GRPs in the Eulerian formulation  by using generalized Riemann invariants and Rankine-Hugoniot jump conditions.

Up to now, the GRP methodology has been widely implemented in many physically interesting cases, including the reactive flows \cite{ben1989generalized}, the motion of
elastic strings \cite{ta1992generalized}, the shallow water equations \cite{li2006generalized}, the radially symmetric compressible flows \cite{li2009implementation}, the one-dimensional (1D) and two-dimensional (2D) special relativistic hydrodynamics (RHD) \cite{yang2011direct,yang2012direct}, the gas-liquid two-phase flow in high-temperature and high-pressure transient wells \cite{xu2013direct}, the spherically symmetric general RHD \cite{wu2016direct}, the radiation
hydrodynamical equations \cite{kuang2019second}, the Baser-Nunziato two-phase model \cite{lei2021staggered}, the blood flow model in arteries \cite{sheng2021direct}, the compressible flows of real materials \cite{wang2023stiffened}, the Kapila
model of compressible multiphase flows \cite{du2023generalized}, the laminar
two-phase flow model with two-velocities \cite{zhang2024generalized} and so on. Besides, a comparison between the GRP scheme and the gas-kinetic scheme revealed the good performance of the GRP solver in simulating some inviscid flows \cite{li2011comparison}. By integrating the GRP scheme with the moving mesh method \cite{tang2003adaptive}, the adaptive direct Eulerian GRP scheme was effectively developed in \cite{han2010adaptive}, resulting in enhanced resolution and accuracy. The adaptive GRP scheme was further studied in simulating 2D complex wave configurations formulated with the 2D Riemann problems
of Euler equations \cite{han2011accuracy} and was also extended to unstructured triangular meshes \cite{li2013adaptive}. There are also many works on the high-order GRP schemes, such as the third-order GRP schemes for the Euler equations \cite{wu2014third2}, 1D RHD \cite{wu2014third1} and the general hyperbolic balance law \cite{qian2014generalized,qian2023high}. Moreover, a two-stage fourth order time-accurate
GRP scheme was also proposed for hyperbolic conservation laws \cite{li2016two} and for the special RHD \cite{yuan2020two}. Arbitrary high-order DG schemes based on the GRP solver were developed \cite{wang2015arbitrary},  where the reconstruction steps were halved compared with the existing high-order Runge-Kutta DG (RKDG) schemes.  The two-stage fourth-order DG method based on the GRP solver was also developed in \cite{cheng2019two}, and the computational cost can be considerably reduced by more than $50\%$ compared with the same order multi-stage strong-stability-preserving (SSP) RKDG method. The GRP-based high resolution schemes for the multi-medium flows \cite{huo2023grp,huo2024grp} and the axisymmetric hydrodynamics \cite{zhu2023high} were recently developed.

This paper develops the second-order direct Eulerian GRP scheme for the ten-moment Gaussian closure equations with source terms. The generalized Riemann invariants and the exact solutions of the 1D Riemann problem are given. Compared to  the Euler equations \cite{ben2006direct}, the shallow water equations \cite{li2006generalized} and the blood flow in arteries \cite{sheng2021direct} etc.,  the ten-moment equations  have more physical variables and   more elementary waves (e.g.  the left and the right shear waves which are linearly degenerate and separated by the contact discontinuity). More physical variables means that more linear algebra equations should be derived to compute the instantaneous time derivatives. Two more elementary waves means that four different solution states between the left and right nonlinear waves should be resolved technically. All these features make the derivation of the direct Eulerian GRP scheme for the ten-moment equations much more complicated and nontrivial.

This paper is organized as follows. Section \ref{preliminaries and notations} introduces the 1D ten-moment equations and gives corresponding eigenvalues, eigenvectors and generalized Riemann invariants. Section \ref{numerical schemes} introduces the outline of the 1D direct GRP scheme and its extension to 2D case.
 Section \ref{exact RP} presents the exact solver for the classical  Riemann problem, and  Section \ref{GRP solver} resolves the generalized Riemann problems. Some 1D and 2D numerical experiments are conducted in Section \ref{numerical experiments} to demonstrate the accuracy and performance of the proposed GRP scheme.
 It should be emphasized that several examples of 2D Riemann problems are constructed for the first time.
  Section \ref{conclusion} concludes this paper.

\section{Preliminaries and notations}\label{preliminaries and notations}
This section introduces the 1D ten-moment equations,  and corresponding eigenvalues as well as
eigenvectors,  discusses the characteristic fields, and
gives generalized Riemann invariants.

\subsection{1D ten-moment equations}\label{sect:2.1}
The 1D   ten-moment Gaussian closure equations with source term may be written into
the form of balance laws as
\begin{equation}\label{ten-moment2}
\frac{\partial\mathbf{U}}{\partial t}+\frac{\partial\mathbf{F(U)}}{\partial x}=\mathbf{S}^x(\mathbf{U}),
\end{equation}
where the solution vector $\mathbf{U}=(\rho,m_1,m_2,E_{11},E_{12},E_{22})^\top$,
 $\rho$, $\mathbf{m}=(m_1,m_2)^\top$, and $E_{ij}$
 denote the density, the momentum vector,  and  the component of the symmetric energy tensor
 $\mathbf{E}=(E_{ij})$,
 respectively, $1\le i,j \le 2$.
 The flux $\mathbf{F(U)}$ and the source term $\mathbf{S}^x(\mathbf{U})$ are respectively given by
\begin{align}
&\mathbf{F(U)}=\left(\rho u_1,\rho u_1^2+p_{11},\rho u_1u_2+p_{12},(E_{11}+p_{11})u_1,E_{12}u_1+\frac{1}{2}(p_{11}u_2+p_{12}u_1),E_{22}u_1+p_{12}u_2\right)^\top,
\label{FU} \\
&\mathbf{S}^x(\mathbf{U})=\left(0,-\frac{1}{2}\rho\partial_xW,0,-\frac{1}{2}\rho u_1\partial_xW,-\frac{1}{4}\rho u_2\partial_xW,0\right)^\top, \notag
\end{align}
where  $\mathbf{u}=(u_1,u_2)^\top = {\bf m}/\rho$ denotes the velocity vector,
and the function $W(x)$ is a given potential, which may denote the electron quiver energy in laser light (see e.g. \cite{morreeuw2006electron,sangam2007anisotropic}).
 The system \eqref{ten-moment2} is closed by the following ``equation of state''
\begin{equation}\label{EOS}
\mathbf{p}=(p_{ij})_{2\times 2}=2\mathbf{E}-\rho\mathbf{u}\otimes\mathbf{u},
\end{equation}
where  the symbol $\otimes$  denotes  the tensor product,
 and its solution  $\mathbf{U}$ should stays in the physically admissible state set
\begin{equation}
\mathcal{G}:=\left\{\mathbf{U}\in\mathbb{R}^6: \rho>0,~ \mathbf{x}^\top\mathbf{p}\mathbf{x}>0 \quad \forall\mathbf{x}\in\mathbb{R}^2 \setminus \{\mathbf{0}\} \right\},
\end{equation}
which means that $\rho>0$, $p_{11}>0$ and $\det(\mathbf{p}):=p_{11}p_{22}-p_{12}^2>0$.

Rewrite \eqref{ten-moment2} into the following quasi-linear form
\begin{subequations}
\begin{align}
  &\frac{\partial\rho}{\partial t}+u_1\frac{\partial\rho}{\partial x}+\rho\frac{\partial u_1}{\partial x}=0, \label{eq:rho}\\
  &\frac{\partial u_1}{\partial t}+u_1\frac{\partial u_1}{\partial x}+\frac{1}{\rho}\frac{\partial p_{11}}{\partial x}=-\frac{1}{2}W_x, \label{eq:u1}\\
  &\frac{\partial u_2}{\partial t}+u_1\frac{\partial u_2}{\partial x}+\frac{1}{\rho}\frac{\partial p_{12}}{\partial x}=0, \label{eq:u2}\\
  &\frac{\partial p_{11}}{\partial t}+3p_{11}\frac{\partial u_1}{\partial x}+u_1\frac{\partial p_{11}}{\partial x}=0, \label{eq:p11}\\
  &\frac{\partial p_{12}}{\partial t}+2p_{12}\frac{\partial u_1}{\partial x}+p_{11}\frac{\partial u_2}{\partial x}+u_1\frac{\partial p_{12}}{\partial x}=0, \label{eq:p12}\\
  &\frac{\partial p_{22}}{\partial t}+p_{22}\frac{\partial u_1}{\partial x}+2p_{12}\frac{\partial u_2}{\partial x}+u_1\frac{\partial p_{22}}{\partial x}=0. \label{eq:p22}
\end{align}
\label{quasilinear_form}
\end{subequations}
It is obvious that \eqref{eq:p11} and \eqref{eq:u1} imply that
\begin{align}
 &\frac{\partial u_1}{\partial x}=-\frac{1}{3p_{11}}\frac{\mathcal{D}p_{11}}{\mathcal{D}t}, \label{u1_x} \\
 &\frac{\partial p_{11}}{\partial x}=-\rho\left(\frac{\mathcal{D}u_1}{\mathcal{D}t}+\frac{1}{2}W_x\right), \label{p11_x}
\end{align}
respectively, where $\frac{\mathcal{D}}{\mathcal{D}t}:=\frac{\partial}{\partial t}+u_1\frac{\partial}{\partial x}$ is the material derivative. Then it follows that
\begin{equation}
\begin{split}
&\frac{\partial u_1}{\partial t}=\frac{u_1}{3p_{11}}\frac{\mathcal{D}p_{11}}{\mathcal{D}t}+\frac{\mathcal{D}u_1}{\mathcal{D}t}, \\
&\frac{\partial p_{11}}{\partial t}=\frac{\mathcal{D}p_{11}}{\mathcal{D}t}+\rho u_1\frac{\mathcal{D}u_1}{\mathcal{D}t}+\frac{1}{2}\rho u_1W_x.
\end{split}
\label{u1_p11_t}
\end{equation}


Besides, \eqref{eq:p12} and \eqref{eq:u2} imply that
\begin{align}
  &\frac{\partial u_2}{\partial x}=-\frac{1}{p_{11}}\left(\frac{\mathcal{D}p_{12}}{\mathcal{D}t}-\frac{2p_{12}}{3p_{11}}\frac{\mathcal{D}p_{11}}{\mathcal{D}t}\right), \label{u2_x} \\
  &\frac{\partial p_{12}}{\partial x}=-\rho\frac{\mathcal{D}u_2}{\mathcal{D}t}, \label{p12_x}
\end{align}
respectively, where \eqref{u1_x} has been used in \eqref{u2_x}. It follows that
\begin{equation}
\begin{split}
&\frac{\partial u_2}{\partial t}=\frac{\mathcal{D} u_2}{\mathcal{D}t}+\frac{u_1}{p_{11}}\left(\frac{\mathcal{D}p_{12}}{\mathcal{D}t}-\frac{2p_{12}}{3p_{11}}\frac{\mathcal{D}p_{11}}{\mathcal{D}t}\right), \\
&\frac{\partial p_{12}}{\partial t}=\frac{\mathcal{D}p_{12}}{\mathcal{D}t}+\rho u_1\frac{\mathcal{D}u_2}{\mathcal{D}t}.
\end{split}
\label{u2_p12_t}
\end{equation}
Moreover, \eqref{eq:p22} implies that
\begin{equation}
\frac{\mathcal{D}p_{22}}{\mathcal{D}t}=\frac{p_{11}p_{22}-4p_{12}^2}{3p_{11}^2}\frac{\mathcal{D}p_{11}}{\mathcal{D}t}+\frac{2p_{12}}{p_{11}}\frac{\mathcal{D}p_{12}}{\mathcal{D}t}, \label{Dp22_t}
\end{equation}
where \eqref{u1_x} and \eqref{u2_x} have been used.


\subsection{Eigenvalues,
eigenvectors and generalized Riemann invariants}
The eigenvalues of the system \eqref{ten-moment2} or \eqref{quasilinear_form}
are given (see e.g. \cite{brown1995numerical}) by
\[
\lambda_1=u_1-c, ~ \lambda_2=u_1-\frac{c}{\sqrt{3}}, ~ \lambda_3=\lambda_4=u_1, ~ \lambda_5=u_1+\frac{c}{\sqrt{3}}, ~ \lambda_6=u_1+c,
\]
where $c:=\sqrt{ {3p_{11}}/{\rho}}$ denotes the sound speed, and the  linearly independent six associated right eigenvectors for  \eqref{quasilinear_form} 
 may be chosen  (see e.g. \cite{sen2018entropy})  as follows
\begin{align*}
 &
 \mathbf{\tilde{r}}_1=\left(\rho p_{11},-cp_{11},-cp_{12},3p_{11}^2,3p_{11}p_{12},p_{11}p_{22}+2p_{12}^2\right)^\top,
\\
&
\mathbf{\tilde{r}}_2=\left(0,0,-\frac{c}{\sqrt{3}},0,p_{11},2p_{12}\right)^\top,
\\
&
\mathbf{\tilde{r}}_3=(0,0,0,0,0,1)^\top,
\ \
 \mathbf{\tilde{r}}_4=(1,0,0,0,0,0)^\top,
\\
&
\mathbf{\tilde{r}}_5=\left(0,0,\frac{c}{\sqrt{3}},0,p_{11},2p_{12}\right)^\top,
\\
&
\mathbf{\tilde{r}}_6=\left(\rho p_{11},cp_{11},cp_{12},3p_{11}^2,3p_{11}p_{12},p_{11}p_{22}+2p_{12}^2\right)^\top.
\end{align*}
Denote the primitive variables vector by $\mathbf{V}:=(\rho,u_1,u_2,p_{11},p_{12},p_{22})^\top$. The associated right eigenvector matrix $\mathbf{R(U)}=\frac{\partial{\mathbf U}}{\partial {\mathbf V}}
\cdot(\mathbf{\tilde{r}}_1,\cdots,\mathbf{\tilde{r}}_6)$ for \eqref{ten-moment2} is given in \ref{R(U)}.

 By simple calculations, one can find that for any physically admissible $\mathbf{V}$,  $\nabla_{\mathbf{V}}\lambda_i(\mathbf{V})\cdot\mathbf{\tilde{r}}_i(\mathbf{V})\neq0$ for $i=1,6$  and $\nabla_{\mathbf{V}}\lambda_i(\mathbf{V})\cdot\mathbf{\tilde{r}}_i(\mathbf{V})=0$ for $i=2,3,4,5$. Hence, only the first and the sixth characteristic fields are genuinely nonlinear, and the other characteristic fields are linearly degenerate.
Possible wave patterns associated with those characteristic fields are
\begin{align*}
 &  \text{for }  \lambda_1=u_1-c : \quad \text{1-shock or 1-rarefaction wave},
 \\
&
 \text{for }
 \lambda_2=u_1-\frac{c}{\sqrt{3}} : \quad \text{2-shear wave},
 \\
&
 \text{for }
 \lambda_3=\lambda_4=u_1 : \quad \text{3, 4-contact wave},
 \\
&
 \text{for }
 \lambda_5=u_1+\frac{c}{\sqrt{3}} : \quad \text{5-shear wave},
 \\
&
 \text{for }
\lambda_6=u_1+c : \quad \text{6-shock or 6-rarefaction wave}.
\end{align*}
Note that the characteristic fields of the 1D ten-moment equations are similar to those of the 1D shear shallow water model \cite{nkonga2022exact}. According to the following relations
\[
\frac{\text{d}\rho}{\tilde{r}_i^{(1)}}=\frac{\text{d}u_1}{\tilde{r}_i^{(2)}}=\frac{\text{d}u_2}{\tilde{r}_i^{(3)}}=\frac{\text{d}p_{11}}{\tilde{r}_i^{(4)}}
=\frac{\text{d}p_{12}}{\tilde{r}_i^{(5)}}=\frac{\text{d}p_{22}}{\tilde{r}_i^{(6)}}, ~ i=1,2,\cdots,6,
\]
 where $\tilde{r}_i^{(k)}$ denotes the $k$-th component of $\mathbf{\tilde{r}}_i$, $k=1,2\cdots,6$, one can derive
 the following generalized Riemann invariants for six characteristic fields
\begin{align*}
 &
 \text{for } \lambda_1=u_1-c:
 \quad \frac{p_{11}}{\rho^3}, \frac{p_{12}}{\rho^3}, u_1+c, u_2+\frac{\sqrt{3}p_{12}}{\sqrt{\rho p_{11}}}, \frac{\det(\mathbf{p})}{\rho^4},
\\
&
 \text{for }  \lambda_2=u_1-\frac{c}{\sqrt{3}}:
 \quad \rho, u_1, p_{11}, u_2+\frac{p_{12}}{\sqrt{\rho p_{11}}}, \det(\mathbf{p}),
 \\
&
 \text{for }
 \lambda_3=u_1: \quad \rho, u_1, u_2, p_{11}, p_{12},
 \\
&
 \text{for } \lambda_4=u_1:\quad  u_1, u_2, p_{11}, p_{12}, p_{22},
 \\
&
 \text{for } \lambda_5=u_1+\frac{c}{\sqrt{3}}:
 \quad \rho, u_1, p_{11}, u_2-\frac{p_{12}}{\sqrt{\rho p_{11}}}, \det(\mathbf{p}),
 \\
&
 \text{for } \lambda_6=u_1+c:\quad \frac{p_{11}}{\rho^3}, \frac{p_{12}}{\rho^3}, u_1-c, u_2-\frac{\sqrt{3}p_{12}}{\sqrt{\rho p_{11}}}, \frac{\det(\mathbf{p})}{\rho^4},
\end{align*}
which are constant across corresponding linearly degenerate wave or rarefaction wave.

\section{Numerical schemes}\label{numerical schemes}
This section introduces the 1D direct Eulerian GRP scheme and its extension to the 2D case by the Strang splitting method.

\subsection{Outline of   1D GRP scheme}\label{sect:3.1}
For the sake of simplicity, the space domain is divided into a uniform mesh $\{x_{j+\frac{1}2}, j\in\mathbb Z\}$ with  $\Delta x:=x_{j+\frac{1}{2}}-x_{j-\frac{1}{2}}$,  denote the $j$th cell by $I_j:=[x_{j-\frac{1}{2}},x_{j+\frac{1}{2}}]$.
Assume that  $\Delta t:=t_{n+1}-t_n$, and at the time $t=t_n$, the approximate solution
is piecewisely linear with the slope $\bm{\sigma}_j^n$, $n\geq 0$, and reconstructed as
\begin{equation}\label{piecewise linear initial data}
\mathbf{V}(x,t_n)=\mathbf{V}_j^n+\bm{\sigma}_j^n(x-x_j) \quad \forall x\in I_j,
\end{equation}
where $\mathbf{V}_j^n$ approximates the cell average of $\mathbf{V}(x,t_n)$ over the cell $I_j$. Hence the second-order Godunov-type scheme to solve \eqref{ten-moment2} is
\begin{equation}\label{1d_scheme}
\mathbf{U}_j^{n+1}=\mathbf{U}_j^{n}-\frac{\Delta t}{\Delta x}\left(\mathbf{F}_{j+\frac{1}{2}}^{n+\frac{1}{2}}-\mathbf{F}_{j-\frac{1}{2}}^{n+\frac{1}{2}}\right)+\frac{\Delta t}{2}\left(\mathbf{S}_{j+\frac{1}{2}}^{x,n+\frac{1}{2}}+\mathbf{S}_{j-\frac{1}{2}}^{x,n+\frac{1}{2}}\right),
\end{equation}
where $\mathbf{F}_{j+\frac{1}{2}}^{n+\frac{1}{2}}=\mathbf{F}\big(\mathbf{U}(\mathbf{V}_{j+\frac{1}{2}}^{n+\frac{1}{2}})\big)$ and $\mathbf{S}_{j+\frac{1}{2}}^{x,n+\frac{1}{2}}=\mathbf{S}^x(\mathbf{U}\big(\mathbf{V}_{j+\frac{1}{2}}^{n+\frac{1}{2}})\big)$,
 $\mathbf{V}_{j+\frac{1}{2}}^{n+\frac{1}{2}}$ denotes a second-order accurate approximation of $\mathbf{V}(x_{j+\frac{1}{2}},t_n+\frac{1}{2}\Delta t)$  and is analytically derived by resolving the local GRP at each point $(x_{j+\frac{1}{2}},t_n)$, i.e.,
\begin{equation}
\begin{cases}
\eqref{ten-moment2}, \quad t-t_n>0, \\
\mathbf{V}(x,t_n)=\begin{cases}
                    \mathbf{V}_j^n+\bm{\sigma}_j^n(x-x_j), \quad x<x_{j+\frac{1}{2}}, \\
                    \mathbf{V}_{j+1}^n+\bm{\sigma}_{j+1}^n(x-x_{j+1}), \quad x>x_{j+\frac{1}{2}}.
                    \end{cases}
\end{cases}
\label{GRP1}
\end{equation}
More specifically, $\mathbf{V}_{j+\frac{1}{2}}^{n+\frac{1}{2}}$ is usually calculated by
\begin{equation}\label{U_j+1/2^n+1/2}
\mathbf{V}_{j+\frac{1}{2}}^{n+\frac{1}{2}}=\mathbf{V}_{j+\frac{1}{2}}^{n}+\frac{\Delta t}{2}\left(\frac{\partial\mathbf{V}}{\partial t}\right)_{j+\frac{1}{2}}^n,
\end{equation}
where $\left(\frac{\partial\mathbf{V}}{\partial t}\right)_{j+\frac{1}{2}}^n$ is obtained by resolving the problem \eqref{GRP1}, according to those theorems in Section \ref{GRP solver}, and $\mathbf{V}_{j+\frac{1}{2}}^{n}:=\bm{\omega}\left(0;\mathbf{V}_{j+\frac{1}{2},L}^{n},\mathbf{V}_{j+\frac{1}{2},R}^{n}\right)$, here $\bm{\omega}\left(\frac{x-x_j}{t-t_n};\mathbf{V}_{j+\frac{1}{2},L}^{n},\mathbf{V}_{j+\frac{1}{2},R}^{n}\right)$ is the exact solution of the associated Riemann problem for \eqref{ten-moment2} centered at $(x_{j+\frac{1}{2}},t_n)$, i.e., the Cauchy problem
\begin{equation}\label{local RP1}
\begin{cases}
\displaystyle
\frac{\partial\mathbf{U(V)}}{\partial t}+\frac{\partial\mathbf{F(U(V))}}{\partial x}=0, \quad t-t_n>0 \\
\mathbf{V}(x,t_n)=\begin{cases}
                    \mathbf{V}_{j+\frac{1}{2},L}^{n}, \quad x<x_{j+\frac{1}{2}}, \\
                    \mathbf{V}_{j+\frac{1}{2},R}^{n}, \quad x>x_{j+\frac{1}{2}},
                  \end{cases}
\end{cases}
\end{equation}
where $\mathbf{V}_{j+\frac{1}{2},L}^{n}=\mathbf{V}_{j}^{n}+\frac{\Delta x}{2}\bm{\sigma}_j^n$ and $\mathbf{V}_{j+\frac{1}{2},R}^{n}=\mathbf{V}_{j+1}^{n}-\frac{\Delta x}{2}\bm{\sigma}_{j+1}^n$ are the left and right limiting values of $\mathbf{V}(x,t_n)$ at $x_{j+\frac{1}{2}}$. The exact solution of Riemann problem \eqref{local RP1} is given in Section \ref{exact RP}, while the approximate slope $\bm{\sigma}_j^{n}$ is evolved by some slope limiter.

After obtaining $\mathbf{U}_j^{n+1}$ by the scheme \eqref{1d_scheme},
 applying some slope limiter to get the slope of $\mathbf{U}$,
 denoted by $\widehat{\bm{\sigma}}_j^{n+1}$,
 and then the slope $\bm{\sigma}_j^{n+1}$ of  {$\mathbf{V}$} can be obtained by the chain rule.
The minmod type and van Leer limiters are utilized here.
With the minmod type limiter, $\widehat{\bm{\sigma}}_j^{n+1}$ is calculated componentwisely by
\begin{equation}\label{minmod limiter}
\widehat{\bm{\sigma}}_j^{n+1}=\mathbf{R}_j\cdot\text{minmod}\left(\frac{\theta}{\Delta x}\mathbf{R}_j^{-1}\left(\mathbf{U}_j^{n+1}-\mathbf{U}_{j-1}^{n+1}\right),\frac{1}{\Delta x}\mathbf{R}_j^{-1}\left(\mathbf{U}_{j+\frac{1}{2}}^{n+1,-}-\mathbf{U}_{j-\frac{1}{2}}^{n+1,-}\right),
\frac{\theta}{\Delta x}\mathbf{R}_j^{-1}\left(\mathbf{U}_{j+1}^{n+1}-\mathbf{U}_{j}^{n+1}\right)\right),
\end{equation}
where $\mathbf{R}_j:=\mathbf{R}(\mathbf{U}_j^{n+1})$, the parameter $\theta\in[1,2)$, and $\mathbf{U}_{j+\frac{1}{2}}^{n+1,-}:=\mathbf{U}(\mathbf{V}_{j+\frac{1}{2}}^{n+1,-})$ with $\mathbf{V}_{j+\frac{1}{2}}^{n+1,-}:=\mathbf{V}_{j+\frac{1}{2}}^n+\Delta t\left(\frac{\partial\mathbf{V}}{\partial t}\right)_{j+\frac{1}{2}}^n$. $\mathbf{R(U)}$ is the right eigenvector matrix of the Jacobian matrix $\frac{\partial\mathbf{F(U)}}{\partial\mathbf{U}}$ and given in \ref{R(U)}.
With the van Leer limiter, $\widehat{\bm{\sigma}}_j^{n+1}$ is calculated componentwise by
\begin{equation}\label{vanLeer limiter}
\widehat{\bm{\sigma}}_j^{n+1}=\mathbf{R}_j\cdot\text{vanLeer}\left(\mathbf{R}_j^{-1}\mathbf{U}_{j-1}^{n+1},\mathbf{R}_j^{-1}\mathbf{U}_{j}^{n+1},\mathbf{R}_j^{-1}\mathbf{U}_{j+1}^{n+1}\right),
\end{equation}
where the van Leer limiter function is defined as \cite{van1974towards,zou2021understand}
\[
\text{vanLeer}(a_L,a_M,a_R)=\begin{cases}
                        0,  &\text{if} ~ a_L=a_R ~ \text{or} ~ f\leq0 ~ \text{or} ~ f\geq1, \\
                        4f(1-f)s, & \text{if} ~ a_L\neq a_R ~ \text{and} ~ 0<f<1,
                      \end{cases}
\]
where $f=\frac{\Delta_{ML}}{\Delta_{RL}}$, $s=\frac{\Delta_{RL}}{2\Delta x}$, $\Delta_{ML}=a_M-a_L$ and $\Delta_{RL}=a_R-a_L$.

In summary, the  second-order direct Eulerian GRP scheme for the 1D ten-moment equations \eqref{ten-moment2} is implemented in the following four steps:

\textbf{Step 1:} Give the piecewise initial data \eqref{piecewise linear initial data}, and solve the local Riemann problem \eqref{local RP1} to get the solution $\mathbf{V}_{j+\frac{1}{2}}^n$ by using the exact Riemann solver stated in Section \ref{exact RP}.

\textbf{Step 2:} Calculate $\left(\frac{\partial\mathbf{V}}{\partial t}\right)_{j+\frac{1}{2}}^n$ by those theorems in Section \ref{GRP solver}, and $\mathbf{V}_{j+\frac{1}{2}}^{n+\frac{1}{2}}$ in \eqref{U_j+1/2^n+1/2}, then the numerical flux $\mathbf{F}_{j+\frac{1}{2}}^{n+\frac{1}{2}}$ and the source term $\mathbf{S}_{j+\frac{1}{2}}^{n+\frac{1}{2}}$ can be obtained.

\textbf{Step 3:} Evaluate the cell averages $\mathbf{U}_j^{n+1}$ by the scheme \eqref{1d_scheme}.

\textbf{Step 4:} Update the slope $\widehat{\bm{\sigma}}_j^{n+1}$ of the conservative variables by the slope limiter \eqref{minmod limiter} or \eqref{vanLeer limiter}, and calculate the slope $\bm{\sigma}_j^{n+1}$ of the primitive variables by the chain rule, then go to Step 1 by replacing $n$ with $n+1$.

\subsection{2D extension of GRP scheme}
For simplicity, this work is limited to a 2D uniform Cartesian grid and uses the Strang splitting method
to extend the above GRP scheme to the 2D case.
Consider the 2D ten-moment equations
\begin{equation}\label{2d ten moment equations}
\frac{\partial\mathbf{U}}{\partial t}+\frac{\partial\mathbf{F(U)}}{\partial x}+\frac{\partial\mathbf{G(U)}}{\partial y}=\mathbf{S}^x(\mathbf{U})+\mathbf{S}^y(\mathbf{U}),
\end{equation}
where $\mathbf{U}$,   $\mathbf{F(U)}$, and $\mathbf{S}^x(\mathbf{U})$ are the same as those in Section \ref{sect:2.1}, and
the  flux $\mathbf{G(U)}$ and  the source term $\mathbf{S}^y(\mathbf{U})$ are defined by
\begin{align*}
&\mathbf{G(U)}=\left(\rho u_2,\rho u_1u_2+p_{12},\rho u_2^2+p_{22},E_{11}u_2+p_{12}u_1,E_{12}u_2+\frac{1}{2}(p_{12}u_2+p_{22}u_1),(E_{22}+p_{22})u_2\right)^\top,
\\
&\mathbf{S}^y(\mathbf{U})=\left(0,0,-\frac{1}{2}\rho\partial_yW,0,-\frac{1}{4}\rho u_1\partial_yW,-\frac{1}{2}\rho u_2\partial_yW\right)^\top.
\end{align*}
The 2D ten-moment equations \eqref{2d ten moment equations} are split into two 1D subsystems
\[
\frac{\partial\mathbf{U}}{\partial t}+\frac{\partial\mathbf{F(U)}}{\partial x}=\mathbf{S}^x(\mathbf{U}) \quad \text{and} \quad
\frac{\partial\mathbf{U}}{\partial t}+\frac{\partial\mathbf{G(U)}}{\partial x}=\mathbf{S}^y(\mathbf{U}).
\]
Hence it suffices to consider the derivation of the GRP scheme for
the subsystem in the $x$-direction  in Section \ref{sect:3.1}.
If denoting the 1D GRP evolution operators  for the above two subsystems by $\mathcal{L}_x(\Delta t)$ and $\mathcal{L}_y(\Delta t)$, respectively,  for one time step $\Delta t$, then
by using the Strang splitting method, 
the simple 2D  GRP scheme can be given by
\[
\mathbf{U}^{n+1}=\mathcal{L}_x\left(\frac{\Delta t}{2}\right)\mathcal{L}_y(\Delta t)\mathcal{L}_x\left(\frac{\Delta t}{2}\right)\mathbf{U}^n.
\]

\section{Exact Riemann solver}\label{exact RP}
This section gives the exact Riemann solver for the  associated Riemann problem
\begin{equation}\label{local RP2}
\begin{cases}
\displaystyle
\frac{\partial\mathbf{U(V)}}{\partial t}+\frac{\partial\mathbf{F(U(V))}}{\partial x}=0, \quad t>0, \\
\mathbf{V}(x,0)=\begin{cases}
                    \mathbf{V}_L, \quad x<0, \\
                    \mathbf{V}_R, \quad x>0.
                \end{cases}
\end{cases}
\end{equation}
As an example, a special  local wave configuration shown in  Figure \ref{localwave_RP} for the associated Riemann problem is only considered here, where  there is a 1-rarefaction wave, a 2-shear wave, a 3,4-contact discontinuity, a 5-shear wave and a 6-shock wave. Other  local wave configurations may be similarly treated.
One can see from Figure \ref{localwave_RP}  that there are five solution states between the given left and right states  $\mathbf{V}_L$ and  $\mathbf{V}_R$ to be determined, that is, the state inside the 1-rarefaction wave denoted by $\mathbf{V}_{\text{Lfan}}$ and  four  solution states between the $i$-wave and the $(i+1)$-wave ($1\leq i<6$) denoted by
\begin{equation*}
\begin{cases}
\mathbf{V}_{\ast L}=(\rho_{\ast L},u_{1,\ast},u_{2,\ast L},p_{11,\ast},p_{12,\ast L},p_{22,\ast L})^\top, \\
\mathbf{V}_{\ast\ast L}=(\rho_{\ast L},u_{1,\ast},u_{2,\ast\ast},p_{11,\ast},p_{12,\ast\ast},p_{22,\ast\ast L})^\top, \\
\mathbf{V}_{\ast\ast R}=(\rho_{\ast R},u_{1,\ast},u_{2,\ast\ast},p_{11,\ast},p_{12,\ast\ast},p_{22,\ast\ast R})^\top, \\
\mathbf{V}_{\ast R}=(\rho_{\ast R},u_{1,\ast},u_{2,\ast R},p_{11,\ast},p_{12,\ast R},p_{22,\ast R})^\top.
\end{cases}
\end{equation*}
To determine them,  the generalized Riemann invariants  will be  used for the rarefaction wave, the shear waves and the contact discontinuity, while the  Rankine-Hugoniot jump conditions will be used for the shock wave.
 For the sake of convenience, the main steps of   the exact Riemann solver for
\eqref{local RP2}  is shown in Figure \ref{main results of exact RP}.

\begin{figure}[!htbp]
\centering
\includegraphics[width=8cm]{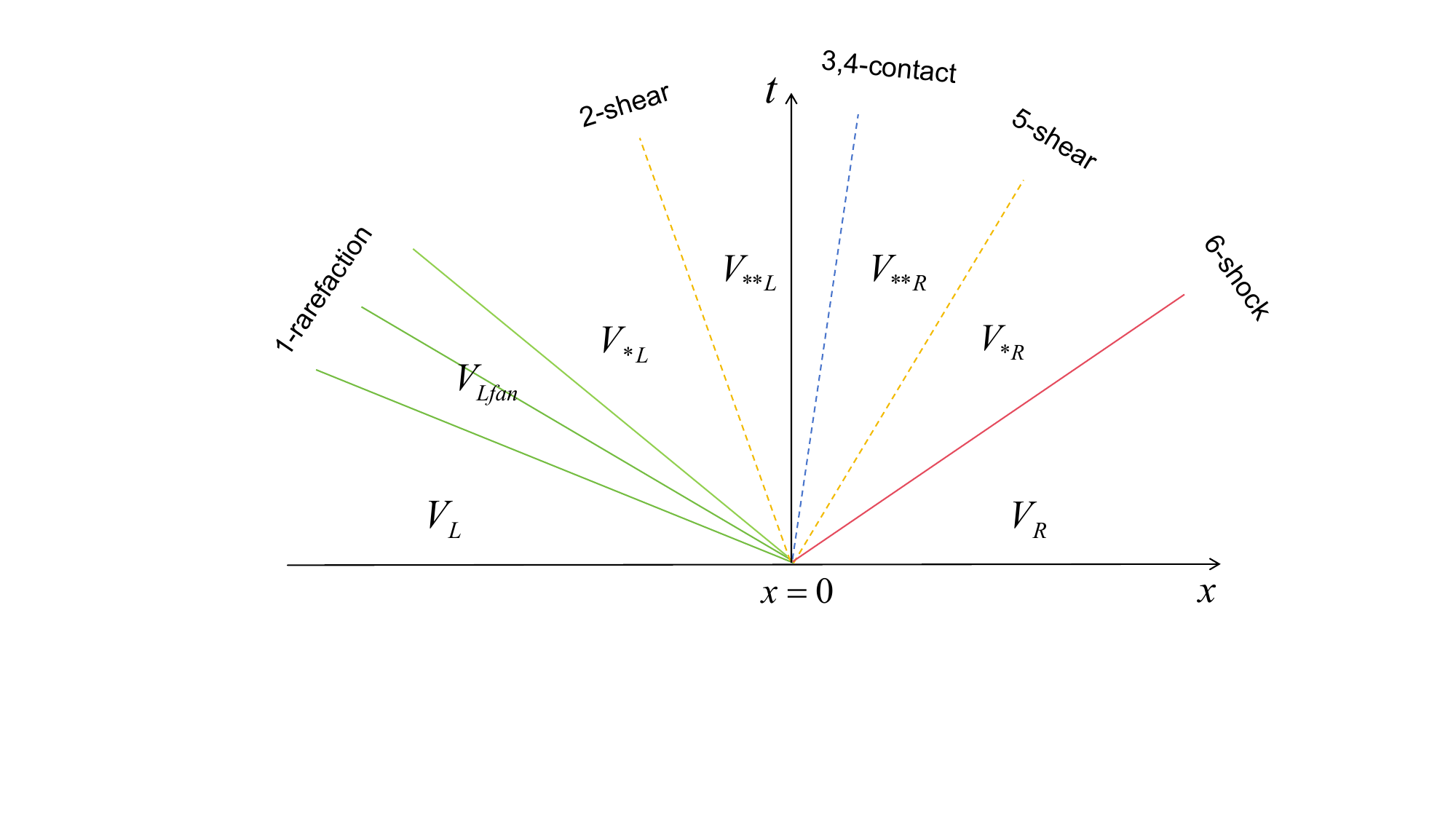}
\caption{The schematic description of a local wave configuration of the associated Riemann problem \eqref{local RP2}.}
\label{localwave_RP}
\end{figure}

\tikzstyle{format}=[rectangle,draw,very thick,fill=white] 
\tikzstyle{arrow}=[thin,->,>=stealth]

\begin{figure}[!htbp]
\begin{tikzpicture}[node distance=5cm, text width=5.3cm]
  \node[format] (start) {Compute $u_{1,\ast}$ and $p_{11,\ast}$, see Theorem \ref{theorem:u1s_p11s}
   and $\mathbf{V}_{\text{Lfan}}$, see \eqref{VLfan}.};


  \node[format] (input) [right of=start, yshift=1.5cm] {Compute $\mathbf{V}_{\ast L}$, see \eqref{V_astL}.};

  \node[format] (process)[right of=start, yshift=-1.5cm] {Compute  $\mathbf{V}_{\ast R}$, see \eqref{rho_astR}, \eqref{eq:u2_p12_astR} and \eqref{p22_astR}.};

  \node[format] (end) [right of=process, yshift=1.5cm] {Compute  $\mathbf{V}_{\ast\ast K}$ ($K=L,R$), see \eqref{u2_p12_2ast} and \eqref{p22_2astK}.};

  \draw [arrow] (start) -- (input);
  \draw [arrow] (start) -- (process);
  \draw [arrow] (input) -- (end);
  \draw [arrow] (process) -- (end);
\end{tikzpicture}
\centering
\caption{The main steps of the exact Riemann solver in Section \ref{exact RP}.}
\label{main results of exact RP}
\end{figure}
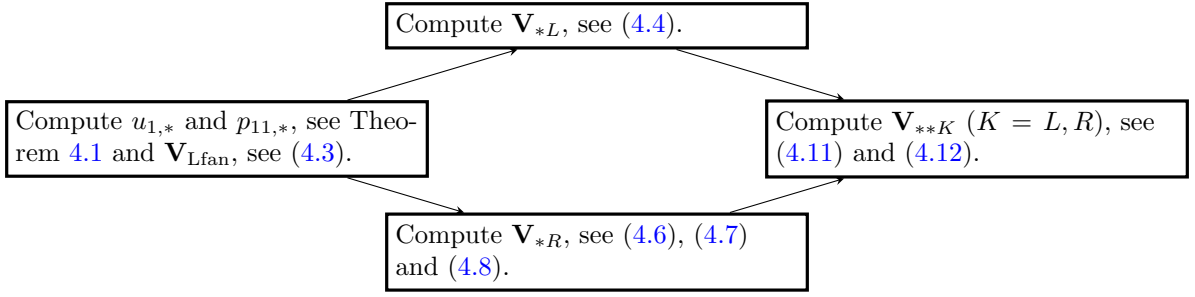

\subsection{Computations of $u_{1,\ast}$ and $p_{11,\ast}$}

Analogous to the derivation for the Euler equations  in \cite[Proposition 4.1]{toro2013riemann}, one can prove the following results for the 1D ten-moment equations.

\begin{theorem}[Computing  $p_{11,\ast}$ and $u_{1,\ast}$]\label{theorem:u1s_p11s}
The  pressure component $p_{11,\ast}$ of the Riemann problem \eqref{local RP2} is given by the root of the algebraic equation
\begin{equation}\label{p11_algebraic_equation}
f(p_{11},\mathbf{V}_L,\mathbf{V}_R):=f_L(p_{11},\mathbf{V}_L)+f_R(p_{11},\mathbf{V}_R)+u_{1,R}-u_{1,L}=0,
\end{equation}
where the function $f_L$ is given by
\begin{equation*}
f_L(p_{11},\mathbf{V}_L)=\begin{cases}
                           (p_{11}-p_{11,L})\left[\frac{1}{\rho_L(2p_{11}+p_{11,L})}\right]^{\frac{1}{2}}, & \mbox{if } p_{11}>p_{11,L} ~ \text{(1-shock)}, \\
                           c_L\left[\left(\frac{p_{11}}{p_{11,L}}\right)^{\frac{1}{3}}-1\right], & \mbox{if } p_{11}\leq p_{11,L} ~ \text{(1-rarefaction)},
                         \end{cases}
\end{equation*}
and the function $f_R$ is given by
\begin{equation*}
f_R(p_{11},\mathbf{V}_R)=\begin{cases}
                           (p_{11}-p_{11,R})\left[\frac{1}{\rho_R(2p_{11}+p_{11,R})}\right]^{\frac{1}{2}}, & \mbox{if } p_{11}>p_{11,R} ~ \text{(6-shock)}, \\
                           c_R\left[\left(\frac{p_{11}}{p_{11,R}}\right)^{\frac{1}{3}}-1\right], & \mbox{if } p_{11}\leq p_{11,R} ~ \text{(6-rarefaction)}.
                         \end{cases}
\end{equation*}
Moreover,  the velocity $u_{1,\ast}$ may be given by
\begin{equation*}
u_{1,\ast}=\frac{1}{2}(u_{1,L}+u_{1,R})+\frac{1}{2}[f_R(p_{11,\ast},\mathbf{V}_R)-f_L(p_{11,\ast},\mathbf{V}_L)].
\end{equation*}
\end{theorem}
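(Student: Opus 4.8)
The plan is to follow the classical Euler-equation strategy from \cite[Proposition 4.1]{toro2013riemann}, adapted to the extra structure of the ten-moment system. The key simplification, visible already in the quasi-linear form \eqref{quasilinear_form}, is that the subsystem governing $(\rho, u_1, p_{11})$ in equations \eqref{eq:rho}, \eqref{eq:u1}, \eqref{eq:p11} decouples from $u_2, p_{12}, p_{22}$ as far as the $1$- and $6$-waves are concerned: the generalized Riemann invariants listed for $\lambda_1$ and $\lambda_6$ show that $u_1$ and $p_{11}$ (together with $p_{11}/\rho^3$) are the quantities that vary across a rarefaction, and the $2$-, $3$-, $4$-, $5$-waves leave both $u_1$ and $p_{11}$ invariant. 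Hence $u_{1,\ast}$ and $p_{11,\ast}$ are common to all four intermediate states, and it suffices to connect $\mathbf{V}_L$ to the star region through a $1$-wave and $\mathbf{V}_R$ through a $6$-wave, exactly as in the isentropic-like reduction of the Euler case.

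First I would treat the rarefaction branch. Across a $1$-rarefaction the invariants $p_{11}/\rho^3$ and $u_1 + c$ are constant, where $c = \sqrt{3 p_{11}/\rho}$; eliminating $\rho$ via $p_{11}/\rho^3 = p_{11,L}/\rho_L^3$ gives $c = c_L (p_{11}/p_{11,L})^{1/3}$, and then constancy of $u_1 + c$ yields $u_{1,\ast} = u_{1,L} - c_L[(p_{11,\ast}/p_{11,L})^{1/3} - 1]$, i.e. the stated $f_L$ on the branch $p_{11} \le p_{11,L}$. The $6$-rarefaction is symmetric, using $u_1 - c$ constant, giving the $f_R$ branch. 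Next I would treat the shock branch: apply the Rankine-Hugoniot conditions for \eqref{ten-moment2} across a $1$-shock of speed $s$. Writing the jump relations for the first, second, and fourth components of $\mathbf{U}$ (the equations whose fluxes involve $\rho u_1$, $\rho u_1^2 + p_{11}$, $(E_{11}+p_{11})u_1$) in a frame moving with the shock, and using $E_{11} = \tfrac12(p_{11} + \rho u_1^2)$ from \eqref{EOS}, one eliminates $\rho$ and the mass flux to obtain the Hugoniot locus relating $p_{11}$, $\rho$ across the shock; combining it with the mass-flux expression then produces the velocity jump $u_{1,\ast} - u_{1,L} = -(p_{11,\ast} - p_{11,L})[\rho_L(2p_{11,\ast} + p_{11,L})]^{-1/2}$, which is $-f_L$ on $p_{11} > p_{11,L}$. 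Again the $6$-shock is the mirror image. Adding the two relations $u_{1,\ast} = u_{1,L} - f_L(p_{11,\ast}) = u_{1,R} + f_R(p_{11,\ast})$ eliminates $u_{1,\ast}$ and gives \eqref{p11_algebraic_equation}; subtracting them and dividing by two gives the displayed formula for $u_{1,\ast}$. To finish I would note that $f_L(\cdot,\mathbf{V}_L) + f_R(\cdot,\mathbf{V}_R)$ is continuous (the two branches match to first order at $p_{11} = p_{11,K}$), strictly increasing in $p_{11}$, and ranges over an interval large enough that, under the usual no-vacuum condition $u_{1,R} - u_{1,L} < f_L(0^+) + f_R(0^+)$-type bound, \eqref{p11_algebraic_equation} has a unique positive root.

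The main obstacle I anticipate is the shock computation: with six conserved quantities and an anisotropic pressure tensor, verifying that the $1$-shock Hugoniot relations for the coupled $(\rho, m_1, E_{11})$ block close up into exactly the same two-parameter $(p_{11},\rho)$ locus as in gas dynamics — in particular that the $u_2, p_{12}, p_{22}$ components of the jump conditions are automatically consistent with $u_1, p_{11}$ being the only nontrivially-jumping primitive variables across the $1$- and $6$-fields — requires some care. This is really the content of the remark that the $1$- and $6$-characteristic fields are genuinely nonlinear and that the effective "equation of state" seen by these waves is the polytropic law $p_{11} \propto \rho^{3}$ (adiabatic exponent $3$, consistent with $c^2 = 3p_{11}/\rho$); once that reduction is made explicit, the algebra is identical to \cite[Proposition 4.1]{toro2013riemann} with $\gamma = 3$. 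A secondary, purely technical point is checking the monotonicity and range of $f_L + f_R$ to guarantee existence and uniqueness of the root, but this follows the standard argument verbatim.
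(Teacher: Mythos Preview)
Your proposal is correct and matches the paper's approach exactly: the paper's proof consists of the single sentence that the argument is ``similar to that of the Euler equations, see \cite[Subsection 4.2]{toro2013riemann}, with replacing the adiabatic index $\gamma$ with $3$,'' and you have spelled out precisely that reduction---the decoupling of the $(\rho,u_1,p_{11})$ block, the polytropic structure $p_{11}\propto\rho^3$ visible in the $\lambda_1,\lambda_6$ invariants, and the resulting identification with the classical derivation. Your anticipated ``obstacle'' about the $u_2,p_{12},p_{22}$ jump conditions being consistent is a non-issue for this theorem, since only the $(\rho,m_1,E_{11})$ Rankine--Hugoniot relations are needed to determine $u_{1,\ast},p_{11,\ast}$; the remaining jump conditions are used later in the paper to recover the other starred quantities.
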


\begin{proof}
The proof is similar to that of the Euler equations, see  \cite[Subsection 4.2]{toro2013riemann},  with replacing the adiabatic index $\gamma$ with 3.
\end{proof}

To find the root of the nonlinear equation \eqref{p11_algebraic_equation},
 the Newton-Raphson iterative method is applied, where the initial guess value may be  obtained in a adaptive way, similar to that for the Euler equations  \cite[Section 9.5 and Figure 9.4]{toro2013riemann}.

\subsection{Computations of $\mathbf{V}_{\text{Lfan}}$}

The 1-rarefaction wave, which is identified by the condition $p_{11,\ast}\leq p_{11,L}$, is enclosed by the head and the tail,  whose characteristic speeds are given respectively by
\[
S_{\text{HL}}=u_{1,L}-c_L, \quad S_{\text{TL}}=u_{1,\ast}-c_{\ast L},
\]
where $c_{\ast L}=c_L\left(\frac{p_{11,\ast}}{p_{11,L}}\right)^{\frac{1}{3}}$. The solution $\mathbf{V}_{\text{Lfan}}$ inside the 1-rarefaction fan is easily obtained by considering the characteristic ray through the origin $(0,0)$ and a general point $(x,t)$ inside the fan. The slope of the characteristic line is
\[
\frac{\text{d}x}{\text{d}t}=\frac{x}{t}=u_1-c.
\]
Using the generalized Riemann invariants associated with the 1-rarefaction wave yields
\begin{equation}
\mathbf{V}_{\text{Lfan}}\begin{cases}
\rho=\frac{\rho_L}{2}\left[1+\frac{1}{c_L}\left(u_{1,L}-\frac{x}{t}\right)\right], \\
u_1=\frac{1}{2}\left(c_L+u_{1,L}+\frac{x}{t}\right), \\
u_2=u_{2,L}+\frac{\sqrt{3}p_{12,L}}{\sqrt{\rho_Lp_{11,L}}}-\frac{\sqrt{3}p_{12}}{\sqrt{\rho p_{11}}}, \\
p_{11}=\frac{p_{11,L}}{8}\left[1+\frac{1}{c_L}\left(u_{1,L}-\frac{x}{t}\right)\right]^3, \\
p_{12}=p_{12,L}\frac{\rho^3}{\rho_L^3}, \\
p_{22}=\left[\frac{\det(\mathbf{p}_L)}{\rho_L^4}\rho^4+p_{12}^2\right]/p_{11}.
\end{cases}
\label{VLfan}
\end{equation}

\begin{remark}
For the 6-rarefaction wave, which is identified by the condition $p_{11,\ast}\leq p_{11,R}$ and enclosed by the characteristic  speeds given respectively by $S_{\text{HR}}=u_{1,R}+c_R$ and $S_{\text{TR}}=u_{1,\ast}+c_{\ast R}$ with $c_{\ast R}=c_R\left(\frac{p_{11,\ast}}{p_{11,R}}\right)^{\frac{1}{3}}$, the solution  $\mathbf{V}_{\text{Rfan}}$ is given by
\begin{equation*}
\mathbf{V}_{\text{Rfan}}\begin{cases}
\rho=\frac{\rho_R}{2}\left[1+\frac{1}{c_R}\left(\frac{x}{t}-u_{1,R}\right)\right], \\
u_1=\frac{1}{2}\left(u_{1,R}-c_R+\frac{x}{t}\right), \\
u_2=u_{2,R}-\frac{\sqrt{3}p_{12,R}}{\sqrt{\rho_Rp_{11,R}}}+\frac{\sqrt{3}p_{12}}{\sqrt{\rho p_{11}}}, \\
p_{11}=\frac{p_{11,R}}{8}\left[1+\frac{1}{c_R}\left(\frac{x}{t}-u_{1,R}\right)\right]^3, \\
p_{12}=p_{12,R}\frac{\rho^3}{\rho_R^3}, \\
p_{22}=\left[\frac{\det(\mathbf{p}_R)}{\rho_R^4}\rho^4+p_{12}^2\right]/p_{11}.
\end{cases}
\end{equation*}
\end{remark}

\subsection{Computing $\mathbf{V}_{\ast L}$}

For the state $\mathbf{V}_{\ast L}$, which is between the 1-rarefaction wave and the 2-shear wave, utilizing the generalized Riemann invariants across the 1-rarefaction wave can  obtain
\begin{align}
&\rho_{\ast L}=\rho_L\left(\frac{p_{11,\ast}}{p_{11,L}}\right)^{\frac{1}{3}}, \quad u_{2,\ast L}=u_{2,L}+\frac{\sqrt{3}p_{12,L}}{\sqrt{\rho_Lp_{11,L}}}-\frac{\sqrt{3}p_{12,\ast L}}{\sqrt{\rho_{\ast L}p_{11,\ast}}}, \notag \\
&p_{12,\ast L}=p_{12,L}\frac{\rho_{\ast L}^3}{\rho_L^3}, \quad p_{22,\ast L}=\left[\frac{\det(\mathbf{p}_L)}{\rho_L^4}\rho_{\ast L}^4+p_{12,\ast L}^2\right]/p_{11,\ast}. \label{V_astL}
\end{align}

\begin{remark}
For the state $\mathbf{V}_{\ast R}$, which is between the 6-rarefaction wave and the 5-shear wave, use of the generalized Riemann invariants across the 6-rarefaction wave may give
\begin{align*}
&\rho_{\ast R}=\rho_R\left(\frac{p_{11,\ast}}{p_{11,R}}\right)^{\frac{1}{3}}, \quad u_{2,\ast R}=u_{2,R}-\frac{\sqrt{3}p_{12,R}}{\sqrt{\rho_Rp_{11,R}}}+\frac{\sqrt{3}p_{12,\ast R}}{\sqrt{\rho_{\ast R}p_{11,\ast}}}, \\
&p_{12,\ast R}=p_{12,R}\frac{\rho_{\ast R}^3}{\rho_R^3}, \quad p_{22,\ast R}=\left[\frac{\det(\mathbf{p}_R)}{\rho_R^4}\rho_{\ast R}^4+p_{12,\ast R}^2\right]/p_{11,\ast}.
\end{align*}
\end{remark}

\subsection{Computing  $\mathbf{V}_{\ast R}$}

The solution state $\mathbf{V}_{\ast R}$  is between the 5-shear wave and the 6-shock wave, while
the 6-shock wave with the following speed
\begin{equation}\label{sigma_R}
\sigma_R=u_{1,R}+c_R\sqrt{\frac{2p_{11,\ast}}{3p_{11,R}}+\frac{1}{3}} {=u_{1,\ast}+\frac{1}{\rho_{\ast R}}\sqrt{(2p_{11,\ast}+p_{11,R})\rho_R}}
\end{equation}
is identified by the condition $p_{11,\ast}>p_{11,R}$.  {The derivation of \eqref{sigma_R} is similar to that of the shock speed for the Euler equations, see Section 3.1.3 in \cite{toro2013riemann}. Moreover, \eqref{sigma_R} implies that $u_{1,\ast}<\sigma_R$}.
By applying  the  Rankine-Hugoniot jump conditions for the variables $\rho$, $m_1$ and $E_{11}$, similar to the discussion for the Euler equations in \cite{toro2013riemann}, one can obtain
\begin{equation}\label{rho_astR}
\rho_{\ast R}=\rho_R\frac{2p_{11,\ast}+p_{11,R}}{p_{11,\ast}+2p_{11,R}}.
\end{equation}
The condition $p_{11,\ast}>p_{11,R}$ implies that $\rho_{\ast R}\in(\rho_R,2\rho_R)$.

Using the    Rankine-Hugoniot jump conditions for the variables $m_2$ and $E_{12}$ yields
\begin{equation}\label{eq:u2_p12_astR}
\mathbf{A}^R\left(\begin{array}{c}
              u_{2,\ast R} \\
              p_{12,\ast R}
            \end{array}\right)
=\left(\begin{array}{c}
   a_1^R \\
   a_2^R
 \end{array}\right),
\end{equation}
where
\begin{align*}
&\mathbf{A}^R:=\begin{pmatrix}
                \rho_{\ast R}(u_{1,\ast}-\sigma_R) & 1 \\
                E_{11,\ast R}-\frac{1}{2}\rho_{\ast R}u_{1,\ast}\sigma_R & u_{1,\ast}-\frac{1}{2}\sigma_R
              \end{pmatrix}, \\
&a_1^R:=\rho_R(u_{1,R}-\sigma_R)u_{2,R}+p_{12,R}, \\
&a_2^R:=E_{12,R}(u_{1,R}-\sigma_R)+\frac{1}{2}(p_{11,R}u_{2,R}+p_{12,R}u_{1,R}).
\end{align*}
Due to
\[
\det(\mathbf{A}^R)=\frac{p_{11,\ast}(2\rho_R-\rho_{\ast R})+p_{11,R}\rho_R}{2\rho_{\ast R}}>0,
\]
the solution of \eqref{eq:u2_p12_astR} exists and is unique.

By  the  Rankine-Hugoniot jump conditions for the variable $E_{22}$, one has
\[
E_{22,\ast R}u_{1,\ast}+p_{12,\ast R}u_{2,\ast R}-(E_{22,R}u_{1,R}+p_{12,R}u_{2,R})=\sigma_R(E_{22,\ast R}-E_{22,R}),
\]
which implies that
\[
E_{22,\ast R}=\frac{E_{22,R}u_{1,R}+p_{12,R}u_{2,R}-\sigma_RE_{22,R}-p_{12,\ast R}u_{2,\ast R}}{u_{1,\ast}-\sigma_R},
\]
and then using the ``equation of state'' yields
\begin{equation}\label{p22_astR}
p_{22,\ast R}=2E_{22,\ast R}-\rho_{\ast R}u_{2,\ast R}^2.
\end{equation}

\begin{remark}
For the 1-shock wave with the speed
\[
\sigma_L=u_{1,L}-c_L\sqrt{\frac{2p_{11,\ast}}{3p_{11,L}}+\frac{1}{3}}= {u_{1,\ast}-\frac{1}{\rho_{\ast L}}\sqrt{(2p_{11,\ast}+p_{11,L})\rho_L}},
\]
which is identified by the condition $p_{11,\ast}>p_{11,L}$, one can obtain
\[
\rho_{\ast L}=\rho_L\frac{2p_{11,\ast}+p_{11,L}}{p_{11,\ast}+2p_{11,L}}.
\]
The variables $u_{2,\ast L}$ and $p_{12,\ast L}$ are obtained by solving the following system
\[
\mathbf{A}^L\left(\begin{array}{c}
              u_{2,\ast L} \\
              p_{12,\ast L}
            \end{array}\right)
=\left(\begin{array}{c}
   a_1^L \\
   a_2^L
 \end{array}\right),
\]
where
\begin{align*}
&\mathbf{A}^L:=\begin{pmatrix}
                \rho_{\ast L}(u_{1,\ast}-\sigma_L) & 1 \\
                E_{11,\ast L}-\frac{1}{2}\rho_{\ast L}u_{1,\ast}\sigma_L & u_{1,\ast}-\frac{1}{2}\sigma_L
              \end{pmatrix}, \\
&a_1^L:=\rho_L(u_{1,L}-\sigma_L)u_{2,L}+p_{12,L}, \\
&a_2^L:=E_{12,L}(u_{1,L}-\sigma_L)+\frac{1}{2}(p_{11,L}u_{2,L}+p_{12,L}u_{1,L}).
\end{align*}
The variable $p_{22,\ast L}$ is given by
\[
p_{22,\ast L}=2E_{22,\ast L}-\rho_{\ast L}u_{2,\ast L}^2,
\]
where
\[
E_{22,\ast L}=\frac{E_{22,L}u_{1,L}+p_{12,L}u_{2,L}-\sigma_LE_{22,L}-p_{12,\ast L}u_{2,\ast L}}{u_{1,\ast}-\sigma_L}.
\]
\end{remark}

\subsection{Computing $\mathbf{V}_{\ast\ast L}$ and $\mathbf{V}_{\ast\ast R}$}
The solution state $\mathbf{V}_{\ast\ast L}$ is between the 2-shear wave and the contact discontinuity, while $\mathbf{V}_{\ast\ast R}$ is between the 5-shear wave and the contact discontinuity.
Across the 2-shear wave, utilizing the generalized Riemann invariant $u_2+\frac{p_{12}}{\sqrt{\rho p_{11}}}$
gives
\begin{equation}\label{eq1:u2_p12_ast2}
u_{2,\ast\ast}+\frac{p_{12,\ast\ast}}{\sqrt{\rho_{\ast L}p_{11,\ast}}}=a_3^L,
\end{equation}
with $a_3^L:=u_{2,\ast L}+\frac{p_{12,\ast L}}{\sqrt{\rho_{\ast L}p_{11,\ast}}}$.
For the 5-shear wave, using the generalized Riemann invariant $u_2-\frac{p_{12}}{\sqrt{\rho p_{11}}}$ yields
\begin{equation}\label{eq2:u2_p12_ast2}
u_{2,\ast\ast}-\frac{p_{12,\ast\ast}}{\sqrt{\rho_{\ast R}p_{11,\ast}}}=a_3^R,
\end{equation}
with $a_3^R:=u_{2,\ast R}-\frac{p_{12,\ast R}}{\sqrt{\rho_{\ast R}p_{11,\ast}}}$.
Solving \eqref{eq1:u2_p12_ast2} and \eqref{eq2:u2_p12_ast2} gives
\begin{equation}
\begin{split}
&u_{2,\ast\ast}=\frac{a_3^L\sqrt{\rho_{\ast L}}+a_3^R\sqrt{\rho_{\ast R}}}{\sqrt{\rho_{\ast L}}+\sqrt{\rho_{\ast R}}}, \\
&p_{12,\ast\ast}=\frac{(a_3^L-a_3^R)\sqrt{\rho_{\ast L}\rho_{\ast R}p_{11,\ast}}}{\sqrt{\rho_{\ast L}}+\sqrt{\rho_{\ast R}}}.
\end{split}
\label{u2_p12_2ast}
\end{equation}
Moreover, the fact that $\det(\mathbf{p})$ is the generalized Riemann invariant of both the 2-shear wave and the 5-shear wave implies that
\begin{equation}\label{p22_2astK}
p_{22,\ast\ast K}=\frac{\det(\mathbf{p}_{\ast K})+p_{12,\ast\ast}^2}{p_{11,\ast}}, \quad K=L,R.
\end{equation}

\section{Resolution of the GRP}\label{GRP solver}
This section resolves the following GRP problem
\begin{equation}\label{GRP2}
\begin{cases}
\eqref{ten-moment2}, \quad t>0, \\
\mathbf{V}(x,0)=\begin{cases}
                    \mathbf{V}_L+x\mathbf{V}_L', \quad x<0, \\
                    \mathbf{V}_R+x\mathbf{V}_R', \quad x>0,
                \end{cases}
\end{cases}
\end{equation}
to derive  the limiting value of $\frac{\partial\mathbf{V}}{\partial t}$ at $x=0$, as $t\rightarrow0+$,
where $\mathbf{V}_K':=(\rho_K',u_{1,K}',u_{2,K}',p_{11,K}',p_{12,K}',p_{22,K}')^\top$, $K=L,R$,  are the constant slope vectors. The initial structure of the solution $\mathbf{V}^{{\rm GRP}}(x,t)$ of \eqref{GRP2} is determined by the exact solution $\bm{\omega}\left(\frac{x}{t};\mathbf{V}_L,\mathbf{V}_R\right)$ of the Riemann problem \eqref{local RP2}  and
\[
\lim_{t\rightarrow0+}\mathbf{V}^{{\rm GRP}}(\lambda t,t)=\bm{\omega}\left(0;\mathbf{V}_L,\mathbf{V}_R\right)=:\mathbf{V}^{{\rm RP}}, \quad x=\lambda t.
\]
The local wave pattern around the singularity point $(x,t)=(0,0)$ of the GRP \eqref{GRP2} typically exhibits piecewise smoothness and comprises elementary wave types, including the rarefaction wave, the shock wave, the shear wave, and the contact discontinuity, as depicted schematically in Figure \ref{localwave_GRP}.

\begin{figure}[!htbp]
\centering
\includegraphics[width=8cm]{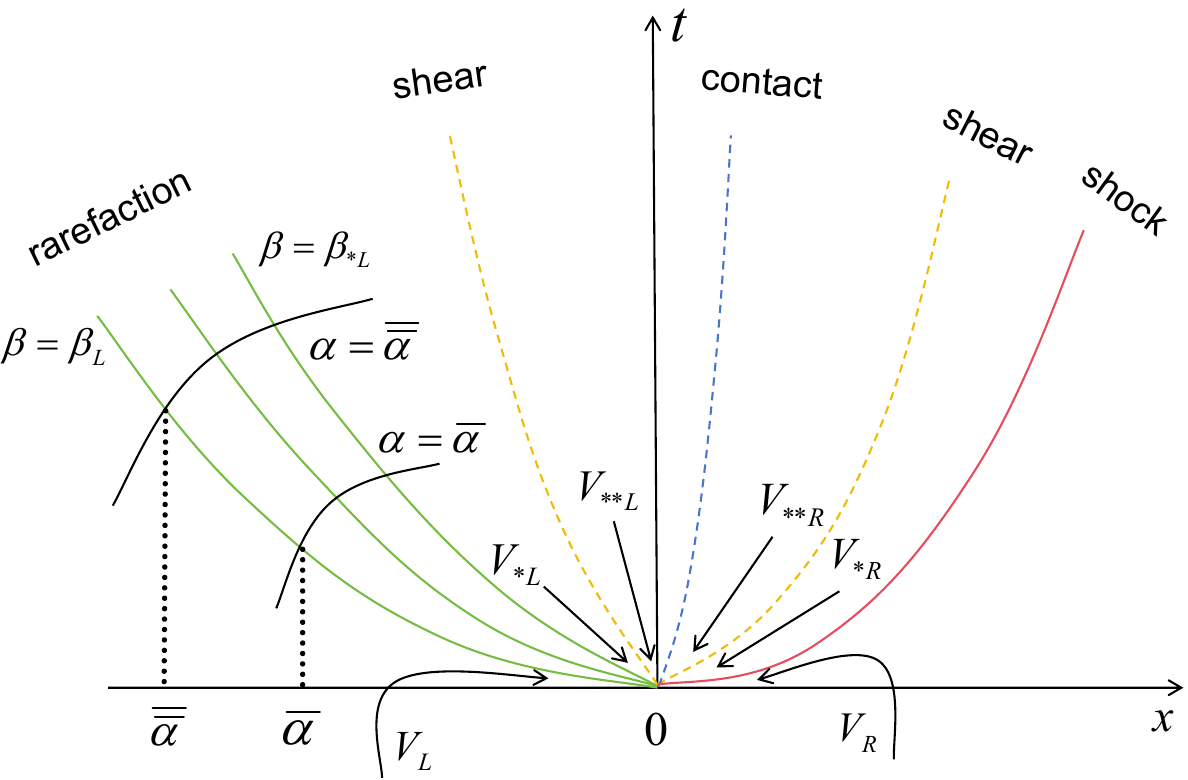}
\caption{The schematic description of a local wave configuration of the GRP \eqref{GRP2} with $0\leq t\ll1$.}
\label{localwave_GRP}
\end{figure}

The rarefaction waves in the solution of the Riemann problem \eqref{local RP2} exhibit isentropic flow properties, making the generalized Riemann invariants constant with vanishing derivatives within $i$th-rarefaction wave ($i=1,6$), but unfortunately, those properties do not generally hold for the generalized Riemann problem \eqref{GRP2} due to the need to consider the curved rarefaction waves.
Nonetheless, for a short time period  $0\leq t\ll1$, the solution of \eqref{GRP2} may be considered as a perturbation of the solution of \eqref{local RP2}, allowing us to expect that the generalized Riemann invariants remain regular within the $i$th-rarefaction waves ($i=1,6$) of the solution $\mathbf{V}^{{\rm GRP}}(x,t)$ near the singularity point $(x,t) = (0,0)$. Consequently, the generalized Riemann invariants are still utilized to resolve the rarefaction waves around this singularity point.

From this, as an example, we will continue to focus our attention on the specific local wave configuration depicted in Figure \ref{localwave_GRP}, corresponding to Figure \ref{localwave_RP}. In this configuration, a rarefaction wave propagates to the left, while a shock wave moves to the right. The intermediate region between them is separated by two shear waves and a contact discontinuity. It is worth noting that other local wave configurations can be analyzed in a similar manner. In the subsequent subsections,   the nonsonic case (see Subsection \ref{nonsonic case}), the sonic case (see Subsection \ref{sonic case}), and the acoustic case (see Subsection \ref{acoustic case})  will be separately discussed in detail to compute the limiting value of $\frac{\partial\mathbf{V}}{\partial t}(0,t)$ as $t\rightarrow0+$.

\subsection{Nonsonic case}\label{nonsonic case}
If the $t$-axis is located between the 1-wave and the 6-wave, the nonsonic case happens. Denote the limiting values of $\frac{\partial\mathbf{V}}{\partial t}(0,t)$ when $t\rightarrow0+$ in the four middle domains by $\left(\frac{\partial\mathbf{V}}{\partial t}\right)_{\ast L}$, $\left(\frac{\partial\mathbf{V}}{\partial t}\right)_{\ast\ast L}$, $\left(\frac{\partial\mathbf{V}}{\partial t}\right)_{\ast\ast R}$ and $\left(\frac{\partial\mathbf{V}}{\partial t}\right)_{\ast R}$, respectively.
The derivations of those limiting values  are very long-winded and tedious, and for the sake of  convenience,
the main steps in this subsection are outlined in Figure \ref{main results of nonsonic case}.

\tikzstyle{format}=[rectangle,draw,very thick,fill=white] 
\tikzstyle{arrow}=[thin,->,>=stealth]

\begin{figure}[!htbp]
\begin{tikzpicture}[node distance=5.5cm, text width=5cm]
  \node[format] (start) {Compute $(\frac{\partial u_1}{\partial t})_{\ast K}$ (=$(\frac{\partial u_1}{\partial t})_{\ast\ast K}$), $(\frac{\partial p_{11}}{\partial t})_{\ast K}$ (=$(\frac{\partial p_{11}}{\partial t})_{\ast\ast K}$) and $(\frac{\partial\rho}{\partial t})_{\ast K}$ (=$(\frac{\partial\rho}{\partial t})_{\ast\ast K}$) {\rm($K=L,R$)}, see Theorem \ref{theorem:u1_p11_rho_t_astK}.};

  \node[format] (input) [right of=start, yshift=1.5cm] {Compute $(\frac{\partial u_2}{\partial t})_{\ast L}$, $(\frac{\partial p_{12}}{\partial t})_{\ast L}$ and $(\frac{\partial p_{22}}{\partial t})_{\ast L}$, see Theorem \ref{theorem for omega_astL}.};

  \node[format] (process)[right of=start, yshift=-1.5cm] {Compute $(\frac{\partial u_2}{\partial t})_{\ast R}$, $(\frac{\partial p_{12}}{\partial t})_{\ast R}$ and $(\frac{\partial p_{22}}{\partial t})_{\ast R}$, see Theorem \ref{theorem:u2_p12_p22_t_astR}.};

  \node[format] (end) [right of=process, yshift=1.5cm] {Compute $\left(\frac{\partial u_2}{\partial t}\right)_{\ast\ast K}$, $\left(\frac{\partial p_{12}}{\partial t}\right)_{\ast\ast K}$ and $\left(\frac{\partial p_{22}}{\partial t}\right)_{\ast\ast K}$ $(K=L,R)$, see Theorem \ref{theorem:u2_p12_t_2astK} and Theorem \ref{theorem:p22_t_2astK}.};

  \draw [arrow] (start) -- (input);
  \draw [arrow] (start) -- (process);
  \draw [arrow] (input) -- (end);
  \draw [arrow] (process) -- (end);
\end{tikzpicture}
\centering
\caption{The main steps for the nonsonic case in Subsection \ref{nonsonic case}.}
\label{main results of nonsonic case}
\end{figure}
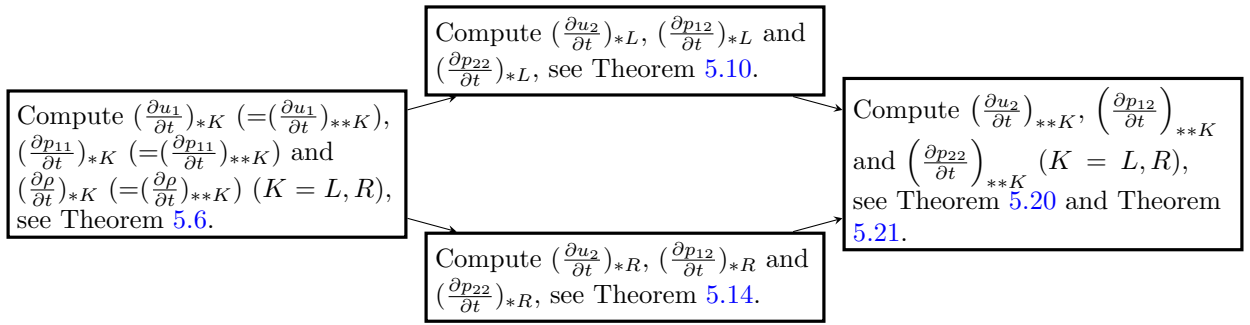

\subsubsection{Computing $(\frac{\partial u_1}{\partial t})_{\ast K}$, $(\frac{\partial p_{11}}{\partial t})_{\ast K}$ and $(\frac{\partial\rho}{\partial t})_{\ast K}$ {\rm($K=L,R$)}}\label{computation_u1t_p11t_rhot_astK}

Similar to the second-order accurate GRP scheme in \cite{ben2006direct,yang2011direct,yang2012direct,kuang2019second}, the region of the 1-rarefaction wave associated with $\lambda_1=u_1-c$ can be described by the characteristic coordinates $(\alpha,\beta)$, where $\alpha\in[-\infty,0]$, $\beta\in[\beta_L,\beta_{\ast L}]$ with $\beta_L=\lambda_1(\mathbf{V}_L)$ and $\beta_{\ast L}=\lambda_1(\mathbf{V}_{\ast L})$, see Figure \ref{localwave_GRP}, and  $\beta=\beta(x,t)$ and $\alpha=\alpha(x,t)$ are the integral curves of the following two equations
\begin{equation}\label{integral curves}
 \frac{\text{d}x}{\text{d}t}=u_1-c, \quad
 \frac{\text{d}x}{\text{d}t}=u_1+c,
\end{equation}
respectively. Here, $\beta$ represents the initial value of the slope $\lambda_1=u_1-c$ at the singularity point (0,0), and $\alpha$ denotes the $x$-coordinate of the intersection point between the transversal characteristic curve and the leading $\beta$-curve, as illustrated in Figure \ref{localwave_GRP}. Two equations in \eqref{integral curves} may give
\begin{equation}
\frac{\partial x}{\partial\alpha}=(u_1-c)\frac{\partial t}{\partial\alpha}, \quad
\frac{\partial x}{\partial\beta}=(u_1+c)\frac{\partial t}{\partial\beta}, \label{x_alpha_beta}
\end{equation}
which further imply that
\begin{equation}\label{t_alpha_beta}
\frac{\partial^2t}{\partial\alpha\partial\beta}=\frac{1}{2c}\left(\frac{\partial(u_1-c)}{\partial\beta}\frac{\partial t}{\partial\alpha}-\frac{\partial(u_1+c)}{\partial\alpha}\frac{\partial t}{\partial\beta}\right).
\end{equation}
Similarly, for the associated Riemann problem, there exists a local coordinate transformation $x_{\text{ass}}=x(\alpha,\beta)$, $t_{\text{ass}}=t(\alpha,\beta)$. Both local coordinate transformations satisfy the properties \cite{ben2006direct}
\begin{equation}\label{transformation properties}
\frac{\partial\lambda_1}{\partial t}(0,\beta)=1, ~ \frac{\partial t}{\partial\alpha}(0,\beta)=\frac{\partial t_{\text{ass}}}{\partial\alpha}(0,\beta), ~ \frac{\partial t}{\partial\beta}(0,\beta)=0, ~ \beta_L\leq\beta\leq\beta_{\ast L}.
\end{equation}
It follows from \eqref{t_alpha_beta} that
\begin{equation}\label{t_alpha_beta_0_beta}
\frac{\partial^2t}{\partial\alpha\partial\beta}(0,\beta)=\frac{1}{2c(0,\beta)}\frac{\partial t}{\partial\alpha}(0,\beta).
\end{equation}

Let $\psi_1:=u_1+c$ and $S_1:=\frac{p_{11}}{\rho^3}$. Following the same derivation in \cite{ben2006direct}, one has the following explicit expressions of the local
coordinate transformation in the 1-rarefaction wave of the associated Riemann problem
\begin{equation}\label{explicit express of tass and xass}
t_{\text{ass}}(\alpha,\beta)=\frac{\alpha}{\psi_{1,L}-\beta}, \ \
x_{\text{ass}}(\alpha,\beta)=\frac{\alpha\beta}{\psi_{1,L}-\beta}.
\end{equation}
Because  $\psi_1=u_1+c$, $\beta=u_1(0,\beta)-c(0,\beta)$ and $\psi_1$ is a generalized Riemann invariant across the 1-rarefaction wave, one has
\begin{equation}\label{c_0beta and u1_0beta}
c(0,\beta)=\frac{1}{2}(\psi_{1,L}-\beta), ~ u_1(0,\beta)=\frac{1}{2}(\psi_{1,L}+\beta).
\end{equation}
Combining \eqref{transformation properties}, \eqref{explicit express of tass and xass} with \eqref{c_0beta and u1_0beta} gives
\begin{equation}\label{t_alpha_0beta}
\frac{\partial t}{\partial\alpha}(0,\beta)=\frac{1}{2c(0,\beta)}.
\end{equation}
Due to $\text{d}\psi_1=\text{d}u_1-\frac{c}{2\rho}\text{d}\rho+\frac{3}{2\rho c}\text{d}p_{11}$ and $\text{d}S_1=\frac{-3p_{11}}{\rho^4}\text{d}\rho+\frac{1}{\rho^3}\text{d}p_{11}$,  using \eqref{eq:rho}, \eqref{eq:u1} and \eqref{eq:p11} yields
\begin{align}
&\frac{\partial\psi_1}{\partial t}+(u_1+c)\frac{\partial\psi_1}{\partial x}=\Pi_1, \label{eq:psi1} \\
&\frac{\partial S_1}{\partial t}+u_1\frac{\partial S_1}{\partial x}=0, \label{eq:S1}
\end{align}
where $\Pi_1:=-\frac{c^2}{2\rho}\frac{\partial\rho}{\partial x}+\frac{1}{2\rho}\frac{\partial p_{11}}{\partial x}-\frac{1}{2}W_x$. Besides, one has
\begin{equation}\label{dp11}
\text{d}p_{11}=c^2\text{d}\rho+\rho^3\text{d}S_1.
\end{equation}
It follows that
\begin{equation}\label{Pi1}
\Pi_1=\frac{1}{2\rho}\left(\frac{\partial p_{11}}{\partial x}-c^2\frac{\partial\rho}{\partial x}\right)-\frac{1}{2}W_x=\frac{1}{2}\rho^2\frac{\partial S_1}{\partial x}-\frac{1}{2}W_x.
\end{equation}
For $\psi_1$, by the total differentials of $\psi_1$ and $S_1$, one has
\begin{equation}\label{dpsi1}
\text{d}\psi_1=\text{d}u_1+\frac{1}{\rho c}\text{d}p_{11}+\frac{\rho^2}{2c}\text{d}S_1.
\end{equation}
The following lemma gives the expressions of $\left(\rho^2\frac{\partial S_1}{\partial t}\right)(0,\beta)$ and $\frac{\partial\psi_1}{\partial t}(0,\beta)$.

\begin{lemma}\label{S1_t_psi1_t_0beta}
If assuming that the 1-rarefaction wave associated with $u_1-c$ moves to the left, and considering
the generalized Riemann invariants $\psi_1, S_1$ and their time derivatives $\frac{\partial\psi_1}{\partial t}$, $\frac{\partial S_1}{\partial t}$ as continuous functions of $\alpha, \beta$ with $-\alpha_0\leq\alpha\leq0$ and $\beta_L\leq\beta\leq\beta_{\ast L}$, then one has
\begin{align}
&\left(\rho^2\frac{\partial S_1}{\partial t}\right)(0,\beta)=-\frac{\rho_L^2S_{1,L}'}{c_L^3}[\beta+c(0,\beta)]\cdot c^3(0,\beta), \label{rho^2_S1_t_0_beta} \\
&\frac{\partial\psi_1}{\partial t}(0,\beta)=\frac{\rho_L^2S_{1,L}'}{8c_L^3}[\psi_{1,L}(3c_L^2-c^2)-2\beta c^2](0,\beta)-\psi_{1,L}'\psi_1(0,\beta)-\frac{1}{2}W_x(0), \label{psi1_t_0_beta}
\end{align}
where
\begin{align}
S_{1,L}'=\frac{1}{\rho_L^3}(p_{11,L}'-c_L^2\rho_L'), 
\quad \psi_{1,L}'=u_{1,L}'+\frac{1}{\rho_Lc_L}p_{11,L}'+\frac{\rho_L^2}{2c_L}S_{1,L}',
\label{psi1_L'}
\end{align}
obtained by \eqref{dp11} and \eqref{dpsi1}.
\end{lemma}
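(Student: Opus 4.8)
The plan is to follow the direct Eulerian GRP technique of \cite{ben2006direct}: work inside the $1$-rarefaction fan in the characteristic coordinates $(\alpha,\beta)$, turn the transport equations \eqref{eq:S1} and \eqref{eq:psi1} into first-order ODEs along the leading $\beta$-curve $\alpha=0$, and integrate them from $\beta=\beta_L$, where the solution is glued to the left initial slope vector $\mathbf{V}_L'$, out to a general $\beta$. The recurring ingredients are the following. The directional derivatives $D_\pm:=\partial_t+(u_1\pm c)\partial_x$ satisfy $\tfrac{\partial t}{\partial\beta}\,D_+=\partial_\beta$ and $\tfrac{\partial t}{\partial\alpha}\,D_-=\partial_\alpha$ by \eqref{x_alpha_beta}. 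Since $\tfrac{\partial t}{\partial\beta}(0,\beta)=0$ by \eqref{transformation properties}, the identities $\partial_\beta S_1=\tfrac{\partial t}{\partial\beta}\,D_+S_1$ and $\partial_\beta\psi_1=\tfrac{\partial t}{\partial\beta}\,D_+\psi_1=\tfrac{\partial t}{\partial\beta}\,\Pi_1$ force $S_1$ and $\psi_1$ to be constant along $\alpha=0$, hence equal to $S_{1,L}=p_{11,L}/\rho_L^3$ and $\psi_{1,L}=u_{1,L}+c_L$; combining this with \eqref{c_0beta and u1_0beta} and $c^2=3S_1\rho^2$ yields $\psi_1(0,\beta)=\psi_{1,L}$, $u_1(0,\beta)=\beta+c(0,\beta)$ and $\rho^2(0,\beta)=\rho_L^2\,c^2(0,\beta)/c_L^2$.

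For the entropy variable, \eqref{eq:S1} gives $\partial_t S_1=-u_1\partial_x S_1$, so only $\partial_x S_1(0,\beta)$ has to be found. Eliminating $\partial_t S_1$ from $D_\pm S_1$ gives $\partial_\alpha S_1=-c\,\tfrac{\partial t}{\partial\alpha}\,\partial_x S_1$ and $\partial_\beta S_1=c\,\tfrac{\partial t}{\partial\beta}\,\partial_x S_1$; equating the mixed partials $\partial_\alpha\partial_\beta S_1=\partial_\beta\partial_\alpha S_1$, evaluating at $\alpha=0$, and substituting \eqref{t_alpha_beta_0_beta}, \eqref{t_alpha_0beta} and \eqref{c_0beta and u1_0beta} reduces to
\[
\frac{\text{d}}{\text{d}\beta}\big[\partial_x S_1(0,\beta)\big]=-\frac{1}{\psi_{1,L}-\beta}\,\partial_x S_1(0,\beta).
\]
Its solution with $\partial_x S_1(0,\beta_L)=S_{1,L}'$ --- the leading $\beta$-curve at $\beta=\beta_L$ is the characteristic carrying the left data, across which the first derivatives are continuous --- is $\partial_x S_1(0,\beta)=S_{1,L}'\,c(0,\beta)/c_L$, where $S_{1,L}'$ is given in \eqref{psi1_L'}. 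Multiplying by $-u_1(0,\beta)$ and $\rho^2(0,\beta)$ and inserting the formulas above produces \eqref{rho^2_S1_t_0_beta}.

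For $\psi_1$, \eqref{eq:psi1} gives $\partial_t\psi_1=\Pi_1-(u_1+c)\partial_x\psi_1=\Pi_1-\psi_{1,L}\,\partial_x\psi_1$, and by \eqref{Pi1} together with the $\partial_x S_1(0,\beta)$ just obtained, $\Pi_1(0,\beta)=\tfrac12\rho_L^2 S_{1,L}'\,c^3(0,\beta)/c_L^3-\tfrac12 W_x(0)$ is now explicit. Running the same mixed-partials argument for $\psi_1$ --- this time with $\partial_\beta\psi_1=\tfrac{\partial t}{\partial\beta}\,\Pi_1$ and $\partial_\alpha\psi_1=\tfrac{\partial t}{\partial\alpha}\,(\Pi_1-2c\,\partial_x\psi_1)$, both coming from \eqref{eq:psi1} --- yields a first-order ODE for $\partial_x\psi_1(0,\beta)$ whose right-hand side is built from the now-known $\Pi_1(0,\beta)$; one integration from $\beta_L$ with $\partial_x\psi_1(0,\beta_L)=\psi_{1,L}'$ (see \eqref{psi1_L'}) gives $\partial_x\psi_1(0,\beta)=\psi_{1,L}'-\tfrac{3\rho_L^2 S_{1,L}'}{8c_L^3}\big(c_L^2-c^2(0,\beta)\big)$. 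Substituting this and $\Pi_1(0,\beta)$ into $\partial_t\psi_1=\Pi_1-\psi_{1,L}\,\partial_x\psi_1$ and simplifying with the identity $\psi_{1,L}-\beta=2c(0,\beta)$ produces \eqref{psi1_t_0_beta}.

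The hard part is the bookkeeping in the two mixed-partial computations at $\alpha=0$: since $\tfrac{\partial t}{\partial\beta}$ vanishes there while $\tfrac{\partial^2 t}{\partial\alpha\partial\beta}$ does not, one must track exactly which terms survive when $\partial_\alpha$ and $\partial_\beta$ hit the products $c\,\tfrac{\partial t}{\partial\beta}\,\partial_x(\cdot)$ and $c\,\tfrac{\partial t}{\partial\alpha}\,\partial_x(\cdot)$, and one must read off the correct initial data at $\beta=\beta_L$ from the left slopes through \eqref{psi1_L'}. The closing algebra --- reconciling the $c^3(0,\beta)$ contribution with the stated combination $\psi_{1,L}(3c_L^2-c^2)-2\beta c^2$ --- is then routine once $\psi_{1,L}-\beta=2c(0,\beta)$ is used.
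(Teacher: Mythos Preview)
Your proposal is correct and follows essentially the same approach as the paper: both arguments work in the characteristic coordinates $(\alpha,\beta)$, derive first-order ODEs along the leading curve $\alpha=0$ from the mixed second partials (using $\tfrac{\partial t}{\partial\beta}(0,\beta)=0$ and \eqref{t_alpha_beta_0_beta}), integrate from $\beta_L$ with initial data supplied by the left slopes, and then assemble $\partial_t S_1$ and $\partial_t\psi_1$ from $\partial_x S_1$, $\partial_x\psi_1$, and $\Pi_1$. The only cosmetic difference is that the paper tracks $\partial_\alpha S_1(0,\beta)$ and $\partial_\alpha\psi_1(0,\beta)$ and converts to $\partial_x$ at the end via \eqref{S1_alpha} and \eqref{psi1_alpha}, whereas you write the ODEs directly for $\partial_x S_1(0,\beta)$ and $\partial_x\psi_1(0,\beta)$; since $\partial_\alpha S_1(0,\beta)=-\tfrac12\partial_x S_1(0,\beta)$ by \eqref{t_alpha_0beta}, the two formulations are trivially equivalent.
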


\begin{proof}
(i) Computing $\left(\rho^2\frac{\partial S_1}{\partial t}\right)(0,\beta)$.
Using \eqref{x_alpha_beta} and \eqref{eq:S1} gives
\begin{align}
&\frac{\partial S_1}{\partial\beta}=\frac{\partial t}{\partial\beta}\left[\frac{\partial S_1}{\partial t}+(u_1+c)\frac{\partial S_1}{\partial x}\right]=\frac{\partial t}{\partial\beta}\cdot c\frac{\partial S_1}{\partial x}, \label{S1_beta} \\
&\frac{\partial S_1}{\partial\alpha}=\frac{\partial t}{\partial\alpha}\left[\frac{\partial S_1}{\partial t}+(u_1-c)\frac{\partial S_1}{\partial x}\right]=-\frac{\partial t}{\partial\alpha}\cdot c\frac{\partial S_1}{\partial x}. \label{S1_alpha}
\end{align}
Differentiating \eqref{S1_beta} with respect to $\alpha$ and noting that $\frac{\partial t}{\partial\beta}(0,\beta)=0$, one gets
\begin{equation}\label{S1_alpha_beta_0beta}
\frac{\partial}{\partial\beta}\left(\frac{\partial S_1}{\partial\alpha}(0,\beta)\right)=\frac{1}{2}\frac{\partial t}{\partial\alpha}(0,\beta)\frac{\partial S_1}{\partial x}(0,\beta)=-\frac{1}{2c(0,\beta)}\frac{\partial S_1}{\partial\alpha}(0,\beta),
\end{equation}
where \eqref{t_alpha_beta_0_beta} and \eqref{S1_alpha} have been used in the first and second equality, respectively. Integrating \eqref{S1_alpha_beta_0beta} from $\beta_L$ to $\beta$ yields
\begin{equation}\label{S1_alpha_0beta}
\frac{\partial S_1}{\partial\alpha}(0,\beta)=\frac{\partial S_1}{\partial\alpha}(0,\beta_L)\cdot\exp\left(-\int_{\beta_L}^{\beta}\frac{1}{2c(0,\eta)}\text{d}\eta\right)=\frac{\partial S_1}{\partial\alpha}(0,\beta_L)\cdot\frac{c(0,\beta)}{c_L}.
\end{equation}
Similarly, from \eqref{t_alpha_beta_0_beta}, one has
\begin{equation}\label{t_alpha_0beta1}
\frac{\partial t}{\partial\alpha}(0,\beta)=\frac{\partial t}{\partial\alpha}(0,\beta_L)\cdot\frac{c_L}{c(0,\beta)}.
\end{equation}
By \eqref{S1_alpha}, \eqref{S1_alpha_0beta} and \eqref{t_alpha_0beta1}, one has
\begin{align}
\frac{\partial S_1}{\partial x}(0,\beta)&=-\frac{\partial S_1}{\partial\alpha}(0,\beta)\cdot\left(c\frac{\partial t}{\partial\alpha}\right)^{-1}(0,\beta) \notag \\
&=-\frac{\partial S_1}{\partial\alpha}(0,\beta_L)\cdot\frac{c(0,\beta)}{c_L}\frac{1}{c(0,\beta)}\left(\frac{\partial t}{\partial\alpha}\right)^{-1}(0,\beta_L)\cdot\frac{c(0,\beta)}{c_L} \notag \\
&=-\frac{\partial S_1}{\partial\alpha}(0,\beta_L)\cdot\left(\frac{\partial t}{\partial\alpha}\right)^{-1}(0,\beta_L)\cdot\frac{c(0,\beta)}{c_L^2} \notag \\
&=c_L\frac{\partial S_1}{\partial x}(0,\beta_L)\cdot\frac{c(0,\beta)}{c_L^2} 
 =\frac{S_{1,L}'}{c_L}c(0,\beta), \label{S1_x_0beta}
\end{align}
It follows that
\begin{equation}\label{S1_t_0beta}
\frac{\partial S_1}{\partial t}(0,\beta)=-u_1(0,\beta)\cdot\frac{\partial S_1}{\partial x}(0,\beta)=-\frac{S_{1,L}'}{c_L}[\beta+c(0,\beta)]\cdot c(0,\beta).
\end{equation}
Because $S_1=p_{11}/\rho^3$ is a generalized Riemann invariant for the 1-rarefaction wave, it holds that
\begin{equation}\label{rho^2/rhoL^2}
\frac{\rho^2(0,\beta)}{\rho_L^2}=\frac{(3p_{11}/\rho)(0,\beta)}{3p_{11,L}/\rho_L}=\frac{c^2(0,\beta)}{c_L^2},
\end{equation}
and then combining \eqref{S1_t_0beta} and \eqref{rho^2/rhoL^2} gives \eqref{rho^2_S1_t_0_beta}.

(ii) Computing $\frac{\partial\psi_1}{\partial t}(0,\beta)$. Using \eqref{eq:psi1} gives
\begin{align}
&\frac{\partial\psi_1}{\partial\beta}=\frac{\partial t}{\partial\beta}\Pi_1(\alpha,\beta), \label{psi1_beta} \\
&\frac{\partial\psi_1}{\partial\alpha}=\frac{\partial t}{\partial\alpha}\left[\frac{\partial\psi_1}{\partial t}+(u_1-c)\frac{\partial\psi_1}{\partial x}\right]=\frac{\partial t}{\partial\alpha}\left[\Pi_1(\alpha,\beta)-2c\frac{\partial\psi_1}{\partial x}\right]. \label{psi1_alpha}
\end{align}
Differentiating \eqref{psi1_beta} with respect to $\alpha$ and using \eqref{transformation properties}, \eqref{t_alpha_beta_0_beta} gets
\begin{equation*}
\frac{\partial}{\partial\beta}\left(\frac{\partial\psi_1}{\partial\alpha}(0,\beta)\right)=\frac{1}{2c(0,\beta)}\frac{\partial t}{\partial\alpha}(0,\beta)\cdot\Pi_1(0,\beta).
\end{equation*}
Integrating the above equation from $\beta_L$ to $\beta$ yields
\begin{equation}\label{psi1_alpha_0beta}
\frac{\partial\psi_1}{\partial\alpha}(0,\beta)=\frac{\partial\psi_1}{\partial\alpha}(0,\beta_L)+\int_{\beta_L}^{\beta}\frac{1}{2c(0,\eta)}\frac{\partial t}{\partial\alpha}(0,\eta)\cdot\Pi_1(0,\eta)\text{d}\eta.
\end{equation}
Due to \eqref{Pi1} and \eqref{psi1_alpha}, the initial value is computed as
\begin{equation}\label{psi1_alpha_0betaL}
\frac{\partial\psi_1}{\partial\alpha}(0,\beta_L)=\frac{\partial t}{\partial\alpha}(0,\beta_L)\cdot\left[\frac{1}{2}\rho_L^2S_{1,L}'-\frac{1}{2}W_x(0)-2c_L\psi_{1,L}'\right].
\end{equation}
Moreover, by \eqref{c_0beta and u1_0beta}, \eqref{Pi1}, \eqref{S1_x_0beta} and \eqref{rho^2/rhoL^2}, one has
\begin{equation}\label{Pi1_0beta}
\Pi_1(0,\beta)=\frac{\rho_L^2S_{1,L}'}{16c_L^3}(\psi_{1,L}-\beta)^3-\frac{1}{2}W_x(0).
\end{equation}
Thus the integral in \eqref{psi1_alpha_0beta} can be exactly obtained and
\begin{equation}\label{integral 1}
\int_{\beta_L}^{\beta}\frac{1}{2c(0,\eta)}\frac{\partial t}{\partial\alpha}(0,\eta)\cdot\Pi_1(0,\eta)\text{d}\eta=
-\frac{\rho_L^2S_{1,L}'}{8c_L^3}[c^2(0,\beta)-c_L^2]-\frac{1}{4}W_x(0)\left(\frac{1}{c(0,\beta)}-\frac{1}{c_L}\right),
\end{equation}
where \eqref{t_alpha_0beta} has been used.
Substituting \eqref{psi1_alpha_0betaL} and \eqref{integral 1} into \eqref{psi1_alpha_0beta} yields the expression of $\frac{\partial\psi_1}{\partial\alpha}(0,\beta)$. The second equality in \eqref{psi1_alpha} implies that
\begin{equation}\label{2c_psi1_x_0beta}
2c(0,\beta)\cdot\frac{\partial\psi_1}{\partial x}(0,\beta)=\Pi_1(0,\beta)-\left(\frac{\partial t}{\partial\alpha}\right)^{-1}(0,\beta)\cdot\frac{\partial\psi_1}{\partial\alpha}(0,\beta).
\end{equation}
Combining \eqref{2c_psi1_x_0beta} with the first equality in \eqref{psi1_alpha}, one gets
\begin{equation}\label{psi1_t_0beta}
\frac{\partial\psi_1}{\partial t}(0,\beta)=-\left(\frac{u_1-c}{2c}\right)(0,\beta)\cdot\Pi_1(0,\beta)+\left(\frac{u_1+c}{2c}\right)(0,\beta)\cdot\left(\frac{\partial t}{\partial\alpha}\right)^{-1}(0,\beta)\cdot\frac{\partial\psi_1}{\partial\alpha}(0,\beta).
\end{equation}
Substituting the expression of $\frac{\partial\psi_1}{\partial\alpha}(0,\beta)$ and \eqref{Pi1_0beta} into \eqref{psi1_t_0beta} gives \eqref{psi1_t_0_beta}.
\end{proof}

\begin{lemma}[Resolution of the 1-rarefaction wave]
Assuming that the 1-rarefaction wave associated with $u_1-c$ moves to the left,   the limiting values $\frac{\mathcal{D}u_1}{\mathcal{D}t}(0,\beta)$ and $\frac{\mathcal{D}p_{11}}{\mathcal{D}t}(0,\beta)$ satisfy
\begin{equation}\label{resolution of 1-rarefaction}
\widetilde{a}_1(0,\beta)\cdot\frac{\mathcal{D}u_1}{\mathcal{D}t}(0,\beta)+\widetilde{b}_1(0,\beta)\cdot\frac{\mathcal{D}p_{11}}{\mathcal{D}t}(0,\beta)=\widetilde{d}_1(0,\beta),
\end{equation}
where
\begin{align*}
&\widetilde{a}_1(0,\beta)=1+\frac{u_1(0,\beta)}{c(0,\beta)}, \quad \widetilde{b}_1(0,\beta)=\left(\frac{u_1}{3p_{11}}+\frac{1}{\rho c}\right)(0,\beta) ,\\
&\widetilde{d}_1(0,\beta)=\frac{\rho_L^2S_{1,L}'\psi_{1,L}}{8c_L^3}[3c_L^2+c^2(0,\beta)]-\psi_{1,L}'\psi_{1,L}-\frac{1}{2}\left(1+\frac{u_1(0,\beta)}{c(0,\beta)}\right)W_x(0).
\end{align*}
\end{lemma}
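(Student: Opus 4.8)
The plan is to convert the two expressions obtained in Lemma~\ref{S1_t_psi1_t_0beta}, namely $\frac{\partial\psi_1}{\partial t}(0,\beta)$ in \eqref{psi1_t_0_beta} and $\bigl(\rho^2\frac{\partial S_1}{\partial t}\bigr)(0,\beta)$ in \eqref{rho^2_S1_t_0_beta}, into a single linear relation between the material derivatives $\frac{\mathcal{D}u_1}{\mathcal{D}t}(0,\beta)$ and $\frac{\mathcal{D}p_{11}}{\mathcal{D}t}(0,\beta)$. The mechanism is exactly the scalar-to-physical-variable conversion used for the Euler equations in \cite{ben2006direct}: the total differential \eqref{dpsi1}, $\mathrm{d}\psi_1=\mathrm{d}u_1+\frac{1}{\rho c}\mathrm{d}p_{11}+\frac{\rho^2}{2c}\mathrm{d}S_1$, read with $\partial_t$, together with the identities \eqref{u1_p11_t} that write $\partial_t u_1$ and $\partial_t p_{11}$ in terms of $\frac{\mathcal{D}u_1}{\mathcal{D}t}$, $\frac{\mathcal{D}p_{11}}{\mathcal{D}t}$ and $W_x$.

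First I would apply $\partial_t$ to \eqref{dpsi1} to get $\frac{\partial\psi_1}{\partial t}=\frac{\partial u_1}{\partial t}+\frac{1}{\rho c}\frac{\partial p_{11}}{\partial t}+\frac{\rho^2}{2c}\frac{\partial S_1}{\partial t}$, then substitute $\frac{\partial u_1}{\partial t}=\frac{\mathcal{D}u_1}{\mathcal{D}t}+\frac{u_1}{3p_{11}}\frac{\mathcal{D}p_{11}}{\mathcal{D}t}$ and $\frac{\partial p_{11}}{\partial t}=\frac{\mathcal{D}p_{11}}{\mathcal{D}t}+\rho u_1\frac{\mathcal{D}u_1}{\mathcal{D}t}+\frac12\rho u_1 W_x$ from \eqref{u1_p11_t}. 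Collecting the coefficients of $\frac{\mathcal{D}u_1}{\mathcal{D}t}$ and $\frac{\mathcal{D}p_{11}}{\mathcal{D}t}$ reproduces precisely $\widetilde a_1=1+\frac{u_1}{c}$ and $\widetilde b_1=\frac{u_1}{3p_{11}}+\frac{1}{\rho c}$, so that one obtains the pointwise identity
\[
\widetilde a_1\,\frac{\mathcal{D}u_1}{\mathcal{D}t}+\widetilde b_1\,\frac{\mathcal{D}p_{11}}{\mathcal{D}t}=\frac{\partial\psi_1}{\partial t}-\frac{u_1}{2c}W_x-\frac{1}{2c}\Bigl(\rho^2\frac{\partial S_1}{\partial t}\Bigr),
\]
valid throughout the $1$-rarefaction fan, hence in particular on the leading $\beta$-curve $x=0$ where the closed-form expressions of Lemma~\ref{S1_t_psi1_t_0beta} apply.

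It then remains to evaluate the right-hand side at $(0,\beta)$: insert \eqref{rho^2_S1_t_0_beta} and \eqref{psi1_t_0_beta}, replace $W_x$ by $W_x(0)$, and use the relations $c(0,\beta)=\frac12(\psi_{1,L}-\beta)$, $u_1(0,\beta)=\frac12(\psi_{1,L}+\beta)$ of \eqref{c_0beta and u1_0beta}, which also give $\psi_1(0,\beta)=\psi_{1,L}$ and $\beta=\psi_{1,L}-2c(0,\beta)$. The $\psi_{1,L}'$-term carries over unchanged and the two $W_x(0)$ contributions merge into $-\frac12\bigl(1+\frac{u_1}{c}\bigr)W_x(0)$; the step that needs the most care — and the only genuinely non-mechanical point of the proof — is the cancellation among the three $S_{1,L}'$-proportional terms. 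After expressing $\beta$ through $\psi_{1,L}$ and $c(0,\beta)$, the combined bracket $\psi_{1,L}(3c_L^2-c^2)-2\beta c^2+4(\beta+c)c^2$ collapses to $\psi_{1,L}(3c_L^2+c^2)$, so the $S_{1,L}'$-contribution becomes $\frac{\rho_L^2 S_{1,L}'\psi_{1,L}}{8c_L^3}(3c_L^2+c^2)$, and the right-hand side is exactly $\widetilde d_1(0,\beta)$. This yields \eqref{resolution of 1-rarefaction}.
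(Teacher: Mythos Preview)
Your proposal is correct and follows essentially the same route as the paper: apply $\partial_t$ to the total differential \eqref{dpsi1}, substitute the identities \eqref{u1_p11_t} to obtain the coefficients $\widetilde a_1,\widetilde b_1$ and the right-hand side $\frac{\partial\psi_1}{\partial t}-\frac{\rho^2}{2c}\frac{\partial S_1}{\partial t}-\frac{u_1}{2c}W_x$, then plug in \eqref{rho^2_S1_t_0_beta} and \eqref{psi1_t_0_beta}. The paper simply states ``combining \ldots\ gives \eqref{resolution of 1-rarefaction}'' for the final step, whereas you explicitly track the $S_{1,L}'$-cancellation via $2\beta c^2+4c^3=2c^2\psi_{1,L}$; this detail is correct and is the only part the paper leaves implicit.
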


\begin{proof}
Using \eqref{dpsi1}  gets
\begin{equation}\label{eq:du1_dp11_dt}
\frac{\partial u_1}{\partial t}+\frac{1}{\rho c}\frac{\partial p_{11}}{\partial t}=\frac{\partial\psi_1}{\partial t}-\frac{\rho^2}{2c}\frac{\partial S_1}{\partial t}.
\end{equation}
Substituting \eqref{u1_p11_t} into above equation obtains
\[
\left(1+\frac{u_1}{c}\right)\frac{\mathcal{D}u_1}{\mathcal{D}t}+\left(\frac{u_1}{3p_{11}}+\frac{1}{\rho c}\right)\frac{\mathcal{D}p_{11}}{\mathcal{D}t}=\frac{\partial\psi_1}{\partial t}-\frac{\rho^2}{2c}\frac{\partial S_1}{\partial t}-\frac{u_1}{2c}W_x.
\]
Combining it with \eqref{rho^2_S1_t_0_beta} and \eqref{psi1_t_0_beta} gives \eqref{resolution of 1-rarefaction}.
\end{proof}

Taking $\beta=\beta_{\ast L}$ in \eqref{resolution of 1-rarefaction} and using \eqref{u1_p11_t} obtains the first equation for $\left(\frac{\mathcal{D}u_1}{\mathcal{D}t}\right)_{\ast}$ and $\left(\frac{\mathcal{D}p_{11}}{\mathcal{D}t}\right)_{\ast}$ as follows
\begin{equation}\label{eq1:D_u1_p11_Dtast}
a_1\left(\frac{\mathcal{D}u_1}{\mathcal{D}t}\right)_\ast+b_1\left(\frac{\mathcal{D}p_{11}}{\mathcal{D}t}\right)_\ast=d_1,
\end{equation}
where
\begin{equation}
\begin{cases}
a_1=\widetilde{a}_1(0,\beta_{\ast L})=1+\frac{u_{1,\ast}}{c_{\ast L}}, \\
b_1=\widetilde{b}_1(0,\beta_{\ast L})=\frac{u_{1,\ast}}{3p_{11,\ast}}+\frac{1}{\rho_{\ast L}c_{\ast L}}, \\
d_1=\widetilde{d}_1(0,\beta_{\ast L})=\left[\frac{\rho_L^2S_{1,L}'}{8c_L^3}(3c_L^2+c_{\ast L}^2)-\psi_{1,L}'\right]\psi_{1,L}-\frac{1}{2}\left(1+\frac{u_{1,\ast}}{c_{\ast L}}\right)W_x(0).
\end{cases}
\label{a1_b1_d1}
\end{equation}

\begin{remark}
It is necessary to prove that both the limiting values of $\frac{\mathcal{D}u_1}{\mathcal{D}t}$ and $\frac{\mathcal{D}p_{11}}{\mathcal{D}t}$ do not change in the whole domain between the 1-wave and the 6-wave, which are denoted by $\left(\frac{\mathcal{D}u_1}{\mathcal{D}t}\right)_{\ast}$ and $\left(\frac{\mathcal{D}p_{11}}{\mathcal{D}t}\right)_{\ast}$, respectively.

To this end, we firstly prove that
\begin{equation}\label{Du1_p11_astK=ast2K}
\left(\frac{\mathcal{D}u_1}{\mathcal{D}t}\right)_{\ast K}=\left(\frac{\mathcal{D}u_1}{\mathcal{D}t}\right)_{\ast\ast K}, ~
\left(\frac{\mathcal{D}p_{11}}{\mathcal{D}t}\right)_{\ast K}=\left(\frac{\mathcal{D}p_{11}}{\mathcal{D}t}\right)_{\ast\ast K}, ~
K=L,R.
\end{equation}
For $K=L$, because both $u_1$ and $p_{11}$ are the generalized Riemann invariants for the 2-shear wave, one has
\[
\left(\frac{\mathcal{D}_2u_1}{\mathcal{D}t}\right)_{\ast L}=\left(\frac{\mathcal{D}_2u_1}{\mathcal{D}t}\right)_{\ast\ast L}, ~
\left(\frac{\mathcal{D}_2p_{11}}{\mathcal{D}t}\right)_{\ast L}=\left(\frac{\mathcal{D}_2p_{11}}{\mathcal{D}t}\right)_{\ast\ast L},
\]
where $\frac{\mathcal{D}_2}{\mathcal{D}t}:=\frac{\partial}{\partial t}+\lambda_2\frac{\partial}{\partial x}$
and $\lambda_2=u_{1,\ast}-\frac{c_{\ast L}}{\sqrt{3}}$.
By \eqref{u1_x} and \eqref{p11_x}, it follows that
\begin{align*}
&\left(\frac{\mathcal{D}u_1}{\mathcal{D}t}\right)_{\ast L}-\frac{\lambda_2-u_{1,\ast}}{3p_{11,\ast}}\left(\frac{\mathcal{D}p_{11}}{\mathcal{D}t}\right)_{\ast L}
=\left(\frac{\mathcal{D}u_1}{\mathcal{D}t}\right)_{\ast\ast L}-\frac{\lambda_2-u_{1,\ast}}{3p_{11,\ast}}\left(\frac{\mathcal{D}p_{11}}{\mathcal{D}t}\right)_{\ast\ast L}, \\
&\left(\frac{\mathcal{D}p_{11}}{\mathcal{D}t}\right)_{\ast L}-\rho_{\ast L}(\lambda_2-u_{1,\ast})\left[\left(\frac{\mathcal{D}u_1}{\mathcal{D}t}\right)_{\ast L}+\frac{1}{2}W_x(0)\right]
=\left(\frac{\mathcal{D}p_{11}}{\mathcal{D}t}\right)_{\ast\ast L}-\rho_{\ast L}(\lambda_2-u_{1,\ast})\left[\left(\frac{\mathcal{D}u_1}{\mathcal{D}t}\right)_{\ast\ast L}+\frac{1}{2}W_x(0)\right].
\end{align*}
The above two equations can imply \eqref{Du1_p11_astK=ast2K} for $K=L$, and the proof for $K=R$ is similar. Besides, both $u_1$ and $p_{11}$ are also the generalized Riemann invariants for the contact discontinuity, thus
\begin{equation}\label{Du1_p11_ast2L=ast2R}
\left(\frac{\mathcal{D}u_1}{\mathcal{D}t}\right)_{\ast\ast L}=\left(\frac{\mathcal{D}u_1}{\mathcal{D}t}\right)_{\ast\ast R}, ~
\left(\frac{\mathcal{D}p_{11}}{\mathcal{D}t}\right)_{\ast\ast L}=\left(\frac{\mathcal{D}p_{11}}{\mathcal{D}t}\right)_{\ast\ast R}.
\end{equation}
In virtue of \eqref{Du1_p11_astK=ast2K} and \eqref{Du1_p11_ast2L=ast2R}, one finds that both the limiting values of $\frac{\mathcal{D}u_1}{\mathcal{D}t}$ and $\frac{\mathcal{D}p_{11}}{\mathcal{D}t}$ do not change in the whole domain between the 1-wave and the 6-wave.
Furthermore, combining \eqref{Du1_p11_astK=ast2K} with \eqref{u1_p11_t}, one obtains 
\begin{equation*}\label{u1_p11_t_astK=ast2K}
\left(\frac{\partial u_1}{\partial t}\right)_{\ast K}=\left(\frac{\partial u_1}{\partial t}\right)_{\ast\ast K}, ~
\left(\frac{\partial p_{11}}{\partial t}\right)_{\ast K}=\left(\frac{\partial p_{11}}{\partial t}\right)_{\ast\ast K},~
K=L,R,
\end{equation*}
which means that both the limiting values of $\frac{\partial u_1}{\partial t}$ and $\frac{\partial p_{11}}{\partial t}$ do not change across the 2-shear wave and the 5-shear wave.
\end{remark}

%
%
%

Up to now, we have established the first equation \eqref{eq1:D_u1_p11_Dtast} that the limiting values $\left(\frac{\mathcal{D}u_1}{\mathcal{D}t}\right)_\ast$ and $\left(\frac{\mathcal{D}p_{11}}{\mathcal{D}t}\right)_\ast$ satisfy. To obtain their values, another equation is necessary by resolving the 6-shock wave. For the shock wavs, one has
\[
\sigma=\frac{\rho u_1-\bar{\rho}\bar{u}_1}{\rho-\overline{\rho}}, \quad u_1=\overline{u}_1\pm\Phi(p_{11};\overline{p}_{11},\overline{\rho}), \quad \rho=H(p_{11};\overline{p}_{11},\overline{\rho}),
\]
where $(\rho,u_1,p_{11})$ and $(\overline{\rho},\overline{u}_1,\overline{p}_{11})$ are the states ahead and behind the shock wave with the speed $\sigma$, and in the second equality, the "$+$" is for 6-shock wave and "$-$" is for 1-shock wave. According to the discussion on the exact Riemann solver, one knows that
\begin{align*}
&\Phi(p_{11};\overline{p}_{11},\overline{\rho})=(p_{11}-\overline{p}_{11})\left[\frac{1}{\overline{\rho}(2p_{11}+\overline{p}_{11})}\right]^{\frac{1}{2}}, \\
&H(p_{11};\overline{p}_{11},\overline{\rho})=\frac{2p_{11}+\overline{p}_{11}}{p_{11}+2\overline{p}_{11}}\overline{\rho}.
\end{align*}
Moreover, along the shock waves, there holds
\begin{equation}\label{D_sigema_gamma}
\frac{\mathcal{D}_{\sigma}\Gamma}{\mathcal{D}t}=0,
\end{equation}
where $\frac{\mathcal{D}_{\sigma}}{\mathcal{D}t}:=\frac{\partial}{\partial t}+\sigma\frac{\partial}{\partial x}$ and
$
\Gamma=u_1-[\overline{u}_1\pm\Phi(p_{11};\overline{p}_{11},\overline{\rho})]
$ or
$
\Gamma=\rho-H(p_{11};\overline{p}_{11},\overline{\rho}).
$
Taking $\Gamma=u_1-[\overline{u}_1\pm\Phi(p_{11};\overline{p}_{11},\overline{\rho})]$ in \eqref{D_sigema_gamma} and utilizing \eqref{eq:rho}, \eqref{eq:u1}, \eqref{eq:p11}, \eqref{u1_x}, \eqref{p11_x} and \eqref{u1_p11_t} yields
\begin{align*}
&\left[1\mp\rho(u_1-\sigma)\Phi_1\right]\frac{\mathcal{D}u_1}{\mathcal{D}t}+\left[\frac{u_1-\sigma}{3p_{11}}\mp\Phi_1\right]\frac{\mathcal{D}p_{11}}{\mathcal{D}t} \\
=&\pm\Phi_3(\sigma-\overline{u}_1)\overline{\rho}'+[\sigma-\overline{u}_1\mp3\overline{p}_{11}\Phi_2\mp\overline{\rho}\Phi_3]\overline{u}_1'+
\left[-\frac{1}{\overline{\rho}}\pm(\sigma-\overline{u}_1)\Phi_2\right]\overline{p}_{11}'-
\frac{1}{2}\left[\mp\rho(u_1-\sigma)\Phi_1+1\right]W_x,
\end{align*}
where
\begin{equation}\label{Phi_123}
\begin{cases}
\Phi_1=\frac{\partial\Phi}{\partial p_{11}}=\frac{p_{11}+2\overline{p}_{11}}{2p_{11}+\overline{p}_{11}}\left[\frac{1}{\overline{\rho}(2p_{11}+\overline{p}_{11})}\right]^{\frac{1}{2}}, \\
\Phi_2=\frac{\partial\Phi}{\partial\overline{p}_{11}}=-\frac{5p_{11}+\overline{p}_{11}}{4p_{11}+2\overline{p}_{11}}\left[\frac{1}{\overline{\rho}(2p_{11}+\overline{p}_{11})}\right]^{\frac{1}{2}}, \\
\Phi_3=\frac{\partial\Phi}{\partial\overline{\rho}}=\frac{\overline{p}_{11}-p_{11}}{2\overline{\rho}}\left[\frac{1}{\overline{\rho}(2p_{11}+\overline{p}_{11})}\right]^{\frac{1}{2}}.
\end{cases}
\end{equation}
Specifically, for the 6-shock wave, one has the following result.
\begin{lemma}[Resolution of the 6-shock wave]
Assume that the 6-shock wave associated with $u_1+c$ moves to the right. The limiting values $\left(\frac{\mathcal{D}u_1}{\mathcal{D}t}\right)_\ast$ and $\left(\frac{\mathcal{D}p_{11}}{\mathcal{D}t}\right)_\ast$ satisfy
\begin{equation}\label{eq2:D_u1_p11_Dtast}
a_2\left(\frac{\mathcal{D}u_1}{\mathcal{D}t}\right)_\ast+b_2\left(\frac{\mathcal{D}p_{11}}{\mathcal{D}t}\right)_\ast=d_2,
\end{equation}
where
\begin{equation}\label{a2_b2_d2}
\begin{cases}
a_2=1-\rho_{\ast R}(u_{1,\ast}-\sigma_R)\cdot\Phi_1^R, \\
b_2=\frac{u_{1,\ast}-\sigma_R}{3p_{11,\ast}}-\Phi_1^R, \\
d_2=L_{\rho}^R\cdot\rho_R'+L_{u_1}^R\cdot u_{1,R}'+L_{p_{11}}^R\cdot p_{11,R}'-\frac{1}{2}W_x(0)\cdot L_{W}^R,
\end{cases}
\end{equation}
and
\begin{align*}
&L_{\rho}^R=(\sigma_R-u_{1,R})\cdot\Phi_3^R, ~ L_{u_1}^R=\sigma_R-u_{1,R}-3p_{11,R}\cdot\Phi_2^R-\rho_R\cdot\Phi_3^R, \\
&L_{p_{11}}^R=-\frac{1}{\rho_R}+(\sigma_R-u_{1,R})\cdot\Phi_2^R, ~ L_{W}^R=-\rho_{\ast R}(u_{1,\ast}-\sigma_R)\cdot\Phi_1^R+1
\end{align*}
with $\Phi_i^R:=\Phi_i(p_{11,\ast};p_{11,R},\rho_R)$ {\rm($i=1,2,3$)}.
\end{lemma}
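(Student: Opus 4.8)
The plan is to obtain \eqref{eq2:D_u1_p11_Dtast} by specializing the general shock relation derived just above the lemma (the displayed \texttt{align*} environment following \eqref{D_sigema_gamma}) to the rightmost wave. Since the 6-shock is the last elementary wave of the configuration in Figure~\ref{localwave_GRP}, everything to its right is governed, for $0\le t\ll 1$, by the unperturbed right initial data $\mathbf{V}_R+x\mathbf{V}_R'$ of \eqref{GRP2}; hence the ``ahead'' (pre-shock) state and its spatial slopes appearing in the general relation are exactly $(\overline{\rho},\overline{u}_1,\overline{p}_{11})=(\rho_R,u_{1,R},p_{11,R})$ and $(\overline{\rho}',\overline{u}_1',\overline{p}_{11}')=(\rho_R',u_{1,R}',p_{11,R}')$, all evaluated at the singularity $(x,t)=(0,0)$, while the ``behind'' (post-shock) state is the star-right state $(\rho_{\ast R},u_{1,\ast},p_{11,\ast})$ of the exact Riemann solver with $\rho_{\ast R}$ from \eqref{rho_astR} and shock speed $\sigma=\sigma_R$ from \eqref{sigma_R}. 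Accordingly, $\Phi$ and its partials in \eqref{Phi_123} are evaluated at $(p_{11};\overline{p}_{11},\overline{\rho})=(p_{11,\ast};p_{11,R},\rho_R)$, producing the quantities $\Phi_i^R:=\Phi_i(p_{11,\ast};p_{11,R},\rho_R)$.

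Next I would fix the sign in the ``$\pm$''/``$\mp$'' bookkeeping: a 6-shock corresponds to the ``$+$'' branch (the ``$+$ for 6-shock'' convention stated before the general relation), so ``$\mp$'' becomes ``$-$''. The material derivatives $\frac{\mathcal{D}u_1}{\mathcal{D}t}$, $\frac{\mathcal{D}p_{11}}{\mathcal{D}t}$ on the left-hand side of the general relation are those of the post-shock state, i.e.\ $\left(\frac{\mathcal{D}u_1}{\mathcal{D}t}\right)_\ast$ and $\left(\frac{\mathcal{D}p_{11}}{\mathcal{D}t}\right)_\ast$, which by the Remark following \eqref{eq1:D_u1_p11_Dtast} are well defined on the whole region between the $1$- and $6$-waves. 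Substituting all of the above and replacing $W_x$ by $W_x(0)$ (the shock emanates from the origin), the coefficients of $\left(\frac{\mathcal{D}u_1}{\mathcal{D}t}\right)_\ast$ and $\left(\frac{\mathcal{D}p_{11}}{\mathcal{D}t}\right)_\ast$ become $a_2=1-\rho_{\ast R}(u_{1,\ast}-\sigma_R)\Phi_1^R$ and $b_2=\frac{u_{1,\ast}-\sigma_R}{3p_{11,\ast}}-\Phi_1^R$, and the right-hand side is the linear combination of $\rho_R',u_{1,R}',p_{11,R}'$ and $W_x(0)$ with coefficients $L^R_\rho,L^R_{u_1},L^R_{p_{11}}$ and $-\tfrac12 L^R_W$ as listed; this is precisely \eqref{a2_b2_d2}.

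For completeness I would note that the reductions $\partial_x u_1=-\frac{1}{3p_{11}}\frac{\mathcal{D}p_{11}}{\mathcal{D}t}$ and $\partial_x p_{11}=-\rho\bigl(\frac{\mathcal{D}u_1}{\mathcal{D}t}+\frac12 W_x\bigr)$ from \eqref{u1_x}--\eqref{p11_x} (used to pass from $\frac{\mathcal{D}_\sigma}{\mathcal{D}t}$ to $\frac{\mathcal{D}}{\mathcal{D}t}$ on the post-shock side) and the analogous use of \eqref{eq:rho}, \eqref{eq:u1}, \eqref{eq:p11} on the pre-shock side are already baked into the general relation, so no further computation is needed beyond matching terms. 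I do not expect a genuine obstacle: the only care required is the sign bookkeeping of ``$\pm$''/``$\mp$'' together with the identification of ``ahead'' with $\mathbf{V}_R$ and ``behind'' with $\mathbf{V}_{\ast R}$, and the observation --- specific to the $6$-wave being extreme --- that the pre-shock slopes are literally the prescribed data slopes $\rho_R',u_{1,R}',p_{11,R}'$ rather than a transported version of them.
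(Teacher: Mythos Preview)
Your proposal is correct and follows exactly the paper's approach: the lemma is obtained by specializing the general $\pm$-shock relation (derived from \eqref{D_sigema_gamma} with $\Gamma=u_1-[\overline{u}_1\pm\Phi]$ and the substitutions \eqref{eq:rho}, \eqref{eq:u1}, \eqref{eq:p11}, \eqref{u1_x}, \eqref{p11_x}) to the 6-shock, taking the ``$+$'' branch, identifying $(\rho,u_1,p_{11})=(\rho_{\ast R},u_{1,\ast},p_{11,\ast})$, $(\overline{\rho},\overline{u}_1,\overline{p}_{11})=(\rho_R,u_{1,R},p_{11,R})$, $\sigma=\sigma_R$, and reading off the coefficients. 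The paper itself offers no further argument beyond the sentence ``Specifically, for the 6-shock wave, one has the following result,'' so your write-up is already more explicit than the original.
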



\begin{remark}
We now prove that
\begin{equation}\label{rho_t_astK=ast2K}
\left(\frac{\partial\rho}{\partial t}\right)_{\ast K}=\left(\frac{\partial\rho}{\partial t}\right)_{\ast\ast K}, ~ K=L,R.
\end{equation}

(i) If $u_{1,\ast}=0$, then by \eqref{eq:rho} and \eqref{u1_x}, one has $\frac{\partial\rho}{\partial t}=\frac{\rho}{3p_{11}}\frac{\mathcal{D}p_{11}}{\mathcal{D}t}$, and thus \eqref{rho_t_astK=ast2K} may be concluded from \eqref{Du1_p11_astK=ast2K}.

(ii) If $u_{1,\ast}\neq0$, then since $\rho$ is a generalized Riemann invariant for the 2-shear wave, one has
\[
u_{1,\ast}\left(\frac{\mathcal{D}_2\rho}{\mathcal{D}t}\right)_{\ast L}=u_{1,\ast}\left(\frac{\mathcal{D}_2\rho}{\mathcal{D}t}\right)_{\ast\ast L},
\]
thus by using \eqref{eq:rho}, \eqref{u1_x} and \eqref{Du1_p11_astK=ast2K}, one can have  \eqref{rho_t_astK=ast2K} for $K=L$.
The proof for $K=R$ is similar, because $\rho$ is also a generalized  Riemann invariant for the 5-shear wave.

In virtue of \eqref{rho_t_astK=ast2K}, henceforth, we will not distinguish $\left(\frac{\partial\rho}{\partial t}\right)_{\ast\ast K}$ from $\left(\frac{\partial\rho}{\partial t}\right)_{\ast K}$ ($K=L,R$) any more.
\end{remark}

\begin{theorem}[Computing $(\frac{\partial u_1}{\partial t})_{\ast K}$, $(\frac{\partial p_{11}}{\partial t})_{\ast K}$ and $(\frac{\partial\rho}{\partial t})_{\ast K}$ ($K=L,R$)]\label{theorem:u1_p11_rho_t_astK}
In the domain between the 1-rarefaction wave and the 6-shock wave, one has
\begin{equation}
\left(\frac{\mathcal{D}u_1}{\mathcal{D}t}\right)_\ast=\frac{d_1b_2-d_2b_1}{a_1b_2-a_2b_1}, \ \
 \left(\frac{\mathcal{D}p_{11}}{\mathcal{D}t}\right)_\ast=\frac{d_1a_2-d_2a_1}{a_2b_1-a_1b_2},
\label{Du1_Dp11_Dt_ast}
\end{equation}
where $a_1,b_1,d_1$ are given in \eqref{a1_b1_d1}  and $a_2,b_2,d_2$ are given in \eqref{a2_b2_d2}.
Then by \eqref{u1_p11_t}, one gets
\begin{equation}
\begin{split}
&\left(\frac{\partial u_1}{\partial t}\right)_{\ast K}=\frac{u_{1,\ast}}{3p_{11,\ast}}\left(\frac{\mathcal{D}p_{11}}{\mathcal{D}t}\right)_\ast+\left(\frac{\mathcal{D}u_1}{\mathcal{D}t}\right)_\ast, \\
&\left(\frac{\partial p_{11}}{\partial t}\right)_{\ast K}=\left(\frac{\mathcal{D}p_{11}}{\mathcal{D}t}\right)_\ast+\rho_{\ast K}u_{1,\ast}\left(\frac{\mathcal{D}u_1}{\mathcal{D}t}\right)_\ast+\frac{1}{2}\rho_{\ast K}u_{1,\ast}W_x(0)
\end{split}
\label{u1_p11_t_astK}
\end{equation}
with $K=L,R$. Moreover,
\begin{equation}\label{rho_t_astL}
\left(\frac{\partial\rho}{\partial t}\right)_{\ast L}=\frac{1}{c_{\ast L}^2}\left[\left(\frac{\partial p_{11}}{\partial t}\right)_{\ast L}+\rho_L^2S_{1,L}'\rho_{\ast L}u_{1,\ast}\frac{c_{\ast L}^3}{c_L^3}\right],
\end{equation}
and ${\rm \left(\frac{\partial\rho}{\partial t}\right)_{\ast R}}$ satisfies
\begin{equation}\label{rho_t_astR}
g_{\rho}^R\cdot\left(\frac{\partial\rho}{\partial t}\right)_{\ast R}+g_{u_1}^R\cdot\left(\frac{\mathcal{D}u_1}{\mathcal{D}t}\right)_\ast+g_{p_{11}}^R\cdot\left(\frac{\mathcal{D}p_{11}}{\mathcal{D}t}\right)_\ast=u_{1,\ast}\cdot f_R,
\end{equation}
where
\begin{align*}
&g_{\rho}^R=u_{1,\ast}-\sigma_R, ~ g_{u_1}^R=\rho_{\ast R}u_{1,\ast}(\sigma_R-u_{1,\ast})\cdot H_1^R, ~ g_{p_{11}}^R=\frac{\sigma_R}{c_{\ast R}^2}-u_{1,\ast}\cdot H_1^R, \\
&f_R=(\sigma_R-u_{1,R})\cdot H_2^R\cdot p_{11,R}'+(\sigma_R-u_{1,R})\cdot H_3^R\cdot\rho_{R}'-\rho_R(H_3^R+c_R^2\cdot H_2^R)\cdot u_{1,R}'-\frac{1}{2}\rho_{\ast R}(\sigma_R-u_{1,\ast})W_x(0)\cdot H_1^R
\end{align*}
with $H_i^R=H_i(p_{11,\ast};p_{11,R},\rho_R)$ {\rm ($i=1,2,3$)} and
\begin{equation}\label{H_123}
H_1=\frac{\partial H}{\partial p_{11}}=\frac{3\bar{\rho}\bar{p}_{11}}{(p_{11}+2\overline{p}_{11})^2}, \quad H_2=\frac{\partial H}{\partial\overline{p}_{11}}=-\frac{3\overline{\rho}p_{11}}{(p_{11}+2\overline{p}_{11})^2}, \quad H_3=\frac{\partial H}{\partial\overline{\rho}}=\frac{2p_{11}+\overline{p}_{11}}{p_{11}+2\overline{p}_{11}}.
\end{equation}
\end{theorem}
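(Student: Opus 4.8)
The statement breaks into three parts, which I would treat in turn.

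First, the material derivatives \eqref{Du1_Dp11_Dt_ast}: equations \eqref{eq1:D_u1_p11_Dtast} and \eqref{eq2:D_u1_p11_Dtast}, already obtained by resolving the 1-rarefaction wave and the 6-shock wave, form a linear $2\times2$ system for the pair $\big((\tfrac{\mathcal{D}u_1}{\mathcal{D}t})_\ast,(\tfrac{\mathcal{D}p_{11}}{\mathcal{D}t})_\ast\big)$ with rows $(a_1,b_1)$ and $(a_2,b_2)$. I would first record that its determinant $a_1b_2-a_2b_1$ does not vanish under the assumed wave ordering (for instance $\sigma_R>u_{1,\ast}$ and $\Phi_1^R>0$ give $a_2>0>b_2$, while $a_1,b_1$ are positive), so that Cramer's rule yields \eqref{Du1_Dp11_Dt_ast}. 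The Eulerian time derivatives \eqref{u1_p11_t_astK} then follow by merely substituting $u_1=u_{1,\ast}$, $p_{11}=p_{11,\ast}$, $\rho=\rho_{\ast K}$ and $W_x=W_x(0)$ into the identities \eqref{u1_p11_t}; note $(\partial_tu_1)_{\ast K}$ comes out $K$-independent, as it must by the remark preceding the theorem, whereas $(\partial_tp_{11})_{\ast K}$ retains the factor $\rho_{\ast K}$.

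Second, the left density \eqref{rho_t_astL}: since $S_1=p_{11}/\rho^3$ is a generalized Riemann invariant of the 1-rarefaction wave, the differential relation \eqref{dp11} gives $\partial_t\rho=\tfrac{1}{c^2}\big(\partial_tp_{11}-\rho^3\partial_tS_1\big)$ throughout $\mathbf{V}_{\ast L}$, with $c=c_{\ast L}$ and $\rho=\rho_{\ast L}$. It remains only to evaluate $(\partial_tS_1)_{\ast L}$, which I would take from Lemma~\ref{S1_t_psi1_t_0beta}: putting $\beta=\beta_{\ast L}=u_{1,\ast}-c_{\ast L}$ in \eqref{rho^2_S1_t_0_beta}, so that $\beta_{\ast L}+c(0,\beta_{\ast L})=u_{1,\ast}$ and $c(0,\beta_{\ast L})=c_{\ast L}$, gives $\rho_{\ast L}^2(\partial_tS_1)_{\ast L}=-\tfrac{\rho_L^2S_{1,L}'}{c_L^3}u_{1,\ast}c_{\ast L}^3$. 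Inserting this together with the $(\partial_tp_{11})_{\ast L}$ of the first part produces \eqref{rho_t_astL}.

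Third, the right density \eqref{rho_t_astR}, which is the substantial step. The 6-shock carries the Rankine--Hugoniot relation $\rho=H(p_{11};\bar p_{11},\bar\rho)$ with the bar state equal to the right initial data; differentiating it along the shock trajectory gives $\tfrac{\mathcal{D}_\sigma}{\mathcal{D}t}\big[\rho-H(p_{11};\bar p_{11},\bar\rho)\big]=0$, i.e.
\begin{equation*}
\frac{\mathcal{D}_\sigma\rho}{\mathcal{D}t}=H_1^R\frac{\mathcal{D}_\sigma p_{11}}{\mathcal{D}t}+H_2^R\frac{\mathcal{D}_\sigma\bar p_{11}}{\mathcal{D}t}+H_3^R\frac{\mathcal{D}_\sigma\bar\rho}{\mathcal{D}t},
\end{equation*}
with $H_i^R$ as in \eqref{H_123}. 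For the bar terms I would use that their $x$-derivatives at $(0,0{+})$ are the prescribed slopes $\rho_R',p_{11,R}',u_{1,R}'$ while their $t$-derivatives are fixed by \eqref{eq:rho} and \eqref{eq:p11}, obtaining $\tfrac{\mathcal{D}_\sigma\bar\rho}{\mathcal{D}t}=(\sigma_R-u_{1,R})\rho_R'-\rho_Ru_{1,R}'$ and $\tfrac{\mathcal{D}_\sigma\bar p_{11}}{\mathcal{D}t}=(\sigma_R-u_{1,R})p_{11,R}'-3p_{11,R}u_{1,R}'$. For the star-region terms I would write $\tfrac{\mathcal{D}_\sigma}{\mathcal{D}t}=\tfrac{\mathcal{D}}{\mathcal{D}t}+(\sigma_R-u_{1,\ast})\partial_x$ and eliminate the spatial derivatives using \eqref{p11_x} for $\partial_xp_{11}$ and \eqref{eq:rho} together with \eqref{u1_x} to express $\partial_x\rho$ through $\partial_t\rho$ and $\tfrac{\mathcal{D}p_{11}}{\mathcal{D}t}$. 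Multiplying the whole identity by $u_{1,\ast}$ cancels the $1/u_{1,\ast}$ introduced in that last step, and regrouping with $\rho_Rc_R^2=3p_{11,R}$ and $c_{\ast R}^2=3p_{11,\ast}/\rho_{\ast R}$ casts it exactly into the form \eqref{rho_t_astR} with the stated $g_\rho^R,g_{u_1}^R,g_{p_{11}}^R,f_R$; being polynomial in $u_{1,\ast}$, the relation remains valid when $u_{1,\ast}=0$.

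The delicate part is this third step: one must carry the potential term $W_x(0)$ correctly through the jump relation, keep the $1/u_{1,\ast}$ factor under control so that \eqref{rho_t_astR} is uniformly valid, and match the several partial derivatives of $H$ against the claimed coefficients; one should also pin down the ``ahead/behind'' convention for the 6-shock so that $H$ is differentiated with respect to the correct (right) bar state. Everything else is substitution into results already established above.
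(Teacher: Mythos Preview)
Your plan is correct and follows essentially the same route as the paper: solve the $2\times2$ system \eqref{eq1:D_u1_p11_Dtast}--\eqref{eq2:D_u1_p11_Dtast} by Cramer's rule, read off \eqref{u1_p11_t_astK} from \eqref{u1_p11_t}, obtain \eqref{rho_t_astL} from \eqref{dp11} together with \eqref{rho^2_S1_t_0_beta} at $\beta=\beta_{\ast L}$, and derive \eqref{rho_t_astR} by differentiating $\rho=H(p_{11};\bar p_{11},\bar\rho)$ along the 6-shock, multiplying by $u_{1,\ast}$, and eliminating $\partial_x\rho,\partial_xp_{11}$ via \eqref{eq:rho}, \eqref{eq:p11}, \eqref{u1_x}, \eqref{p11_x}. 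Your additional remarks---checking $a_1b_2-a_2b_1\neq0$, noting that the multiplication by $u_{1,\ast}$ clears the apparent singularity so \eqref{rho_t_astR} is polynomial in $u_{1,\ast}$, and flagging the ahead/behind convention---are not in the paper's proof but are helpful sanity checks rather than a different method.
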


\begin{proof}
In virtue of \eqref{eq1:D_u1_p11_Dtast} and \eqref{eq2:D_u1_p11_Dtast}, the proofs of \eqref{Du1_Dp11_Dt_ast} and \eqref{u1_p11_t_astK} are direct.
At the 1-rarefaction side, due to \eqref{dp11}, one has
\[
\left(\frac{\partial p_{11}}{\partial t}\right)_{\ast L}=c_{\ast L}^2\left(\frac{\partial\rho}{\partial t}\right)_{\ast L}+\rho_{\ast L}^3\left(\frac{\partial S_1}{\partial t}\right)_{\ast L}.
\]
Combing it with the result obtained by taking $\beta=\beta_{\ast L}$ in \eqref{rho^2_S1_t_0_beta} implies \eqref{rho_t_astL}.

At the shock wave side, by taking $\Gamma=\rho-H(p_{11};\overline{p}_{11},\overline{\rho})$ in \eqref{D_sigema_gamma}, one gets
\begin{equation}\label{D_sigma_rho}
\frac{\partial\rho}{\partial t}+\sigma\frac{\partial\rho}{\partial x}=H_1\cdot\left(\frac{\partial p_{11}}{\partial t}+\sigma\frac{\partial p_{11}}{\partial x}\right)+H_2\cdot\left(\frac{\partial\overline{p}_{11}}{\partial t}+\sigma\frac{\partial\overline{p}_{11}}{\partial x}\right)+H_3\cdot\left(\frac{\partial\overline{\rho}}{\partial t}+\sigma\frac{\partial\overline{\rho}}{\partial x}\right).
\end{equation}
Then multiplying both sides of \eqref{D_sigma_rho} by $u_1$, and utilizing \eqref{eq:rho}, \eqref{eq:p11}, \eqref{u1_x} and \eqref{p11_x}, one obtains
\begin{align}
&(u_1-\sigma)\frac{\partial\rho}{\partial t}+\rho u_1(\sigma-u_1)H_1\frac{\mathcal{D}u_1}{\mathcal{D}t}+\left(\frac{\sigma}{c^2}-u_1 H_1\right)\frac{\mathcal{D}p_{11}}{\mathcal{D}t} \notag \\
=&u_1\cdot\left[(\sigma-\overline{u}_1)H_2\cdot\overline{p}_{11}'+(\sigma-\overline{u}_1)H_3\cdot\overline{\rho}'-\overline{\rho}(H_3+\overline{c}^2H_2)\cdot
\overline{u}_1'-\frac{1}{2}\rho(\sigma-u_1)W_xH_1\right], \label{shock_rho_t}
\end{align}
where $\overline{c}=\sqrt{\frac{3\overline{p}_{11}}{\overline{\rho}}}$. Specifically, for the 6-shock wave, \eqref{shock_rho_t} implies \eqref{rho_t_astR}.
\end{proof}

\begin{remark}
For other possible cases, computing $(\frac{\partial u_1}{\partial t})_{\ast K}$, $(\frac{\partial p_{11}}{\partial t})_{\ast K}$ and $(\frac{\partial\rho}{\partial t})_{\ast K}$ {\rm($K=L,R$)} is presented in \ref{u1_p11_rho_t_all_cases}.
\end{remark}

\subsubsection{Computing $(\frac{\partial u_2}{\partial t})_{\ast L}$, $(\frac{\partial p_{12}}{\partial t})_{\ast L}$ and $(\frac{\partial p_{22}}{\partial t})_{\ast L}$}\label{computing omega_astL}
This section gives the values of $(\frac{\partial u_2}{\partial t})_{\ast L}$, $(\frac{\partial p_{12}}{\partial t})_{\ast L}$ and $(\frac{\partial p_{22}}{\partial t})_{\ast L}$ by utilizing
the remaining three generalized Riemann invariants associated with the 1-rarefaction wave
\[
\psi_2:=u_2+\frac{\sqrt{3}p_{12}}{\sqrt{\rho p_{11}}}, \quad \psi_3:=\frac{p_{12}}{\rho^3}, \quad S_2:=\frac{\det(\mathbf{p})}{\rho^4}.
\]
From those,  one has the following three total differentials
\begin{align}
&\text{d}\psi_2=\text{d}u_2+\frac{\sqrt{3}}{\sqrt{\rho p_{11}}}\text{d}p_{12}-\frac{\sqrt{3}p_{12}}{2\rho\sqrt{\rho p_{11}}}\text{d}\rho-\frac{\sqrt{3}p_{12}}{2p_{11}\sqrt{\rho p_{11}}}\text{d}p_{11}, \label{dpsi2} \\
&\text{d}\psi_3=\frac{1}{\rho^3}\text{d}p_{12}-\frac{3p_{12}}{\rho^4}\text{d}\rho, \label{dpsi3} \\
&\text{d}S_2=\frac{p_{11}}{\rho^4}\text{d}p_{22}+\frac{p_{22}}{\rho^4}\text{d}p_{11}-\frac{2p_{12}}{\rho^4}\text{d}p_{12}-\frac{4\det(\mathbf{p})}{\rho^5}\text{d}\rho. \label{dS2}
\end{align}
Combing them with \eqref{eq:rho} and \eqref{eq:u2}-\eqref{eq:p22} gives
\[
\frac{\partial\psi_2}{\partial t}+(u_1+c)\frac{\partial\psi_2}{\partial x}=\Pi_2, \quad
\frac{\partial\psi_3}{\partial t}+(u_1+c)\frac{\partial\psi_3}{\partial x}=\Pi_3, \quad
\frac{\partial S_2}{\partial t}+u_1\frac{\partial S_2}{\partial x}=0,
\]
where
\begin{align}
&\Pi_2:=\frac{2}{\rho}\frac{\partial p_{12}}{\partial x}-\frac{3p_{12}}{2\rho^2}\frac{\partial\rho}{\partial x}-\frac{3p_{12}}{2\rho p_{11}}\frac{\partial p_{11}}{\partial x}, \label{Pi2} \\
&\Pi_3:=\frac{c}{\rho^3}\frac{\partial p_{12}}{\partial x}+\frac{p_{12}}{\rho^3}\frac{\partial u_1}{\partial x}-\frac{p_{11}}{\rho^3}\frac{\partial u_2}{\partial x}-\frac{3p_{12}c}{\rho^4}\frac{\partial\rho}{\partial x}. \label{Pi3}
\end{align}
Similar to the derivation of \eqref{psi1_alpha_0beta}, for $\psi_2$, one can also obtain
\begin{equation}\label{psi2_alpha_0beta}
\frac{\partial\psi_2}{\partial\alpha}(0,\beta)=\frac{\partial\psi_2}{\partial\alpha}(0,\beta_L)+\int_{\beta_L}^{\beta}\frac{1}{2c(0,\eta)}\frac{\partial t}{\partial\alpha}(0,\eta)\cdot\Pi_2(0,\eta)\text{d}\eta,
\end{equation}
where
\begin{equation*}\label{psi2_alpha_0betaL}
\frac{\partial\psi_2}{\partial\alpha}(0,\beta_L)=\frac{\partial t}{\partial\alpha}(0,\beta_L)\cdot(\Pi_{2,L}-2c_L\psi_{2,L}'),
\end{equation*}
with $\Pi_{2,L}$ and $\psi_{2,L}'$ being given by \eqref{Pi2} and \eqref{dpsi2}, respectively.
Similar to \eqref{psi1_t_0beta}, one has
\begin{equation}\label{psi2_t_0beta}
\frac{\partial\psi_2}{\partial t}(0,\beta)=-\left(\frac{u_1-c}{2c}\right)(0,\beta)\cdot\Pi_2(0,\beta)+\left(\frac{u_1+c}{2c}\right)(0,\beta)\cdot\left(\frac{\partial t}{\partial\alpha}\right)^{-1}(0,\beta)\cdot\frac{\partial\psi_2}{\partial\alpha}(0,\beta).
\end{equation}
Note that if one wants to get the explicit expression of $\frac{\partial\psi_2}{\partial t}(0,\beta)$ in a way similar to the derivation of \eqref{psi1_t_0_beta} in Lemma \ref{S1_t_psi1_t_0beta}, one has to first obtain the explicit expression of $\Pi_2(0,\beta)$ so as to exactly obtain the integral in \eqref{psi2_alpha_0beta}.
Such task seems to be difficultly completed since it is hard to explicitly represent the spatial derivatives of the primitive variables with the characteristic coordinate $(\alpha,\beta)$.
In practice, it may not be necessary to exactly get the integral in \eqref{psi2_alpha_0beta}
and a second-order numerical approximation to the integral is enough.
Specifically, applying the trapezoidal rule in \eqref{psi2_alpha_0beta}  obtains
\[
\frac{\partial\psi_2}{\partial\alpha}(0,\beta)=\frac{\partial\psi_2}{\partial\alpha}(0,\beta_L)+
\frac{\beta-\beta_L}{2}\left[\frac{1}{4c(0,\beta)}\Pi_2(0,\beta)+\frac{1}{4c_L}\Pi_{2,L}\right].
\]
Substituting it into \eqref{psi2_t_0beta}  gets
\begin{equation}\label{psi2_t_0beta_2}
\frac{\partial\psi_2}{\partial t}(0,\beta)=\left[\frac{\partial\psi_2}{\partial\alpha}(0,\beta_L)+\frac{\Pi_{2,L}}{8c_L}(\beta-\beta_L)\right]\psi_{1,L}
+\frac{1}{2c(0,\beta)}\left(\frac{\beta-\beta_L}{4}\psi_{1,L}-\beta\right)\cdot\Pi_2(0,\beta).
\end{equation}
Moreover, by using \eqref{dp11}, \eqref{Pi2} is rewritten into
\[
\Pi_2=\frac{2}{\rho}\frac{\partial p_{12}}{\partial x}-\frac{2p_{12}}{\rho p_{11}}\frac{\partial p_{11}}{\partial x}+\frac{\rho^2p_{12}}{2p_{11}}\frac{\partial S_1}{\partial x}.
\]
Substituting \eqref{p11_x} and \eqref{p12_x} into above expression yields
\begin{equation}\label{Pi2_2}
\Pi_2=-2\frac{\mathcal{D}u_2}{\mathcal{D}t}+\frac{2p_{12}}{p_{11}}\left(\frac{\mathcal{D}u_1}{\mathcal{D}t}+\frac{1}{2}W_x\right)+\frac{\rho^2p_{12}}{2p_{11}}\frac{\partial S_1}{\partial x}.
\end{equation}

\begin{lemma}[Resolution of the 1-rarefaction wave]
Assume that the 1-rarefaction wave associated with $u_1-c$ moves to the left. Then $\frac{\mathcal{D}u_2}{\mathcal{D}t}(0,\beta)$ and $\frac{\mathcal{D}p_{12}}{\mathcal{D}t}(0,\beta)$ satisfy
\begin{equation}\label{eq1:u2_t_p12_t}
\widetilde{a}_{3,L}(0,\beta)\cdot\frac{\mathcal{D}u_2}{\mathcal{D}t}(0,\beta)+\widetilde{b}_{3,L}(0,\beta)\cdot
\frac{\mathcal{D}p_{12}}{\mathcal{D}t}(0,\beta)=\widetilde{d}_{3,L}(0,\beta),
\end{equation}
where
\begin{align*}
&\widetilde{a}_{3,L}(0,\beta)=1+\left(\frac{\sqrt{3}\rho u_1}{\sqrt{\rho p_{11}}}\right)(0,\beta)+\frac{1}{c(0,\beta)}\left(\frac{\beta-\beta_L}{4}\psi_{1,L}-\beta\right), \\
&\widetilde{b}_{3,L}(0,\beta)=\left(\frac{u_1}{p_{11}}+\frac{\sqrt{3}}{\sqrt{\rho p_{11}}}\right)(0,\beta), \\
&\widetilde{d}_{3,L}(0,\beta)=\left[\frac{\partial\psi_2}{\partial\alpha}(0,\beta_L)+\frac{\Pi_{2,L}}{8c_L}(\beta-\beta_L)\right]\psi_{1,L} +\frac{1}{2c(0,\beta)}\left(\frac{\beta-\beta_L}{4}\psi_{1,L}-\beta\right)\left\{\left(\frac{2p_{12}}{p_{11}}\right)(0,\beta)
\right. \\
&
\quad\quad\quad\left.
\cdot\left[\left(\frac{\mathcal{D}u_1}{\mathcal{D}t}\right)(0,\beta)+\frac{1}{2}W_x(0)\right]
+\frac{S_{1,L}'}{c_L}\left(\frac{c\rho^2p_{12}}{2p_{11}}\right)(0,\beta)\right\}
+\left(\frac{\sqrt{3}p_{12}}{2\rho\sqrt{\rho p_{11}}}\frac{\partial\rho}{\partial t}\right)(0,\beta)
\\
& \quad\quad\quad+\left(\frac{\sqrt{3}p_{12}}{2p_{11}\sqrt{\rho p_{11}}}\frac{\partial p_{11}}{\partial t}\right)(0,\beta)
+\left(\frac{2u_1p_{12}}{3p_{11}^2}\frac{\mathcal{D}p_{11}}{\mathcal{D}t}\right)(0,\beta).
\end{align*}
\end{lemma}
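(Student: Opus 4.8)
The plan is to obtain the limiting value $\frac{\partial\psi_2}{\partial t}(0,\beta)$ of the time derivative of the generalized Riemann invariant $\psi_2=u_2+\sqrt{3}\,p_{12}/\sqrt{\rho p_{11}}$ in two ways and equate them. On one side, differentiating $\psi_2$ in $t$ via its total differential \eqref{dpsi2} and then eliminating $\frac{\partial u_2}{\partial t}$, $\frac{\partial p_{12}}{\partial t}$ through the already-derived identities \eqref{u2_p12_t} expresses $\frac{\partial\psi_2}{\partial t}$ as a linear combination of $\frac{\mathcal{D}u_2}{\mathcal{D}t}$, $\frac{\mathcal{D}p_{12}}{\mathcal{D}t}$, $\frac{\mathcal{D}p_{11}}{\mathcal{D}t}$, $\frac{\partial\rho}{\partial t}$ and $\frac{\partial p_{11}}{\partial t}$, whose coefficients are explicit functions of the primitive variables at $(0,\beta)$; in particular the coefficients of $\frac{\mathcal{D}u_2}{\mathcal{D}t}$ and $\frac{\mathcal{D}p_{12}}{\mathcal{D}t}$ come out as $1+\big(\tfrac{\sqrt{3}\rho u_1}{\sqrt{\rho p_{11}}}\big)(0,\beta)$ and $\big(\tfrac{u_1}{p_{11}}+\tfrac{\sqrt{3}}{\sqrt{\rho p_{11}}}\big)(0,\beta)$. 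On the other side, \eqref{psi2_t_0beta_2} already gives $\frac{\partial\psi_2}{\partial t}(0,\beta)$ in terms of $\Pi_2(0,\beta)$ and the initial data at $\beta_L$.

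Next I would feed into \eqref{psi2_t_0beta_2} the rewritten form \eqref{Pi2_2} of $\Pi_2$, namely $\Pi_2=-2\frac{\mathcal{D}u_2}{\mathcal{D}t}+\frac{2p_{12}}{p_{11}}\big(\frac{\mathcal{D}u_1}{\mathcal{D}t}+\tfrac12W_x\big)+\frac{\rho^2p_{12}}{2p_{11}}\frac{\partial S_1}{\partial x}$, and eliminate the last term using the explicit value $\frac{\partial S_1}{\partial x}(0,\beta)=\frac{S_{1,L}'}{c_L}c(0,\beta)$ from \eqref{S1_x_0beta}. Equating the two expressions for $\frac{\partial\psi_2}{\partial t}(0,\beta)$ and collecting terms: the $\frac{\mathcal{D}u_2}{\mathcal{D}t}(0,\beta)$ term appears on both sides, so transferring the one coming from $\Pi_2$ to the left adds $\tfrac{1}{c(0,\beta)}\big(\tfrac{\beta-\beta_L}{4}\psi_{1,L}-\beta\big)$ to its coefficient, yielding exactly $\widetilde{a}_{3,L}(0,\beta)$; the $\frac{\mathcal{D}p_{12}}{\mathcal{D}t}(0,\beta)$ coefficient is untouched and equals $\widetilde{b}_{3,L}(0,\beta)$; and every remaining contribution — the $[\,\cdots\,]\psi_{1,L}$ piece of \eqref{psi2_t_0beta_2}, the $\frac{2p_{12}}{p_{11}}(\frac{\mathcal{D}u_1}{\mathcal{D}t}+\tfrac12W_x)$ and $\frac{S_{1,L}'}{c_L}\frac{c\rho^2p_{12}}{2p_{11}}$ pieces of $\Pi_2$ (weighted by $\tfrac{1}{2c}(\tfrac{\beta-\beta_L}{4}\psi_{1,L}-\beta)$), together with the $\frac{\mathcal{D}p_{11}}{\mathcal{D}t}$, $\frac{\partial\rho}{\partial t}$, $\frac{\partial p_{11}}{\partial t}$ terms transferred from the total-differential side — assembles into $\widetilde{d}_{3,L}(0,\beta)$. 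This is precisely \eqref{eq1:u2_t_p12_t}.

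I expect the difficulty here to be organizational rather than conceptual. Unlike $\Pi_1(0,\beta)$ in Lemma \ref{S1_t_psi1_t_0beta}, the quantity $\Pi_2(0,\beta)$ admits no closed form because the spatial derivatives of the primitive variables are not explicitly representable in the characteristic coordinates $(\alpha,\beta)$; this is why \eqref{psi2_t_0beta_2} was obtained only after a second-order trapezoidal approximation of the integral in \eqref{psi2_alpha_0beta}, and why $\Pi_2(0,\beta)$ must be carried symbolically and replaced only by its equivalent form \eqref{Pi2_2}. Consequently $\widetilde{d}_{3,L}(0,\beta)$ legitimately still contains $\frac{\mathcal{D}u_1}{\mathcal{D}t}(0,\beta)$, $\frac{\mathcal{D}p_{11}}{\mathcal{D}t}(0,\beta)$, $\frac{\partial\rho}{\partial t}(0,\beta)$ and $\frac{\partial p_{11}}{\partial t}(0,\beta)$, which are regarded as known — for general $\beta$ in the fan they follow from \eqref{resolution of 1-rarefaction}, \eqref{S1_t_0beta}, \eqref{dp11} and \eqref{u1_p11_t}, and at $\beta=\beta_{\ast L}$ they reduce to the values of Theorem \ref{theorem:u1_p11_rho_t_astK}. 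Apart from this bookkeeping, the proof reuses only the chain-rule and characteristic-coordinate machinery already developed for $\psi_1$ and $S_1$, so no new analytic input is required.
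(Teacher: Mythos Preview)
Your proposal is correct and follows essentially the same route as the paper: both use the total differential \eqref{dpsi2} together with \eqref{u2_p12_t} to express $\frac{\partial\psi_2}{\partial t}$ as a linear combination of the material derivatives, and then equate this with the trapezoidal expression \eqref{psi2_t_0beta_2} after substituting the rewritten form \eqref{Pi2_2} of $\Pi_2$ and using \eqref{S1_x_0beta}. Your commentary on why $\Pi_2(0,\beta)$ must be carried symbolically rather than computed explicitly is also accurate and matches the paper's rationale.
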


\begin{proof}
Due to \eqref{dpsi2}, one has
\[
\frac{\partial u_2}{\partial t}+\frac{\sqrt{3}}{\sqrt{\rho p_{11}}}\frac{\partial p_{12}}{\partial t}
=\frac{\partial\psi_2}{\partial t}+\frac{\sqrt{3}p_{12}}{2\rho\sqrt{\rho p_{11}}}\frac{\partial\rho}{\partial t}
+\frac{\sqrt{3}p_{12}}{2p_{11}\sqrt{\rho p_{11}}}\frac{\partial p_{11}}{\partial t}.
\]
Substituting \eqref{u2_p12_t} into the above expression gives
\begin{equation}
\left(1+\frac{\sqrt{3}\rho u_1}{\sqrt{\rho p_{11}}}\right)\frac{\mathcal{D}u_2}{\mathcal{D}t}+\left(\frac{u_1}{p_{11}}+\frac{\sqrt{3}}{\sqrt{\rho p_{11}}}\right)\frac{\mathcal{D}p_{12}}{\mathcal{D}t}-\frac{2u_1p_{12}}{3p_{11}^2}\frac{\mathcal{D}p_{11}}{\mathcal{D}t}
=\frac{\partial\psi_2}{\partial t}+\frac{\sqrt{3}p_{12}}{2\rho\sqrt{\rho p_{11}}}\frac{\partial\rho}{\partial t}
+\frac{\sqrt{3}p_{12}}{2p_{11}\sqrt{\rho p_{11}}}\frac{\partial p_{11}}{\partial t}. \label{wjf:tmp1}
\end{equation}
Taking the limiting value at $(0,\beta)$ in \eqref{wjf:tmp1}, and combining it with \eqref{psi2_t_0beta_2} and \eqref{Pi2_2}, one obtains \eqref{eq1:u2_t_p12_t}.  Note that in the expression of $\widetilde{d}_{3,L}(0,\beta)$, \eqref{S1_x_0beta} has been used.
\end{proof}

Taking $\beta=\beta_{\ast L}$ in \eqref{eq1:u2_t_p12_t}, one obtains the first equation that $\left(\frac{\mathcal{D}u_2}{\mathcal{D}t}\right)_{\ast L}$ and $\left(\frac{\mathcal{D}p_{12}}{\mathcal{D}t}\right)_{\ast L}$ satisfy
\begin{equation}\label{eq1:u2_t_p12_t_astL}
a_{3,L}\left(\frac{\mathcal{D}u_2}{\mathcal{D}t}\right)_{\ast L}+b_{3,L}\left(\frac{\mathcal{D}p_{12}}{\mathcal{D}t}\right)_{\ast L}=d_{3,L},
\end{equation}
with $a_{3,L}=\widetilde{a}_{3,L}(0,\beta_{\ast L})$, $b_{3,L}=\widetilde{b}_{3,L}(0,\beta_{\ast L})$ and $d_{3,L}=\widetilde{d}_{3,L}(0,\beta_{\ast L})$.

For $\psi_3$, similar to $\psi_2$, one can get the following results
\begin{align}
&\frac{\partial\psi_3}{\partial\alpha}(0,\beta)=\frac{\partial\psi_3}{\partial\alpha}(0,\beta_L)+\int_{\beta_L}^{\beta}\frac{1}{2c(0,\eta)}\frac{\partial t}{\partial\alpha}(0,\eta)\cdot\Pi_3(0,\eta)\text{d}\eta, \label{psi3_alpha_0beta} \\
&\frac{\partial\psi_3}{\partial t}(0,\beta)=-\left(\frac{u_1-c}{2c}\right)(0,\beta)\cdot\Pi_3(0,\beta)+\left(\frac{u_1+c}{2c}\right)(0,\beta)\cdot\left(\frac{\partial t}{\partial\alpha}\right)^{-1}(0,\beta)\cdot\frac{\partial\psi_3}{\partial\alpha}(0,\beta), \label{psi3_t_0beta}
\end{align}
where
\[
\frac{\partial\psi_3}{\partial\alpha}(0,\beta_L)=\frac{\partial t}{\partial\alpha}(0,\beta_L)\cdot(\Pi_{3,L}-2c_L\psi_{3,L}'),
\]
with $\Pi_{3,L}$ and $\psi_{3,L}'$ being given by \eqref{Pi3} and \eqref{dpsi3}, respectively. Applying the trapezoidal rule in \eqref{psi3_alpha_0beta}  obtains
\[
\frac{\partial\psi_3}{\partial\alpha}(0,\beta)=\frac{\partial\psi_3}{\partial\alpha}(0,\beta_L)+
\frac{\beta-\beta_L}{2}\left[\frac{1}{4c(0,\beta)}\Pi_3(0,\beta)+\frac{1}{4c_L}\Pi_{3,L}\right].
\]
Substituting it into \eqref{psi3_t_0beta}  gets
\begin{equation}\label{psi3_t_0beta_2}
\frac{\partial\psi_3}{\partial t}(0,\beta)=\left[\frac{\partial\psi_3}{\partial\alpha}(0,\beta_L)+\frac{\Pi_{3,L}}{8c_L}(\beta-\beta_L)\right]\psi_{1,L}
+\frac{1}{2c(0,\beta)}\left(\frac{\beta-\beta_L}{4}\psi_{1,L}-\beta\right)\cdot\Pi_3(0,\beta).
\end{equation}
Besides, by using \eqref{dp11}, \eqref{p12_x}, \eqref{u1_x}, \eqref{u2_x} and \eqref{p11_x}, one can rewrite \eqref{Pi3} into
\begin{equation}
\Pi_3=\frac{1}{\rho^3}\left[-\rho c\frac{\mathcal{D}u_2}{\mathcal{D}t}-\frac{p_{12}}{p_{11}}\frac{\mathcal{D}p_{11}}{\mathcal{D}t}
+\frac{\mathcal{D}p_{12}}{\mathcal{D}t}+\frac{3p_{12}}{c}\left(\frac{\mathcal{D}u_1}{\mathcal{D}t}+\frac{1}{2}W_x\right)\right]+\frac{3p_{12}}{\rho c}\frac{\partial S_1}{\partial x}. \label{Pi3_2}
\end{equation}

\begin{lemma}[Resolution of the 1-rarefaction wave]
Assume that the 1-rarefaction wave associated with $u_1-c$ moves to the left. Then $\frac{\mathcal{D}u_2}{\mathcal{D}t}(0,\beta)$ and $\frac{\mathcal{D}p_{12}}{\mathcal{D}t}(0,\beta)$ satisfy
\begin{equation}\label{eq2:u2_t_p12_t}
\widetilde{a}_{4,L}(0,\beta)\cdot\frac{\mathcal{D}u_2}{\mathcal{D}t}(0,\beta)+
\widetilde{b}_{4,L}(0,\beta)\cdot\frac{\mathcal{D}p_{12}}{\mathcal{D}t}(0,\beta)=\widetilde{d}_{4,L}(0,\beta),
\end{equation}
where
\begin{align*}
&\widetilde{a}_{4,L}(0,\beta)=\frac{1}{\rho^2(0,\beta)}\left[u_1(0,\beta)+\frac{1}{2}\left(\frac{\beta-\beta_L}{4}\psi_{1,L}-\beta\right)\right], \\
&\widetilde{b}_{4,L}(0,\beta)=\frac{1}{\rho^3(0,\beta)}\left[1-\frac{1}{2c(0,\beta)}\left(\frac{\beta-\beta_L}{4}\psi_{1,L}-\beta\right)\right], \\
&\widetilde{d}_{4,L}(0,\beta)=\left[\frac{\partial\psi_3}{\partial\alpha}(0,\beta_L)+\frac{\Pi_{3,L}}{8c_L}(\beta-\beta_L)\right]\psi_{1,L}+\left(\frac{3p_{12}}{\rho^4}\frac{\partial\rho}{\partial t}\right)(0,\beta)
+\frac{1}{2c(0,\beta)}\left(\frac{\beta-\beta_L}{4}\psi_{1,L}-\beta\right)
\\ &
\qquad \quad \cdot \left\{\frac{1}{\rho^3(0,\beta)}\left[-\frac{p_{12}}{p_{11}}\frac{\mathcal{D}p_{11}}{\mathcal{D}t}\right.
\left.+\frac{3p_{12}}{c}\left(\frac{\mathcal{D}u_1}{\mathcal{D}t}+\frac{1}{2}W_x(0)\right)\right](0,\beta)
+\frac{S_{1,L}'}{c_L}\left(\frac{3p_{12}}{\rho}\right)(0,\beta)\right\}.
\end{align*}
\end{lemma}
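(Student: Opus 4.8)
The plan is to repeat, almost verbatim, the argument already used for $\psi_2$ in the preceding lemma, now applied to the third generalized Riemann invariant $\psi_3=p_{12}/\rho^3$. The ingredients are all in place: the boundary-value identity \eqref{psi3_t_0beta_2} for $\partial_t\psi_3(0,\beta)$, the material-derivative rewriting \eqref{Pi3_2} of $\Pi_3$, and the relation $\partial_x S_1(0,\beta)=\tfrac{S_{1,L}'}{c_L}c(0,\beta)$ from \eqref{S1_x_0beta}; no new analytic input beyond these is needed.

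First I would use the total differential \eqref{dpsi3} to write, inside the $1$-rarefaction fan, $\tfrac{1}{\rho^3}\partial_t p_{12}=\partial_t\psi_3+\tfrac{3p_{12}}{\rho^4}\partial_t\rho$. Then I would eliminate the $t$-derivatives of $u_2$ and $p_{12}$ in favour of material derivatives via \eqref{u2_p12_t}, which turns the left-hand side into $\tfrac{1}{\rho^3}\tfrac{\mathcal{D}p_{12}}{\mathcal{D}t}+\tfrac{u_1}{\rho^2}\tfrac{\mathcal{D}u_2}{\mathcal{D}t}$. Passing to the limit at $(0,\beta)$, I would substitute \eqref{psi3_t_0beta_2} for $\partial_t\psi_3(0,\beta)$, in which $\Pi_3(0,\beta)$ is replaced by \eqref{Pi3_2} and $\partial_x S_1(0,\beta)$ by $\tfrac{S_{1,L}'}{c_L}c(0,\beta)$. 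Finally I would carry out the bookkeeping: the $\tfrac{\mathcal{D}u_2}{\mathcal{D}t}(0,\beta)$- and $\tfrac{\mathcal{D}p_{12}}{\mathcal{D}t}(0,\beta)$-terms contributed by $\Pi_3$ (with coefficients $-c/\rho^2$ and $1/\rho^3$), once multiplied by the weight $\tfrac{1}{2c(0,\beta)}\bigl(\tfrac{\beta-\beta_L}{4}\psi_{1,L}-\beta\bigr)$, are moved to the left-hand side and added to the pre-existing $\tfrac{u_1}{\rho^2}\tfrac{\mathcal{D}u_2}{\mathcal{D}t}$ and $\tfrac{1}{\rho^3}\tfrac{\mathcal{D}p_{12}}{\mathcal{D}t}$; since $c/(2c)=\tfrac12$ at $(0,\beta)$, this reproduces exactly $\widetilde a_{4,L}(0,\beta)$ and $\widetilde b_{4,L}(0,\beta)$. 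Everything that remains --- the $\psi_{1,L}$-bracket built from $\tfrac{\partial\psi_3}{\partial\alpha}(0,\beta_L)$ and $\Pi_{3,L}$, the $\tfrac{3p_{12}}{\rho^4}\partial_t\rho$ term, and the $\tfrac{\mathcal{D}p_{11}}{\mathcal{D}t}$-, $\tfrac{\mathcal{D}u_1}{\mathcal{D}t}$- and $W_x(0)$-contributions inherited from $\Pi_3$ and from the $S_1$-derivative --- then assembles into $\widetilde d_{4,L}(0,\beta)$.

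The main difficulty is organizational rather than conceptual: one must keep clear which quantities are treated as already known when this lemma is invoked --- namely $\tfrac{\mathcal{D}u_1}{\mathcal{D}t}(0,\beta)$, $\tfrac{\mathcal{D}p_{11}}{\mathcal{D}t}(0,\beta)$ and $\partial_t\rho(0,\beta)$, produced by the resolution of the first characteristic field in Subsection \ref{nonsonic case} --- versus the two genuine unknowns $\tfrac{\mathcal{D}u_2}{\mathcal{D}t}$ and $\tfrac{\mathcal{D}p_{12}}{\mathcal{D}t}$, and then check that the sign conventions, the powers of $\rho$ and $c$, and the $(0,\beta)$-evaluations all combine to give the stated $\widetilde a_{4,L}$, $\widetilde b_{4,L}$, $\widetilde d_{4,L}$ without slip. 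Taking $\beta=\beta_{\ast L}$ afterwards restricts \eqref{eq2:u2_t_p12_t} to the state $\mathbf{V}_{\ast L}$, yielding the companion of \eqref{eq1:u2_t_p12_t_astL} to be solved simultaneously with it in the next step.
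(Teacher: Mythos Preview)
Your proposal is correct and follows essentially the same approach as the paper: the paper's proof likewise combines \eqref{dpsi3} with \eqref{u2_p12_t} to get $\partial_t\psi_3=\tfrac{1}{\rho^3}\bigl(\tfrac{\mathcal{D}p_{12}}{\mathcal{D}t}+\rho u_1\tfrac{\mathcal{D}u_2}{\mathcal{D}t}\bigr)-\tfrac{3p_{12}}{\rho^4}\partial_t\rho$, then substitutes \eqref{psi3_t_0beta_2} and \eqref{Pi3_2}, using \eqref{S1_x_0beta} for $\partial_x S_1(0,\beta)$, exactly as you describe.
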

\begin{proof}
By \eqref{dpsi3} and \eqref{u2_p12_t}, one has
\[
\frac{\partial\psi_3}{\partial t}=\frac{1}{\rho^3}\left(\frac{\mathcal{D}p_{12}}{\mathcal{D}t}+\rho u_1\frac{\mathcal{D}u_2}{\mathcal{D}t}\right)-\frac{3p_{12}}{\rho^4}\frac{\partial\rho}{\partial t}.
\]
Combining it with \eqref{Pi3_2} and \eqref{psi3_t_0beta_2} gives \eqref{eq2:u2_t_p12_t}. Similarly,
in the expression of $\widetilde{d}_{4,L}(0,\beta)$, \eqref{S1_x_0beta} has been used.
\end{proof}

Taking $\beta=\beta_{\ast L}$ in \eqref{eq2:u2_t_p12_t}, one obtains the second equation that $\left(\frac{\mathcal{D}u_2}{\mathcal{D}t}\right)_{\ast L}$ and $\left(\frac{\mathcal{D}p_{12}}{\mathcal{D}t}\right)_{\ast L}$ satisfy
\begin{equation}\label{eq2:u2_t_p12_t_astL}
a_{4,L}\left(\frac{\mathcal{D}u_2}{\mathcal{D}t}\right)_{\ast L}+b_{4,L}\left(\frac{\mathcal{D}p_{12}}{\mathcal{D}t}\right)_{\ast L}=d_{4,L},
\end{equation}
with $a_{4,L}=\widetilde{a}_{4,L}(0,\beta_{\ast L})$, $b_{4,L}=\widetilde{b}_{4,L}(0,\beta_{\ast L})$ and $d_{4,L}=\widetilde{d}_{4,L}(0,\beta_{\ast L})$.

For $S_2$, with the similar derivation of \eqref{S1_t_0beta}, one can obtain
\begin{equation}\label{S2_t_0beta}
\frac{\partial S_2}{\partial t}(0,\beta)=-\frac{S_{2,L}'}{c_L}(cu_1)(0,\beta),
\end{equation}
with $S_{2,L}'$ being given by \eqref{dS2}.
Moreover, by \eqref{dS2}, one has
\begin{equation}\label{p22_t_0beta}
\left(\frac{p_{11}}{\rho^4}\frac{\partial p_{22}}{\partial t}\right)(0,\beta)=\frac{\partial S_2}{\partial t}(0,\beta)-\left(\frac{p_{22}}{\rho^4}\frac{\partial p_{11}}{\partial t}\right)(0,\beta)+\left(\frac{2p_{12}}{\rho^4}\frac{\partial p_{12}}{\partial t}\right)(0,\beta)+\left(\frac{4\det(\mathbf{p})}{\rho^5}\frac{\partial\rho}{\partial t}\right)(0,\beta).
\end{equation}

\begin{theorem}[Computing $(\frac{\partial u_2}{\partial t})_{\ast L}$, $(\frac{\partial p_{12}}{\partial t})_{\ast L}$ and $(\frac{\partial p_{22}}{\partial t})_{\ast L}$]\label{theorem for omega_astL}
Assume that the 1-rarefaction wave associated with $u_1-c$ move to the left. Then one has
\begin{equation}
\left(\frac{\mathcal{D}u_2}{\mathcal{D}t}\right)_{\ast L}=\frac{d_{3,L}b_{4,L}-d_{4,L}b_{3,L}}{a_{3,L}b_{4,L}-a_{4,L}b_{3,L}}, \ \
\left(\frac{\mathcal{D}p_{12}}{\mathcal{D}t}\right)_{\ast L}=\frac{d_{3,L}a_{4,L}-d_{4,L}a_{3,L}}{b_{3,L}a_{4,L}-b_{4,L}a_{3,L}},
\label{Du2_Dp12_t_astL}
\end{equation}
and
\begin{equation}
\begin{split}
&\left(\frac{\partial u_2}{\partial t}\right)_{\ast L}=\left(\frac{\mathcal{D}u_2}{\mathcal{D}t}\right)_{\ast L}+\frac{u_{1,\ast}}{p_{11,\ast}}\left[\left(\frac{\mathcal{D}p_{12}}{\mathcal{D}t}\right)_{\ast L}-\frac{2p_{12,\ast L}}{3p_{11,\ast}}\left(\frac{\mathcal{D}p_{11}}{\mathcal{D}t}\right)_{\ast}\right], \\
&\left(\frac{\partial p_{12}}{\partial t}\right)_{\ast L}=\left(\frac{\mathcal{D}p_{12}}{\mathcal{D}t}\right)_{\ast L}+\rho_{\ast L}u_{1,\ast}\left(\frac{\mathcal{D}u_2}{\mathcal{D}t}\right)_{\ast L}.
\end{split}
\label{u2_p12_t_astL}
\end{equation}
The limiting value $\left(\frac{\partial p_{22}}{\partial t}\right)_{\ast L}$ can be obtained by
\begin{equation}\label{p22_t_astL}
\left(\frac{p_{11}}{\rho^4}\frac{\partial p_{22}}{\partial t}\right)_{\ast L}=-\frac{S_{2,L}'}{c_L}c_{\ast L}u_{1,\ast}-
\left(\frac{p_{22}}{\rho^4}\frac{\partial p_{11}}{\partial t}\right)_{\ast L}+\left(\frac{2p_{12}}{\rho^4}\frac{\partial p_{12}}{\partial t}\right)_{\ast L}+\left(\frac{4\det(\mathbf{p})}{\rho^5}\frac{\partial\rho}{\partial t}\right)_{\ast L}.
\end{equation}
\end{theorem}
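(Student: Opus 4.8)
The plan is to read off all three time derivatives at the state $\mathbf{V}_{\ast L}$ by specializing the two resolution lemmas for the $1$-rarefaction wave together with the pointwise identities \eqref{u2_p12_t} and \eqref{p22_t_0beta} to the tail of the fan, i.e.\ to the characteristic coordinate $\beta=\beta_{\ast L}$, where $c(0,\beta_{\ast L})=c_{\ast L}$, $u_1(0,\beta_{\ast L})=u_{1,\ast}$, $p_{11}(0,\beta_{\ast L})=p_{11,\ast}$, $\rho(0,\beta_{\ast L})=\rho_{\ast L}$ and $p_{12}(0,\beta_{\ast L})=p_{12,\ast L}$, consistently with the exact Riemann solver.

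First I would treat \eqref{eq1:u2_t_p12_t_astL} and \eqref{eq2:u2_t_p12_t_astL} as a $2\times 2$ linear system in the unknowns $\left(\frac{\mathcal{D}u_2}{\mathcal{D}t}\right)_{\ast L}$ and $\left(\frac{\mathcal{D}p_{12}}{\mathcal{D}t}\right)_{\ast L}$: the coefficients $a_{3,L},b_{3,L},a_{4,L},b_{4,L}$ depend only on the already-known state $\mathbf{V}_{\ast L}$, while the right-hand sides $d_{3,L}=\widetilde{d}_{3,L}(0,\beta_{\ast L})$ and $d_{4,L}=\widetilde{d}_{4,L}(0,\beta_{\ast L})$ involve only $\left(\frac{\mathcal{D}u_1}{\mathcal{D}t}\right)_\ast$, $\left(\frac{\mathcal{D}p_{11}}{\mathcal{D}t}\right)_\ast$, $\left(\frac{\partial\rho}{\partial t}\right)_{\ast L}$ and $\left(\frac{\partial p_{11}}{\partial t}\right)_{\ast L}$, all supplied by Theorem \ref{theorem:u1_p11_rho_t_astK}. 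Once the determinant $a_{3,L}b_{4,L}-a_{4,L}b_{3,L}$ is shown to be nonzero, Cramer's rule yields \eqref{Du2_Dp12_t_astL}. Then I would evaluate the pointwise identities \eqref{u2_p12_t} --- which hold in any smooth region and therefore in the limit $t\to 0+$ inside the $\ast L$ zone --- at $\mathbf{V}_{\ast L}$, replacing $\frac{\mathcal{D}p_{11}}{\mathcal{D}t}$ by $\left(\frac{\mathcal{D}p_{11}}{\mathcal{D}t}\right)_\ast$; this is precisely \eqref{u2_p12_t_astL}.

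For $\left(\frac{\partial p_{22}}{\partial t}\right)_{\ast L}$ I would pass to the limit $\beta\to\beta_{\ast L}$ in \eqref{p22_t_0beta}, which is just the total differential \eqref{dS2} rewritten along the $t$-axis. The only new input is $\frac{\partial S_2}{\partial t}(0,\beta_{\ast L})=-\frac{S_{2,L}'}{c_L}c_{\ast L}u_{1,\ast}$, obtained by specializing \eqref{S2_t_0beta}; inserting the previously computed $\left(\frac{\partial p_{11}}{\partial t}\right)_{\ast L}$, $\left(\frac{\partial p_{12}}{\partial t}\right)_{\ast L}$, $\left(\frac{\partial\rho}{\partial t}\right)_{\ast L}$ together with $\det(\mathbf{p}_{\ast L})=p_{11,\ast}p_{22,\ast L}-p_{12,\ast L}^2$ produces \eqref{p22_t_astL}.

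I expect the main obstacle to be the solvability of the $2\times 2$ system, i.e.\ the nonvanishing of $a_{3,L}b_{4,L}-a_{4,L}b_{3,L}$ for physically admissible $\mathbf{V}_{\ast L}$. Using the explicit forms of $\widetilde{a}_{3,L},\widetilde{b}_{3,L},\widetilde{a}_{4,L},\widetilde{b}_{4,L}$ at $\beta=\beta_{\ast L}$, this determinant should collapse, after factoring out the positive $\rho_{\ast L}^{-3}$ and cancelling the common terms built from $\frac{\beta_{\ast L}-\beta_L}{4}\psi_{1,L}-\beta_{\ast L}$, to a manifestly signed expression dominated by the $\frac{\sqrt 3}{\sqrt{\rho_{\ast L}p_{11,\ast}}}$ contribution, so a short direct computation --- in the spirit of the sign checks already carried out for $\det(\mathbf{A}^R)$ and for $a_1b_2-a_2b_1$ in the $(u_1,p_{11})$ step --- closes this point. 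Everything else is substitution bookkeeping: plugging the state $\mathbf{V}_{\ast L}$ and the quantities provided by Theorem \ref{theorem:u1_p11_rho_t_astK} into \eqref{Du2_Dp12_t_astL}, \eqref{u2_p12_t} and \eqref{p22_t_0beta}.
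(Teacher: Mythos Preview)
Your proposal is correct and follows essentially the same route as the paper: solve the $2\times 2$ system \eqref{eq1:u2_t_p12_t_astL}--\eqref{eq2:u2_t_p12_t_astL} by Cramer's rule, convert material derivatives to partial time derivatives via \eqref{u2_p12_t}, and specialize \eqref{p22_t_0beta} together with \eqref{S2_t_0beta} at $\beta=\beta_{\ast L}$. The one point you raise that the paper does not address is the nonvanishing of the determinant $a_{3,L}b_{4,L}-a_{4,L}b_{3,L}$; the paper simply asserts the Cramer formulas without checking solvability, so your extra care there goes beyond what is actually proved.
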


\begin{proof}
Combining \eqref{eq1:u2_t_p12_t_astL} and \eqref{eq2:u2_t_p12_t_astL} yields \eqref{Du2_Dp12_t_astL},
and then  can get \eqref{u2_p12_t_astL} by \eqref{u2_p12_t}. Substituting \eqref{S2_t_0beta} into \eqref{p22_t_0beta} and taking $\beta=\beta_{\ast L}$  obtain  \eqref{p22_t_astL}.
\end{proof}

\begin{remark}
For the 1-shock wave, computing $(\frac{\partial u_2}{\partial t})_{\ast L}$, $(\frac{\partial p_{12}}{\partial t})_{\ast L}$ and $(\frac{\partial p_{22}}{\partial t})_{\ast L}$ is presented in \ref{u2_p12_p22_astL_1shock}.
\end{remark}

\subsubsection{Computing $(\frac{\partial u_2}{\partial t})_{\ast R}$, $(\frac{\partial p_{12}}{\partial t})_{\ast R}$ and $(\frac{\partial p_{22}}{\partial t})_{\ast R}$}\label{compute omega_astR}

At the 6-shock wave side, the Rankine-Hugoniot jump conditions will be used to derive the limiting values $(\frac{\partial u_2}{\partial t})_{\ast R}$, $(\frac{\partial p_{12}}{\partial t})_{\ast R}$ and $(\frac{\partial p_{22}}{\partial t})_{\ast R}$.

Across the shock wave with speed $\sigma$, the  Rankine-Hugoniot jump condition for $m_2$ implies
\[
\rho u_2(u_1-\sigma)+p_{12}=\bar{\rho}\bar{u}_2(\overline{u}_1-\sigma)+\overline{p}_{12}.
\]
If denoting $\Gamma_{m_2}:=\rho u_2(u_1-\sigma)+p_{12}$, one has $\Gamma_{m_2}=\Gamma_{\overline{m}_2}$ and
\begin{equation*}\label{d_gamma_m2}
\text{d}\Gamma_{m_2}=u_2(u_1-\sigma)\text{d}\rho+\rho u_2\text{d}u_1+\rho(u_1-\sigma)\text{d}u_2+\text{d}p_{12}-\rho u_2\text{d}\sigma.
\end{equation*}
It follows that
\begin{equation*}\label{Dsigma_gamma_m2}
\frac{\mathcal{D}_{\sigma}\Gamma_{m_2}}{\mathcal{D}t}=u_2(u_1-\sigma)\frac{\mathcal{D}_{\sigma}\rho}{\mathcal{D}t}+\rho u_2\frac{\mathcal{D}_{\sigma}u_1}{\mathcal{D}t}+\rho(u_1-\sigma)\frac{\mathcal{D}_{\sigma}u_2}{\mathcal{D}t}+\frac{\mathcal{D}_{\sigma}p_{12}}{\mathcal{D}t}
-\rho u_2\frac{\mathcal{D}_{\sigma}\sigma}{\mathcal{D}t},
\end{equation*}
and then one can utilize the relation along the shock wave, i.e., $\frac{\mathcal{D}_{\sigma}\Gamma_{m_2}}{\mathcal{D}t}=\frac{\mathcal{D}_{\sigma}\Gamma_{\overline{m}_2}}{\mathcal{D}t}$, to get the first equation that the limiting values $\left(\frac{\mathcal{D}u_2}{\mathcal{D}t}\right)_{\ast K}$ and $\left(\frac{\mathcal{D}p_{12}}{\mathcal{D}t}\right)_{\ast K}$ ($K=L,R$) satisfy.

Specifically, for the 6-shock wave, one has
\begin{equation}\label{DsigmaR_gamma_m2astR_m2R}
\frac{\mathcal{D}_{\sigma_R}\Gamma_{m_{2,\ast R}}}{\mathcal{D}t}=\frac{\mathcal{D}_{\sigma_R}\Gamma_{m_{2,R}}}{\mathcal{D}t},
\end{equation}
where
\begin{align}
&\sigma_R=u_{1,R}+\left(\frac{2p_{11,\ast}+p_{11,R}}{\rho_R}\right)^{\frac{1}{2}}, \label{sigmaR} \\
&\frac{\mathcal{D}_{\sigma_R}\Gamma_{m_{2,\ast R}}}{\mathcal{D}t}=u_{2,\ast R}(u_{1,\ast}-\sigma_R)\left(\frac{\mathcal{D}_{\sigma_R}\rho}{\mathcal{D}t}\right)_{\ast R}+\rho_{\ast R}u_{2,\ast R}\left(\frac{\mathcal{D}_{\sigma_R}u_1}{\mathcal{D}t}\right)_{\ast R} \notag \\
&\quad\quad+\rho_{\ast R}(u_{1,\ast}-\sigma_R)\left(\frac{\mathcal{D}_{\sigma_R}u_2}{\mathcal{D}t}\right)_{\ast R}+\left(\frac{\mathcal{D}_{\sigma_R}p_{12}}{\mathcal{D}t}\right)_{\ast R}-\rho_{\ast R}u_{2,\ast R}\frac{\mathcal{D}_{\sigma_R}\sigma_R}{\mathcal{D}t}, \label{DsigmaR_gamma_m2astR} \\
&\frac{\mathcal{D}_{\sigma_R}\Gamma_{m_{2,R}}}{\mathcal{D}t}=u_{2, R}(u_{1,R}-\sigma_R)\frac{\mathcal{D}_{\sigma_R}\rho_R}{\mathcal{D}t}+\rho_{R}u_{2, R}\frac{\mathcal{D}_{\sigma_R}u_{1,R}}{\mathcal{D}t} \notag \\
&\quad\quad+\rho_{R}(u_{1,R}-\sigma_R)\frac{\mathcal{D}_{\sigma_R}u_{2,R}}{\mathcal{D}t}+\frac{\mathcal{D}_{\sigma_R}p_{12,R}}{\mathcal{D}t}-\rho_Ru_{2, R}\frac{\mathcal{D}_{\sigma_R}\sigma_R}{\mathcal{D}t}. \label{DsigmaR_gamma_m2R}
\end{align}
By \eqref{u1_x}, one has
\begin{equation}\label{DsigmaR_u1_astR}
\left(\frac{\mathcal{D}_{\sigma_R}u_1}{\mathcal{D}t}\right)_{\ast R}=\left(\frac{\partial u_1}{\partial t}\right)_{\ast R}-\frac{\sigma_R}{3p_{11,\ast}}\left(\frac{\mathcal{D}p_{11}}{\mathcal{D}t}\right)_{\ast}.
\end{equation}
By \eqref{u2_x}, one gets
\begin{equation}\label{DsigmaR_u2_astR}
\left(\frac{\mathcal{D}_{\sigma_R}u_2}{\mathcal{D}t}\right)_{\ast R}=\left(\frac{\mathcal{D}u_2}{\mathcal{D}t}\right)_{\ast R}-\frac{\sigma_R-u_{1,\ast}}{p_{11,\ast}}\left[\left(\frac{\mathcal{D}p_{12}}{\mathcal{D}t}\right)_{\ast R}-\frac{2p_{12,\ast R}}{3p_{11,\ast}}\left(\frac{\mathcal{D}p_{11}}{\mathcal{D}t}\right)_{\ast}\right].
\end{equation}
By \eqref{p12_x}, one obtains
\begin{equation}\label{DsigmaR_p12_astR}
\left(\frac{\mathcal{D}_{\sigma_R}p_{12}}{\mathcal{D}t}\right)_{\ast R}=\left(\frac{\mathcal{D}p_{12}}{\mathcal{D}t}\right)_{\ast R}-\rho_{\ast R}(\sigma_R-u_{1,\ast})\left(\frac{\mathcal{D}u_2}{\mathcal{D}t}\right)_{\ast R}.
\end{equation}
By \eqref{quasilinear_form} and \eqref{sigmaR}, one has
\begin{subequations}\label{DsigmaR_VR}
\begin{align}
&\frac{\mathcal{D}_{\sigma_R}\rho_R}{\mathcal{D}t}=(\sigma_R-u_{1,R})\rho_R'-\rho_Ru_{1,R}', \label{DsigmaR_rhoR} \\
&\frac{\mathcal{D}_{\sigma_R}u_{1,R}}{\mathcal{D}t}=(\sigma_R-u_{1,R})u_{1,R}'-\frac{1}{\rho_R}p_{11,R}'-\frac{1}{2}W_x(0), \label{DsigmaR_u1R} \\
&\frac{\mathcal{D}_{\sigma_R}u_{2,R}}{\mathcal{D}t}=(\sigma_R-u_{1,R})u_{2,R}'-\frac{1}{\rho_R}p_{12,R}', \label{DsigmaR_u2R} \\
&\frac{\mathcal{D}_{\sigma_R}p_{11,R}}{\mathcal{D}t}=(\sigma_R-u_{1,R})p_{11,R}'-3p_{11,R}u_{1,R}', \label{DsigmaR_p11R} \\
&\frac{\mathcal{D}_{\sigma_R}p_{12,R}}{\mathcal{D}t}=(\sigma_R-u_{1,R})p_{12,R}'-2p_{12,R}u_{1,R}'-p_{11,R}u_{2,R}', \label{DsigmaR_p12R} \\
& \frac{\mathcal{D}_{\sigma_R}\sigma_R}{\mathcal{D}t}=\frac{\mathcal{D}_{\sigma_R}u_{1,R}}{\mathcal{D}t}-\frac{\sqrt{2p_{11,\ast}+p_{11,R}}}{2\rho_R\sqrt{\rho_R}}
\frac{\mathcal{D}_{\sigma_R}\rho_R}{\mathcal{D}t}+\frac{1}{2\sqrt{\rho_R(2p_{11,\ast}+p_{11,R})}}\frac{\mathcal{D}_{\sigma_R}p_{11,R}}{\mathcal{D}t}
+\frac{1}{\sqrt{\rho_R(2p_{11,\ast}+p_{11,R})}}\frac{\mathcal{D}_{\sigma_R}p_{11,\ast}}{\mathcal{D}t}. \label{DsigmaR_sigmaR}
\end{align}
\end{subequations}
By \eqref{p11_x}, one obtains
\begin{equation}\label{DsigmaR_p11ast}
\frac{\mathcal{D}_{\sigma_R}p_{11,\ast}}{\mathcal{D}t}=\left(\frac{\mathcal{D}p_{11}}{\mathcal{D}t}\right)_{\ast}-\rho_{\ast R}(\sigma_R-u_{1,\ast})\left[\left(\frac{\mathcal{D}u_1}{\mathcal{D}t}\right)_{\ast}+\frac{1}{2}W_x(0)\right].
\end{equation}
Substituting \eqref{DsigmaR_rhoR}, \eqref{DsigmaR_u1R}, \eqref{DsigmaR_p11R} and \eqref{DsigmaR_p11ast} into \eqref{DsigmaR_sigmaR}  can obtain the value of $\frac{\mathcal{D}_{\sigma_R}\sigma_R}{\mathcal{D}t}$.   Similarly, substituting \eqref{DsigmaR_rhoR}, \eqref{DsigmaR_u1R}, \eqref{DsigmaR_u2R}, \eqref{DsigmaR_p12R} and \eqref{DsigmaR_sigmaR} into \eqref{DsigmaR_gamma_m2R} may get the value of $\frac{\mathcal{D}_{\sigma_R}\Gamma_{m_{2,R}}}{\mathcal{D}t}$.
As for the computation of $\left(\frac{\mathcal{D}_{\sigma_R}\rho}{\mathcal{D}t}\right)_{\ast R}$, one uses the relation across the shock wave, i.e., $\rho=H(p_{11};\overline{p}_{11},\overline{\rho})$, to obtain
\[
\frac{\mathcal{D}_{\sigma}\rho}{\mathcal{D}t}=H_1\cdot\frac{\mathcal{D}_{\sigma}p_{11}}{\mathcal{D}t}
+H_2\cdot\frac{\mathcal{D}_{\sigma}\overline{p}_{11}}{\mathcal{D}t}+H_3\cdot\frac{\mathcal{D}_{\sigma}\overline{\rho}}{\mathcal{D}t}.
\]
Applying it to the 6-shock wave  gets
\begin{equation}\label{DsigmaR_rhoastR}
\left(\frac{\mathcal{D}_{\sigma_R}\rho}{\mathcal{D}t}\right)_{\ast R}=H_1^R\cdot\frac{\mathcal{D}_{\sigma_R}p_{11,\ast}}{\mathcal{D}t}+H_2^R\cdot\frac{\mathcal{D}_{\sigma_R}p_{11,R}}{\mathcal{D}t}
+H_3^R\cdot\frac{\mathcal{D}_{\sigma_R}\rho_R}{\mathcal{D}t}.
\end{equation}

\begin{lemma}[Resolution of the 6-shock wave]
Assume that the 6-shock wave associated with $u_1+c$ moves to the right. Then the limiting values $\left(\frac{\mathcal{D}u_2}{\mathcal{D}t}\right)_{\ast R}$ and $\left(\frac{\mathcal{D}p_{12}}{\mathcal{D}t}\right)_{\ast R}$ satisfy
\begin{equation}\label{eq1:Du2_Dp12_astR}
a_{3,R}\left(\frac{\mathcal{D}u_2}{\mathcal{D}t}\right)_{\ast R}+b_{3,R}\left(\frac{\mathcal{D}p_{12}}{\mathcal{D}t}\right)_{\ast R}=d_{3,R},
\end{equation}
where
\begin{align*}
&a_{3,R}=2\rho_{\ast R}(u_{1,\ast}-\sigma_R), \quad b_{3,R}=1+\frac{\rho_{\ast R}(u_{1,\ast}-\sigma_R)^2}{p_{11,\ast}}, \\
&d_{3,R}=\frac{\mathcal{D}_{\sigma_R}\Gamma_{m_{2,R}}}{\mathcal{D}t}+\rho_{\ast R}u_{2,\ast R}\frac{\mathcal{D}_{\sigma_R}\sigma_R}{\mathcal{D}t}-u_{2,\ast R}(u_{1,\ast}-\sigma_R)\left(\frac{\mathcal{D}_{\sigma_R}\rho}{\mathcal{D}t}\right)_{\ast R} \\
&\quad-\rho_{\ast R}u_{2,\ast R}\left(\frac{\mathcal{D}_{\sigma_R}u_1}{\mathcal{D}t}\right)_{\ast R}+\frac{2\rho_{\ast R}p_{12,\ast R}(u_{1,\ast}-\sigma_R)^2}{3p_{11,\ast}^2}\left(\frac{\mathcal{D}p_{11}}{\mathcal{D}t}\right)_{\ast}.
\end{align*}
Here the values of $\frac{\mathcal{D}_{\sigma_R}\Gamma_{m_{2,R}}}{\mathcal{D}t}$, $\frac{\mathcal{D}_{\sigma_R}\sigma_R}{\mathcal{D}t}$, $\left(\frac{\mathcal{D}_{\sigma_R}\rho}{\mathcal{D}t}\right)_{\ast R}$, $\left(\frac{\mathcal{D}_{\sigma_R}u_1}{\mathcal{D}t}\right)_{\ast R}$ and $\left(\frac{\mathcal{D}p_{11}}{\mathcal{D}t}\right)_{\ast}$ are given by \eqref{DsigmaR_gamma_m2R}, \eqref{DsigmaR_sigmaR}, \eqref{DsigmaR_rhoastR}, \eqref{DsigmaR_u1_astR} and \eqref{Du1_Dp11_Dt_ast}, respectively.
\end{lemma}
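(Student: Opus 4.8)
The plan is to convert the shock identity \eqref{DsigmaR_gamma_m2astR_m2R}, namely $\frac{\mathcal{D}_{\sigma_R}\Gamma_{m_{2,\ast R}}}{\mathcal{D}t}=\frac{\mathcal{D}_{\sigma_R}\Gamma_{m_{2,R}}}{\mathcal{D}t}$, into a single scalar equation whose only unknowns are $\left(\frac{\mathcal{D}u_2}{\mathcal{D}t}\right)_{\ast R}$ and $\left(\frac{\mathcal{D}p_{12}}{\mathcal{D}t}\right)_{\ast R}$, by expanding its left-hand side through \eqref{DsigmaR_gamma_m2astR} and then eliminating every intermediate derivative in favour of quantities that are already known. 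First I would substitute \eqref{DsigmaR_u2_astR} and \eqref{DsigmaR_p12_astR} for $\left(\frac{\mathcal{D}_{\sigma_R}u_2}{\mathcal{D}t}\right)_{\ast R}$ and $\left(\frac{\mathcal{D}_{\sigma_R}p_{12}}{\mathcal{D}t}\right)_{\ast R}$ into \eqref{DsigmaR_gamma_m2astR}; this is the only place the two unknowns appear, and after collecting them the coefficient of $\left(\frac{\mathcal{D}u_2}{\mathcal{D}t}\right)_{\ast R}$ becomes $\rho_{\ast R}(u_{1,\ast}-\sigma_R)-\rho_{\ast R}(\sigma_R-u_{1,\ast})=2\rho_{\ast R}(u_{1,\ast}-\sigma_R)=a_{3,R}$ and that of $\left(\frac{\mathcal{D}p_{12}}{\mathcal{D}t}\right)_{\ast R}$ becomes $1+\frac{\rho_{\ast R}(u_{1,\ast}-\sigma_R)^2}{p_{11,\ast}}=b_{3,R}$.

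Next I would move everything else to the right to form $d_{3,R}$. The $\frac{2p_{12,\ast R}}{3p_{11,\ast}}$ piece inside \eqref{DsigmaR_u2_astR} produces the contribution $\frac{2\rho_{\ast R}p_{12,\ast R}(u_{1,\ast}-\sigma_R)^2}{3p_{11,\ast}^2}\left(\frac{\mathcal{D}p_{11}}{\mathcal{D}t}\right)_{\ast}$, and the three terms of \eqref{DsigmaR_gamma_m2astR} that carry no unknown, namely $u_{2,\ast R}(u_{1,\ast}-\sigma_R)\left(\frac{\mathcal{D}_{\sigma_R}\rho}{\mathcal{D}t}\right)_{\ast R}$, $\rho_{\ast R}u_{2,\ast R}\left(\frac{\mathcal{D}_{\sigma_R}u_1}{\mathcal{D}t}\right)_{\ast R}$ and $-\rho_{\ast R}u_{2,\ast R}\frac{\mathcal{D}_{\sigma_R}\sigma_R}{\mathcal{D}t}$, are transposed and combined with $\frac{\mathcal{D}_{\sigma_R}\Gamma_{m_{2,R}}}{\mathcal{D}t}$; this reproduces exactly the stated $d_{3,R}$. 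To close the argument I would record that each ingredient of $d_{3,R}$ is already in hand: $\frac{\mathcal{D}_{\sigma_R}\Gamma_{m_{2,R}}}{\mathcal{D}t}$ from \eqref{DsigmaR_gamma_m2R} together with \eqref{DsigmaR_VR}, $\frac{\mathcal{D}_{\sigma_R}\sigma_R}{\mathcal{D}t}$ from \eqref{DsigmaR_sigmaR} after inserting \eqref{DsigmaR_rhoR}, \eqref{DsigmaR_u1R}, \eqref{DsigmaR_p11R} and \eqref{DsigmaR_p11ast}, $\left(\frac{\mathcal{D}_{\sigma_R}\rho}{\mathcal{D}t}\right)_{\ast R}$ from \eqref{DsigmaR_rhoastR}, $\left(\frac{\mathcal{D}_{\sigma_R}u_1}{\mathcal{D}t}\right)_{\ast R}$ from \eqref{DsigmaR_u1_astR}, and the common quantities $\left(\frac{\mathcal{D}u_1}{\mathcal{D}t}\right)_{\ast}$, $\left(\frac{\mathcal{D}p_{11}}{\mathcal{D}t}\right)_{\ast}$, $\left(\frac{\partial u_1}{\partial t}\right)_{\ast R}$ from Theorem \ref{theorem:u1_p11_rho_t_astK}.

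I do not expect a conceptual obstacle: the derivation follows the same ``differentiate the jump relation along the shock trajectory, then use \eqref{u1_x}, \eqref{u2_x}, \eqref{p12_x}, \eqref{p11_x} to trade $t$-derivatives for material derivatives'' pattern already applied to $\rho$, $m_1$ and $E_{11}$ in Theorem \ref{theorem:u1_p11_rho_t_astK}. The main difficulty is purely the sign and grouping bookkeeping — in particular, keeping the $\left(\frac{\mathcal{D}p_{11}}{\mathcal{D}t}\right)_{\ast}$ contribution coming from \eqref{DsigmaR_u2_astR} separate from the one hidden inside $\left(\frac{\mathcal{D}_{\sigma_R}u_1}{\mathcal{D}t}\right)_{\ast R}$ (the latter being absorbed into the already-known value rather than into the coefficients), and consistently writing all differences as $u_{1,\ast}-\sigma_R$, which is negative since $u_{1,\ast}<\sigma_R$ by \eqref{sigma_R}; this in turn makes $b_{3,R}>0$, so that the linear system formed by this relation together with the companion one coming from the $E_{12}$-jump is nonsingular and the target limiting values are uniquely determined.
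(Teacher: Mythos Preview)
Your proposal is correct and follows exactly the paper's approach: substitute \eqref{DsigmaR_u2_astR} and \eqref{DsigmaR_p12_astR} into \eqref{DsigmaR_gamma_m2astR}, then combine with \eqref{DsigmaR_gamma_m2astR_m2R} and collect terms. Your explicit verification of the coefficients $a_{3,R}$, $b_{3,R}$ and of the $\left(\frac{\mathcal{D}p_{11}}{\mathcal{D}t}\right)_{\ast}$ contribution to $d_{3,R}$ is accurate, and the closing remark on $b_{3,R}>0$ (though not needed for this lemma itself) is a correct and useful observation for the solvability of the system formed together with the $E_{12}$ relation.
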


\begin{proof}
One can first substitute \eqref{DsigmaR_u2_astR} and \eqref{DsigmaR_p12_astR} into \eqref{DsigmaR_gamma_m2astR}, and then combine \eqref{DsigmaR_gamma_m2astR} with \eqref{DsigmaR_gamma_m2astR_m2R} to yield \eqref{eq1:Du2_Dp12_astR}.
\end{proof}

Across the shock wave with speed $\sigma$, the Rankine-Hugoniot jump condition for $E_{12}$ implies that $\Gamma_{E_{12}}=\Gamma_{\overline{E}_{12}}$ with
\[
\Gamma_{E_{12}}:=\frac{1}{2}(\rho u_1u_2+p_{12})(u_1-\sigma)+\frac{1}{2}(p_{11}u_2+p_{12}u_1).
\]
It follows that
\begin{align*}
\frac{\mathcal{D}_{\sigma}\Gamma_{E_{12}}}{\mathcal{D}t}=&\frac{1}{2}u_1u_2(u_1-\sigma)\frac{\mathcal{D}_{\sigma}\rho}{\mathcal{D}t}+\left(2E_{12}-\frac{1}{2}\rho u_2\sigma\right)\frac{\mathcal{D}_{\sigma}u_1}{\mathcal{D}t}+\left(E_{11}-\frac{1}{2}\rho u_1\sigma\right)\frac{\mathcal{D}_{\sigma}u_2}{\mathcal{D}t} \\
&+\frac{1}{2}u_2\frac{\mathcal{D}_{\sigma}p_{11}}{\mathcal{D}t}
+\left(u_1-\frac{1}{2}\sigma\right)\frac{\mathcal{D}_{\sigma}p_{12}}{\mathcal{D}t}-E_{12}\frac{\mathcal{D}_{\sigma}\sigma}{\mathcal{D}t}.
\end{align*}
Then one can use the relation along the shock wave, i.e., $\frac{\mathcal{D}_{\sigma}\Gamma_{E_{12}}}{\mathcal{D}t}=\frac{\mathcal{D}_{\sigma}\Gamma_{\overline{E}_{12}}}{\mathcal{D}t}$, to build the second equation that the limiting values $\left(\frac{\mathcal{D}u_2}{\mathcal{D}t}\right)_{\ast K}$ and $\left(\frac{\mathcal{D}p_{12}}{\mathcal{D}t}\right)_{\ast K}$ ($K=L,R$) satisfy.

Specifically, for the 6-shock wave, one has
\begin{equation}\label{DsigmaR_gamma_E12astR_E12R}
\frac{\mathcal{D}_{\sigma_R}\Gamma_{E_{12,\ast R}}}{\mathcal{D}t}=\frac{\mathcal{D}_{\sigma_R}\Gamma_{E_{12,R}}}{\mathcal{D}t},
\end{equation}
where
\begin{align}
&\frac{\mathcal{D}_{\sigma_R}\Gamma_{E_{12,\ast R}}}{\mathcal{D}t}=\frac{1}{2}u_{1,\ast}u_{2,\ast R}(u_{1,\ast}-\sigma_R)\left(\frac{\mathcal{D}_{\sigma_R}\rho}{\mathcal{D}t}\right)_{\ast R}+\left(2E_{12,\ast R}-\frac{1}{2}\rho_{\ast R}u_{2,\ast R}\sigma_R\right)\left(\frac{\mathcal{D}_{\sigma_R}u_1}{\mathcal{D}t}\right)_{\ast R} \notag \\
&\qquad+\left(E_{11,\ast R}-\frac{1}{2}\rho_{\ast R}u_{1,\ast}\sigma_R\right)\left(\frac{\mathcal{D}_{\sigma_R}u_2}{\mathcal{D}t}\right)_{\ast R}
+\frac{1}{2}u_{2,\ast R}\frac{\mathcal{D}_{\sigma_R}p_{11,\ast}}{\mathcal{D}t} \notag \\
&\qquad+\left(u_{1,\ast}-\frac{1}{2}\sigma_R\right)\left(\frac{\mathcal{D}_{\sigma_R}p_{12}}{\mathcal{D}t}\right)_{\ast R}-E_{12,\ast R}\frac{\mathcal{D}_{\sigma_R}\sigma_R}{\mathcal{D}t}, \label{DsigmaR_gamma_E12astR} \\
&\frac{\mathcal{D}_{\sigma_R}\Gamma_{E_{12,R}}}{\mathcal{D}t}=\frac{1}{2}u_{1,R}u_{2, R}(u_{1,R}-\sigma_R)\frac{\mathcal{D}_{\sigma_R}\rho_R}{\mathcal{D}t}+\left(2E_{12,R}-\frac{1}{2}\rho_{R}u_{2, R}\sigma_R\right)\frac{\mathcal{D}_{\sigma_R}u_{1,R}}{\mathcal{D}t} \notag \\
&\quad+\left(E_{11,R}-\frac{1}{2}\rho_{R}u_{1,R}\sigma_R\right)\frac{\mathcal{D}_{\sigma_R}u_{2,R}}{\mathcal{D}t}
+\frac{1}{2}u_{2,R}\frac{\mathcal{D}_{\sigma_R}p_{11,R}}{\mathcal{D}t}
+\left(u_{1,R}-\frac{1}{2}\sigma_R\right)\frac{\mathcal{D}_{\sigma_R}p_{12,R}}{\mathcal{D}t}-E_{12, R}\frac{\mathcal{D}_{\sigma_R}\sigma_R}{\mathcal{D}t}. \label{DsigmaR_gamma_E12R}
\end{align}
Substituting \eqref{DsigmaR_VR} into \eqref{DsigmaR_gamma_E12R}  can obtain the value of $\frac{\mathcal{D}_{\sigma_R}\Gamma_{E_{12,R}}}{\mathcal{D}t}$.

\begin{lemma}[Resolution of the 6-shock wave]
Assume that the 6-shock wave  associated with   $u_1+c$ moves to the right. Then the limiting values $\left(\frac{\mathcal{D}u_2}{\mathcal{D}t}\right)_{\ast R}$ and $\left(\frac{\mathcal{D}p_{12}}{\mathcal{D}t}\right)_{\ast R}$ satisfy
\begin{equation}\label{eq2:Du2_Dp12_astR}
a_{4,R}\left(\frac{\mathcal{D}u_2}{\mathcal{D}t}\right)_{\ast R}+b_{4,R}\left(\frac{\mathcal{D}p_{12}}{\mathcal{D}t}\right)_{\ast R}=d_{4,R},
\end{equation}
where
\begin{align*}
&a_{4,R}=E_{11,\ast R}+\rho_{\ast R}(u_{1,\ast}-\sigma_R)^2-\frac{1}{2}\rho_{\ast R}\sigma_R^2, \quad b_{4,R}=\left(E_{11,\ast R}-\frac{1}{2}\rho_{\ast R}u_{1,\ast}\sigma_R\right)\frac{u_{1,\ast}-\sigma_R}{p_{11,\ast}}+u_{1,\ast}-\frac{1}{2}\sigma_R, \\
&d_{4,R}=\frac{\mathcal{D}_{\sigma_R}\Gamma_{E_{12,R}}}{\mathcal{D}t}+E_{12,\ast R}\frac{\mathcal{D}_{\sigma_R}\sigma_R}{\mathcal{D}t}-\frac{1}{2}u_{1,\ast}u_{2,\ast R}(u_{1,\ast}-\sigma_R)\left(\frac{\mathcal{D}_{\sigma_R}\rho}{\mathcal{D}t}\right)_{\ast R}-\frac{1}{2}u_{2,\ast R}\frac{\mathcal{D}_{\sigma_R}p_{11,\ast}}{\mathcal{D}t} \\
&\qquad-\left(2E_{12,\ast R}-\frac{1}{2}\rho_{\ast R}u_{2,\ast R}\sigma_R\right)\left(\frac{\mathcal{D}_{\sigma_R}u_1}{\mathcal{D}t}\right)_{\ast R}
+\left(E_{11,\ast R}-\frac{1}{2}\rho_{\ast R}u_{1,\ast}\sigma_R\right)\frac{2p_{12,\ast R}(u_{1,\ast}-\sigma_R)}{3p_{11,\ast}^2}\left(\frac{\mathcal{D}p_{11}}{\mathcal{D}t}\right)_{\ast}.
\end{align*}
Here the value of $d_{4,R}$ can be obtained by \eqref{DsigmaR_gamma_E12R}, \eqref{DsigmaR_sigmaR}, \eqref{DsigmaR_rhoastR}, \eqref{DsigmaR_p11ast}, \eqref{DsigmaR_u1_astR} and \eqref{Du1_Dp11_Dt_ast}.
\end{lemma}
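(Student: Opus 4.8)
The plan is to derive \eqref{eq2:Du2_Dp12_astR} by mimicking the derivation of its companion \eqref{eq1:Du2_Dp12_astR}, with the Rankine--Hugoniot relation for $m_2$ replaced by the one for $E_{12}$. The starting point is the identity \eqref{DsigmaR_gamma_E12astR_E12R}, i.e.\ $\frac{\mathcal{D}_{\sigma_R}\Gamma_{E_{12,\ast R}}}{\mathcal{D}t}=\frac{\mathcal{D}_{\sigma_R}\Gamma_{E_{12,R}}}{\mathcal{D}t}$, whose left-hand side is written out in \eqref{DsigmaR_gamma_E12astR} and whose right-hand side, after substituting \eqref{DsigmaR_VR} into \eqref{DsigmaR_gamma_E12R}, is an already-known quantity. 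Hence the whole task is to reduce the left-hand side of \eqref{DsigmaR_gamma_E12astR} to an affine function of the two remaining unknowns $\left(\frac{\mathcal{D}u_2}{\mathcal{D}t}\right)_{\ast R}$ and $\left(\frac{\mathcal{D}p_{12}}{\mathcal{D}t}\right)_{\ast R}$.

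First I would eliminate the $\sigma_R$-directional derivatives on the starred side of \eqref{DsigmaR_gamma_E12astR} in favour of ordinary material derivatives: substitute \eqref{DsigmaR_u1_astR} for $\left(\frac{\mathcal{D}_{\sigma_R}u_1}{\mathcal{D}t}\right)_{\ast R}$, \eqref{DsigmaR_u2_astR} for $\left(\frac{\mathcal{D}_{\sigma_R}u_2}{\mathcal{D}t}\right)_{\ast R}$ and \eqref{DsigmaR_p12_astR} for $\left(\frac{\mathcal{D}_{\sigma_R}p_{12}}{\mathcal{D}t}\right)_{\ast R}$, then handle $\frac{\mathcal{D}_{\sigma_R}p_{11,\ast}}{\mathcal{D}t}$ by \eqref{DsigmaR_p11ast}, $\left(\frac{\mathcal{D}_{\sigma_R}\rho}{\mathcal{D}t}\right)_{\ast R}$ by \eqref{DsigmaR_rhoastR}, $\frac{\mathcal{D}_{\sigma_R}\sigma_R}{\mathcal{D}t}$ by \eqref{DsigmaR_sigmaR}, and $\left(\frac{\mathcal{D}u_1}{\mathcal{D}t}\right)_{\ast}$, $\left(\frac{\mathcal{D}p_{11}}{\mathcal{D}t}\right)_{\ast}$ by \eqref{Du1_Dp11_Dt_ast} (together with \eqref{u1_p11_t_astK} for $\left(\frac{\partial u_1}{\partial t}\right)_{\ast R}$). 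After these replacements, every quantity in \eqref{DsigmaR_gamma_E12astR} except $\left(\frac{\mathcal{D}u_2}{\mathcal{D}t}\right)_{\ast R}$ and $\left(\frac{\mathcal{D}p_{12}}{\mathcal{D}t}\right)_{\ast R}$ is determined.

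Then I would collect coefficients. The unknown $\left(\frac{\mathcal{D}u_2}{\mathcal{D}t}\right)_{\ast R}$ enters both directly through the term $\left(E_{11,\ast R}-\frac12\rho_{\ast R}u_{1,\ast}\sigma_R\right)\left(\frac{\mathcal{D}_{\sigma_R}u_2}{\mathcal{D}t}\right)_{\ast R}$ and, via the $-\rho_{\ast R}(\sigma_R-u_{1,\ast})$ factor in \eqref{DsigmaR_p12_astR}, through $\left(u_{1,\ast}-\frac12\sigma_R\right)\left(\frac{\mathcal{D}_{\sigma_R}p_{12}}{\mathcal{D}t}\right)_{\ast R}$; summing these and rearranging (one may use the ``equation of state'' $E_{11,\ast R}=\frac12(p_{11,\ast}+\rho_{\ast R}u_{1,\ast}^2)$ to write it compactly) gives the stated $a_{4,R}$. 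Similarly, $\left(\frac{\mathcal{D}p_{12}}{\mathcal{D}t}\right)_{\ast R}$ enters directly through \eqref{DsigmaR_p12_astR} and through the $\frac{\sigma_R-u_{1,\ast}}{p_{11,\ast}}$ factor in \eqref{DsigmaR_u2_astR}, producing $b_{4,R}$. Transposing the $\frac{\mathcal{D}_{\sigma_R}\Gamma_{E_{12,R}}}{\mathcal{D}t}$ term to the starred side and gathering the remaining already-known pieces — the $\frac{\mathcal{D}_{\sigma_R}\sigma_R}{\mathcal{D}t}$ term, the $\left(\frac{\mathcal{D}_{\sigma_R}\rho}{\mathcal{D}t}\right)_{\ast R}$ and $\frac{\mathcal{D}_{\sigma_R}p_{11,\ast}}{\mathcal{D}t}$ terms, the $\left(\frac{\mathcal{D}_{\sigma_R}u_1}{\mathcal{D}t}\right)_{\ast R}$ term, and the $\left(\frac{\mathcal{D}p_{11}}{\mathcal{D}t}\right)_{\ast}$-proportional remainders spun off by \eqref{DsigmaR_u2_astR} — yields exactly the listed expression for $d_{4,R}$.

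The hard part is not conceptual but bookkeeping: each of the substitution formulas \eqref{DsigmaR_u2_astR} and \eqref{DsigmaR_p12_astR} contains \emph{both} unknowns, so the coefficients $a_{4,R}$ and $b_{4,R}$ each receive two contributions that must be combined with the correct signs (recall $u_{1,\ast}<\sigma_R$, cf.\ \eqref{sigma_R}), and one must be vigilant that the $\left(\frac{\mathcal{D}p_{11}}{\mathcal{D}t}\right)_{\ast}$-proportional terms shed by \eqref{DsigmaR_u2_astR} are routed into $d_{4,R}$ rather than being mistaken for part of $b_{4,R}$. Writing \eqref{DsigmaR_gamma_E12astR} once and for all as an explicit affine form in the pair of unknowns before equating it with the fully evaluated right-hand side is the cleanest way to keep this organized; the remaining simplification is then a routine, if lengthy, algebraic exercise.
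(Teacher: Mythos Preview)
Your proposal is correct and follows exactly the approach of the paper's own proof, which simply states that the result is obtained by combining \eqref{DsigmaR_u2_astR}, \eqref{DsigmaR_p12_astR}, \eqref{DsigmaR_gamma_E12astR} and \eqref{DsigmaR_gamma_E12astR_E12R}. You have merely spelled out in detail the bookkeeping that the paper leaves implicit, and your identification of the two contributions to each of $a_{4,R}$ and $b_{4,R}$, together with the routing of the $\left(\frac{\mathcal{D}p_{11}}{\mathcal{D}t}\right)_{\ast}$-proportional remainder from \eqref{DsigmaR_u2_astR} into $d_{4,R}$, is accurate.
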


\begin{proof}
The proof is easily completed by combining \eqref{DsigmaR_u2_astR}, \eqref{DsigmaR_p12_astR}, \eqref{DsigmaR_gamma_E12astR} and \eqref{DsigmaR_gamma_E12astR_E12R}.
\end{proof}

Across the shock wave with speed $\sigma$, the  Rankine-Hugoniot jump condition for $E_{22}$ implies that $\Gamma_{E_{22}}=\Gamma_{\overline{E}_{22}}$ with
\[
\Gamma_{E_{22}}:=\frac{1}{2}(p_{22}+\rho u_2^2)(u_1-\sigma)+p_{12}u_2.
\]
It follows that
\begin{align*}
\frac{\mathcal{D}_{\sigma}\Gamma_{E_{22}}}{\mathcal{D}t}=&\frac{1}{2}u_2^2(u_1-\sigma)\frac{\mathcal{D}_{\sigma}\rho}{\mathcal{D}t}+E_{22}\frac{\mathcal{D}_{\sigma}u_1}{\mathcal{D}t}+(2E_{12}-\rho u_2\sigma)\frac{\mathcal{D}_{\sigma}u_2}{\mathcal{D}t} \\
&+u_2\frac{\mathcal{D}_{\sigma}p_{12}}{\mathcal{D}t}+\frac{1}{2}(u_1-\sigma)\frac{\mathcal{D}_{\sigma}p_{22}}{\mathcal{D}t}-E_{22}\frac{\mathcal{D}_{\sigma}\sigma}{\mathcal{D}t}.
\end{align*}
Then one can utilize the relation along the shock wave, i.e., $\frac{\mathcal{D}_{\sigma}\Gamma_{E_{22}}}{\mathcal{D}t}=\frac{\mathcal{D}_{\sigma}\Gamma_{\overline{E}_{22}}}{\mathcal{D}t}$, to compute the limiting value $\left(\frac{\partial p_{22}}{\partial t}\right)_{\ast K}$, $K=L,R$.

Specifically, for the 6-shock wave, one has
\begin{equation}\label{DsigmaR_E22astR_E22R}
\frac{\mathcal{D}_{\sigma_R}\Gamma_{E_{22,\ast R}}}{\mathcal{D}t}=\frac{\mathcal{D}_{\sigma_R}\Gamma_{E_{22,R}}}{\mathcal{D}t},
\end{equation}
where
\begin{align}
&\frac{\mathcal{D}_{\sigma_R}\Gamma_{E_{22,\ast R}}}{\mathcal{D}t}=\frac{1}{2}u_{2,\ast R}^2(u_{1,\ast}-\sigma_R)\left(\frac{\mathcal{D}_{\sigma_R}\rho}{\mathcal{D}t}\right)_{\ast R}+E_{22,\ast R}\left(\frac{\mathcal{D}_{\sigma_R}u_1}{\mathcal{D}t}\right)_{\ast R}+(2E_{12,\ast R}-\rho_{\ast R}u_{2,\ast R}\sigma_R)\left(\frac{\mathcal{D}_{\sigma_R}u_2}{\mathcal{D}t}\right)_{\ast R} \notag \\
&\qquad+u_{2,\ast R}\left(\frac{\mathcal{D}_{\sigma_R}p_{12}}{\mathcal{D}t}\right)_{\ast R}+\frac{1}{2}(u_{1,\ast}-\sigma_R)\left(\frac{\mathcal{D}_{\sigma_R}p_{22}}{\mathcal{D}t}\right)_{\ast R}-E_{22,\ast R}\frac{\mathcal{D}_{\sigma_R}\sigma_R}{\mathcal{D}t}, \label{DsigmaR_gamma_E22astR} \\
&\frac{\mathcal{D}_{\sigma_R}\Gamma_{E_{22,R}}}{\mathcal{D}t}=\frac{1}{2}u_{2, R}^2(u_{1,R}-\sigma_R)\frac{\mathcal{D}_{\sigma_R}\rho_R}{\mathcal{D}t}+E_{22, R}\frac{\mathcal{D}_{\sigma_R}u_{1,R}}{\mathcal{D}t}+(2E_{12,R}-\rho_{R}u_{2, R}\sigma_R)\frac{\mathcal{D}_{\sigma_R}u_{2,R}}{\mathcal{D}t} \notag \\
&\qquad+u_{2,R}\frac{\mathcal{D}_{\sigma_R}p_{12,R}}{\mathcal{D}t}+\frac{1}{2}(u_{1,R}-\sigma_R)\frac{\mathcal{D}_{\sigma_R}p_{22,R}}{\mathcal{D}t}-E_{22, R}\frac{\mathcal{D}_{\sigma_R}\sigma_R}{\mathcal{D}t}. \label{DsigmaR_gamma_E22R}
\end{align}
By \eqref{eq:p22}, one has
\begin{equation}\label{DsigmaR_p22R}
\frac{\mathcal{D}_{\sigma_R}p_{22,R}}{\mathcal{D}t}=(\sigma_R-u_{1,R})p_{22,R}'-p_{22,R}u_{1,R}'-2p_{12,R}u_{2,R}'.
\end{equation}
Substituting \eqref{DsigmaR_rhoR}, \eqref{DsigmaR_u1R}, \eqref{DsigmaR_u2R}, \eqref{DsigmaR_p12R} and \eqref{DsigmaR_p22R} into \eqref{DsigmaR_gamma_E22R} may obtain the value of $\frac{\mathcal{D}_{\sigma_R}\Gamma_{E_{22,R}}}{\mathcal{D}t}$.

\begin{theorem}[Computing $(\frac{\partial u_2}{\partial t})_{\ast R}$, $(\frac{\partial p_{12}}{\partial t})_{\ast R}$ and $(\frac{\partial p_{22}}{\partial t})_{\ast R}$]\label{theorem:u2_p12_p22_t_astR}
Assume that the 6-shock wave associated with $u_1+c$ moves to the right. Then the limiting values $\left(\frac{\mathcal{D}u_2}{\mathcal{D}t}\right)_{\ast R}$ and $\left(\frac{\mathcal{D}p_{12}}{\mathcal{D}t}\right)_{\ast R}$ are
\begin{equation}
\left(\frac{\mathcal{D}u_2}{\mathcal{D}t}\right)_{\ast R}=\frac{d_{3,R}b_{4,R}-d_{4,R}b_{3,R}}{a_{3,R}b_{4,R}-a_{4,R}b_{3,R}}, \quad
\left(\frac{\mathcal{D}p_{12}}{\mathcal{D}t}\right)_{\ast R}=\frac{d_{3,R}a_{4,R}-d_{4,R}a_{3,R}}{b_{3,R}a_{4,R}-b_{4,R}a_{3,R}}.
\label{Du2_Dp12_t_astR}
\end{equation}
It follows that
\begin{equation}
\begin{split}
&\left(\frac{\partial u_2}{\partial t}\right)_{\ast R}=\left(\frac{\mathcal{D}u_2}{\mathcal{D}t}\right)_{\ast R}+\frac{u_{1,\ast}}{p_{11,\ast}}\left[\left(\frac{\mathcal{D}p_{12}}{\mathcal{D}t}\right)_{\ast R}-\frac{2p_{12,\ast R}}{3p_{11,\ast}}\left(\frac{\mathcal{D}p_{11}}{\mathcal{D}t}\right)_{\ast}\right],  \\
&\left(\frac{\partial p_{12}}{\partial t}\right)_{\ast R}=\left(\frac{\mathcal{D}p_{12}}{\mathcal{D}t}\right)_{\ast R}+\rho_{\ast R}u_{1,\ast}\left(\frac{\mathcal{D}u_2}{\mathcal{D}t}\right)_{\ast R}.
\end{split}
\label{u2_p12_t_astR}
\end{equation}
Moreover, the limiting value $\left(\frac{\partial p_{22}}{\partial t}\right)_{\ast R}$ is computed by
\begin{equation}\label{p22_t_astR}
g_{p_{22}}^{\ast R}\left(\frac{\partial p_{22}}{\partial t}\right)_{\ast R}=f_{p_{22}}^{\ast R},
\end{equation}
where
\begin{align*}
&g_{p_{22}}^{\ast R}=\frac{1}{2}(u_{1,\ast}-\sigma_R)^2, \\
&f_{p_{22}}^{\ast R}=u_{1,\ast}\cdot\widetilde{f}_{p_{22}}^{\ast R}-\frac{(p_{11,\ast}p_{22,\ast R}-4p_{12,\ast R}^2)(u_{1,\ast}-\sigma_R)\sigma_R}{6p_{11,\ast}^2}\left(\frac{\mathcal{D}p_{11}}{\mathcal{D}t}\right)_{\ast}
-\frac{p_{12,\ast R}\sigma_R(u_{1,\ast}-\sigma_R)}{p_{11,\ast}}\left(\frac{\mathcal{D}p_{12}}{\mathcal{D}t}\right)_{\ast R},
\end{align*}
with
\begin{align*}
&\widetilde{f}_{p_{22}}^{\ast R}=\frac{\mathcal{D}_{\sigma_R}\Gamma_{E_{22,R}}}{\mathcal{D}t}+E_{22,\ast R}\frac{\mathcal{D}_{\sigma_R}\sigma_R}{\mathcal{D}t}-\frac{1}{2}u_{2,\ast R}^2(u_{1,\ast}-\sigma_R)\left(\frac{\mathcal{D}_{\sigma_R}\rho}{\mathcal{D}t}\right)_{\ast R} \\
&\qquad-E_{22,\ast R}\left(\frac{\mathcal{D}_{\sigma_R}u_1}{\mathcal{D}t}\right)_{\ast R}-(2E_{12,\ast R}-\rho_{\ast R}u_{2,\ast R}\sigma_R)\left(\frac{\mathcal{D}_{\sigma_R}u_2}{\mathcal{D}t}\right)_{\ast R}-u_{2,\ast R}\left(\frac{\mathcal{D}_{\sigma_R}p_{12}}{\mathcal{D}t}\right)_{\ast R}.
\end{align*}
\end{theorem}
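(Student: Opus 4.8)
The plan is to dispatch the three assertions in order, each time recycling quantities already assembled above. \emph{Step 1 ($u_2,p_{12}$).} I would regard \eqref{eq1:Du2_Dp12_astR} and \eqref{eq2:Du2_Dp12_astR} --- the two preceding ``resolution of the 6-shock wave'' lemmas --- as a $2\times2$ linear system for $\bigl(\frac{\mathcal{D}u_2}{\mathcal{D}t}\bigr)_{\ast R}$ and $\bigl(\frac{\mathcal{D}p_{12}}{\mathcal{D}t}\bigr)_{\ast R}$ and solve it by Cramer's rule to get \eqref{Du2_Dp12_t_astR}; the only thing to check is that the coefficient determinant $a_{3,R}b_{4,R}-a_{4,R}b_{3,R}$ is nonzero, which follows from the explicit formulas for $a_{3,R},b_{3,R},a_{4,R},b_{4,R}$ together with $p_{11,\ast}>p_{11,R}$ and $u_{1,\ast}<\sigma_R$, in the same spirit as the verification $\det(\mathbf{A}^R)>0$ in the exact Riemann solver. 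The Eulerian time derivatives \eqref{u2_p12_t_astR} are then read off by evaluating the kinematic identities \eqref{u2_p12_t} at the state $\mathbf{V}_{\ast R}$ (i.e.\ $u_1=u_{1,\ast},\rho=\rho_{\ast R},p_{11}=p_{11,\ast},p_{12}=p_{12,\ast R}$) and inserting the two values just obtained together with $\bigl(\frac{\mathcal{D}p_{11}}{\mathcal{D}t}\bigr)_{\ast}$ from \eqref{Du1_Dp11_Dt_ast}.

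\emph{Step 2 ($p_{22}$, setting up the equation).} Here the plan is to use the Rankine--Hugoniot jump condition for $E_{22}$ across the 6-shock, i.e.\ \eqref{DsigmaR_E22astR_E22R}. Its ahead-state side $\frac{\mathcal{D}_{\sigma_R}\Gamma_{E_{22,R}}}{\mathcal{D}t}$ is already fully explicit via \eqref{DsigmaR_gamma_E22R} with \eqref{DsigmaR_VR} and \eqref{DsigmaR_p22R}. On the behind-state side I would substitute into \eqref{DsigmaR_gamma_E22astR} every $\sigma_R$-directional derivative that has been computed already: $\bigl(\frac{\mathcal{D}_{\sigma_R}\rho}{\mathcal{D}t}\bigr)_{\ast R}$ from \eqref{DsigmaR_rhoastR}, $\bigl(\frac{\mathcal{D}_{\sigma_R}u_1}{\mathcal{D}t}\bigr)_{\ast R}$ from \eqref{DsigmaR_u1_astR}, $\bigl(\frac{\mathcal{D}_{\sigma_R}u_2}{\mathcal{D}t}\bigr)_{\ast R}$ from \eqref{DsigmaR_u2_astR} (also used a second time to trade the $u_2$-term for $\bigl(\frac{\mathcal{D}p_{12}}{\mathcal{D}t}\bigr)_{\ast R}$ and $\bigl(\frac{\mathcal{D}p_{11}}{\mathcal{D}t}\bigr)_{\ast}$), $\bigl(\frac{\mathcal{D}_{\sigma_R}p_{12}}{\mathcal{D}t}\bigr)_{\ast R}$ from \eqref{DsigmaR_p12_astR}, $\frac{\mathcal{D}_{\sigma_R}p_{11,\ast}}{\mathcal{D}t}$ from \eqref{DsigmaR_p11ast}, and $\frac{\mathcal{D}_{\sigma_R}\sigma_R}{\mathcal{D}t}$ from \eqref{DsigmaR_sigmaR}. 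After this the only unknown term left in \eqref{DsigmaR_gamma_E22astR} is $\tfrac{1}{2}(u_{1,\ast}-\sigma_R)\bigl(\frac{\mathcal{D}_{\sigma_R}p_{22}}{\mathcal{D}t}\bigr)_{\ast R}$, so moving the rest to the other side of \eqref{DsigmaR_E22astR_E22R} produces exactly $\tfrac{1}{2}(u_{1,\ast}-\sigma_R)\bigl(\frac{\mathcal{D}_{\sigma_R}p_{22}}{\mathcal{D}t}\bigr)_{\ast R}=\widetilde{f}_{p_{22}}^{\ast R}$ with $\widetilde{f}_{p_{22}}^{\ast R}$ as stated.

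\emph{Step 3 ($p_{22}$, converting to Eulerian time derivative).} Finally I would pass from $\bigl(\frac{\mathcal{D}_{\sigma_R}p_{22}}{\mathcal{D}t}\bigr)_{\ast R}$ to $\bigl(\frac{\partial p_{22}}{\partial t}\bigr)_{\ast R}$: from the operator identities $\frac{\mathcal{D}_{\sigma_R}}{\mathcal{D}t}-\frac{\mathcal{D}}{\mathcal{D}t}=(\sigma_R-u_{1,\ast})\frac{\partial}{\partial x}$ and $\frac{\mathcal{D}}{\mathcal{D}t}-\frac{\partial}{\partial t}=u_{1,\ast}\frac{\partial}{\partial x}$ at $\mathbf{V}_{\ast R}$, eliminating $\frac{\partial p_{22}}{\partial x}$ between them (by a suitable linear combination, so that no division by $u_{1,\ast}$ is needed) gives $u_{1,\ast}\bigl(\frac{\mathcal{D}_{\sigma_R}p_{22}}{\mathcal{D}t}\bigr)_{\ast R}=(u_{1,\ast}-\sigma_R)\bigl(\frac{\partial p_{22}}{\partial t}\bigr)_{\ast R}+\sigma_R\bigl(\frac{\mathcal{D}p_{22}}{\mathcal{D}t}\bigr)_{\ast R}$. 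Multiplying the relation of Step 2 by $u_{1,\ast}$, substituting this, and then replacing $\bigl(\frac{\mathcal{D}p_{22}}{\mathcal{D}t}\bigr)_{\ast R}$ by \eqref{Dp22_t} evaluated at $\mathbf{V}_{\ast R}$ yields precisely \eqref{p22_t_astR} with $g_{p_{22}}^{\ast R}=\tfrac{1}{2}(u_{1,\ast}-\sigma_R)^2$ (which is nonzero unconditionally since $u_{1,\ast}<\sigma_R$). I expect the only genuine difficulty to be the bookkeeping in Step 2 --- carrying the long list of substituted $\sigma_R$-derivatives and checking that the non-$p_{22}$ terms regroup into the stated closed forms for $\widetilde{f}_{p_{22}}^{\ast R}$ and $f_{p_{22}}^{\ast R}$ --- which is lengthy but entirely mechanical and uses no idea beyond those already employed to compute $\mathbf{V}_{\ast R}$ in the exact Riemann solver.
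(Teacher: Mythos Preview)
Your proposal is correct and follows essentially the same route as the paper. Step~1 matches the paper exactly (solve the $2\times2$ system \eqref{eq1:Du2_Dp12_astR}--\eqref{eq2:Du2_Dp12_astR}, then read off \eqref{u2_p12_t_astR} from \eqref{u2_p12_t}); for $p_{22}$ the paper also multiplies \eqref{DsigmaR_E22astR_E22R} by $u_{1,\ast}$ and converts $u_{1,\ast}\bigl(\frac{\mathcal{D}_{\sigma_R}p_{22}}{\mathcal{D}t}\bigr)_{\ast R}$ into $(u_{1,\ast}-\sigma_R)\bigl(\frac{\partial p_{22}}{\partial t}\bigr)_{\ast R}$ plus known terms --- the paper derives that conversion identity directly from \eqref{eq:p22}, \eqref{u1_x}, \eqref{u2_x}, while you obtain it via the operator identity together with \eqref{Dp22_t}, which amounts to the same computation.
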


\begin{proof}
One can deduce \eqref{Du2_Dp12_t_astR} from \eqref{eq1:Du2_Dp12_astR} and \eqref{eq2:Du2_Dp12_astR},
and then by \eqref{u2_p12_t},   get  \eqref{u2_p12_t_astR}. To derive \eqref{p22_t_astR}, one can first utilize \eqref{eq:p22}, \eqref{u1_x} and \eqref{u2_x} to obtain
\begin{equation}\label{u1ast_DsigmaR_p22astR}
u_{1,\ast}\left(\frac{\mathcal{D}_{\sigma_R}p_{22}}{\mathcal{D}t}\right)_{\ast R}=(u_{1,\ast}-\sigma_R)\left(\frac{\partial p_{22}}{\partial t}\right)_{\ast R}+\frac{(p_{11,\ast}p_{22,\ast R}-4p_{12,\ast R}^2)\sigma_R}{3p_{11,\ast}^2}\left(\frac{\mathcal{D}p_{11}}{\mathcal{D}t}\right)_{\ast}+\frac{2p_{12,\ast R}\sigma_R}{p_{11,\ast}}\left(\frac{\mathcal{D}p_{12}}{\mathcal{D}t}\right)_{\ast R}.
\end{equation}
Multiplying both sides of \eqref{DsigmaR_E22astR_E22R} by $u_{1,\ast}$ yields
\begin{equation}\label{u1ast_DsigmaR_E22astR_E22R}
u_{1,\ast}\frac{\mathcal{D}_{\sigma_R}\Gamma_{E_{22,\ast R}}}{\mathcal{D}t}=u_{1,\ast}\frac{\mathcal{D}_{\sigma_R}\Gamma_{E_{22,R}}}{\mathcal{D}t}.
\end{equation}
Then   substituting \eqref{DsigmaR_gamma_E22astR} and \eqref{DsigmaR_gamma_E22R} into \eqref{u1ast_DsigmaR_E22astR_E22R} and applying \eqref{u1ast_DsigmaR_p22astR}   gets \eqref{p22_t_astR}.
\end{proof}

\begin{remark}
For the 6-rarefaction wave, computing $(\frac{\partial u_2}{\partial t})_{\ast R}$, $(\frac{\partial p_{12}}{\partial t})_{\ast R}$ and $(\frac{\partial p_{22}}{\partial t})_{\ast R}$ is  in \ref{u2_p12_p22_astR_6rarefaction}.
\end{remark}

\subsubsection{Computing $\left(\frac{\partial u_2}{\partial t}\right)_{\ast\ast K}$, $\left(\frac{\partial p_{12}}{\partial t}\right)_{\ast\ast K}$ and $\left(\frac{\partial p_{22}}{\partial t}\right)_{\ast\ast K}$ $(K=L,R)$}

In this subsection, the generalized Riemann invariants associated with the 2- and 5-shear waves are used to derive the limiting values $\left(\frac{\partial u_2}{\partial t}\right)_{\ast\ast K}$, $\left(\frac{\partial p_{12}}{\partial t}\right)_{\ast\ast K}$ and $\left(\frac{\partial p_{22}}{\partial t}\right)_{\ast\ast K}$ $(K=L,R)$.

If denoting $\Theta_2:=u_2+\frac{p_{12}}{\sqrt{\rho p_{11}}}$ and recalling that $\frac{\mathcal{D}_2}{\mathcal{D}t}:=\frac{\partial}{\partial t}+\lambda_2\frac{\partial}{\partial x}$ with $\lambda_2=u_{1,\ast}-\frac{c_{\ast L}}{\sqrt{3}}$, then
\[
\frac{\mathcal{D}_2\Theta_2}{\mathcal{D}t}=-\frac{p_{12}}{2\rho\sqrt{\rho p_{11}}}\frac{\mathcal{D}_2\rho}{\mathcal{D}t}+\frac{\mathcal{D}_2u_2}{\mathcal{D}t}-\frac{p_{12}}{2p_{11}\sqrt{\rho p_{11}}}\frac{\mathcal{D}_2p_{11}}{\mathcal{D}t}+\frac{1}{\sqrt{\rho p_{11}}}\frac{\mathcal{D}_2p_{12}}{\mathcal{D}t}.
\]
Because $\Theta_2$ is a generalized Riemann invariant associated with the 2-shear wave, one has
\begin{equation}\label{D2_Theta2ast2L_Theta2astL}
\frac{\mathcal{D}_2\Theta_{2,\ast\ast L}}{\mathcal{D}t}=\frac{\mathcal{D}_2\Theta_{2,\ast L}}{\mathcal{D}t},
\end{equation}
where
{\small\begin{align}
&\frac{\mathcal{D}_2\Theta_{2,\ast\ast L}}{\mathcal{D}t}=-\frac{p_{12,\ast\ast}}{2\rho_{\ast L}\sqrt{\rho_{\ast L} p_{11,\ast}}}\left(\frac{\mathcal{D}_2\rho}{\mathcal{D}t}\right)_{\ast\ast L}+\left(\frac{\mathcal{D}_2u_2}{\mathcal{D}t}\right)_{\ast\ast L}-\frac{p_{12,\ast\ast}}{2p_{11,\ast}\sqrt{\rho_{\ast L} p_{11,\ast}}}\left(\frac{\mathcal{D}_2p_{11}}{\mathcal{D}t}\right)_{\ast\ast L}+\frac{1}{\sqrt{\rho_{\ast L} p_{11,\ast}}}\left(\frac{\mathcal{D}_2p_{12}}{\mathcal{D}t}\right)_{\ast\ast L}, \label{D2_Theta2ast2L} \\
&\frac{\mathcal{D}_2\Theta_{2,\ast L}}{\mathcal{D}t}=-\frac{p_{12,\ast L}}{2\rho_{\ast L}\sqrt{\rho_{\ast L} p_{11,\ast}}}\left(\frac{\mathcal{D}_2\rho}{\mathcal{D}t}\right)_{\ast L}+\left(\frac{\mathcal{D}_2u_2}{\mathcal{D}t}\right)_{\ast L} -\frac{p_{12,\ast L}}{2p_{11,\ast}\sqrt{\rho_{\ast L} p_{11,\ast}}}\left(\frac{\mathcal{D}_2p_{11}}{\mathcal{D}t}\right)_{\ast L}+\frac{1}{\sqrt{\rho_{\ast L} p_{11,\ast}}}\left(\frac{\mathcal{D}_2p_{12}}{\mathcal{D}t}\right)_{\ast L}. \label{D2_Theta2astL}
\end{align}}%
Because  both $\rho$ and $p_{11}$ are the generalized Riemann invariants associated with the 2-shear wave, one actually has $\left(\frac{\mathcal{D}_2\rho}{\mathcal{D}t}\right)_{\ast\ast L}=\left(\frac{\mathcal{D}_2\rho}{\mathcal{D}t}\right)_{\ast L}$ and $\left(\frac{\mathcal{D}_2p_{11}}{\mathcal{D}t}\right)_{\ast\ast L}=\left(\frac{\mathcal{D}_2p_{11}}{\mathcal{D}t}\right)_{\ast L}$. By \eqref{p11_x}, one gets
\begin{equation*}\label{D2_p11ast2L_p11astL}
\left(\frac{\mathcal{D}_2p_{11}}{\mathcal{D}t}\right)_{\ast\ast L}=\left(\frac{\mathcal{D}_2p_{11}}{\mathcal{D}t}\right)_{\ast L}
=\left(\frac{\mathcal{D}p_{11}}{\mathcal{D}t}\right)_{\ast}-\rho_{\ast L}(\lambda_2-u_{1,\ast})\left[\left(\frac{\mathcal{D}u_1}{\mathcal{D}t}\right)_{\ast}+\frac{1}{2}W_x(0)\right].
\end{equation*}
As for the value of $\left(\frac{\mathcal{D}_2\rho}{\mathcal{D}t}\right)_{\ast L}$, recalling the equation \eqref{dp11}, that is $\text{d}p_{11}=c^2\text{d}\rho+\rho^3\text{d}S_{1}$, one has
\[
\left(\frac{\partial\rho}{\partial x}\right)_{\ast L}=\frac{1}{c_{\ast L}^2}\left(\frac{\partial p_{11}}{\partial x}\right)_{\ast L}-\frac{\rho_{\ast L}^3}{c_{\ast L}^2}\left(\frac{\partial S_1}{\partial x}\right)_{\ast L}
=-\frac{\rho_{\ast L}}{c_{\ast L}^2}\left[\left(\frac{\mathcal{D}u_1}{\mathcal{D}t}\right)_{\ast}+\frac{1}{2}W_x(0)\right]-\frac{\rho_{\ast L}^3S_{1,L}'}{c_{\ast L}c_L},
\]
where \eqref{p11_x} and \eqref{S1_x_0beta} have been used in the second equality, and $S_{1,L}'$ is given in \eqref{psi1_L'},
then $\left(\frac{\mathcal{D}_2\rho}{\mathcal{D}t}\right)_{\ast L}=\left(\frac{\partial\rho}{\partial t}\right)_{\ast L}+\lambda_2\left(\frac{\partial\rho}{\partial x}\right)_{\ast L}$ can be obtained.


\begin{remark}\label{computation of D2_rhoastL for 1-shock}
For the 1-shock wave, one can get the value of $\left(\frac{\mathcal{D}_{\sigma_L}\rho}{\mathcal{D}t}\right)_{\ast L}$ in a similar way to compute $\left(\frac{\mathcal{D}_{\sigma_R}\rho}{\mathcal{D}t}\right)_{\ast R}$ in Subsection {\rm\ref{compute omega_astR}}, {\rm}see \eqref{DsigmaR_rhoastR}{\rm}. Thus
\[
\left(\frac{\mathcal{D}_2\rho}{\mathcal{D}t}\right)_{\ast L}=\left(1-\frac{\lambda_2}{\sigma_L}\right)\left(\frac{\partial \rho}{\partial t}\right)_{\ast L}+\frac{\lambda_2}{\sigma_L}\left(\frac{\mathcal{D}_{\sigma_L}\rho}{\mathcal{D}t}\right)_{\ast L}.
\]
\end{remark}

By \eqref{u2_x}, one has
\begin{align}
&\left(\frac{\mathcal{D}_2u_2}{\mathcal{D}t}\right)_{\ast\ast L}=\left(\frac{\mathcal{D}u_2}{\mathcal{D}t}\right)_{\ast\ast}-
\frac{\lambda_2-u_{1,\ast}}{p_{11,\ast}}\left[\left(\frac{\mathcal{D}p_{12}}{\mathcal{D}t}\right)_{\ast\ast}-
\frac{2p_{12,\ast\ast}}{3p_{11,\ast}}\left(\frac{\mathcal{D}p_{11}}{\mathcal{D}t}\right)_{\ast}\right], \label{D2_u2ast2L} \\
&\left(\frac{\mathcal{D}_2u_2}{\mathcal{D}t}\right)_{\ast L}=\left(\frac{\mathcal{D}u_2}{\mathcal{D}t}\right)_{\ast L}-
\frac{\lambda_2-u_{1,\ast}}{p_{11,\ast}}\left[\left(\frac{\mathcal{D}p_{12}}{\mathcal{D}t}\right)_{\ast L}-
\frac{2p_{12,\ast L}}{3p_{11,\ast}}\left(\frac{\mathcal{D}p_{11}}{\mathcal{D}t}\right)_{\ast}\right], \notag 
\end{align}
where the notations $\left(\frac{\mathcal{D}u_2}{\mathcal{D}t}\right)_{\ast\ast}:=\left(\frac{\mathcal{D}u_2}{\mathcal{D}t}\right)_{\ast\ast L}=\left(\frac{\mathcal{D}u_2}{\mathcal{D}t}\right)_{\ast\ast R}$ and $\left(\frac{\mathcal{D}p_{12}}{\mathcal{D}t}\right)_{\ast\ast}:=\left(\frac{\mathcal{D}p_{12}}{\mathcal{D}t}\right)_{\ast\ast L}=\left(\frac{\mathcal{D}p_{12}}{\mathcal{D}t}\right)_{\ast\ast R}$ have been used
because both $u_2$ and $p_{12}$ are generalized Riemann invariants associated with the contact discontinuity.
By \eqref{p12_x}, one has
\begin{align}
&\left(\frac{\mathcal{D}_2p_{12}}{\mathcal{D}t}\right)_{\ast\ast L}=\left(\frac{\mathcal{D}p_{12}}{\mathcal{D}t}\right)_{\ast\ast}-\rho_{\ast L}(\lambda_2-u_{1,\ast})\left(\frac{\mathcal{D}u_2}{\mathcal{D}t}\right)_{\ast\ast}, \label{D2_p12ast2L} \\
&\left(\frac{\mathcal{D}_2p_{12}}{\mathcal{D}t}\right)_{\ast L}=\left(\frac{\mathcal{D}p_{12}}{\mathcal{D}t}\right)_{\ast L}-\rho_{\ast L}(\lambda_2-u_{1,\ast})\left(\frac{\mathcal{D}u_2}{\mathcal{D}t}\right)_{\ast L}. \notag 
\end{align}
Note that the limiting values $\left(\frac{\mathcal{D}u_2}{\mathcal{D}t}\right)_{\ast L}$ and $\left(\frac{\mathcal{D}p_{12}}{\mathcal{D}t}\right)_{\ast L}$ have been obtained in Subsection \ref{computing omega_astL}, see Theorem \ref{theorem for omega_astL}.

\begin{lemma}
Assume that the 2-shear wave associated with $u_1-\frac{c}{\sqrt{3}}$ moves to the left. Then the limiting values $\left(\frac{\mathcal{D}u_2}{\mathcal{D}t}\right)_{\ast\ast}$ and $\left(\frac{\mathcal{D}p_{12}}{\mathcal{D}t}\right)_{\ast\ast}$ satisfy
\begin{equation}\label{eq1:Du2_Dp12_ast2}
a_{5,L}\left(\frac{\mathcal{D}u_2}{\mathcal{D}t}\right)_{\ast\ast}+b_{5,L}\left(\frac{\mathcal{D}p_{12}}{\mathcal{D}t}\right)_{\ast\ast}=d_{5,L},
\end{equation}
where
\begin{align*}
&a_{5,L}=2, \quad b_{5,L}=\frac{2}{\sqrt{\rho_{\ast L}p_{11,\ast}}}, \\
&d_{5,L}=\frac{p_{12,\ast\ast}-p_{12,\ast L}}{2\rho_{\ast L}\sqrt{\rho_{\ast L}p_{11,\ast}}}\left(\frac{\mathcal{D}_2\rho}{\mathcal{D}t}\right)_{\ast L}+\left(\frac{\mathcal{D}_2u_2}{\mathcal{D}t}\right)_{\ast L}
+\frac{p_{12,\ast\ast}-p_{12,\ast L}}{2p_{11,\ast}\sqrt{\rho_{\ast L}p_{11,\ast}}}\left(\frac{\mathcal{D}_2p_{11}}{\mathcal{D}t}\right)_{\ast L} \\
&\qquad+\frac{1}{\sqrt{\rho_{\ast L}p_{11,\ast}}}\left(\frac{\mathcal{D}_2p_{12}}{\mathcal{D}t}\right)_{\ast L}
-\frac{2p_{12,\ast\ast}(\lambda_2-u_{1,\ast})}{3p_{11,\ast}^2}\left(\frac{\mathcal{D}p_{11}}{\mathcal{D}t}\right)_{\ast}.
\end{align*}
\end{lemma}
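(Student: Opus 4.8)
The plan is to read off the relation \eqref{eq1:Du2_Dp12_ast2} directly from the shear-wave jump identity \eqref{D2_Theta2ast2L_Theta2astL}, $\frac{\mathcal{D}_2\Theta_{2,\ast\ast L}}{\mathcal{D}t}=\frac{\mathcal{D}_2\Theta_{2,\ast L}}{\mathcal{D}t}$, which records that the generalized Riemann invariant $\Theta_2=u_2+\frac{p_{12}}{\sqrt{\rho p_{11}}}$ of the $2$-shear wave has a constant, hence derivative-matched, trace on the two sides of that wave. The two sides are the expansions \eqref{D2_Theta2ast2L} and \eqref{D2_Theta2astL}. On the $\ast L$ side every ingredient $\left(\frac{\mathcal{D}_2\rho}{\mathcal{D}t}\right)_{\ast L}$, $\left(\frac{\mathcal{D}_2u_2}{\mathcal{D}t}\right)_{\ast L}$, $\left(\frac{\mathcal{D}_2p_{11}}{\mathcal{D}t}\right)_{\ast L}$, $\left(\frac{\mathcal{D}_2p_{12}}{\mathcal{D}t}\right)_{\ast L}$ is already available, being assembled in Subsection~\ref{computing omega_astL} from \eqref{p11_x}, \eqref{S1_x_0beta}, Remark~\ref{computation of D2_rhoastL for 1-shock} and Theorem~\ref{theorem for omega_astL}; so this whole side is a known constant that will be absorbed into $d_{5,L}$.

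On the $\ast\ast L$ side I would first use that $\rho$ and $p_{11}$ are themselves generalized Riemann invariants of the $2$-shear wave, so $\left(\frac{\mathcal{D}_2\rho}{\mathcal{D}t}\right)_{\ast\ast L}=\left(\frac{\mathcal{D}_2\rho}{\mathcal{D}t}\right)_{\ast L}$ and $\left(\frac{\mathcal{D}_2p_{11}}{\mathcal{D}t}\right)_{\ast\ast L}=\left(\frac{\mathcal{D}_2p_{11}}{\mathcal{D}t}\right)_{\ast L}$, both now known. It remains to replace $\left(\frac{\mathcal{D}_2u_2}{\mathcal{D}t}\right)_{\ast\ast L}$ and $\left(\frac{\mathcal{D}_2p_{12}}{\mathcal{D}t}\right)_{\ast\ast L}$ by the target material derivatives $\left(\frac{\mathcal{D}u_2}{\mathcal{D}t}\right)_{\ast\ast}$ and $\left(\frac{\mathcal{D}p_{12}}{\mathcal{D}t}\right)_{\ast\ast}$ (which coincide with their $\ast\ast L$ versions since $u_2,p_{12}$ are invariants of the contact discontinuity), using \eqref{D2_u2ast2L} and \eqref{D2_p12ast2L}. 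Collecting the coefficients of $\left(\frac{\mathcal{D}u_2}{\mathcal{D}t}\right)_{\ast\ast}$ and $\left(\frac{\mathcal{D}p_{12}}{\mathcal{D}t}\right)_{\ast\ast}$ yields $a_{5,L}$ and $b_{5,L}$, while all remaining terms — the $\left(\frac{\mathcal{D}p_{11}}{\mathcal{D}t}\right)_{\ast}$ contribution generated by \eqref{D2_u2ast2L}, the (known) $\left(\frac{\mathcal{D}_2\rho}{\mathcal{D}t}\right)_{\ast L}$ and $\left(\frac{\mathcal{D}_2p_{11}}{\mathcal{D}t}\right)_{\ast L}$ pieces from both sides, and the constant $\frac{\mathcal{D}_2\Theta_{2,\ast L}}{\mathcal{D}t}$ — are moved to the right and regrouped; combining the $p_{12,\ast\ast}$ and $p_{12,\ast L}$ factors in the $\rho$- and $p_{11}$-derivative terms produces exactly the differences $p_{12,\ast\ast}-p_{12,\ast L}$ appearing in $d_{5,L}$.

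The only step that is not purely mechanical is verifying that the two coefficients collapse to the clean values $a_{5,L}=2$ and $b_{5,L}=\frac{2}{\sqrt{\rho_{\ast L}p_{11,\ast}}}$. This rests on the identity $\lambda_2-u_{1,\ast}=-\frac{c_{\ast L}}{\sqrt{3}}=-\sqrt{p_{11,\ast}/\rho_{\ast L}}$, coming from $c_{\ast L}=\sqrt{3p_{11,\ast}/\rho_{\ast L}}$: the direct contribution $1$ from $\left(\frac{\mathcal{D}_2u_2}{\mathcal{D}t}\right)_{\ast\ast L}$ and the contribution $-\frac{\rho_{\ast L}(\lambda_2-u_{1,\ast})}{\sqrt{\rho_{\ast L}p_{11,\ast}}}$ arising from $\frac{1}{\sqrt{\rho_{\ast L}p_{11,\ast}}}\left(\frac{\mathcal{D}_2p_{12}}{\mathcal{D}t}\right)_{\ast\ast L}$ are each equal to $1$, giving $a_{5,L}=2$; likewise $-\frac{\lambda_2-u_{1,\ast}}{p_{11,\ast}}=\frac{1}{\sqrt{\rho_{\ast L}p_{11,\ast}}}$, which added to the $\frac{1}{\sqrt{\rho_{\ast L}p_{11,\ast}}}$ coming from the $p_{12}$ term gives $b_{5,L}=\frac{2}{\sqrt{\rho_{\ast L}p_{11,\ast}}}$. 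With these simplifications the lemma is immediate, and the companion statement for the $5$-shear wave follows by the mirror argument with $L\leftrightarrow R$ and $\lambda_2$ replaced by $\lambda_5=u_{1,\ast}+\frac{c_{\ast R}}{\sqrt{3}}$.
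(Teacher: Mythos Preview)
Your proposal is correct and follows exactly the same approach as the paper's proof, which simply says: substitute \eqref{D2_Theta2ast2L} and \eqref{D2_Theta2astL} into \eqref{D2_Theta2ast2L_Theta2astL} and utilize \eqref{D2_u2ast2L} and \eqref{D2_p12ast2L}. You have additionally spelled out the coefficient verification via $\lambda_2-u_{1,\ast}=-\sqrt{p_{11,\ast}/\rho_{\ast L}}$, which the paper leaves implicit.
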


\begin{proof}
Substituting \eqref{D2_Theta2ast2L} and \eqref{D2_Theta2astL} into \eqref{D2_Theta2ast2L_Theta2astL} and utilizing \eqref{D2_u2ast2L} and \eqref{D2_p12ast2L} may obtain \eqref{eq1:Du2_Dp12_ast2}.
\end{proof}

If denoting $\Theta_5:=u_2-\frac{p_{12}}{\sqrt{\rho p_{11}}}$ and $\frac{\mathcal{D}_5}{\mathcal{D}t}:=\frac{\partial}{\partial t}+\lambda_5\frac{\partial}{\partial x}$ with $\lambda_5=u_{1,\ast}+\frac{c_{\ast R}}{\sqrt{3}}$, then
\[
\text{d}\Theta_5=\frac{p_{12}}{2\rho\sqrt{\rho p_{11}}}\text{d}\rho+\text{d}u_2+\frac{p_{12}}{2p_{11}\sqrt{\rho p_{11}}}\text{d}p_{11}-\frac{1}{\sqrt{\rho p_{11}}}\text{d}p_{12}.
\]
Because $\Theta_5$ is a generalized Riemann invariant associated with the 5-shear wave, one has
\begin{equation*}
\frac{\mathcal{D}_5\Theta_{5,\ast\ast R}}{\mathcal{D}t}=\frac{\mathcal{D}_5\Theta_{5,\ast R}}{\mathcal{D}t}.
\end{equation*}
With similar derivation to build \eqref{eq1:Du2_Dp12_ast2}, one can obtain the following result.

\begin{lemma}
Assume that the 5-shear wave associated with $u_1+\frac{c}{\sqrt{3}}$ moves to the right. Then the limiting values $\left(\frac{\mathcal{D}u_2}{\mathcal{D}t}\right)_{\ast\ast}$ and $\left(\frac{\mathcal{D}p_{12}}{\mathcal{D}t}\right)_{\ast\ast}$ satisfy
\begin{equation}\label{eq2:Du2_Dp12_ast2}
a_{5,R}\left(\frac{\mathcal{D}u_2}{\mathcal{D}t}\right)_{\ast\ast}+b_{5,R}\left(\frac{\mathcal{D}p_{12}}{\mathcal{D}t}\right)_{\ast\ast}=d_{5,R},
\end{equation}
where
\begin{align*}
&a_{5,R}=2, \quad b_{5,R}=-\frac{2}{\sqrt{\rho_{\ast R}p_{11,\ast}}}, \\
&d_{5,R}=\frac{p_{12,\ast R}-p_{12,\ast\ast}}{2\rho_{\ast R}\sqrt{\rho_{\ast R}p_{11,\ast}}}\left(\frac{\mathcal{D}_5\rho}{\mathcal{D}t}\right)_{\ast R}+\left(\frac{\mathcal{D}_5u_2}{\mathcal{D}t}\right)_{\ast R}
+\frac{p_{12,\ast R}-p_{12,\ast\ast}}{2p_{11,\ast}\sqrt{\rho_{\ast R}p_{11,\ast}}}\left(\frac{\mathcal{D}_5p_{11}}{\mathcal{D}t}\right)_{\ast R} \\
&\qquad-\frac{1}{\sqrt{\rho_{\ast R}p_{11,\ast}}}\left(\frac{\mathcal{D}_5p_{12}}{\mathcal{D}t}\right)_{\ast R}
-\frac{2p_{12,\ast\ast}(\lambda_5-u_{1,\ast})}{3p_{11,\ast}^2}\left(\frac{\mathcal{D}p_{11}}{\mathcal{D}t}\right)_{\ast}
\end{align*}
with the computation of $\left(\frac{\mathcal{D}_5\rho}{\mathcal{D}t}\right)_{\ast R}$ being similar to $\left(\frac{\mathcal{D}_2\rho}{\mathcal{D}t}\right)_{\ast L}$ for the 1-shock wave case, see {\rm Remark \ref{computation of D2_rhoastL for 1-shock}}, and
\begin{align*}
&\left(\frac{\mathcal{D}_5p_{11}}{\mathcal{D}t}\right)_{\ast R}=\left(\frac{\mathcal{D}p_{11}}{\mathcal{D}t}\right)_{\ast}-\rho_{\ast R}(\lambda_5-u_{1,\ast})\left[\left(\frac{\mathcal{D}u_1}{\mathcal{D}t}\right)_{\ast}+\frac{1}{2}W_x(0)\right], \\
&\left(\frac{\mathcal{D}_5u_2}{\mathcal{D}t}\right)_{\ast R}=\left(\frac{\mathcal{D}u_2}{\mathcal{D}t}\right)_{\ast R}-
\frac{\lambda_5-u_{1,\ast}}{p_{11,\ast}}\left[\left(\frac{\mathcal{D}p_{12}}{\mathcal{D}t}\right)_{\ast R}-
\frac{2p_{12,\ast R}}{3p_{11,\ast}}\left(\frac{\mathcal{D}p_{11}}{\mathcal{D}t}\right)_{\ast}\right], \\
&\left(\frac{\mathcal{D}_5p_{12}}{\mathcal{D}t}\right)_{\ast R}=\left(\frac{\mathcal{D}p_{12}}{\mathcal{D}t}\right)_{\ast R}-\rho_{\ast R}(\lambda_5-u_{1,\ast})\left(\frac{\mathcal{D}u_2}{\mathcal{D}t}\right)_{\ast R}.
\end{align*}
\end{lemma}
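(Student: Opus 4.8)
The plan is to mirror, step for step, the derivation of \eqref{eq1:Du2_Dp12_ast2} for the $2$-shear wave, now working on the right side of the contact discontinuity along the $5$-characteristic direction $\mathcal{D}_5/\mathcal{D}t=\partial_t+\lambda_5\partial_x$ with $\lambda_5=u_{1,\ast}+c_{\ast R}/\sqrt{3}$. First I would take the total differential of $\Theta_5:=u_2-p_{12}/\sqrt{\rho p_{11}}$ displayed above and rewrite it as a relation among the $\mathcal{D}_5$-derivatives of $\rho$, $u_2$, $p_{11}$ and $p_{12}$. Evaluating the one-sided limits on the two faces of the $5$-shear wave and using that $\Theta_5$ is a generalized Riemann invariant there gives $\mathcal{D}_5\Theta_{5,\ast\ast R}/\mathcal{D}t=\mathcal{D}_5\Theta_{5,\ast R}/\mathcal{D}t$. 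Since $\rho$ and $p_{11}$ are generalized Riemann invariants of the $5$-shear wave as well, their $\mathcal{D}_5$-derivatives coincide on the two faces (and $\rho_{\ast\ast R}=\rho_{\ast R}$, $p_{11,\ast\ast R}=p_{11,\ast}$), so the $\rho$- and $p_{11}$-derivative terms combine into the coefficients $\frac{p_{12,\ast R}-p_{12,\ast\ast}}{2\rho_{\ast R}\sqrt{\rho_{\ast R}p_{11,\ast}}}$ and $\frac{p_{12,\ast R}-p_{12,\ast\ast}}{2p_{11,\ast}\sqrt{\rho_{\ast R}p_{11,\ast}}}$ multiplying already-known quantities, leaving only the $u_2$- and $p_{12}$-derivative terms to be resolved.

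Next I would convert the $\mathcal{D}_5$-derivatives of $u_2$ and $p_{12}$ at the $\ast\ast R$ state into the material derivatives $\left(\frac{\mathcal{D}u_2}{\mathcal{D}t}\right)_{\ast\ast}$ and $\left(\frac{\mathcal{D}p_{12}}{\mathcal{D}t}\right)_{\ast\ast}$ via \eqref{u2_x} and \eqref{p12_x}, exactly as in the identities \eqref{D2_u2ast2L} and \eqref{D2_p12ast2L} but with $\lambda_5$ and the $\ast\ast R$ data; here I also use that $u_2$ and $p_{12}$ are generalized Riemann invariants of the contact discontinuity, so their $\ast\ast L$ and $\ast\ast R$ limits agree. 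Collecting the coefficients of $\left(\frac{\mathcal{D}u_2}{\mathcal{D}t}\right)_{\ast\ast}$ and $\left(\frac{\mathcal{D}p_{12}}{\mathcal{D}t}\right)_{\ast\ast}$ and pushing everything else to the right-hand side produces the claimed equation; the clean coefficients hinge on the identity $\lambda_5-u_{1,\ast}=c_{\ast R}/\sqrt{3}=\sqrt{p_{11,\ast}/\rho_{\ast R}}$ (the mirror of $\lambda_2-u_{1,\ast}=-\sqrt{p_{11,\ast}/\rho_{\ast L}}$ used for $a_{5,L}$), which forces $a_{5,R}=2$ and $b_{5,R}=-2/\sqrt{\rho_{\ast R}p_{11,\ast}}$ exactly. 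Every ingredient of $d_{5,R}$ is already available: $\left(\frac{\mathcal{D}_5p_{11}}{\mathcal{D}t}\right)_{\ast R}$, $\left(\frac{\mathcal{D}_5u_2}{\mathcal{D}t}\right)_{\ast R}$ and $\left(\frac{\mathcal{D}_5p_{12}}{\mathcal{D}t}\right)_{\ast R}$ are listed in the statement, $\left(\frac{\mathcal{D}p_{11}}{\mathcal{D}t}\right)_{\ast}$ comes from \eqref{Du1_Dp11_Dt_ast}, and $\left(\frac{\mathcal{D}_5\rho}{\mathcal{D}t}\right)_{\ast R}$ is obtained as in Remark \ref{computation of D2_rhoastL for 1-shock} except that the right nonlinear wave is a $6$-shock, so one uses $\left(\frac{\mathcal{D}_{\sigma_R}\rho}{\mathcal{D}t}\right)_{\ast R}$ from \eqref{DsigmaR_rhoastR} together with the splitting $\mathcal{D}_5=\left(1-\frac{\lambda_5}{\sigma_R}\right)\partial_t+\frac{\lambda_5}{\sigma_R}\mathcal{D}_{\sigma_R}$.

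I expect the main obstacle to be organizational rather than conceptual: keeping the bookkeeping straight among the nearby states $\mathbf{V}_{\ast R}$, $\mathbf{V}_{\ast\ast R}$ and the common contact-discontinuity values, remembering that $\rho$ and $p_{11}$ — but not $u_2$, $p_{12}$, $p_{22}$ — are continuous across the $5$-shear wave, and tracking the signs generated by $\mathcal{D}_5-\mathcal{D}=(\lambda_5-u_1)\partial_x$ together with the minus signs in \eqref{u2_x} and \eqref{p12_x}. The one spot to double-check carefully is the exact cancellation yielding the coefficients $2$ and $-2/\sqrt{\rho_{\ast R}p_{11,\ast}}$: once $\lambda_5-u_{1,\ast}=\sqrt{p_{11,\ast}/\rho_{\ast R}}$ is in hand this is immediate, but it is the single place where the $5$-shear computation differs in sign pattern from the $2$-shear one, so it merits an explicit check.
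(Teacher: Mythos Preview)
Your proposal is correct and follows exactly the approach the paper intends: the paper's own proof is the one-line remark ``With similar derivation to build \eqref{eq1:Du2_Dp12_ast2}, one can obtain the following result,'' and your plan spells out precisely that mirror argument (differentiate $\Theta_5$ along $\mathcal{D}_5$, use that $\Theta_5,\rho,p_{11}$ are $5$-shear Riemann invariants, convert $\mathcal{D}_5$-derivatives to material derivatives via \eqref{u2_x}--\eqref{p12_x}, and simplify with $\lambda_5-u_{1,\ast}=\sqrt{p_{11,\ast}/\rho_{\ast R}}$). Your remark on obtaining $\left(\frac{\mathcal{D}_5\rho}{\mathcal{D}t}\right)_{\ast R}$ via the $\sigma_R$-splitting is also exactly what the paper does in Remark~\ref{computation of D2_rhoastL for 1-shock} transposed to the right side.
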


\begin{remark}
For the 6-rarefaction wave case,  {similar to the 1-rarefaction wave case, one has
\[
\left(\frac{\partial\rho}{\partial x}\right)_{\ast R}=
-\frac{\rho_{\ast R}}{c_{\ast R}^2}\left[\left(\frac{\mathcal{D}u_1}{\mathcal{D}t}\right)_{\ast}+\frac{1}{2}W_x(0)\right]-\frac{\rho_{\ast R}^3S_{1,R}'}{c_{\ast R}c_R},
\]
where $S_{1,R}'=\frac{1}{\rho_R^3}(p_{11,R}'-c_R^2\rho_R')$.
}
Thus the value of $\left(\frac{\mathcal{D}_5\rho}{\mathcal{D}t}\right)_{\ast R}=\left(\frac{\partial\rho}{\partial t}\right)_{\ast R}+\lambda_5\left(\frac{\partial\rho}{\partial x}\right)_{\ast R}$ can be obtained.
\end{remark}

\begin{theorem}[Computing  $\left(\frac{\partial u_2}{\partial t}\right)_{\ast\ast K}$ and $\left(\frac{\partial p_{12}}{\partial t}\right)_{\ast\ast K}$ with $K=L,R$]\label{theorem:u2_p12_t_2astK}
Assume that the 2-shear wave associated with $u_1-\frac{c}{\sqrt{3}}$ moves to the left and the 5-shear wave associated with $u_1+\frac{c}{\sqrt{3}}$ moves to the right. Then one has
\begin{equation}
\begin{split}
&\left(\frac{\mathcal{D}u_2}{\mathcal{D}t}\right)_{\ast\ast}=\frac{d_{5,L}b_{5,R}-d_{5,R}b_{5,L}}{a_{5,L}b_{5,R}-a_{5,R}b_{5,L}}, \\
&\left(\frac{\mathcal{D}p_{12}}{\mathcal{D}t}\right)_{\ast\ast}=\frac{d_{5,L}a_{5,R}-d_{5,R}a_{5,L}}{a_{5,R}b_{5,L}-a_{5,L}b_{5,R}}.
\end{split}
\label{Du2_Dp12_ast2}
\end{equation}
It follows that
\begin{equation}
\begin{split}
&\left(\frac{\partial u_2}{\partial t}\right)_{\ast\ast K}=\left(\frac{\mathcal{D}u_2}{\mathcal{D}t}\right)_{\ast\ast}+
\frac{u_{1,\ast}}{p_{11,\ast}}\left[\left(\frac{\mathcal{D}p_{12}}{\mathcal{D}t}\right)_{\ast\ast}-\frac{2p_{12,\ast\ast}}{3p_{11,\ast}}
\left(\frac{\mathcal{D}p_{11}}{\mathcal{D}t}\right)_{\ast}\right], \\
&\left(\frac{\partial p_{12}}{\partial t}\right)_{\ast\ast K}=\left(\frac{\mathcal{D}p_{12}}{\mathcal{D}t}\right)_{\ast\ast}+\rho_{\ast K}u_{1,\ast}\left(\frac{\mathcal{D}u_2}{\mathcal{D}t}\right)_{\ast\ast},
\end{split}
\label{u2_p12_t_ast2K}
\end{equation}
with $K=L,R$. 
\end{theorem}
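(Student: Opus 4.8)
The plan is to regard Theorem \ref{theorem:u2_p12_t_2astK} as the assembly step that couples the two shear-wave resolutions. The two preceding lemmas already furnish \eqref{eq1:Du2_Dp12_ast2} and \eqref{eq2:Du2_Dp12_ast2}, a pair of linear equations in the two unknowns $\left(\frac{\mathcal{D}u_2}{\mathcal{D}t}\right)_{\ast\ast}$ and $\left(\frac{\mathcal{D}p_{12}}{\mathcal{D}t}\right)_{\ast\ast}$; the absence of a $K$-subscript on these material derivatives is legitimate because $u_2$ and $p_{12}$ are generalized Riemann invariants of the contact discontinuity, so their material derivatives cannot jump across it. First I would collect these into the $2\times2$ system
\[
\begin{pmatrix} a_{5,L} & b_{5,L}\\ a_{5,R} & b_{5,R}\end{pmatrix}\begin{pmatrix}\left(\frac{\mathcal{D}u_2}{\mathcal{D}t}\right)_{\ast\ast}\\ \left(\frac{\mathcal{D}p_{12}}{\mathcal{D}t}\right)_{\ast\ast}\end{pmatrix}=\begin{pmatrix} d_{5,L}\\ d_{5,R}\end{pmatrix}.
\]

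Second, I would verify its unique solvability. Since $a_{5,L}=a_{5,R}=2$ while $b_{5,L}=\frac{2}{\sqrt{\rho_{\ast L}p_{11,\ast}}}>0$ and $b_{5,R}=-\frac{2}{\sqrt{\rho_{\ast R}p_{11,\ast}}}<0$, the determinant is $a_{5,L}b_{5,R}-a_{5,R}b_{5,L}=-4\left(\frac{1}{\sqrt{\rho_{\ast L}p_{11,\ast}}}+\frac{1}{\sqrt{\rho_{\ast R}p_{11,\ast}}}\right)$, which is strictly negative because $\rho_{\ast L},\rho_{\ast R},p_{11,\ast}>0$ (physical admissibility is preserved by the rarefaction/shock resolutions that produced these intermediate states). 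Cramer's rule then gives \eqref{Du2_Dp12_ast2} verbatim.

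Third, to pass from material to Eulerian time derivatives I would invoke the identities \eqref{u2_p12_t}, evaluated at the intermediate state $\mathbf{V}_{\ast\ast K}$, i.e. with $u_1=u_{1,\ast}$, $p_{11}=p_{11,\ast}$, $p_{12}=p_{12,\ast\ast}$ and $\rho=\rho_{\ast K}$ (recalling $\rho_{\ast\ast L}=\rho_{\ast L}$ and $\rho_{\ast\ast R}=\rho_{\ast R}$ from the structure of the Riemann fan). Here one also uses $\left(\frac{\mathcal{D}p_{11}}{\mathcal{D}t}\right)_{\ast\ast}=\left(\frac{\mathcal{D}p_{11}}{\mathcal{D}t}\right)_{\ast}$, established in the remark following \eqref{Du1_p11_astK=ast2K}. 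Substituting \eqref{Du2_Dp12_ast2} into \eqref{u2_p12_t} produces \eqref{u2_p12_t_ast2K}, the only $K$-dependence being the factor $\rho_{\ast K}$ multiplying $\left(\frac{\mathcal{D}u_2}{\mathcal{D}t}\right)_{\ast\ast}$ in the expression for $\left(\frac{\partial p_{12}}{\partial t}\right)_{\ast\ast K}$.

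The argument is essentially mechanical once the two lemmas are in hand, and I do not anticipate a genuine analytic obstacle. The only points needing care are the non-degeneracy check above (without which \eqref{Du2_Dp12_ast2} would be meaningless) and the bookkeeping of which quantities are continuous across the contact discontinuity ($u_1$, $p_{11}$, $u_2$, $p_{12}$ and their material derivatives) as opposed to $\rho$, which is not. The real effort was spent earlier in deriving the coefficients $a_{5,K}$, $b_{5,K}$, $d_{5,K}$ from the shear-wave invariants.
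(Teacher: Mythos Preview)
Your proposal is correct and follows the same approach as the paper: the paper's proof simply states that \eqref{Du2_Dp12_ast2} is a direct consequence of \eqref{eq1:Du2_Dp12_ast2} and \eqref{eq2:Du2_Dp12_ast2}, and that \eqref{u2_p12_t_ast2K} then follows by applying \eqref{u2_p12_t}. Your write-up in fact adds useful details the paper omits, namely the explicit non-degeneracy check on the determinant $a_{5,L}b_{5,R}-a_{5,R}b_{5,L}$ and the careful bookkeeping of which intermediate quantities are continuous across the contact discontinuity.
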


\begin{proof}
\eqref{Du2_Dp12_ast2} is the direct result  from \eqref{eq1:Du2_Dp12_ast2} and \eqref{eq2:Du2_Dp12_ast2}. Applying \eqref{u2_p12_t} may obtain \eqref{u2_p12_t_ast2K}.
\end{proof}

Because $\det(\mathbf{p})$ is a generalized Riemann invariant for the 2-shear wave, one has
\begin{equation}\label{D2_past2L_pastL}
\frac{\mathcal{D}_2\det(\mathbf{p}_{\ast\ast L})}{\mathcal{D}t}=\frac{\mathcal{D}_2\det(\mathbf{p}_{\ast L})}{\mathcal{D}t}.
\end{equation}
Due to $\text{d}(\det(\mathbf{p}))=p_{22}\text{d}p_{11}-2p_{12}\text{d}p_{12}+p_{11}\text{d}p_{22}$, one has
\begin{equation}
\begin{split}
&\frac{\mathcal{D}_2\det(\mathbf{p}_{\ast\ast L})}{\mathcal{D}t}=p_{22,\ast\ast L}\left(\frac{\mathcal{D}_2p_{11}}{\mathcal{D}t}\right)_{\ast\ast L}-2p_{12,\ast\ast}\left(\frac{\mathcal{D}_2p_{12}}{\mathcal{D}t}\right)_{\ast\ast L}+p_{11,\ast}\left(\frac{\mathcal{D}_2p_{22}}{\mathcal{D}t}\right)_{\ast\ast L}, \\
&\frac{\mathcal{D}_2\det(\mathbf{p}_{\ast L})}{\mathcal{D}t}=p_{22,\ast L}\left(\frac{\mathcal{D}_2p_{11}}{\mathcal{D}t}\right)_{\ast L}-2p_{12,\ast L}\left(\frac{\mathcal{D}_2p_{12}}{\mathcal{D}t}\right)_{\ast L}+p_{11,\ast}\left(\frac{\mathcal{D}_2p_{22}}{\mathcal{D}t}\right)_{\ast L}.
\end{split}
\label{D2_detp_ast2L_astL}
\end{equation}
Using \eqref{eq:p22}, \eqref{u1_x} and \eqref{u2_x} yields
\begin{equation}
\begin{split}
&u_{1,\ast}\left(\frac{\mathcal{D}_2p_{22}}{\mathcal{D}t}\right)_{\ast\ast L}=(u_{1,\ast}-\lambda_2)\left(\frac{\partial p_{22}}{\partial t}\right)_{\ast\ast L}+\frac{\lambda_2(p_{11,\ast}p_{22,\ast\ast L}-4p_{12,\ast\ast}^2)}{3p_{11,\ast}^2}\left(\frac{\mathcal{D}p_{11}}{\mathcal{D}t}\right)_{\ast}+\frac{2\lambda_2p_{12,\ast\ast}}{p_{11,\ast}}
\left(\frac{\mathcal{D}p_{12}}{\mathcal{D}t}\right)_{\ast\ast}, \\
&u_{1,\ast}\left(\frac{\mathcal{D}_2p_{22}}{\mathcal{D}t}\right)_{\ast L}=(u_{1,\ast}-\lambda_2)\left(\frac{\partial p_{22}}{\partial t}\right)_{\ast L}+\frac{\lambda_2(p_{11,\ast}p_{22,\ast L}-4p_{12,\ast L}^2)}{3p_{11,\ast}^2}\left(\frac{\mathcal{D}p_{11}}{\mathcal{D}t}\right)_{\ast}+\frac{2\lambda_2p_{12,\ast L}}{p_{11,\ast}}
\left(\frac{\mathcal{D}p_{12}}{\mathcal{D}t}\right)_{\ast L}.
\end{split}
\label{u1ast_D2_p22_ast2L_astL}
\end{equation}
Similarly, one has
\begin{align*}
&u_{1,\ast}\left(\frac{\mathcal{D}_5p_{22}}{\mathcal{D}t}\right)_{\ast\ast R}=(u_{1,\ast}-\lambda_5)\left(\frac{\partial p_{22}}{\partial t}\right)_{\ast\ast R}+\frac{\lambda_5(p_{11,\ast}p_{22,\ast\ast R}-4p_{12,\ast\ast}^2)}{3p_{11,\ast}^2}\left(\frac{\mathcal{D}p_{11}}{\mathcal{D}t}\right)_{\ast}+\frac{2\lambda_5p_{12,\ast\ast}}{p_{11,\ast}}
\left(\frac{\mathcal{D}p_{12}}{\mathcal{D}t}\right)_{\ast\ast}, \\ 
&u_{1,\ast}\left(\frac{\mathcal{D}_5p_{22}}{\mathcal{D}t}\right)_{\ast R}=(u_{1,\ast}-\lambda_5)\left(\frac{\partial p_{22}}{\partial t}\right)_{\ast R}+\frac{\lambda_5(p_{11,\ast}p_{22,\ast R}-4p_{12,\ast R}^2)}{3p_{11,\ast}^2}\left(\frac{\mathcal{D}p_{11}}{\mathcal{D}t}\right)_{\ast}+\frac{2\lambda_5p_{12,\ast R}}{p_{11,\ast}}
\left(\frac{\mathcal{D}p_{12}}{\mathcal{D}t}\right)_{\ast R}. 
\end{align*}

\begin{theorem}[Computing $\left(\frac{\partial p_{22}}{\partial t}\right)_{\ast\ast K}$ $(K=L,R)$]\label{theorem:p22_t_2astK}
Assume that the 2-shear wave associated with $u_1-\frac{c}{\sqrt{3}}$ moves to the left and the 5-shear wave associated with $u_1+\frac{c}{\sqrt{3}}$ moves to the right. Then $\left(\frac{\partial p_{22}}{\partial t}\right)_{\ast\ast K}$  satisfies
\begin{equation}\label{p22_t_ast2K}
g_{p_{22}}^{\ast\ast K}\left(\frac{\partial p_{22}}{\partial t}\right)_{\ast\ast K}=f_{p_{22}}^{\ast\ast K},
\ K=L,R,
\end{equation}
where
\begin{align*}
&g_{p_{22}}^{\ast\ast L}=p_{11,\ast}(u_{1,\ast}-\lambda_2), \quad g_{p_{22}}^{\ast\ast R}=p_{11,\ast}(u_{1,\ast}-\lambda_5), \\
&f_{p_{22}}^{\ast\ast K}=u_{1,\ast}\cdot\widetilde{f}_{p_{22}}^{\ast\ast K}+p_{11,\ast}\cdot\widehat{f}_{p_{22}}^{\ast\ast K}, \quad K=L,R,
\end{align*}
and
\begin{align*}
&\widetilde{f}_{p_{22}}^{\ast\ast L}=(p_{22,\ast L}-p_{22,\ast\ast L})\left(\frac{\mathcal{D}_2p_{11}}{\mathcal{D}t}\right)_{\ast L}-2p_{12,\ast L}\left(\frac{\mathcal{D}_2p_{12}}{\mathcal{D}t}\right)_{\ast L}+2p_{12,\ast\ast}\left(\frac{\mathcal{D}_2p_{12}}{\mathcal{D}t}\right)_{\ast\ast L}, \\
&\widetilde{f}_{p_{22}}^{\ast\ast R}=(p_{22,\ast R}-p_{22,\ast\ast R})\left(\frac{\mathcal{D}_5p_{11}}{\mathcal{D}t}\right)_{\ast R}-2p_{12,\ast R}\left(\frac{\mathcal{D}_5p_{12}}{\mathcal{D}t}\right)_{\ast R}+2p_{12,\ast\ast}\left(\frac{\mathcal{D}_5p_{12}}{\mathcal{D}t}\right)_{\ast\ast R}, \\
&\widehat{f}_{p_{22}}^{\ast\ast L}=(u_{1,\ast}-\lambda_2)\left(\frac{\partial p_{22}}{\partial t}\right)_{\ast L}+
\frac{\lambda_2}{3p_{11,\ast}^2}(p_{11,\ast}p_{22,\ast L}-4p_{12,\ast L}^2-p_{11,\ast}p_{22,\ast\ast L}+4p_{12,\ast\ast}^2)\left(\frac{\mathcal{D}p_{11}}{\mathcal{D}t}\right)_{\ast} \\
&\qquad+\frac{2\lambda_2}{p_{11,\ast}}\left[p_{12,\ast L}\left(\frac{\mathcal{D}p_{12}}{\mathcal{D}t}\right)_{\ast L}-p_{12,\ast\ast}\left(\frac{\mathcal{D}p_{12}}{\mathcal{D}t}\right)_{\ast\ast}\right], \\
&\widehat{f}_{p_{22}}^{\ast\ast R}=(u_{1,\ast}-\lambda_5)\left(\frac{\partial p_{22}}{\partial t}\right)_{\ast R}+
\frac{\lambda_5}{3p_{11,\ast}^2}(p_{11,\ast}p_{22,\ast R}-4p_{12,\ast R}^2-p_{11,\ast}p_{22,\ast\ast R}+4p_{12,\ast\ast}^2)\left(\frac{\mathcal{D}p_{11}}{\mathcal{D}t}\right)_{\ast} \\
&\qquad+\frac{2\lambda_5}{p_{11,\ast}}\left[p_{12,\ast R}\left(\frac{\mathcal{D}p_{12}}{\mathcal{D}t}\right)_{\ast R}-p_{12,\ast\ast}\left(\frac{\mathcal{D}p_{12}}{\mathcal{D}t}\right)_{\ast\ast}\right],
\end{align*}
with
\[
\left(\frac{\mathcal{D}_5p_{12}}{\mathcal{D}t}\right)_{\ast\ast R}=\left(\frac{\mathcal{D}p_{12}}{\mathcal{D}t}\right)_{\ast\ast}-\rho_{\ast R}(\lambda_5-u_{1,\ast})\left(\frac{\mathcal{D}u_2}{\mathcal{D}t}\right)_{\ast\ast}.
\]
\end{theorem}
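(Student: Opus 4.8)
The plan is to mirror the derivation of $\left(\frac{\partial p_{22}}{\partial t}\right)_{\ast R}$ in Theorem~\ref{theorem:u2_p12_p22_t_astR}, this time exploiting that $\det(\mathbf{p})$ is a generalized Riemann invariant of the $2$-shear wave for $K=L$ and of the $5$-shear wave for $K=R$. I will describe $K=L$; the case $K=R$ is identical after replacing $L$ by $R$, $\lambda_2$ by $\lambda_5$, and the $2$-shear wave by the $5$-shear wave, and using the $\frac{\mathcal{D}_5\,\cdot}{\mathcal{D}t}$-identities (for $\rho$, $p_{11}$, $u_2$, $p_{12}$ at $\ast R$ and $\ast\ast R$) collected just before the theorem statement.

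First I would take the invariance relation \eqref{D2_past2L_pastL}, expand both of its sides via the total differential $\text{d}(\det(\mathbf{p}))=p_{22}\,\text{d}p_{11}-2p_{12}\,\text{d}p_{12}+p_{11}\,\text{d}p_{22}$, i.e.\ via \eqref{D2_detp_ast2L_astL}, and use the already established equality $\left(\frac{\mathcal{D}_2p_{11}}{\mathcal{D}t}\right)_{\ast\ast L}=\left(\frac{\mathcal{D}_2p_{11}}{\mathcal{D}t}\right)_{\ast L}$ to merge the two $p_{11}$-terms into $(p_{22,\ast L}-p_{22,\ast\ast L})\left(\frac{\mathcal{D}_2p_{11}}{\mathcal{D}t}\right)_{\ast L}$. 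Next I would multiply the whole identity by $u_{1,\ast}$ and replace the two occurrences of $u_{1,\ast}\left(\frac{\mathcal{D}_2p_{22}}{\mathcal{D}t}\right)$ by their expressions \eqref{u1ast_D2_p22_ast2L_astL} in terms of $\left(\frac{\partial p_{22}}{\partial t}\right)$, $\left(\frac{\mathcal{D}p_{11}}{\mathcal{D}t}\right)_{\ast}$ and $\left(\frac{\mathcal{D}p_{12}}{\mathcal{D}t}\right)$. The term $u_{1,\ast}p_{11,\ast}\left(\frac{\mathcal{D}_2p_{22}}{\mathcal{D}t}\right)_{\ast\ast L}$ then produces the coefficient $p_{11,\ast}(u_{1,\ast}-\lambda_2)=g_{p_{22}}^{\ast\ast L}$ in front of $\left(\frac{\partial p_{22}}{\partial t}\right)_{\ast\ast L}$, and moving everything else to the right-hand side leaves a single linear equation for that unknown.

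Finally I would sort the right-hand side into two groups: the terms still carrying an explicit factor $u_{1,\ast}$ — the images of the $p_{22}\,\text{d}p_{11}$ and $p_{12}\,\text{d}p_{12}$ parts of the $\det$-expansion, which assemble into $u_{1,\ast}\widetilde{f}_{p_{22}}^{\ast\ast L}$ — and the terms coming out of \eqref{u1ast_D2_p22_ast2L_astL} applied to the $p_{11}\,\text{d}p_{22}$ part, which all carry an overall factor $p_{11,\ast}$ and assemble into $p_{11,\ast}\widehat{f}_{p_{22}}^{\ast\ast L}$; this yields \eqref{p22_t_ast2K}. Everything appearing on the right is already available: $\left(\frac{\partial p_{22}}{\partial t}\right)_{\ast L}$ from Theorem~\ref{theorem for omega_astL}, $\left(\frac{\mathcal{D}p_{11}}{\mathcal{D}t}\right)_{\ast}$ from Theorem~\ref{theorem:u1_p11_rho_t_astK}, $\left(\frac{\mathcal{D}p_{12}}{\mathcal{D}t}\right)_{\ast\ast}$ and $\left(\frac{\mathcal{D}u_2}{\mathcal{D}t}\right)_{\ast\ast}$ from Theorem~\ref{theorem:u2_p12_t_2astK}, and $\left(\frac{\mathcal{D}_2p_{11}}{\mathcal{D}t}\right)_{\ast L}$, $\left(\frac{\mathcal{D}_2p_{12}}{\mathcal{D}t}\right)_{\ast L}$, $\left(\frac{\mathcal{D}_2p_{12}}{\mathcal{D}t}\right)_{\ast\ast L}$ from the identities preceding the theorem.

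I expect no conceptual obstacle: the relation is linear in the one unknown and its solvability is immediate, since $g_{p_{22}}^{\ast\ast L}=p_{11,\ast}(u_{1,\ast}-\lambda_2)=p_{11,\ast}c_{\ast L}/\sqrt3\neq 0$. The only real work is bookkeeping — tracking which contributions multiply $u_{1,\ast}$ versus $p_{11,\ast}$ so that the right-hand side splits exactly as $u_{1,\ast}\widetilde{f}_{p_{22}}^{\ast\ast K}+p_{11,\ast}\widehat{f}_{p_{22}}^{\ast\ast K}$, and making sure the hidden quantity $\left(\frac{\mathcal{D}_2\rho}{\mathcal{D}t}\right)_{\ast L}$ that enters through $\left(\frac{\mathcal{D}_2u_2}{\mathcal{D}t}\right)_{\ast L}$ (hence through $\left(\frac{\mathcal{D}_2p_{12}}{\mathcal{D}t}\right)_{\ast L}$) is read from the correct branch — the rarefaction formula built from \eqref{dp11} and \eqref{S1_x_0beta} when the left wave is a $1$-rarefaction, or the one in Remark~\ref{computation of D2_rhoastL for 1-shock} when it is a $1$-shock.
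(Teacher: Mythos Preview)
Your proposal is correct and follows essentially the same approach as the paper: multiply the invariance relation \eqref{D2_past2L_pastL} by $u_{1,\ast}$, substitute the expansion \eqref{D2_detp_ast2L_astL} of $\mathcal{D}_2\det(\mathbf{p})/\mathcal{D}t$, and replace $u_{1,\ast}\left(\mathcal{D}_2p_{22}/\mathcal{D}t\right)$ via \eqref{u1ast_D2_p22_ast2L_astL} to isolate $\left(\partial p_{22}/\partial t\right)_{\ast\ast L}$; the case $K=R$ is handled symmetrically through the $5$-shear wave. One small inaccuracy in your closing remarks: $\left(\mathcal{D}_2p_{12}/\mathcal{D}t\right)_{\ast L}$ depends only on $\left(\mathcal{D}p_{12}/\mathcal{D}t\right)_{\ast L}$ and $\left(\mathcal{D}u_2/\mathcal{D}t\right)_{\ast L}$, not on $\left(\mathcal{D}_2\rho/\mathcal{D}t\right)_{\ast L}$, so the latter does not actually enter this theorem.
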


\begin{proof}
Multiplying both sides of \eqref{D2_past2L_pastL} by $u_{1,\ast}$ gets
\begin{equation}\label{u1ast_D2_past2L_pastL}
u_{1,\ast}\frac{\mathcal{D}_2\det(\mathbf{p}_{\ast\ast L})}{\mathcal{D}t}=u_{1,\ast}\frac{\mathcal{D}_2\det(\mathbf{p}_{\ast L})}{\mathcal{D}t}.
\end{equation}
Then substituting \eqref{D2_detp_ast2L_astL} into \eqref{u1ast_D2_past2L_pastL}, and utilizing \eqref{u1ast_D2_p22_ast2L_astL} yields \eqref{p22_t_ast2K} for $K=L$. Because $\det(\mathbf{p})$ is also a generalized Riemann invariant for the 5-shear wave, one can also complete the proof for $K=R$ with similar derivation.
\end{proof}

\subsection{Sonic case}\label{sonic case}

When the $t$-axis is located inside a rarefaction fan, the sonic case happens, and the results for the nonsonic case can not be applied directly. Without loss of generality, consider the local wave configuration where $t$-axis is within the 1-rarefaction wave. The local characteristic
coordinates $(\alpha,\beta)$ within the 1-rarefaction wave are also introduced similar to those in Subsection \ref{computation_u1t_p11t_rhot_astK}.

Denote $\phi:=u_1-c$, then
\begin{equation}\label{dphi}
\text{d}\phi=\text{d}u_1+\frac{c}{2\rho}\text{d}\rho-\frac{3}{2\rho c}\text{d}p_{11}=\text{d}u_1-\frac{1}{\rho c}\text{d}p_{11}-\frac{\rho^2}{2c}\text{d}S_1,
\end{equation}
where \eqref{dp11} has been used. By \eqref{eq:rho}, \eqref{eq:u1} and \eqref{eq:p11}, one has
\begin{equation*}\label{eq:phi}
\frac{\partial\phi}{\partial t}+(u_1-c)\frac{\partial\phi}{\partial x}=\Pi_1,
\end{equation*}
where   $\Pi_1=\frac{\rho^2}{2}\frac{\partial S_1}{\partial x}-\frac{1}{2}W_x$.
Denote $\mathbf{V}_0$ is the limiting value of $\mathbf{V}$ when $t\rightarrow0+$ along the $t$-axis. Then along this characteristic curve which is tangential to $t$-axis, one has $\beta_0=u_{1,0}-c_0=0$. It follows that
\begin{equation*}
\left(\frac{\partial\phi}{\partial t}\right)_0=\left(\frac{\partial\phi}{\partial t}\right)_0+(u_{1,0}-c_0)\left(\frac{\partial\phi}{\partial x}\right)_0=\left(\frac{\rho^2}{2}\frac{\partial S_1}{\partial x}\right)_0-\frac{1}{2}W_x(0).
\end{equation*}
Hence, by \eqref{dphi}, one has
\begin{equation}\label{eq1:u1_p11_t_0}
\left(\frac{\partial u_1}{\partial t}\right)_0-\frac{1}{\rho_0c_0}\left(\frac{\partial p_{11}}{\partial t}\right)_0=-\frac{1}{2}W_x(0),
\end{equation}
where the equality $\left(\frac{\partial S_1}{\partial t}\right)_0=-u_{1,0}\left(\frac{\partial S_1}{\partial x}\right)_0=-c_{0}\left(\frac{\partial S_1}{\partial x}\right)_0$ has been used.
Taking the limit as $t\rightarrow0$ in \eqref{eq:du1_dp11_dt}, and let $\beta=\beta_0=0$ in \eqref{rho^2_S1_t_0_beta} and \eqref{psi1_t_0_beta} obtains
\begin{equation}\label{eq2:u1_p11_t_0}
\left(\frac{\partial u_1}{\partial t}\right)_0+\frac{1}{\rho_0c_0}\left(\frac{\partial p_{11}}{\partial t}\right)_0=\widetilde{d}_{L,0},
\end{equation}
where
\[
\widetilde{d}_{L,0}=\frac{\rho_L^2S_{1,L}'}{4c_L^3}u_{1,0}(3c_L^2+u_{1,0}^2)-2\psi_{1,L}'u_{1,0}-\frac{1}{2}W_x(0),
\]
with the values of $S_{1,L}'$ and $\psi_{1,L}'$ being given in \eqref{psi1_L'}.

\begin{theorem}[Computing $\left(\frac{\partial u_1}{\partial t}\right)_0$, $\left(\frac{\partial p_{11}}{\partial t}\right)_0$ and $\left(\frac{\partial\rho}{\partial t}\right)_0$]
Assume that the $t$-axis is located inside the 1-rarefaction wave associated with the $u_1-c$ characteristic family. Then
\begin{eqnarray}
&\begin{split}
&\left(\frac{\partial u_1}{\partial t}\right)_0=\frac{1}{2}\left[\widetilde{d}_{L,0}-\frac{1}{2}W_x(0)\right], \\
&\left(\frac{\partial p_{11}}{\partial t}\right)_0=\frac{\rho_0c_0}{2}\left[\widetilde{d}_{L,0}+\frac{1}{2}W_x(0)\right],
\end{split} \label{u1_p11_t_0} \\
&~~\left(\frac{\partial\rho}{\partial t}\right)_0=\frac{1}{c_0^2}\left[\left(\frac{\partial p_{11}}{\partial t}\right)_0+\frac{S_{1,L}'}{c_L}\rho_0^3u_{1,0}^2\right]. \label{rho_t_0}
\end{eqnarray}
\end{theorem}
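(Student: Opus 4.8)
The plan is to read the statement as the resolution of a small linear system. Equations \eqref{eq1:u1_p11_t_0} and \eqref{eq2:u1_p11_t_0}, both already established under the standing assumption that the $t$-axis lies inside the $1$-rarefaction fan, are two linear relations in the two unknowns $\left(\frac{\partial u_1}{\partial t}\right)_0$ and $\left(\frac{\partial p_{11}}{\partial t}\right)_0$, with coefficient rows $\left(1,-\frac{1}{\rho_0 c_0}\right)$ and $\left(1,\frac{1}{\rho_0 c_0}\right)$ respectively, hence a nonsingular system. First I would add the two equations: the $\left(\frac{\partial p_{11}}{\partial t}\right)_0$ terms cancel and one obtains $2\left(\frac{\partial u_1}{\partial t}\right)_0=\widetilde{d}_{L,0}-\frac{1}{2}W_x(0)$, which is the first line of \eqref{u1_p11_t_0}. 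Then I would subtract \eqref{eq1:u1_p11_t_0} from \eqref{eq2:u1_p11_t_0}: the $\left(\frac{\partial u_1}{\partial t}\right)_0$ terms cancel, leaving $\frac{2}{\rho_0 c_0}\left(\frac{\partial p_{11}}{\partial t}\right)_0=\widetilde{d}_{L,0}+\frac{1}{2}W_x(0)$, which rearranges to the second line of \eqref{u1_p11_t_0}.

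For \eqref{rho_t_0} I would use the thermodynamic identity \eqref{dp11}, namely $\text{d}p_{11}=c^2\,\text{d}\rho+\rho^3\,\text{d}S_1$, applied to the time derivative at $\mathbf{V}_0$, which gives $\left(\frac{\partial p_{11}}{\partial t}\right)_0=c_0^2\left(\frac{\partial\rho}{\partial t}\right)_0+\rho_0^3\left(\frac{\partial S_1}{\partial t}\right)_0$. It then remains to compute $\left(\frac{\partial S_1}{\partial t}\right)_0$. Since the characteristic tangent to the $t$-axis has slope $\beta_0=u_{1,0}-c_0=0$, equation \eqref{eq:S1} gives $\left(\frac{\partial S_1}{\partial t}\right)_0=-u_{1,0}\left(\frac{\partial S_1}{\partial x}\right)_0$, and specializing \eqref{S1_x_0beta} to $\beta=\beta_0=0$ yields $\left(\frac{\partial S_1}{\partial x}\right)_0=\frac{S_{1,L}'}{c_L}c_0$; combining these with the sonic relation $u_{1,0}=c_0$ gives $\rho_0^3\left(\frac{\partial S_1}{\partial t}\right)_0=-\frac{S_{1,L}'}{c_L}\rho_0^3 u_{1,0}^2$. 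Substituting this back and solving for $\left(\frac{\partial\rho}{\partial t}\right)_0$ produces \eqref{rho_t_0}. As a consistency check one may instead evaluate \eqref{rho^2_S1_t_0_beta} at $\beta=0$ together with \eqref{rho^2/rhoL^2}; the two routes must agree.

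I do not expect a genuine obstacle in this step: once \eqref{eq1:u1_p11_t_0} and \eqref{eq2:u1_p11_t_0} are available the first part is elementary linear algebra, and the second part is essentially a one-line rearrangement of \eqref{dp11}. The only place that needs care is the repeated use of the sonic identity $u_{1,0}=c_0$ (equivalently $\beta_0=0$), which is exactly what allows the general-$\beta$ formulas of Lemma \ref{S1_t_psi1_t_0beta} to be specialized consistently to the point $\mathbf{V}_0$ on the $t$-axis; tracking where $u_{1,0}$ versus $c_0$ appears is the main bookkeeping concern, and it is what turns $\rho_0^3 S_{1,L}'/c_L$ into the form $S_{1,L}'\rho_0^3 u_{1,0}^2/(c_L c_0^2)$ displayed in \eqref{rho_t_0}.
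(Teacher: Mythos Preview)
Your proposal is correct and follows essentially the same approach as the paper: solve the $2\times 2$ linear system \eqref{eq1:u1_p11_t_0}--\eqref{eq2:u1_p11_t_0} for \eqref{u1_p11_t_0}, then combine \eqref{dp11} with the value of $\left(\frac{\partial S_1}{\partial t}\right)_0$ (the paper reads this off directly from \eqref{S1_t_0beta} at $\beta=0$, while you obtain it via \eqref{eq:S1} and \eqref{S1_x_0beta}, which is equivalent) to get \eqref{rho_t_0}.
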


\begin{proof}
By \eqref{eq1:u1_p11_t_0} and \eqref{eq2:u1_p11_t_0},
\eqref{u1_p11_t_0} may be obtained. One takes $\beta=\beta_0=0$ in \eqref{S1_t_0beta} to get
\[
\left(\frac{\partial S_1}{\partial t}\right)_0=-\frac{S_{1,L}'}{c_L}u_{1,0}^2.
\]
Substituting it into
\[
\left(\frac{\partial\rho}{\partial t}\right)_0=\frac{1}{c_0^2}\left[\left(\frac{\partial p_{11}}{\partial t}\right)_0-\rho_0^3\left(\frac{\partial S_1}{\partial t}\right)_0\right]
\]
which is derived from \eqref{dp11}, one obtains \eqref{rho_t_0}.
\end{proof}

It is noted that for the sonic case, one has the relation $p_{11,0}=\frac13(\rho u_1^2)_0$ which is derived from $u_{1,0}-c_0=0$; and
\eqref{u2_p12_t} induces that if $p_{11}\neq\rho u_1^2$, one has
\begin{equation}
\begin{split}
&\frac{\mathcal{D}u_2}{\mathcal{D}t}=\frac{p_{11}}{p_{11}-\rho u_1^2}\left(\frac{\partial u_2}{\partial t}+\frac{2u_1p_{12}}{3p_{11}^2}\frac{\mathcal{D}p_{11}}{\mathcal{D}t}-\frac{u_1}{p_{11}}\frac{\partial p_{12}}{\partial t}\right), \\
&\frac{\mathcal{D}p_{12}}{\mathcal{D}t}=\frac{-p_{11}}{p_{11}-\rho u_1^2}\left[\rho u_1\left(\frac{\partial u_2}{\partial t}+\frac{2u_1p_{12}}{3p_{11}^2}\frac{\mathcal{D}p_{11}}{\mathcal{D}t}\right)-\frac{\partial p_{12}}{\partial t}\right].
\end{split}
\label{Du2_Dp12_t}
\end{equation}
 Hence, substituting \eqref{Du2_Dp12_t} into \eqref{eq1:u2_t_p12_t} and \eqref{eq2:u2_t_p12_t}, one can acquire the two equations which $\frac{\partial u_2}{\partial t}(0,\beta)$ and $\frac{\partial p_{12}}{\partial t}(0,\beta)$ satisfy inside the 1-rarefaction wave, and taking $\beta=\beta_0=0$ yields the following system
\begin{equation}
\begin{cases}
\widetilde{a}_{3,L,0}\left(\frac{\partial u_2}{\partial t}\right)_0+\widetilde{b}_{3,L,0}\left(\frac{\partial p_{12}}{\partial t}\right)_0
=\widetilde{d}_{3,L,0},  \\
\widetilde{a}_{4,L,0}\left(\frac{\partial u_2}{\partial t}\right)_0+\widetilde{b}_{4,L,0}\left(\frac{\partial p_{12}}{\partial t}\right)_0
=\widetilde{d}_{4,L,0},
\end{cases}
\label{eq12:u2_p12_t_0}
\end{equation}
where
\begin{align*}
&\widetilde{a}_{3,L,0}=1+\frac{\beta_L}{4}, ~ \widetilde{a}_{4,L,0}=\frac{\beta_Lc_0}{4\rho_0^2}, ~ \widetilde{b}_{3,L,0}=\frac{u_{1,0}}{p_{11,0}}\left(1-\frac{\beta_L}{4}\right), ~ \widetilde{b}_{4,L,0}=\frac{1}{\rho_0^3}\left(1-\frac{\beta_L}{2}\right), \\
&\widetilde{d}_{3,L,0}=\left[\frac{\partial\psi_2}{\partial\alpha}(0,\beta_L)-\frac{\Pi_{2,L}\beta_L}{8c_L}\right]\psi_{1,L}
-\frac{\beta_L}{4}\cdot\left\{\left(\frac{2p_{12}}{p_{11}}\right)_0
\cdot\left[\left(\frac{\partial u_1}{\partial t}\right)_0+\frac{1}{2}W_x(0)\right]
+\frac{S_{1,L}'}{c_L}\left(\frac{c\rho^2p_{12}}{2p_{11}}\right)_0\right\} \\
&\qquad+\left(\frac{\sqrt{3}p_{12}}{2\rho\sqrt{\rho p_{11}}}\frac{\partial\rho}{\partial t}\right)_0
+\left(\frac{\sqrt{3}p_{12}}{2p_{11}\sqrt{\rho p_{11}}}\frac{\partial p_{11}}{\partial t}\right)_0, \\
&\widetilde{d}_{4,L,0}=\left[\frac{\partial\psi_3}{\partial\alpha}(0,\beta_L)-\frac{\Pi_{3,L}\beta_L}{8c_L}\right]\psi_{1,L}
-\frac{\beta_L}{4}\cdot\left\{\left(\frac{3p_{12}}{c\rho^3}\right)_0\left[\left(\frac{\partial u_1}{\partial t}\right)_0+\frac{1}{2}W_x(0)\right]
+\frac{S_{1,L}'}{c_L}\left(\frac{3p_{12}}{\rho}\right)_0\right\}
+\left(\frac{3p_{12}}{\rho^4}\frac{\partial\rho}{\partial t}\right)_0.
\end{align*}
It is noted that \eqref{u1_p11_t} has been used during deriving the above coefficients.

\begin{theorem}[Computing $\left(\frac{\partial u_2}{\partial t}\right)_0$, $\left(\frac{\partial p_{12}}{\partial t}\right)_0$ and $\left(\frac{\partial p_{22}}{\partial t}\right)_0$]
Assume that the $t$-axis is located inside the 1-rarefaction wave associated with the $u_1-c$ characteristic family. Then
\begin{equation}
\left(\frac{\partial u_2}{\partial t}\right)_0=\frac{\widetilde{d}_{3,L,0}\widetilde{b}_{4,L,0}-\widetilde{d}_{4,L,0}\widetilde{b}_{3,L,0}}{\widetilde{a}_{3,L,0}\widetilde{b}_{4,L,0}-\widetilde{a}_{4,L,0}\widetilde{b}_{3,L,0}},
\quad
\left(\frac{\partial p_{12}}{\partial t}\right)_0=\frac{\widetilde{d}_{3,L,0}\widetilde{a}_{4,L,0}-\widetilde{d}_{4,L,0}\widetilde{a}_{3,L,0}}{\widetilde{a}_{4,L,0}\widetilde{b}_{3,L,0}-\widetilde{a}_{3,L,0}\widetilde{b}_{4,L,0}}.
\label{u2_p12_t_0}
\end{equation}
The limiting value $\left(\frac{\partial p_{22}}{\partial t}\right)_0$ is computed by
\begin{equation}\label{p22_t_0}
\left(\frac{p_{11}}{\rho^4}\frac{\partial p_{22}}{\partial t}\right)_{0}=-\frac{S_{2,L}'}{c_L}u_{1,0}^2-
\left(\frac{p_{22}}{\rho^4}\frac{\partial p_{11}}{\partial t}\right)_0+\left(\frac{2p_{12}}{\rho^4}\frac{\partial p_{12}}{\partial t}\right)_0+\left(\frac{4\det(\mathbf{p})}{\rho^5}\frac{\partial\rho}{\partial t}\right)_0.
\end{equation}
\end{theorem}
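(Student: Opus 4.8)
The plan is to prove the two assertions of the theorem separately; each reduces to assembling results already established in Subsection~\ref{sonic case}.

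For $\left(\frac{\partial u_2}{\partial t}\right)_0$ and $\left(\frac{\partial p_{12}}{\partial t}\right)_0$: the $2\times2$ linear system \eqref{eq12:u2_p12_t_0} for these two quantities has already been obtained, by substituting the conversion formulas \eqref{Du2_Dp12_t} (legitimate here since $p_{11,0}=\tfrac13(\rho u_1^2)_0\neq(\rho u_1^2)_0$, as $(\rho u_1^2)_0>0$) into the rarefaction-wave identities \eqref{eq1:u2_t_p12_t} and \eqref{eq2:u2_t_p12_t} and then setting $\beta=\beta_0=0$. I would therefore simply solve this system by Cramer's rule, which yields \eqref{u2_p12_t_0} at once, under the (implicit, admissibility-based) hypothesis that the coefficient determinant $\widetilde{a}_{3,L,0}\widetilde{b}_{4,L,0}-\widetilde{a}_{4,L,0}\widetilde{b}_{3,L,0}$ does not vanish --- the same solvability assumption tacitly invoked for the nonsonic Theorems~\ref{theorem for omega_astL} and \ref{theorem:u2_p12_p22_t_astR}.

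For $\left(\frac{\partial p_{22}}{\partial t}\right)_0$: since $S_2=\det(\mathbf p)/\rho^4$ is a generalized Riemann invariant of the 1-rarefaction wave, the identity \eqref{S2_t_0beta}, namely $\frac{\partial S_2}{\partial t}(0,\beta)=-\frac{S_{2,L}'}{c_L}(cu_1)(0,\beta)$, applies. Evaluating at $\beta=\beta_0=0$ and using $\beta_0=u_{1,0}-c_0=0$, hence $c_0=u_{1,0}$, converts $(cu_1)(0,\beta_0)$ into $u_{1,0}^2$, so $\left(\frac{\partial S_2}{\partial t}\right)_0=-\frac{S_{2,L}'}{c_L}u_{1,0}^2$. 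I would then pass to the limit $\beta\to\beta_0=0$ in the total-differential relation \eqref{p22_t_0beta} --- which expresses $\frac{p_{11}}{\rho^4}\frac{\partial p_{22}}{\partial t}$ through $\frac{\partial S_2}{\partial t}$, $\frac{\partial p_{11}}{\partial t}$, $\frac{\partial p_{12}}{\partial t}$ and $\frac{\partial\rho}{\partial t}$ --- and insert the value just computed for $\left(\frac{\partial S_2}{\partial t}\right)_0$; this is precisely \eqref{p22_t_0}. All terms on its right-hand side are then explicit, since $\left(\frac{\partial p_{11}}{\partial t}\right)_0$ and $\left(\frac{\partial\rho}{\partial t}\right)_0$ are given by \eqref{u1_p11_t_0}--\eqref{rho_t_0} and $\left(\frac{\partial p_{12}}{\partial t}\right)_0$ by the first part.

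I do not expect a genuine obstacle here: all the substantive work --- deriving \eqref{eq12:u2_p12_t_0}, computing its coefficients, and establishing \eqref{S2_t_0beta} --- has already been done before the statement, so the proof is essentially a short bookkeeping step. The only delicate point is the invertibility of the $2\times2$ system, and a careful write-up would either check $\widetilde{a}_{3,L,0}\widetilde{b}_{4,L,0}-\widetilde{a}_{4,L,0}\widetilde{b}_{3,L,0}\neq0$ from the admissibility constraints $\rho_0>0$, $p_{11,0}>0$ or simply flag it, in line with the treatment of the analogous nonsonic theorems.
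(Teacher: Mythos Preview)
Your proposal is correct and follows essentially the same approach as the paper: the paper's proof simply states that \eqref{u2_p12_t_0} is obtained by solving the system \eqref{eq12:u2_p12_t_0}, and that \eqref{p22_t_0} follows by taking $\beta=\beta_0=0$ in \eqref{p22_t_0beta} and using \eqref{S2_t_0beta}. Your write-up is in fact more careful than the paper's, since you explicitly justify why \eqref{Du2_Dp12_t} applies (via $p_{11,0}=\tfrac13(\rho u_1^2)_0\neq(\rho u_1^2)_0$) and flag the solvability of the $2\times2$ system, points the paper leaves implicit.
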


\begin{proof}
\eqref{u2_p12_t_0} is directly obtained by solving the system \eqref{eq12:u2_p12_t_0}. Taking $\beta=\beta_0=0$ in \eqref{p22_t_0beta} and using \eqref{S2_t_0beta}  yields \eqref{p22_t_0}.
\end{proof}

\begin{remark}
For the case that the $t$-axis is located inside the 6-rarefaction wave, computing $\left(\frac{\partial\mathbf{V}}{\partial t}\right)_0$ is presented in \ref{sonic_case_6rarefaction}.
\end{remark}

\subsection{Acoustic case}\label{acoustic case}

When $\mathbf{U}_L=\mathbf{U}_{\ast}=\mathbf{U}_R$ but $\mathbf{U}_L'\neq\mathbf{U}_R'$, the acoustic case happens. Only the linear waves emanate from the origin $(x,t)=(0,0)$. This case will be simpler than the general case discussed before.

\subsubsection{Computing $\left(\frac{\partial u_1}{\partial t}\right)_{\ast}$, $\left(\frac{\partial p_{11}}{\partial t}\right)_{\ast}$ and $\left(\frac{\partial\rho}{\partial t}\right)_{\ast K}$ $(K=L,R)$}

For the 1-acoustic wave in the left associated with $u_1-c$, since the solution is continuous across the 1-acoustic wave, taking the differentiation along it, one has
\[
\left(\frac{\partial u_1}{\partial t}\right)_{\ast}+(u_{1,\ast}-c_{\ast})\left(\frac{\partial u_1}{\partial x}\right)_{\ast}
=\frac{\partial u_{1,L}}{\partial t}+(u_{1,\ast}-c_{\ast})\frac{\partial u_{1,L}}{\partial x}.
\]
By \eqref{u1_x} and \eqref{p11_x}, it follows that
\begin{equation}\label{eq1:u1_t_Dp11_t_ast}
\left(\frac{\partial u_1}{\partial t}\right)_{\ast}-\frac{u_{1,\ast}-c_{\ast}}{3p_{11,\ast}}\left(\frac{\mathcal{D}p_{11}}{\mathcal{D}t}\right)_{\ast}
=-\frac{p_{11,L}'}{\rho_{\ast}}-c_{\ast}u_{1,L}'-\frac{1}{2}W_x(0).
\end{equation}
By resolving the 6-acoustic wave in the right associated with $u_1+c$, one has
\begin{equation}\label{eq2:u1_t_Dp11_t_ast}
\left(\frac{\partial u_1}{\partial t}\right)_{\ast}-\frac{u_{1,\ast}+c_{\ast}}{3p_{11,\ast}}\left(\frac{\mathcal{D}p_{11}}{\mathcal{D}t}\right)_{\ast}
=-\frac{p_{11,R}'}{\rho_{\ast}}+c_{\ast}u_{1,R}'-\frac{1}{2}W_x(0).
\end{equation}

\begin{theorem}[Computing $\left(\frac{\partial u_1}{\partial t}\right)_{\ast}$, $\left(\frac{\partial p_{11}}{\partial t}\right)_{\ast}$ and $\left(\frac{\partial\rho}{\partial t}\right)_{\ast K}$ $(K=L,R)$]
For the acoustic case, one has
\begin{align}
&\left(\frac{\partial u_1}{\partial t}\right)_{\ast}=\frac{1}{2c_{\ast}}\left[\left(-\frac{p_{11,L}'}{\rho_{\ast}}-c_{\ast}u_{1,L}'-\frac{1}{2}W_x(0)\right)(u_{1,\ast}+c_{\ast})+
\left(\frac{p_{11,R}'}{\rho_{\ast}}-c_{\ast}u_{1,R}'+\frac{1}{2}W_x(0)\right)(u_{1,\ast}-c_{\ast})\right], \label{u1_t_ast} \\
&\left(\frac{\partial p_{11}}{\partial t}\right)_{\ast}=-\frac{\rho_{\ast}c_{\ast}}{2}\left[\left(u_{1,L}'+\frac{p_{11,L}'}{\rho_{\ast}c_{\ast}}\right)(u_{1,\ast}+c_{\ast})
-\left(u_{1,R}'-\frac{p_{11,R}'}{\rho_{\ast}c_{\ast}}\right)(u_{1,\ast}-c_{\ast})\right], \label{p11_t_ast}
\end{align}
and
\begin{equation}
\left(\frac{\partial\rho}{\partial t}\right)_{\ast K}=\frac{1}{c_{\ast}^2}\left[\left(\frac{\partial p_{11}}{\partial t}\right)_{\ast}+u_{1,\ast}(p_{11,K}'-c_{\ast}^2\rho_K')\right], ~ K=L,R. \label{rho_t_astK_acoustic}
\end{equation}
\end{theorem}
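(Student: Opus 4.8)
The plan is to read \eqref{eq1:u1_t_Dp11_t_ast} and \eqref{eq2:u1_t_Dp11_t_ast} as a linear $2\times2$ system for the two unknowns $\left(\frac{\partial u_1}{\partial t}\right)_{\ast}$ and $\left(\frac{\mathcal{D}p_{11}}{\mathcal{D}t}\right)_{\ast}$, solve it in closed form, and then pass to the primitive time derivatives via the algebraic identities \eqref{u1_p11_t} and \eqref{dp11}. Throughout, the acoustic hypothesis $\mathbf{U}_L=\mathbf{U}_{\ast}=\mathbf{U}_R$ is used to identify all base-state quantities, $\rho_{\ast}=\rho_L=\rho_R$, $c_{\ast}=c_L=c_R$, $p_{11,\ast}=p_{11,L}=p_{11,R}$, $u_{1,\ast}=u_{1,L}=u_{1,R}$, and it guarantees that only linear (acoustic and contact) waves issue from $(0,0)$, so that the solution together with its one-sided first derivatives is continuous across the $1$- and $6$-acoustic rays; this is precisely what makes \eqref{eq1:u1_t_Dp11_t_ast}--\eqref{eq2:u1_t_Dp11_t_ast} available and what will underpin the argument for $\left(\frac{\partial\rho}{\partial t}\right)_{\ast K}$ below.

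\textbf{Step 1: solve the linear system.} The coefficient matrix of \eqref{eq1:u1_t_Dp11_t_ast}--\eqref{eq2:u1_t_Dp11_t_ast} is $\begin{pmatrix}1 & -\frac{u_{1,\ast}-c_{\ast}}{3p_{11,\ast}}\\ 1 & -\frac{u_{1,\ast}+c_{\ast}}{3p_{11,\ast}}\end{pmatrix}$, whose determinant is $-\frac{2c_{\ast}}{3p_{11,\ast}}\neq0$ since $c_{\ast}>0$; hence the solution is unique. Subtracting \eqref{eq2:u1_t_Dp11_t_ast} from \eqref{eq1:u1_t_Dp11_t_ast} isolates $\left(\frac{\mathcal{D}p_{11}}{\mathcal{D}t}\right)_{\ast}$, while forming the combination of $(u_{1,\ast}+c_{\ast})$ times \eqref{eq1:u1_t_Dp11_t_ast} minus $(u_{1,\ast}-c_{\ast})$ times \eqref{eq2:u1_t_Dp11_t_ast} eliminates $\left(\frac{\mathcal{D}p_{11}}{\mathcal{D}t}\right)_{\ast}$ and produces \eqref{u1_t_ast} directly.

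\textbf{Step 2: recover $\left(\frac{\partial p_{11}}{\partial t}\right)_{\ast}$ and $\left(\frac{\partial\rho}{\partial t}\right)_{\ast K}$.} To obtain \eqref{p11_t_ast} I would substitute the outputs of Step 1 into \eqref{u1_p11_t}: write $\left(\frac{\mathcal{D}u_1}{\mathcal{D}t}\right)_{\ast}=\left(\frac{\partial u_1}{\partial t}\right)_{\ast}-\frac{u_{1,\ast}}{3p_{11,\ast}}\left(\frac{\mathcal{D}p_{11}}{\mathcal{D}t}\right)_{\ast}$ and then $\left(\frac{\partial p_{11}}{\partial t}\right)_{\ast}=\left(\frac{\mathcal{D}p_{11}}{\mathcal{D}t}\right)_{\ast}+\rho_{\ast}u_{1,\ast}\left(\frac{\mathcal{D}u_1}{\mathcal{D}t}\right)_{\ast}+\tfrac12\rho_{\ast}u_{1,\ast}W_x(0)$; simplifying with $3p_{11,\ast}=\rho_{\ast}c_{\ast}^2$ cancels the $W_x(0)$ contribution and the $u_{1,\ast}^2$-weighted terms and collapses the result to the compact form \eqref{p11_t_ast}. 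For $\left(\frac{\partial\rho}{\partial t}\right)_{\ast K}$ I would go through the entropy-type variable $S_1=p_{11}/\rho^3$: from \eqref{dp11}, $\left(\frac{\partial p_{11}}{\partial t}\right)_{\ast K}=c_{\ast}^2\left(\frac{\partial\rho}{\partial t}\right)_{\ast K}+\rho_{\ast}^3\left(\frac{\partial S_1}{\partial t}\right)_{\ast K}$, and since $p_{11}$ is a generalized Riemann invariant of the shear waves and of the contact discontinuity, $\left(\frac{\partial p_{11}}{\partial t}\right)_{\ast K}$ equals the value found in \eqref{p11_t_ast}. To evaluate $\left(\frac{\partial S_1}{\partial t}\right)_{\ast K}$ I would use that $S_1$ is transported along the $u_1$-characteristics by \eqref{eq:S1}, so $\left(\frac{\partial S_1}{\partial t}\right)_{\ast K}=-u_{1,\ast}\left(\frac{\partial S_1}{\partial x}\right)_{\ast K}$ and, on the $K$-side, $\frac{\partial S_{1,K}}{\partial t}=-u_{1,\ast}S'_{1,K}$ with $S'_{1,K}=\frac{1}{\rho_K^3}(p'_{11,K}-c_K^2\rho'_K)$; differentiating $S_1$ along the $K$-acoustic ray, across which the solution is continuous, then forces $\left(\frac{\partial S_1}{\partial x}\right)_{\ast K}=S'_{1,K}$, hence $\left(\frac{\partial S_1}{\partial t}\right)_{\ast K}=-u_{1,\ast}S'_{1,K}$. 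Substituting this and using $\rho_{\ast}^3 S'_{1,K}=p'_{11,K}-c_{\ast}^2\rho'_K$ yields \eqref{rho_t_astK_acoustic}.

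\textbf{Main obstacle.} There is no deep difficulty here; the case is genuinely simpler than the general one. The only delicate point is bookkeeping: in Step 2 one must carry the material derivatives $\left(\frac{\mathcal{D}p_{11}}{\mathcal{D}t}\right)_{\ast},\left(\frac{\mathcal{D}u_1}{\mathcal{D}t}\right)_{\ast}$ through \eqref{u1_p11_t}, exploit $3p_{11,\ast}=\rho_{\ast}c_{\ast}^2$ to see the cancellations, and recognize the outcome as the symmetric form \eqref{p11_t_ast}; and for $\left(\frac{\partial\rho}{\partial t}\right)_{\ast K}$ one must be careful that the continuity used across the $1$- and $6$-acoustic rays is legitimate precisely because $\mathbf{U}_L=\mathbf{U}_{\ast}=\mathbf{U}_R$ turns those rays into characteristic rays rather than genuine rarefaction fans or shocks.
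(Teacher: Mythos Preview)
Your proposal is correct and follows essentially the same route as the paper: solve the $2\times2$ system \eqref{eq1:u1_t_Dp11_t_ast}--\eqref{eq2:u1_t_Dp11_t_ast} for $\left(\frac{\partial u_1}{\partial t}\right)_{\ast}$ and $\left(\frac{\mathcal{D}p_{11}}{\mathcal{D}t}\right)_{\ast}$, convert to $\left(\frac{\partial p_{11}}{\partial t}\right)_{\ast}$ via \eqref{u1_p11_t}, and then use \eqref{dp11} together with the transport of $S_1$ for $\left(\frac{\partial\rho}{\partial t}\right)_{\ast K}$. The only cosmetic difference is in the last step: the paper invokes the rarefaction formula \eqref{S1_t_0beta} in its degenerate acoustic limit $\beta_{\ast L}=\beta_L$ to get $\left(\frac{\partial S_1}{\partial t}\right)_{\ast K}=-u_{1,\ast}S'_{1,K}$, whereas you argue it directly by matching the tangential derivative of $S_1$ across the $K$-acoustic ray and using \eqref{eq:S1} on both sides; your argument is self-contained and avoids appealing to a formula derived for a genuine fan, but the content is the same.
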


\begin{proof}
Combining \eqref{eq1:u1_t_Dp11_t_ast} with \eqref{eq2:u1_t_Dp11_t_ast}   yields \eqref{u1_t_ast} and
\begin{equation}\label{Dp11_Dt_ast_acoustic}
\left(\frac{\mathcal{D}p_{11}}{\mathcal{D}t}\right)_{\ast}=\frac{3p_{11,\ast}}{2c_{\ast}}\left[\frac{1}{\rho_{\ast}}(p_{11,R}'-p_{11,L}')-c_{\ast}(u_{1,L}'+u_{1,R}')\right].
\end{equation}
By utilizing \eqref{u1_p11_t} and
\[
\left(\frac{\mathcal{D}u_1}{\mathcal{D}t}\right)_{\ast}=\left(\frac{\partial u_1}{\partial t}\right)_{\ast}-\frac{u_{1,\ast}}{3p_{11,\ast}}\left(\frac{\mathcal{D}p_{11}}{\mathcal{D}t}\right)_{\ast},
\]
where \eqref{u1_x} has been used,  one obtains \eqref{p11_t_ast}. Applying \eqref{dp11} and \eqref{S1_t_0beta} gives \eqref{rho_t_astK_acoustic}.
\end{proof}

\subsubsection{Computing $\left(\frac{\partial u_2}{\partial t}\right)_{\ast K}$, $\left(\frac{\partial p_{12}}{\partial t}\right)_{\ast K}$ and $\left(\frac{\partial p_{22}}{\partial t}\right)_{\ast K}$ $(K=L,R)$}

Taking the differentiation along the 1-acoustic wave for $u_2$ and $p_{12}$ gives
\begin{align*}
&\left(\frac{\partial u_2}{\partial t}\right)_{\ast L}+(u_{1,\ast}-c_{\ast})\left(\frac{\partial u_2}{\partial x}\right)_{\ast L}
=\frac{\partial u_{2,L}}{\partial t}+(u_{1,\ast}-c_{\ast})u_{2,L}', \\
&\left(\frac{\partial p_{12}}{\partial t}\right)_{\ast L}+(u_{1,\ast}-c_{\ast})\left(\frac{\partial p_{12}}{\partial x}\right)_{\ast L}
=\frac{\partial p_{12,L}}{\partial t}+(u_{1,\ast}-c_{\ast})p_{12,L}'.
\end{align*}
Applying \eqref{eq:u2}, \eqref{eq:p12}, \eqref{u2_x} and \eqref{p12_x} to above two equations obtains
\begin{align*}
&\left(\frac{\mathcal{D}u_2}{\mathcal{D}t}\right)_{\ast L}+\frac{c_{\ast}}{p_{11,\ast}}\left(\frac{\mathcal{D}p_{12}}{\mathcal{D}t}\right)_{\ast L}=\frac{2p_{12,\ast}c_{\ast}}{3p_{11,\ast}^2}\left(\frac{\mathcal{D}p_{11}}{\mathcal{D}t}\right)_{\ast}-\frac{p_{12,L}'}{\rho_L}-c_{\ast}u_{2,L}',
 \\
&\rho_{\ast}c_{\ast}\left(\frac{\mathcal{D}u_2}{\mathcal{D}t}\right)_{\ast L}+\left(\frac{\mathcal{D}p_{12}}{\mathcal{D}t}\right)_{\ast L}=-c_{\ast}p_{12,L}'-2p_{12,L}u_{1,L}'-p_{11,L}u_{2,L}'.
\end{align*}
It follows that
\begin{equation}
\begin{split}
&\left(\frac{\mathcal{D}u_2}{\mathcal{D}t}\right)_{\ast L}=-\frac{p_{12,L}'}{\rho_L}-\frac{c_{\ast}p_{12,L}}{p_{11,\ast}}u_{1,L}'-\frac{p_{12,\ast}c_{\ast}}{3p_{11,\ast}^2}\left(\frac{\mathcal{D}p_{11}}{\mathcal{D}t}\right)_{\ast},
\\
&\left(\frac{\mathcal{D}p_{12}}{\mathcal{D}t}\right)_{\ast L}=\frac{p_{12,\ast}}{p_{11,\ast}}\left(\frac{\mathcal{D}p_{11}}{\mathcal{D}t}\right)_{\ast}+p_{12,L}u_{1,L}'-p_{11,L}u_{2,L}'.
\end{split}
\label{Du2_Dp12_Dt_astL_acoustic}
\end{equation}
Taking the differentiation along the 1-acoustic wave for $p_{22}$ yields
\begin{equation}\label{p22_x_astL}
\left(\frac{\partial p_{22}}{\partial x}\right)_{\ast L}=\frac{1}{c_{\ast}}\left[\left(\frac{\mathcal{D}p_{22}}{\mathcal{D}t}\right)_{\ast L}+p_{22,L}u_{1,L}'+2p_{12,L}u_{2,L}'+c_{\ast}p_{22,L}'\right],
\end{equation}
where \eqref{eq:p22} has been used.
Moreover,  resolving the 6-acoustic wave  has
\begin{equation}\label{Du2_Dp12_Dt_astR_acoustic and p22_x_astR}
\begin{cases}
\left(\frac{\mathcal{D}u_2}{\mathcal{D}t}\right)_{\ast R}=-\frac{p_{12,R}'}{\rho_R}+\frac{c_{\ast}p_{12,R}}{p_{11,\ast}}u_{1,R}'+\frac{p_{12,\ast}c_{\ast}}{3p_{11,\ast}^2}\left(\frac{\mathcal{D}p_{11}}{\mathcal{D}t}\right)_{\ast},
\\ 
\left(\frac{\mathcal{D}p_{12}}{\mathcal{D}t}\right)_{\ast R}=\frac{p_{12,\ast}}{p_{11,\ast}}\left(\frac{\mathcal{D}p_{11}}{\mathcal{D}t}\right)_{\ast}+p_{12,R}u_{1,R}'-p_{11,R}u_{2,R}',
\\ 
\left(\frac{\partial p_{22}}{\partial x}\right)_{\ast R}=\frac{1}{c_{\ast}}\left[-p_{22,R}u_{1,R}'-2p_{12,R}u_{2,R}'+c_{\ast}p_{22,R}'-\left(\frac{\mathcal{D}p_{22}}{\mathcal{D}t}\right)_{\ast R}\right]. 
\end{cases}
\end{equation}

\begin{theorem}[Computing $\left(\frac{\partial u_2}{\partial t}\right)_{\ast K}$, $\left(\frac{\partial p_{12}}{\partial t}\right)_{\ast K}$ and $\left(\frac{\partial p_{22}}{\partial t}\right)_{\ast K}$ $K=L,R$]
For the acoustic case, one has
\begin{equation}
\begin{split}\label{u2_p12_t_astK_acoustic}
&\left(\frac{\partial u_2}{\partial t}\right)_{\ast K}=\left(\frac{\mathcal{D}u_2}{\mathcal{D}t}\right)_{\ast K}+\frac{u_{1,\ast}}{p_{11,\ast}}\left[\left(\frac{\mathcal{D}p_{12}}{\mathcal{D}t}\right)_{\ast K}-\frac{2p_{12,\ast}}{3p_{11,\ast}}\left(\frac{\mathcal{D}p_{11}}{\mathcal{D}t}\right)_{\ast}\right], \\ 
&\left(\frac{\partial p_{12}}{\partial t}\right)_{\ast K}=\left(\frac{\mathcal{D}p_{12}}{\mathcal{D}t}\right)_{\ast K}+\rho_{\ast}u_{1,\ast}\left(\frac{\mathcal{D}u_2}{\mathcal{D}t}\right)_{\ast K}, 
\end{split}
\end{equation}
and
\begin{equation}
\left(\frac{\partial p_{22}}{\partial t}\right)_{\ast K}=\left(\frac{\mathcal{D}p_{22}}{\mathcal{D}t}\right)_{\ast K}-u_{1,\ast}\left(\frac{\partial p_{22}}{\partial x}\right)_{\ast K}, \label{p22_t_astK_acoustic}
\end{equation}
where
\begin{equation}\label{Dp22_Dt_astK_acoustic}
\left(\frac{\mathcal{D}p_{22}}{\mathcal{D}t}\right)_{\ast K}=\left(\frac{p_{11}p_{22}-4p_{12}^2}{3p_{11}^2}\frac{\mathcal{D}p_{11}}{\mathcal{D}t}\right)_{\ast}+\frac{2p_{12,\ast}}{p_{11,\ast}}\left(\frac{\mathcal{D}p_{12}}{\mathcal{D}t}\right)_{\ast K},
\end{equation}
and $\left(\frac{\mathcal{D}p_{11}}{\mathcal{D}t}\right)_{\ast}$, $\left(\frac{\mathcal{D}u_2}{\mathcal{D}t}\right)_{\ast K}$, $\left(\frac{\mathcal{D}p_{12}}{\mathcal{D}t}\right)_{\ast K}$ and $\left(\frac{\partial p_{22}}{\partial x}\right)_{\ast K}$ are given by
\eqref{Dp11_Dt_ast_acoustic}-\eqref{Du2_Dp12_Dt_astR_acoustic and p22_x_astR}, $K=L,R$.
\end{theorem}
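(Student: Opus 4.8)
The plan is to treat this theorem purely as a conversion step: all three Eulerian time derivatives $\left(\frac{\partial u_2}{\partial t}\right)_{\ast K}$, $\left(\frac{\partial p_{12}}{\partial t}\right)_{\ast K}$, $\left(\frac{\partial p_{22}}{\partial t}\right)_{\ast K}$ will be recovered from the material (or spatial) derivatives already produced along the $1$- and $6$-acoustic waves, using only the algebraic identities \eqref{u1_p11_t}, \eqref{u2_p12_t} and \eqref{Dp22_t} extracted from the quasilinear form \eqref{quasilinear_form}. The first point to record is that in the acoustic case $\mathbf{U}_L=\mathbf{U}_\ast=\mathbf{U}_R$, so the intermediate states $\mathbf{U}_{\ast K}$ and $\mathbf{U}_{\ast\ast K}$ all collapse to the common value $\mathbf{U}_\ast$; hence $\rho_\ast$, $u_{1,\ast}$, $p_{11,\ast}$, $p_{12,\ast}$, $p_{22,\ast}$ are unambiguous scalars, and — by the same argument as in the nonsonic case (because $u_1$ and $p_{11}$ are generalized Riemann invariants of every linearly degenerate field), or trivially because the background state is constant — $\left(\frac{\mathcal{D}u_1}{\mathcal{D}t}\right)$ and $\left(\frac{\mathcal{D}p_{11}}{\mathcal{D}t}\right)$ take a single value, denoted $\left(\frac{\mathcal{D}u_1}{\mathcal{D}t}\right)_\ast$ and $\left(\frac{\mathcal{D}p_{11}}{\mathcal{D}t}\right)_\ast$, between the two acoustic waves; only the one-sided derivatives of $u_2$, $p_{12}$, $p_{22}$ retain a genuine $K=L,R$ dependence, inherited from $\mathbf{U}_L'\neq\mathbf{U}_R'$.

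First I would prove \eqref{u2_p12_t_astK_acoustic}. Evaluating \eqref{u2_p12_t} at the state $\ast K$ and substituting $\rho=\rho_\ast$, $u_1=u_{1,\ast}$, $p_{11}=p_{11,\ast}$, $p_{12}=p_{12,\ast}$ together with $\left(\frac{\mathcal{D}p_{11}}{\mathcal{D}t}\right)=\left(\frac{\mathcal{D}p_{11}}{\mathcal{D}t}\right)_\ast$ yields exactly the two displayed formulas. A fully closed form then follows by inserting $\left(\frac{\mathcal{D}u_2}{\mathcal{D}t}\right)_{\ast L}$, $\left(\frac{\mathcal{D}p_{12}}{\mathcal{D}t}\right)_{\ast L}$ from \eqref{Du2_Dp12_Dt_astL_acoustic}, their right-hand counterparts from \eqref{Du2_Dp12_Dt_astR_acoustic and p22_x_astR}, and $\left(\frac{\mathcal{D}p_{11}}{\mathcal{D}t}\right)_\ast$ from \eqref{Dp11_Dt_ast_acoustic}.

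Next I would handle $p_{22}$. The relation \eqref{p22_t_astK_acoustic} is just the definition $\frac{\mathcal{D}}{\mathcal{D}t}=\frac{\partial}{\partial t}+u_1\frac{\partial}{\partial x}$ evaluated at $\ast K$ with $u_1=u_{1,\ast}$, and $\left(\frac{\partial p_{22}}{\partial x}\right)_{\ast K}$ is already available from \eqref{p22_x_astL} for $K=L$ and from the third line of \eqref{Du2_Dp12_Dt_astR_acoustic and p22_x_astR} for $K=R$. To reach \eqref{Dp22_Dt_astK_acoustic} I would evaluate \eqref{Dp22_t} at $\ast K$, again replacing $\left(\frac{\mathcal{D}p_{11}}{\mathcal{D}t}\right)$ by the common value $\left(\frac{\mathcal{D}p_{11}}{\mathcal{D}t}\right)_\ast$ and using $p_{11}=p_{11,\ast}$, $p_{12}=p_{12,\ast}$, $p_{22}=p_{22,\ast}$; combining this with \eqref{Dp11_Dt_ast_acoustic} and the expressions for $\left(\frac{\mathcal{D}p_{12}}{\mathcal{D}t}\right)_{\ast K}$ closes the computation. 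Note that $\left(\frac{\mathcal{D}p_{22}}{\mathcal{D}t}\right)_{\ast K}$ does carry a $K$-label through $\left(\frac{\mathcal{D}p_{12}}{\mathcal{D}t}\right)_{\ast K}$, which is why \eqref{p22_t_astK_acoustic} is stated for each side separately.

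Since every wave emanating from the origin is linear and the underlying state is constant, there is no genuine analytic difficulty here: the proof is a short bookkeeping argument assembled from identities established in Section \ref{preliminaries and notations} and earlier in this subsection. The point that needs the most care — and the one I would double-check — is the consistent use of the side labels: $\rho_\ast, u_{1,\ast}, p_{11,\ast}, p_{12,\ast}, p_{22,\ast}, \left(\frac{\mathcal{D}p_{11}}{\mathcal{D}t}\right)_\ast$ are single-valued, whereas $\left(\frac{\mathcal{D}u_2}{\mathcal{D}t}\right)_{\ast K}$, $\left(\frac{\mathcal{D}p_{12}}{\mathcal{D}t}\right)_{\ast K}$, $\left(\frac{\partial p_{22}}{\partial x}\right)_{\ast K}$ and hence $\left(\frac{\mathcal{D}p_{22}}{\mathcal{D}t}\right)_{\ast K}$ are not, and one must verify that the characteristic differentiations behind \eqref{Du2_Dp12_Dt_astL_acoustic}, \eqref{p22_x_astL} and \eqref{Du2_Dp12_Dt_astR_acoustic and p22_x_astR} — in particular the sign in the $p_{22}$ relation and the coefficient $\frac{2p_{12,\ast}}{p_{11,\ast}}$ appearing in \eqref{Dp22_Dt_astK_acoustic} — come out correctly from \eqref{eq:u2}--\eqref{eq:p22}, \eqref{u2_x} and \eqref{p12_x}.
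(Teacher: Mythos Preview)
Your proposal is correct and follows essentially the same route as the paper: the paper's proof consists of the one-line observation that \eqref{u2_p12_t} yields \eqref{u2_p12_t_astK_acoustic}, that \eqref{p22_t_astK_acoustic} is immediate from the definition of $\frac{\mathcal{D}}{\mathcal{D}t}$, and that \eqref{Dp22_t} gives \eqref{Dp22_Dt_astK_acoustic}. Your additional bookkeeping remarks about which quantities are single-valued versus $K$-dependent are helpful clarifications but do not change the argument.
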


\begin{proof}
Utilizing \eqref{u2_p12_t} yields \eqref{u2_p12_t_astK_acoustic}. \eqref{p22_t_astK_acoustic} is direct, and applying \eqref{Dp22_t} gives \eqref{Dp22_Dt_astK_acoustic}.
\end{proof}

\subsubsection{Computing $\left(\frac{\partial u_2}{\partial t}\right)_{\ast\ast}$, $\left(\frac{\partial p_{12}}{\partial t}\right)_{\ast\ast}$ and $\left(\frac{\partial p_{22}}{\partial t}\right)_{\ast\ast K}$ $(K=L,R)$}

Since the solution is continuous across the 2-acoustic wave, taking the differentiation along the 2-acoustic wave for $u_2$, one has
\[
\left(\frac{\partial u_2}{\partial t}\right)_{\ast\ast}+(u_{1,\ast}-\frac{c_{\ast}}{\sqrt{3}})\left(\frac{\partial u_2}{\partial x}\right)_{\ast\ast}=\left(\frac{\partial u_2}{\partial t}\right)_{\ast L}+(u_{1,\ast}-\frac{c_{\ast}}{\sqrt{3}})\left(\frac{\partial u_2}{\partial x}\right)_{\ast L}.
\]
Applying \eqref{u2_x} to the above formula gives
\begin{equation*}\label{eq1:Du2_Dp12_Dt_ast2_acoustic}
\left(\frac{\mathcal{D}u_2}{\mathcal{D}t}\right)_{\ast\ast}+\frac{c_{\ast}}{\sqrt{3}p_{11,\ast}}\left(\frac{\mathcal{D}p_{12}}{\mathcal{D}t}\right)_{\ast\ast}
=\left(\frac{\mathcal{D}u_2}{\mathcal{D}t}\right)_{\ast L}+\frac{c_{\ast}}{\sqrt{3}p_{11,\ast}}\left(\frac{\mathcal{D}p_{12}}{\mathcal{D}t}\right)_{\ast L}.
\end{equation*}
Resolving the 5-acoustic wave yields
\begin{equation*}\label{eq2:Du2_Dp12_Dt_ast2_acoustic}
\left(\frac{\mathcal{D}u_2}{\mathcal{D}t}\right)_{\ast\ast}-\frac{c_{\ast}}{\sqrt{3}p_{11,\ast}}\left(\frac{\mathcal{D}p_{12}}{\mathcal{D}t}\right)_{\ast\ast}
=\left(\frac{\mathcal{D}u_2}{\mathcal{D}t}\right)_{\ast R}-\frac{c_{\ast}}{\sqrt{3}p_{11,\ast}}\left(\frac{\mathcal{D}p_{12}}{\mathcal{D}t}\right)_{\ast R}.
\end{equation*}
Then it follows from the above two equations that
\begin{equation}
\begin{split}\label{Du2_Dp12_Dt_ast2_acoustic}
&\left(\frac{\mathcal{D}u_2}{\mathcal{D}t}\right)_{\ast\ast}=\frac{1}{2}\left[\left(\frac{\mathcal{D}u_2}{\mathcal{D}t}\right)_{\ast L}+\left(\frac{\mathcal{D}u_2}{\mathcal{D}t}\right)_{\ast R}+\frac{c_{\ast}}{\sqrt{3}p_{11,\ast}}\left(\left(\frac{\mathcal{D}p_{12}}{\mathcal{D}t}\right)_{\ast L}-\left(\frac{\mathcal{D}p_{12}}{\mathcal{D}t}\right)_{\ast R}\right)\right], \\
&\left(\frac{\mathcal{D}p_{12}}{\mathcal{D}t}\right)_{\ast\ast}=\frac{\sqrt{3}p_{11,\ast}}{2c_{\ast}}\left[\left(\frac{\mathcal{D}u_2}{\mathcal{D}t}\right)_{\ast L}-\left(\frac{\mathcal{D}u_2}{\mathcal{D}t}\right)_{\ast R}+\frac{c_{\ast}}{\sqrt{3}p_{11,\ast}}\left(\left(\frac{\mathcal{D}p_{12}}{\mathcal{D}t}\right)_{\ast L}+\left(\frac{\mathcal{D}p_{12}}{\mathcal{D}t}\right)_{\ast R}\right)\right].
\end{split}
\end{equation}
Taking the differentiation along the 2-acoustic wave or the 5-acoustic wave for $p_{22}$, one has
\begin{equation}
\begin{split}\label{p22_x_ast2L_ast2R}
&\left(\frac{\partial p_{22}}{\partial x}\right)_{\ast\ast L}=\frac{\sqrt{3}}{c_{\ast}}\left[\left(\frac{\mathcal{D}p_{22}}{\mathcal{D}t}\right)_{\ast\ast}-\left(\frac{\mathcal{D}p_{22}}{\mathcal{D}t}\right)_{\ast L}\right]-\left(\frac{\partial p_{22}}{\partial x}\right)_{\ast L}, \\
&\left(\frac{\partial p_{22}}{\partial x}\right)_{\ast\ast R}=\left(\frac{\partial p_{22}}{\partial x}\right)_{\ast R}+\frac{\sqrt{3}}{c_{\ast}}\left[\left(\frac{\mathcal{D}p_{22}}{\mathcal{D}t}\right)_{\ast R}-\left(\frac{\mathcal{D}p_{22}}{\mathcal{D}t}\right)_{\ast\ast}\right],
\end{split}
\end{equation}
where $\left(\frac{\partial p_{22}}{\partial x}\right)_{\ast L}$ and $\left(\frac{\partial p_{22}}{\partial x}\right)_{\ast R}$ are given in \eqref{p22_x_astL} and \eqref{Du2_Dp12_Dt_astR_acoustic and p22_x_astR}, respectively, and $\left(\frac{\mathcal{D}p_{22}}{\mathcal{D}t}\right)_{\ast K}$ ($K=L,R$) are given by \eqref{Dp22_Dt_astK_acoustic}.

\begin{theorem}[Computing $\left(\frac{\partial u_2}{\partial t}\right)_{\ast\ast}$, $\left(\frac{\partial p_{12}}{\partial t}\right)_{\ast\ast}$ and $\left(\frac{\partial p_{22}}{\partial t}\right)_{\ast\ast K}$ $(K=L,R)$]
For the acoustic case, one has
\begin{equation}
\begin{split}\label{u2_p12_t_ast2_acoustic}
&\left(\frac{\partial u_2}{\partial t}\right)_{\ast\ast}=\left(\frac{\mathcal{D}u_2}{\mathcal{D}t}\right)_{\ast\ast}+
\frac{u_{1,\ast}}{p_{11,\ast}}\left[\left(\frac{\mathcal{D}p_{12}}{\mathcal{D}t}\right)_{\ast\ast}
-\frac{2p_{12,\ast}}{3p_{11,\ast}}\left(\frac{\mathcal{D}p_{11}}{\mathcal{D}t}\right)_{\ast}\right], \\
&\left(\frac{\partial p_{12}}{\partial t}\right)_{\ast\ast}=\left(\frac{\mathcal{D}p_{12}}{\mathcal{D}t}\right)_{\ast\ast}
+\rho_{\ast}u_{1,\ast}\left(\frac{\mathcal{D}u_2}{\mathcal{D}t}\right)_{\ast\ast},
\end{split}
\end{equation}
and
\begin{equation}
\left(\frac{\partial p_{22}}{\partial t}\right)_{\ast\ast K}=\left(\frac{\mathcal{D}p_{22}}{\mathcal{D}t}\right)_{\ast\ast}
-u_{1,\ast}\left(\frac{\partial p_{22}}{\partial x}\right)_{\ast\ast K}, \label{p22_t_ast2K_acoustic}
\end{equation}
where
\begin{equation}\label{Dp22_Dt_ast2_acoustic}
\left(\frac{\mathcal{D}p_{22}}{\mathcal{D}t}\right)_{\ast\ast}=\left(\frac{p_{11}p_{22}-4p_{12}^2}{3p_{11}^2}\frac{\mathcal{D}p_{11}}{\mathcal{D}t}\right)_{\ast}
+\frac{2p_{12,\ast}}{p_{11,\ast}}\left(\frac{\mathcal{D}p_{12}}{\mathcal{D}t}\right)_{\ast\ast},
\end{equation}
and $\left(\frac{\mathcal{D}p_{11}}{\mathcal{D}t}\right)_{\ast}$, $\left(\frac{\mathcal{D}u_2}{\mathcal{D}t}\right)_{\ast\ast}$, $\left(\frac{\mathcal{D}p_{12}}{\mathcal{D}t}\right)_{\ast\ast}$ and $\left(\frac{\partial p_{22}}{\partial x}\right)_{\ast\ast K}$ are given by \eqref{Dp11_Dt_ast_acoustic}, \eqref{Du2_Dp12_Dt_ast2_acoustic} and \eqref{p22_x_ast2L_ast2R},  $K=L,R$.
\end{theorem}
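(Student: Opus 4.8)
The plan is to assemble the three asserted formulas from the pointwise algebraic identities of Section~\ref{preliminaries and notations} together with the acoustic-case quantities already fixed in the two preceding theorems, in exactly the style of the proofs of Theorems~\ref{theorem:u1_p11_rho_t_astK}, \ref{theorem for omega_astL} and \ref{theorem:u2_p12_t_2astK}. The structural facts to keep in mind are that in the acoustic case $\mathbf{U}_L=\mathbf{U}_{\ast}=\mathbf{U}_R$, so every intermediate state shares the common point value $\mathbf{U}_{\ast}$ (in particular $\rho_{\ast L}=\rho_{\ast R}=:\rho_{\ast}$ and $p_{22,\ast\ast L}=p_{22,\ast\ast R}=p_{22,\ast}$), and that the relevant convection speed in every middle region is $u_{1,\ast}$, because the contact discontinuity is a $u_1$-wave while $u_1$ is a generalized Riemann invariant across the $2$- and $5$-shear (here acoustic) waves; moreover $\frac{\mathcal{D}p_{11}}{\mathcal{D}t}$ takes the common value $\left(\frac{\mathcal{D}p_{11}}{\mathcal{D}t}\right)_{\ast}$ throughout the whole region between the $1$- and $6$-waves, as noted after \eqref{a1_b1_d1}.

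First I would prove \eqref{u2_p12_t_ast2_acoustic}. Substitute the already-computed acoustic material derivatives $\left(\frac{\mathcal{D}u_2}{\mathcal{D}t}\right)_{\ast\ast}$ and $\left(\frac{\mathcal{D}p_{12}}{\mathcal{D}t}\right)_{\ast\ast}$ from \eqref{Du2_Dp12_Dt_ast2_acoustic}, together with $\left(\frac{\mathcal{D}p_{11}}{\mathcal{D}t}\right)_{\ast}$ from \eqref{Dp11_Dt_ast_acoustic}, into the exact pointwise identity \eqref{u2_p12_t} evaluated at the doubly-starred state, i.e. with $u_1$ replaced by $u_{1,\ast}$, $p_{11}$ by $p_{11,\ast}$, $p_{12}$ by $p_{12,\ast}$ and $\rho$ by $\rho_{\ast}$. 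Since \eqref{u2_p12_t} is derived from \eqref{eq:u2}, \eqref{eq:p12}, \eqref{u1_x} and \eqref{u2_x} with no approximation, \eqref{u2_p12_t_ast2_acoustic} follows verbatim.

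Next I would obtain \eqref{p22_t_ast2K_acoustic} as the mere definition of the material derivative: from $\frac{\mathcal{D}}{\mathcal{D}t}=\frac{\partial}{\partial t}+u_1\frac{\partial}{\partial x}$ one has $\frac{\partial p_{22}}{\partial t}=\frac{\mathcal{D}p_{22}}{\mathcal{D}t}-u_1\frac{\partial p_{22}}{\partial x}$, and evaluating both sides at the state $\ast\ast K$ (convection speed $u_{1,\ast}$) gives \eqref{p22_t_ast2K_acoustic}. The spatial derivative $\left(\frac{\partial p_{22}}{\partial x}\right)_{\ast\ast K}$ is already supplied by \eqref{p22_x_ast2L_ast2R}, so it only remains to pin down $\left(\frac{\mathcal{D}p_{22}}{\mathcal{D}t}\right)_{\ast\ast}$. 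For this I would invoke \eqref{Dp22_t}, which expresses $\frac{\mathcal{D}p_{22}}{\mathcal{D}t}$ through $\frac{\mathcal{D}p_{11}}{\mathcal{D}t}$ and $\frac{\mathcal{D}p_{12}}{\mathcal{D}t}$; evaluating it at the doubly-starred state, using that $p_{11}$ and $p_{12}$ are continuous across the contact discontinuity (so that $\left(\frac{\mathcal{D}p_{12}}{\mathcal{D}t}\right)_{\ast\ast L}=\left(\frac{\mathcal{D}p_{12}}{\mathcal{D}t}\right)_{\ast\ast R}=\left(\frac{\mathcal{D}p_{12}}{\mathcal{D}t}\right)_{\ast\ast}$) and that all $\mathbf{U}$-point-values coincide with $\mathbf{U}_{\ast}$, gives \eqref{Dp22_Dt_ast2_acoustic}. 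In particular $\left(\frac{\mathcal{D}p_{22}}{\mathcal{D}t}\right)_{\ast\ast}$ is independent of $K$, and combining with \eqref{p22_x_ast2L_ast2R} closes the argument.

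I do not expect a genuine analytic obstacle here: once the acoustic $\ast K$ and $\ast\ast$ material derivatives are in hand, the present statement is pure bookkeeping. The only points that deserve care are (i) checking that each intermediate state is indeed convected at speed $u_{1,\ast}$, so that the translation between $\frac{\partial}{\partial t}$ and $\frac{\mathcal{D}}{\mathcal{D}t}$ uses the correct coefficient, and (ii) noting that, unlike the rarefaction-wave theorems, none of the acoustic relations used here involve the trapezoidal quadrature, so the resulting $\left(\frac{\partial\mathbf{V}}{\partial t}\right)$ is exact and retains the second-order accuracy demanded by \eqref{U_j+1/2^n+1/2}.
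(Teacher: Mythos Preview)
Your proposal is correct and follows essentially the same approach as the paper: the paper's own proof is the one-line ``Utilizing \eqref{u2_p12_t} yields \eqref{u2_p12_t_ast2_acoustic}. \eqref{p22_t_ast2K_acoustic} is direct, and applying \eqref{Dp22_t} gives \eqref{Dp22_Dt_ast2_acoustic},'' which is precisely your three steps. Your additional remarks about why the convection speed is $u_{1,\ast}$ and why $\left(\frac{\mathcal{D}p_{12}}{\mathcal{D}t}\right)_{\ast\ast}$ is $K$-independent are correct and make the bookkeeping explicit, but add nothing the paper does not already assume.
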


\begin{proof}
Utilizing \eqref{u2_p12_t} yields \eqref{u2_p12_t_ast2_acoustic}. \eqref{p22_t_ast2K_acoustic} is direct, and applying \eqref{Dp22_t} gives \eqref{Dp22_Dt_ast2_acoustic}.
\end{proof}

\section{Numerical experiments}\label{numerical experiments}
In this section, some initial value and initial-boundary-value problems of the 1D and 2D ten-moment equations will be solved to verify the accuracy and performance of the proposed GRP schemes. It should be emphasized that several examples of 2D Riemann problems are constructed for the first time.
For the 1D case, the time step size is computed by
\[
\Delta t=C_{\text{cfl}}\frac{\Delta x}{\max\limits_j\{\lambda_{\max}^x(\mathbf{V}_j)\}},
\]
where $\lambda_{\max}^x(\mathbf{V}_j):=|u_{1,j}|+\sqrt{\frac{3p_{11,j}}{\rho_j}}$, $j=1,\cdots,N_x$ with $N_x$ denoting the number of cells in the $x$-direction. For the 2D case, the time step size is computed by
\[
\Delta t=\frac{C_{\text{cfl}}}{\frac{\max\limits_{j,k}\{\lambda_{\max}^x(\mathbf{V}_{j,k})\}}{\Delta x}+\frac{\max\limits_{j,k}\{\lambda_{\max}^y(\mathbf{V}_{j,k})
\}}{\Delta y}},
\]
where $\lambda_{\max}^x(\mathbf{V}_{j,k}):=|u_{1,j,k}|+\sqrt{\frac{3p_{11,j,k}}{\rho_{j,k}}}$ and $\lambda_{\max}^y(\mathbf{V}_{j,k}):=|u_{2,j,k}|+\sqrt{\frac{3p_{22,j,k}}{\rho_{j,k}}}$, $j=1,\cdots,N_x$, $k=1,\cdots,N_y$ with $N_x$, $N_y$ denoting respectively the numbers of cells in the $x$- and $y$-directions. The CFL number $C_{\text{cfl}}$ is taken as 0.45.
Without special station, the van Leer slope limiter \eqref{vanLeer limiter} with characteristic decomposition is utilized.

\begin{example}{Accuracy test 1 without source term}\label{accuracy_test1}
\rm

The first example is a smooth problem within the interval $[-0.5, 0.5]$ without source term, and the exact solution is
\[
\rho = 2+ \sin(2\pi(x - t)), ~ u_1 = 1, ~ u_2 = 0, ~ p_{11} = 1, ~ p_{12} = 0, ~ p_{22} = 1.
\]
The periodic boundary conditions are imposed. The proposed GRP scheme is applied to solve this problem up to the final time $t=0.5$. Table \ref{sin_nr1} shows the errors and  corresponding orders of the density $\rho$. One can see that almost second-order convergence rates are obtained for the $l^1$ and $l^2$ errors, but the $l^\infty$ convergence rate is lower than 1.5, which is coincided with those convergent results in \cite{kuang2019second}.

\end{example}

\begin{table}[!htbp]
\centering
\caption{Example \ref{accuracy_test1}: The convergent results of $\rho$ at $t=0.5$ for the accuracy test 1.}
\label{sin_nr1}
\begin{center}
\small
\begin{tabular}{c|c|cc|cc|cc}
\hline
& $N_x$ & $l^1$ error & order & $l^2$ error & order & $l^\infty$ error & order \\
\hline
\multirow{7}{*}{$\rho$} & 10 &   8.8236e-02 & -- &      1.1507e-01 & -- &     2.1657e-01 & -- \\
\multirow{7}{*}{} & 20 &   3.4526e-02 & 1.35 &      4.1664e-02 & 1.47 &     7.8594e-02 & 1.46 \\
\multirow{7}{*}{} & 40 &   9.6788e-03 & 1.83 &      1.2793e-02 & 1.70 &     3.1038e-02 & 1.34 \\
\multirow{7}{*}{} & 80 &   2.4275e-03 & 2.00 &      3.8036e-03 & 1.75 &     1.1914e-02 & 1.38 \\
\multirow{7}{*}{} & 160 &   5.7060e-04 & 2.09 &      1.1138e-03 & 1.77 &     4.4951e-03 & 1.41 \\
\multirow{7}{*}{} & 320 &   1.3677e-04 & 2.06 &      3.2356e-04 & 1.78 &     1.6742e-03 & 1.42 \\
\multirow{7}{*}{} & 640 &   3.2039e-05 & 2.09 &      9.3616e-05 & 1.79 &     6.1817e-04 & 1.44 \\
\hline
\end{tabular}
\end{center}
\end{table}

\begin{example}{Accuracy test 2 with nontrivial source term}\label{accuracy_test2}
\rm

This example examines a smooth problem within the interval $[-0.25, 0.25]$ with the nontrivial potential $W(x) = x$, and the exact solution is
\[
\rho = \epsilon + \sin^2(2\pi(x - t)), ~ u_1 = 1, ~ u_2 = 0, ~ p_{11} = 1 + (t - x)\left(\frac{\epsilon}{2} + \frac{1}{4}\right) + \frac{\sin(4\pi(x - t))}{16\pi}, ~ p_{12} = 0, ~ p_{22} = 1.
\]
The parameter $\epsilon$ is taken as $10^{-2}$. For the density $\rho$, the periodic boundary conditions are imposed, and for the pressure component $p_{11}$, the exact boundary conditions are imposed. The proposed GRP scheme is applied to solve this problem up to the final time $t=0.1$. Table \ref{sin_nr2} shows the errors and  corresponding orders of the density $\rho$ and the pressure component $p_{11}$. One can see that for the density $\rho$, there are similar convergent rates with those in accuracy test 1, and for the pressure component $p_{11}$, almost second-order convergent rates are obtained on the fine meshes, which may be due to the $p_{11}$ being close to a linear function.

\end{example}

\begin{table}[!htbp]
\centering
\caption{Example \ref{accuracy_test2}: The convergent results of $\rho$ and $p_{11} $at $t=0.1$ for the accuracy test 2 with $\epsilon=10^{-2}$.}
\label{sin_nr2}
\begin{center}
\small
\begin{tabular}{c|c|cc|cc|cc}
\hline
& $N_x$ & $l^1$ error & order & $l^2$ error & order & $l^\infty$ error & order \\
\hline
\multirow{7}{*}{$\rho$} & 10 &   2.6529e-02 & -- &      3.2438e-02 & -- &     5.5835e-02 & -- \\
\multirow{7}{*}{} & 20 &   1.0243e-02 & 1.37 &      1.1660e-02 & 1.48 &     2.3462e-02 & 1.25 \\
\multirow{7}{*}{} & 40 &   2.6514e-03 & 1.95 &      3.6216e-03 & 1.69 &     9.4498e-03 & 1.31 \\
\multirow{7}{*}{} & 80 &   6.6426e-04 & 2.00 &      1.0997e-03 & 1.72 &     3.6870e-03 & 1.36 \\
\multirow{7}{*}{} & 160 &   1.6390e-04 & 2.02 &      3.2991e-04 & 1.74 &     1.4114e-03 & 1.39 \\
\multirow{7}{*}{} & 320 &   4.0720e-05 & 2.01 &      9.7789e-05 & 1.75 &     5.3239e-04 & 1.41 \\
\multirow{7}{*}{} & 640 &   1.0855e-05 & 1.91 &      2.9285e-05 & 1.74 &     1.9890e-04 & 1.42 \\
\hline
\multirow{7}{*}{$p_{11}$} & 10 &   3.9221e-03 & -- &      4.8356e-03 & -- &     8.5007e-03 & -- \\
\multirow{7}{*}{} & 20 &   2.3719e-03 & 0.73 &      3.5839e-03 & 0.43 &     1.2636e-02 & -0.57 \\
\multirow{7}{*}{} & 40 &   9.8731e-04 & 1.26 &      1.6739e-03 & 1.10 &     8.0796e-03 & 0.65 \\
\multirow{7}{*}{} & 80 &   2.9479e-04 & 1.74 &      5.3125e-04 & 1.66 &     2.3384e-03 & 1.79 \\
\multirow{7}{*}{} & 160 &   8.1947e-05 & 1.85 &      1.6025e-04 & 1.73 &     1.0013e-03 & 1.22 \\
\multirow{7}{*}{} & 320 &   2.2038e-05 & 1.89 &      4.1142e-05 & 1.96 &     1.7405e-04 & 2.52 \\
\multirow{7}{*}{} & 640 &   6.5996e-06 & 1.74 &      1.0550e-05 & 1.96 &     3.9586e-05 & 2.14 \\
\hline
\end{tabular}
\end{center}
\end{table}

\begin{example}{1D Riemann problems}\label{1D_RP}
\rm

Three 1D Riemann problems are considered here to test the wave-capturing ability of the proposed GRP scheme. Their initial conditions and setups are as follows:

(i) 1D Riemann problem 1:
\begin{equation}\label{1D_RP1}
(\rho,u_1,u_2,p_{11},p_{12},p_{22})=\begin{cases}
                                      (1,0,0,2,0.05,0.6), &  x\leq0.5, \\
                                      (0.125,0,0,0.2,0.1,0.2), &  x>0.5.
                                    \end{cases}
\end{equation}
The discontinuity is located at $x=0.5$ in the interval $[0,1]$. The final time is $t=0.125$. This is the Sod shock-tube problem, and the solutions at the final time contains five waves: the left rarefaction wave, the left shear wave, the middle contact discontinuity, the right shear wave, and the right shock wave, see Figure \ref{RP1_nr}.

(ii) 1D Riemann problem 2:
\begin{equation}\label{1D_RP2}
(\rho,u_1,u_2,p_{11},p_{12},p_{22})=\begin{cases}
                                      (1,1,1,1,0,1), &  x\leq0, \\
                                      (1,-1,-1,1,0,1), &  x>0.
                                    \end{cases}
\end{equation}
The discontinuity is located at $x=0$ in the interval $[-0.5,0.5]$. The final time is $t=0.125$. The solutions at the final time contains four waves: the left shock wave, the left shear wave, the right shear wave, and the right shock wave, see Figure \ref{RP3_nr}.

(iii) 1D Riemann problem 3:
\begin{equation}\label{1D_RP3}
(\rho,u_1,u_2,p_{11},p_{12},p_{22})=\begin{cases}
                                      (2,-0.5,-0.5,1.5,0.5,1.5), &  x\leq0, \\
                                      (1,1,1,1,0,1), &  x>0.
                                    \end{cases}
\end{equation}
The discontinuity is located at $x=0$ in the interval $[-0.5,0.5]$. The final time is $t=0.15$. The solutions at the final time contains five waves: the left rarefaction wave, the left shear wave, the middle contact discontinuity, the right shear wave, and the right rarefaction wave, see Figure \ref{RP4_nr}.

For the above three problems, the outflow boundary conditions are imposed. They are simulated by the GRP scheme on a mesh consisting of 400 cells. The numerical results are respectively presented in Figures \ref{RP1_nr}-\ref{RP4_nr}. One can find that the GRP scheme captures all possible waves well and has high resolution for the shock waves.

\end{example}

\begin{figure}[!htbp]
\subfigure[$\rho$]{
\begin{minipage}[c]{0.3\linewidth}
\centering
\includegraphics[width=5cm]{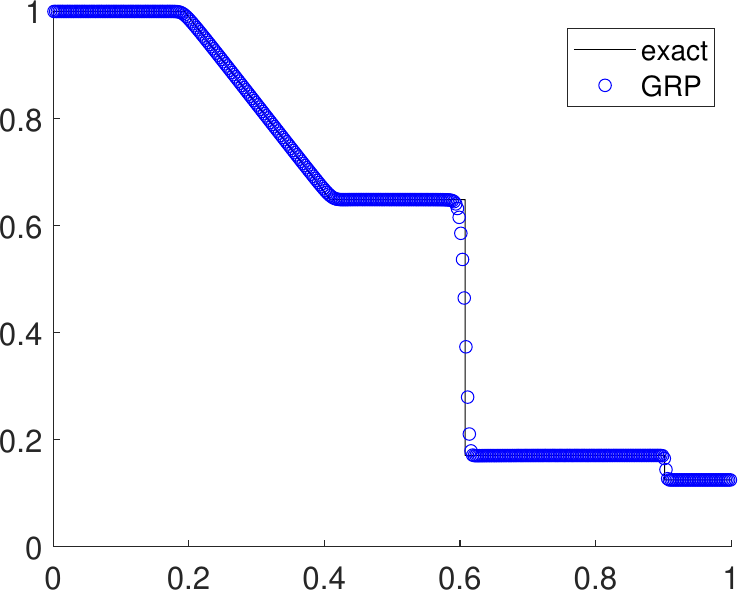}
\end{minipage}
}
\subfigure[$u_1$]{
\begin{minipage}[c]{0.3\linewidth}
\centering
\includegraphics[width=5cm]{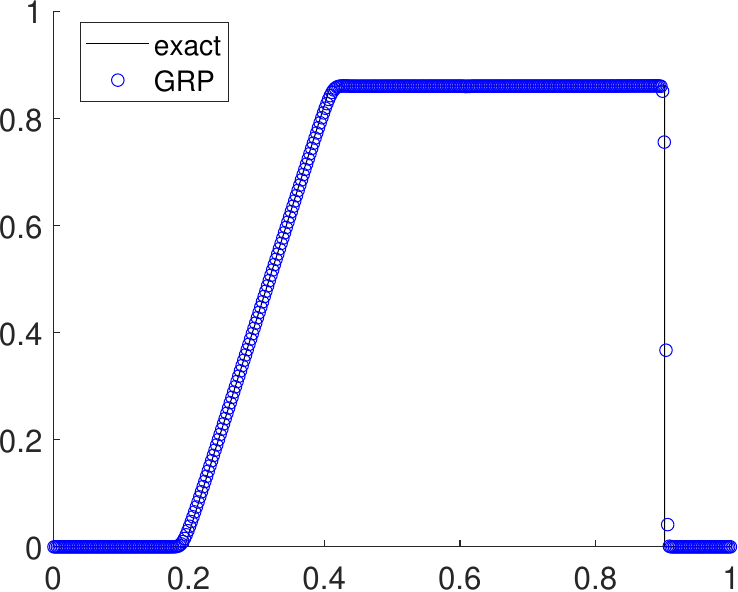}
\end{minipage}
}
\subfigure[$u_2$]{
\begin{minipage}[c]{0.3\linewidth}
\centering
\includegraphics[width=5cm]{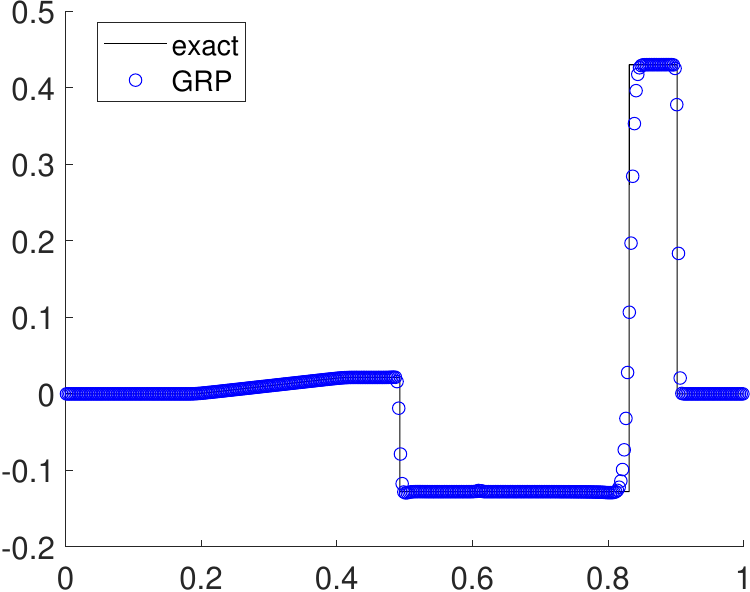}
\end{minipage}
}
\subfigure[$p_{11}$]{
\begin{minipage}[c]{0.3\linewidth}
\centering
\includegraphics[width=5cm]{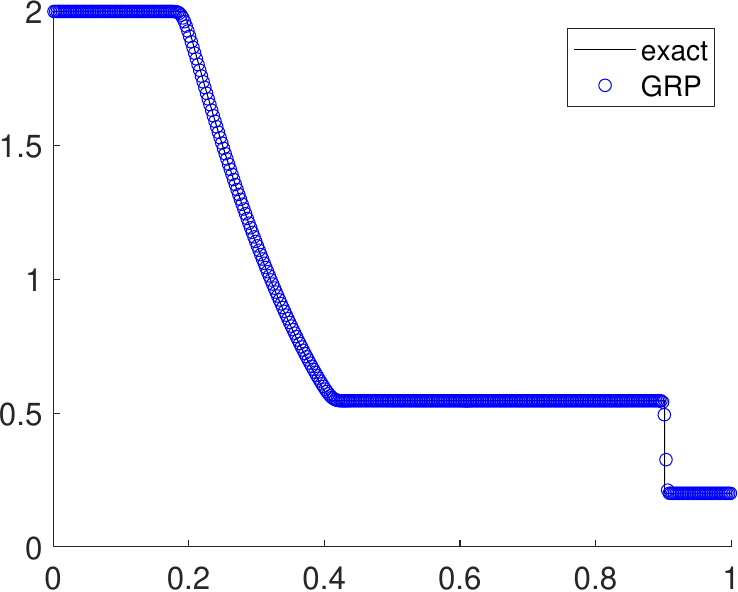}
\end{minipage}
}
\subfigure[$p_{12}$]{
\begin{minipage}[c]{0.3\linewidth}
\centering
\includegraphics[width=5cm]{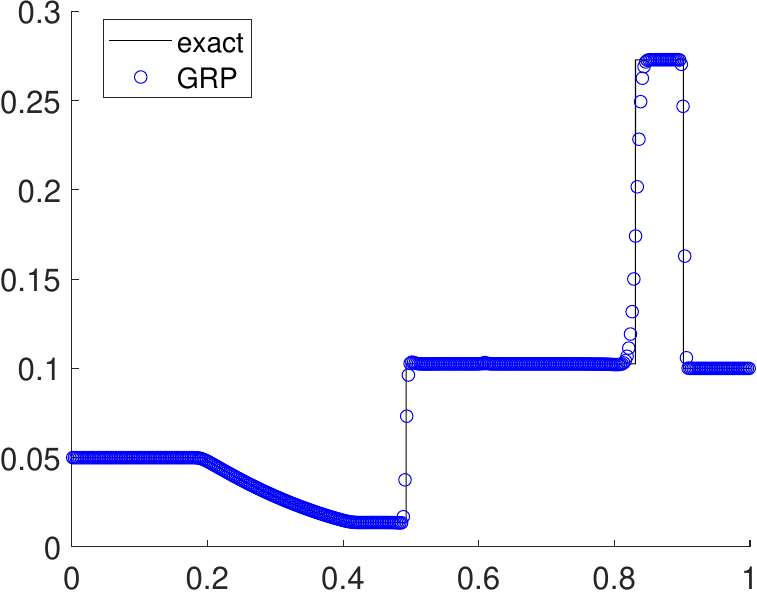}
\end{minipage}
}
\subfigure[$p_{22}$]{
\begin{minipage}[c]{0.3\linewidth}
\centering
\includegraphics[width=5cm]{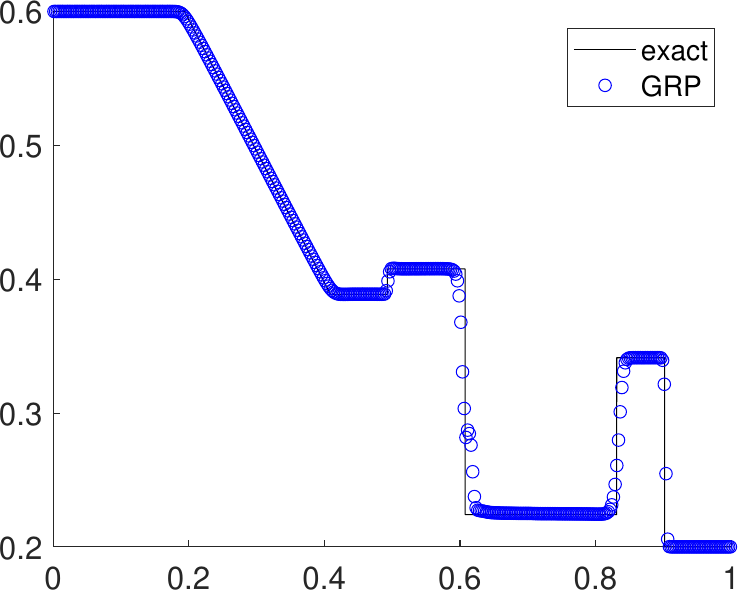}
\end{minipage}
}
\centering
\caption{Example \ref{1D_RP}: The numerical solutions of the GRP scheme on 400 uniform cells for the 1D Riemann problem \eqref{1D_RP1}.}
\label{RP1_nr}
\end{figure}

\begin{figure}[!htbp]
\subfigure[$\rho$]{
\begin{minipage}[c]{0.3\linewidth}
\centering
\includegraphics[width=5cm]{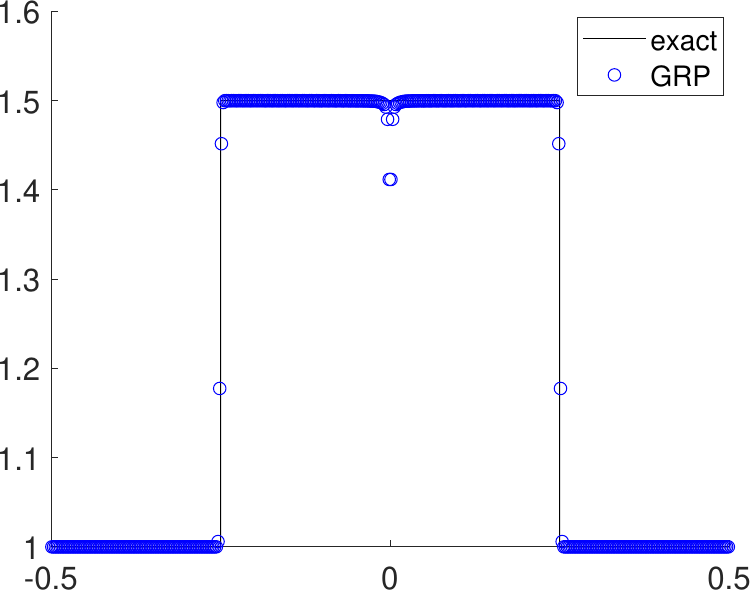}
\end{minipage}
}
\subfigure[$u_1$]{
\begin{minipage}[c]{0.3\linewidth}
\centering
\includegraphics[width=5cm]{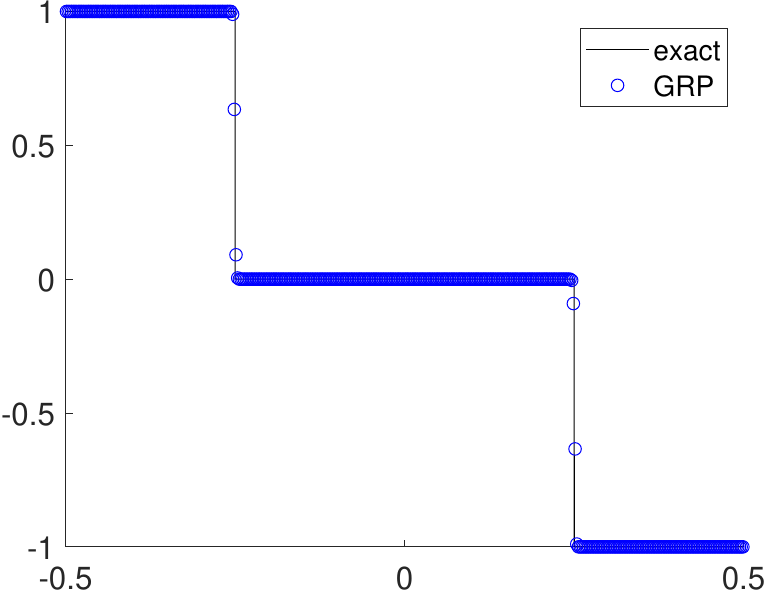}
\end{minipage}
}
\subfigure[$u_2$]{
\begin{minipage}[c]{0.3\linewidth}
\centering
\includegraphics[width=5cm]{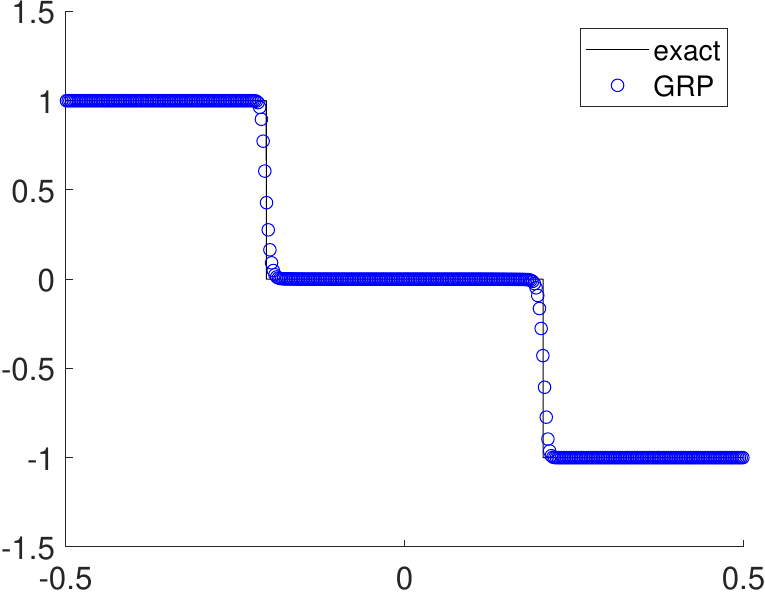}
\end{minipage}
}
\subfigure[$p_{11}$]{
\begin{minipage}[c]{0.3\linewidth}
\centering
\includegraphics[width=5cm]{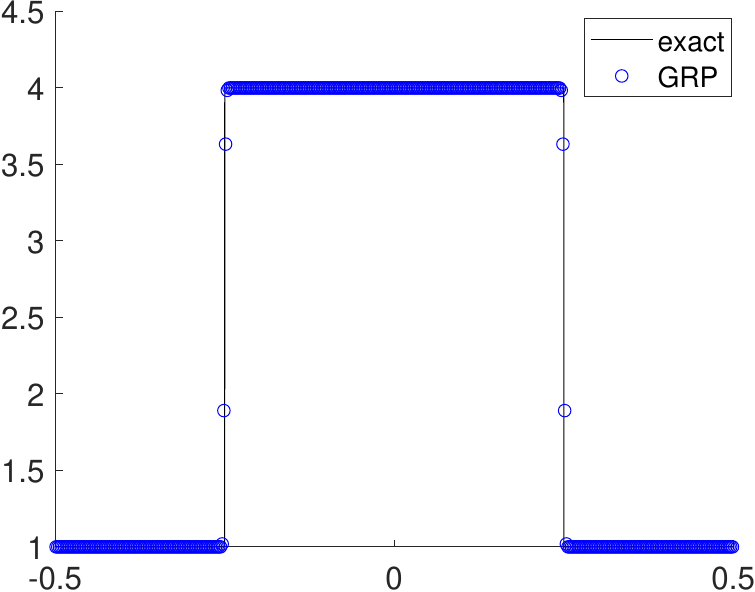}
\end{minipage}
}
\subfigure[$p_{12}$]{
\begin{minipage}[c]{0.3\linewidth}
\centering
\includegraphics[width=5cm]{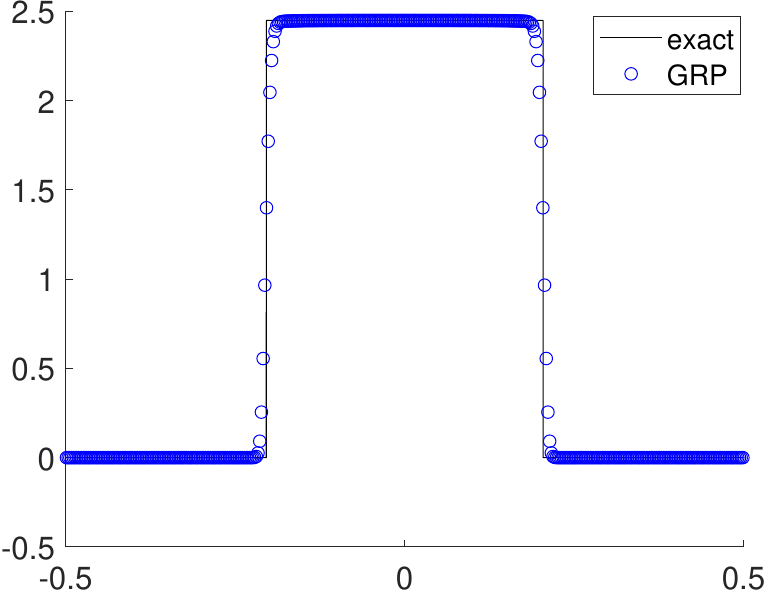}
\end{minipage}
}
\subfigure[$p_{22}$]{
\begin{minipage}[c]{0.3\linewidth}
\centering
\includegraphics[width=5cm]{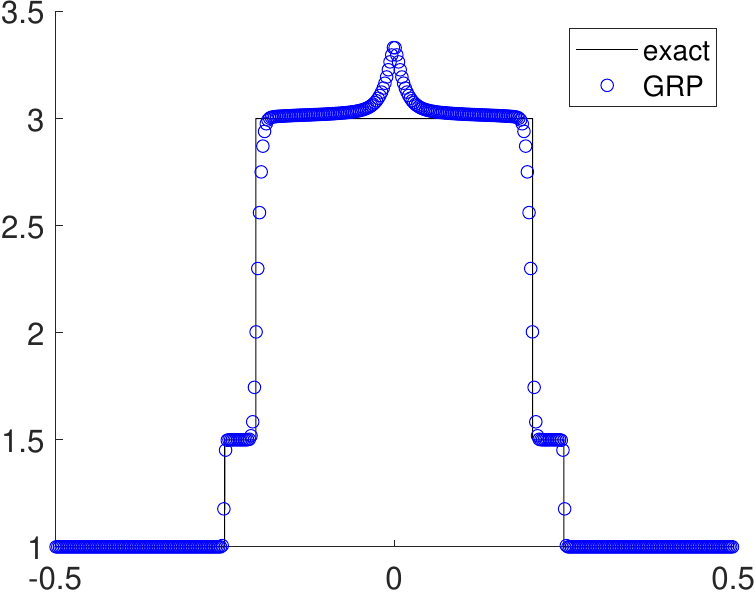}
\end{minipage}
}
\centering
\caption{Example \ref{1D_RP}: The numerical solutions of the GRP scheme on 400 uniform cells for the 1D Riemann problem \eqref{1D_RP2}.}
\label{RP3_nr}
\end{figure}

\begin{figure}[!htbp]
\subfigure[$\rho$]{
\begin{minipage}[c]{0.3\linewidth}
\centering
\includegraphics[width=5cm]{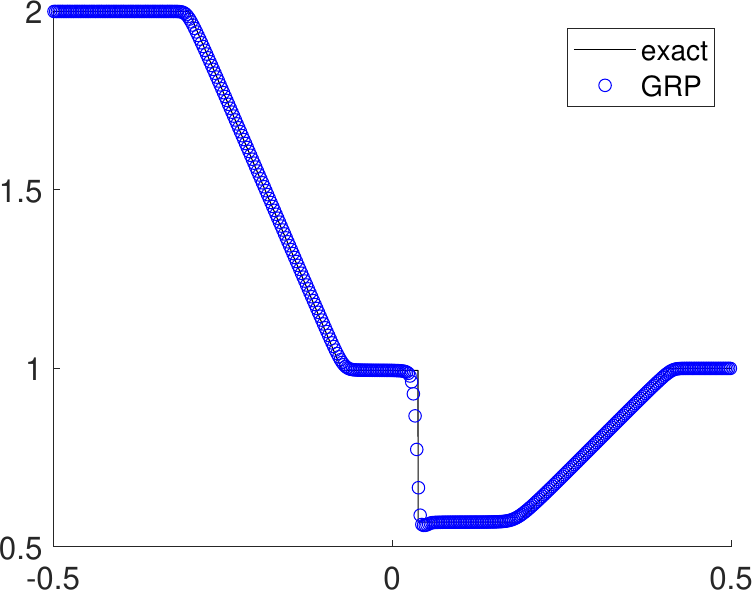}
\end{minipage}
}
\subfigure[$u_1$]{
\begin{minipage}[c]{0.3\linewidth}
\centering
\includegraphics[width=5cm]{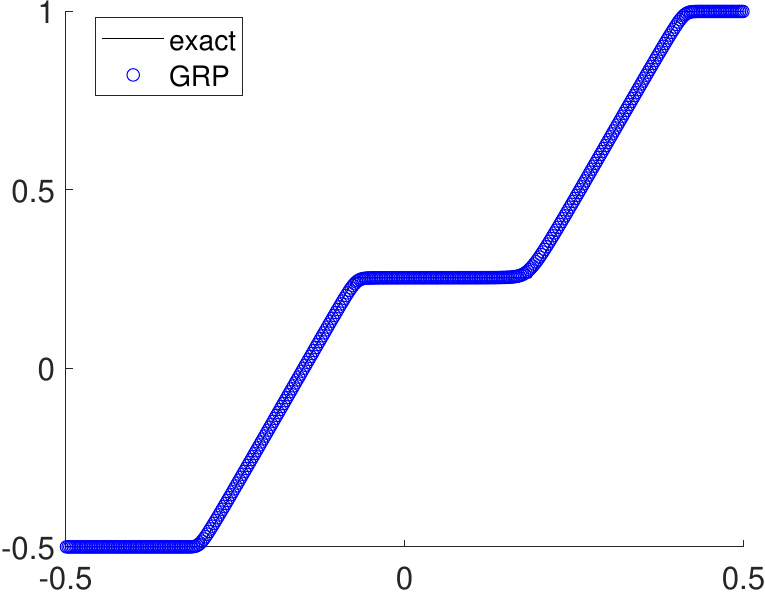}
\end{minipage}
}
\subfigure[$u_2$]{
\begin{minipage}[c]{0.3\linewidth}
\centering
\includegraphics[width=5cm]{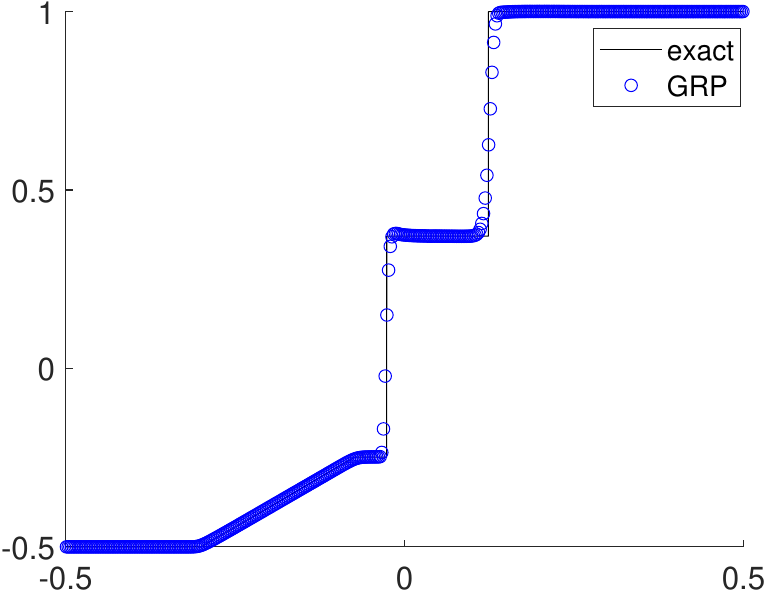}
\end{minipage}
}
\subfigure[$p_{11}$]{
\begin{minipage}[c]{0.3\linewidth}
\centering
\includegraphics[width=5cm]{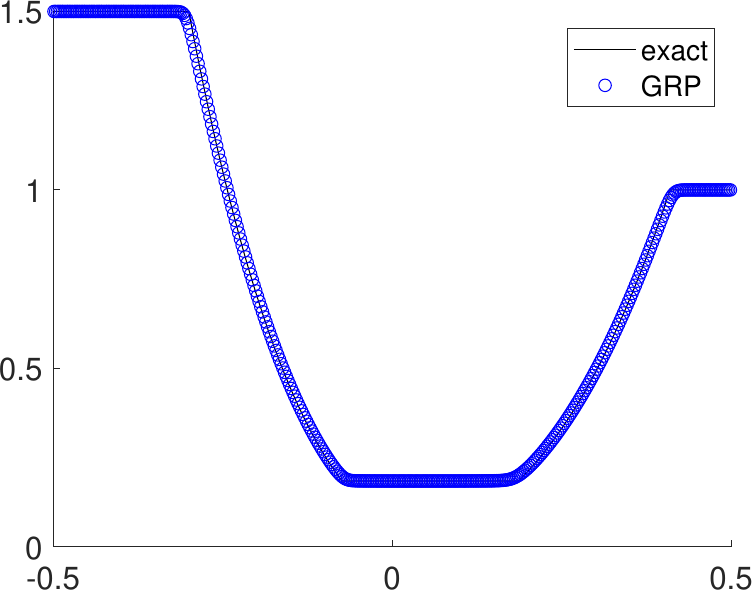}
\end{minipage}
}
\subfigure[$p_{12}$]{
\begin{minipage}[c]{0.3\linewidth}
\centering
\includegraphics[width=5cm]{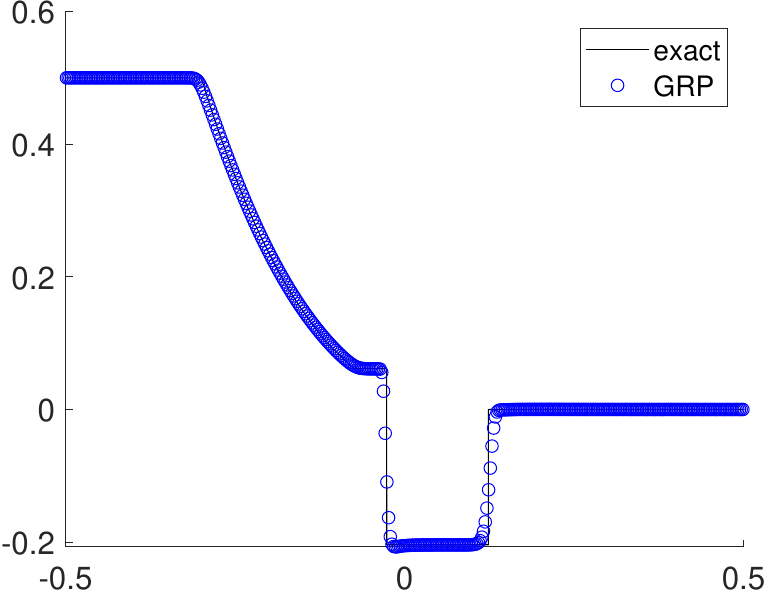}
\end{minipage}
}
\subfigure[$p_{22}$]{
\begin{minipage}[c]{0.3\linewidth}
\centering
\includegraphics[width=5cm]{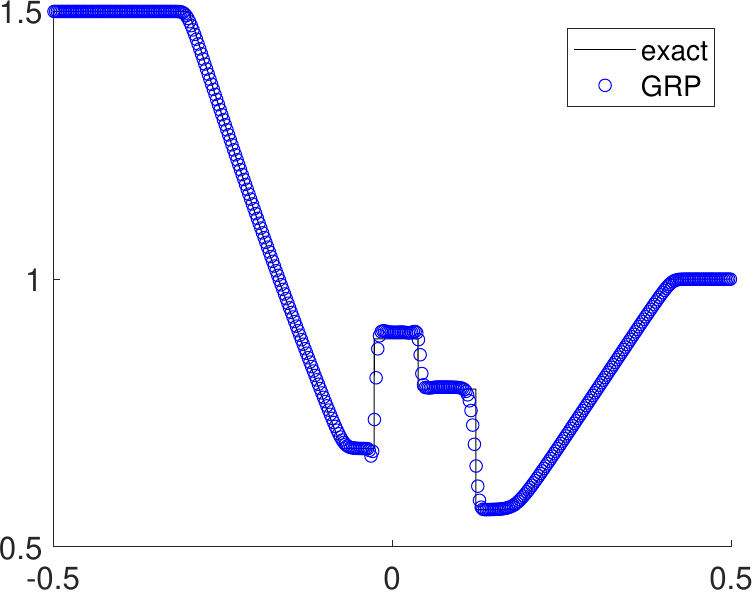}
\end{minipage}
}
\centering
\caption{Example \ref{1D_RP}: The numerical solutions of the GRP scheme on 400 uniform cells for the 1D Riemann problem \eqref{1D_RP3}.}
\label{RP4_nr}
\end{figure}

%

\begin{example}{Leblanc problem}\label{Leblanc_problem}
\rm

The Leblanc problem is considered in this test, which is an extension of the Leblanc problem of the Euler equations. The initial solution is given by
\[
(\rho,u_1,u_2,p_{11},p_{12},p_{22})=\begin{cases}
                                      (2,0,0,10^9,0,10^9), &  x\leq5, \\
                                      (0.001,0,0,1,0,1), &  x>5.
                                    \end{cases}
\]
This problem is highly challenging due to the presence of the strong jumps in the initial density and pressure. The computational domain is taken as $[0,10]$ with the outflow boundary conditions. This problem is simulated until the time $t=0.00004$. Figure \ref{leblanc_nr} displays the numerical solutions obtained on a mesh with 800 cells. For this problem, the minmod slope limiter \eqref{minmod limiter} with $\theta=1.8$ is utilized. One can see that the proposed GRP scheme is able to capture the strong discontinuities with high resolution.

\end{example}

\begin{figure}[!htbp]
\subfigure[$\log(\rho)$]{
\begin{minipage}[c]{0.3\linewidth}
\centering
\includegraphics[width=5cm]{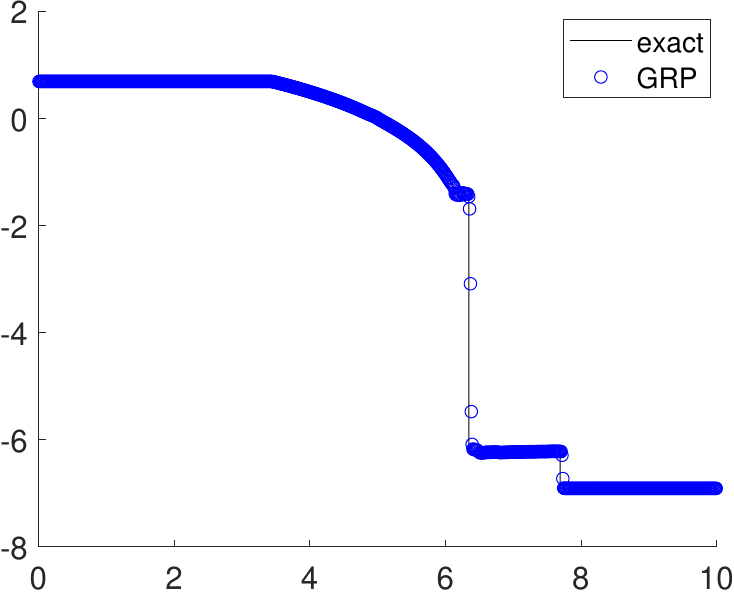}
\end{minipage}
}
\subfigure[$\log(p_{11})$]{
\begin{minipage}[c]{0.3\linewidth}
\centering
\includegraphics[width=5cm]{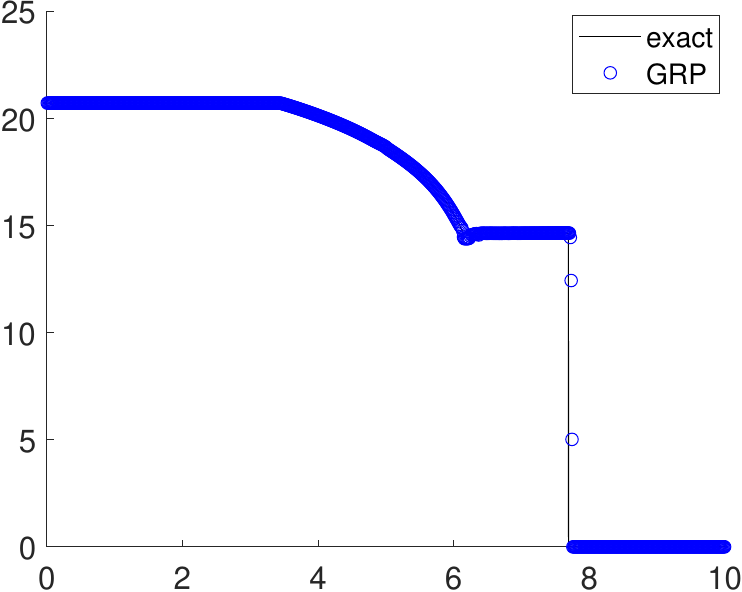}
\end{minipage}
}
\subfigure[$\log(p_{22})$]{
\begin{minipage}[c]{0.3\linewidth}
\centering
\includegraphics[width=5cm]{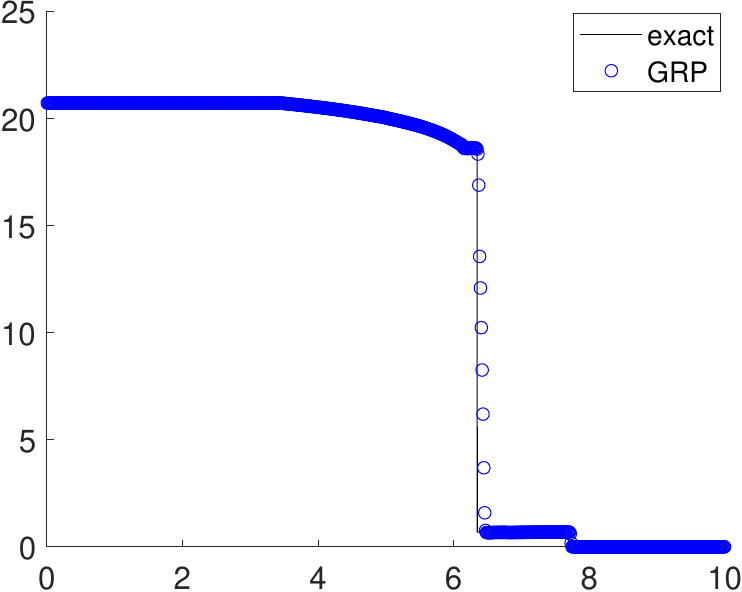}
\end{minipage}
}
\centering
\caption{Example \ref{Leblanc_problem}: The log plots of density $\rho$ (left), the pressure components $p_{11}$ (middle) and $p_{22}$ (right) for the Leblanc problem obtained by the GRP scheme with the minmod slope limiter and $\theta=1.8$ on 800 uniform cells.}
\label{leblanc_nr}
\end{figure}

\begin{example}{Shu-Osher problem}\label{shu-osher}
\rm

The Shu-Osher problem of the ten-moment equations is considered, and it is an extension of the Shu-Osher problem of the Euler equations \cite{shu1989efficient}. The initial solution is given by
\[
(\rho,u_1,u_2,p_{11},p_{12},p_{22})=\begin{cases}
                                      (3.857143,2.629369,0,10.33333,0,10.33333), &  x\leq-4, \\
                                      (1+0.2\sin(5x),0,0,1,0,1), &  x>-4.
                                    \end{cases}
\]
The computational interval is taken as $[-5,5]$. The outflow boundary conditions are imposed. This problem is simulated until the time $t=1.4$ on a mesh with 800 cells. Figure \ref{shu_osher_nr} displays the numerical solutions. The reference solutions are obtained by the GRP scheme with 10000 cells.

 To observe all the waves clearly, the plots of the reference solutions $\rho$, $u_1$, $p_{11}$ and $p_{22}$ are presented together in Figure \ref{shu_osher_nr2}, where the plots of $p_{11}$ and $p_{22}$ translate downward by 10 and 8 units, respectively. One can observe that there are three right-going shock waves near $x=-4.68$, $x=-3.61$ and $x=3.40$, respectively, and a contact discontinuity near $x=-0.69$. In the regions between these locations, the solutions are smooth. From Figure \ref{shu_osher_nr}, one can see that the GRP scheme captures these waves well and resolves the shock waves with high resolution on the mesh with 800 cells.

 Moreover, one can find that the pressure component $p_{11}$ is continuous across the contact discontinuity, but its slope changes. This phenomenon is reasonable. At this moment, the material derivative $\frac{\mathcal{D}u_1}{\mathcal{D}t}$ is non-trivial near the contact discontinuity, and the density has a jump across the contact discontinuity, so by \eqref{p11_x}, one knows that $\frac{\partial p_{11}}{\partial x}$ takes different values at the left and right side of the contact discontinuity.

\end{example}

\begin{figure}[!htbp]
\subfigure[$\rho$]{
\begin{minipage}[c]{0.4\linewidth}
\centering
\includegraphics[width=5cm]{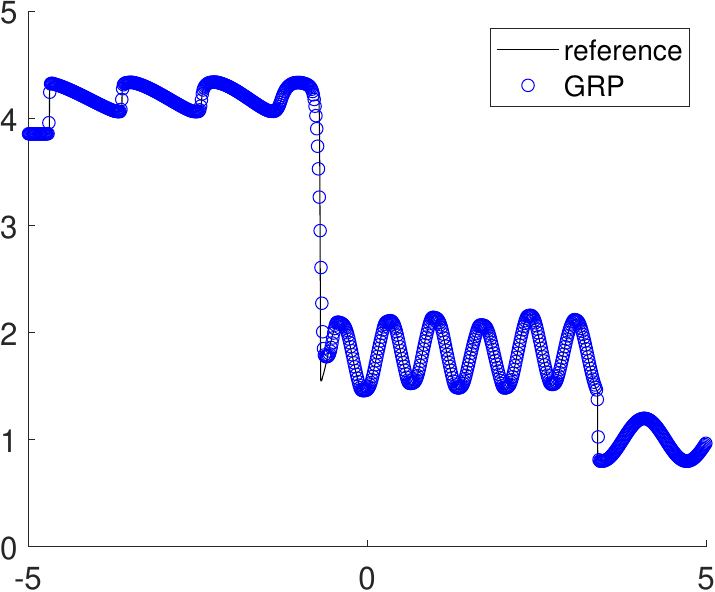}
\end{minipage}
}
\subfigure[$u_1$]{
\begin{minipage}[c]{0.4\linewidth}
\centering
\includegraphics[width=5cm]{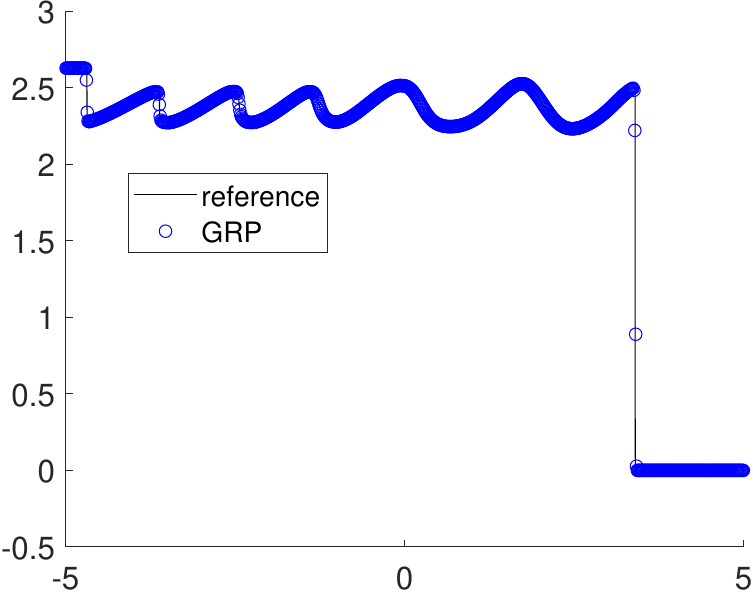}
\end{minipage}
}
\subfigure[$p_{11}$]{
\begin{minipage}[c]{0.4\linewidth}
\centering
\includegraphics[width=5cm]{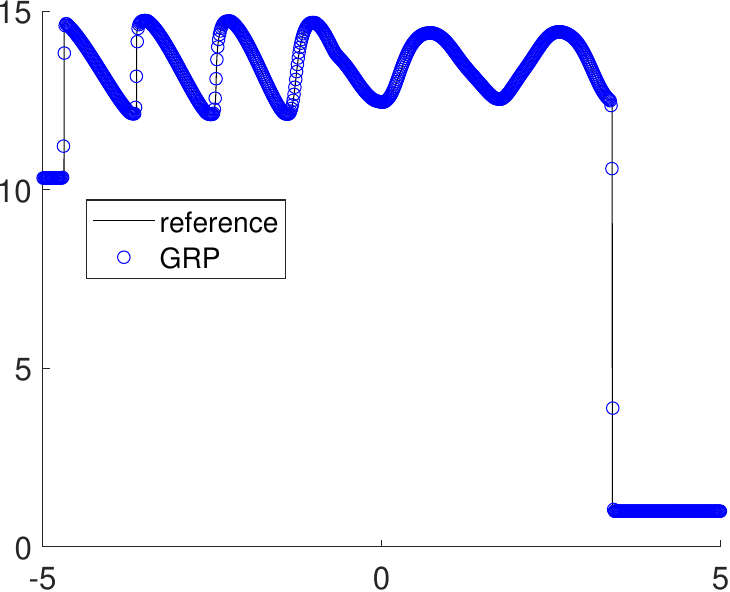}
\end{minipage}
}
\subfigure[$p_{22}$]{
\begin{minipage}[c]{0.4\linewidth}
\centering
\includegraphics[width=5cm]{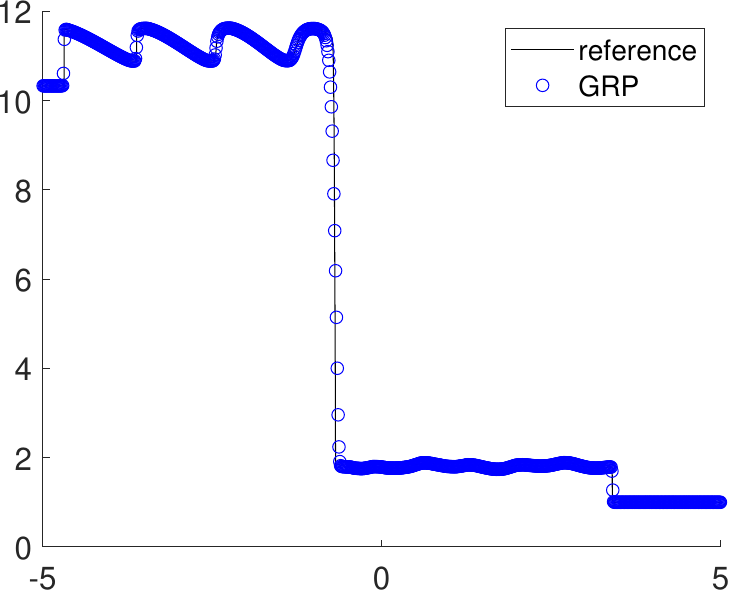}
\end{minipage}
}
\centering
\caption{Example \ref{shu-osher}: The numerical solutions of the GRP scheme on 800 uniform cells for Shu-Osher problem.}
\label{shu_osher_nr}
\end{figure}

\begin{figure}[!htbp]
\centering
\includegraphics[width=6cm]{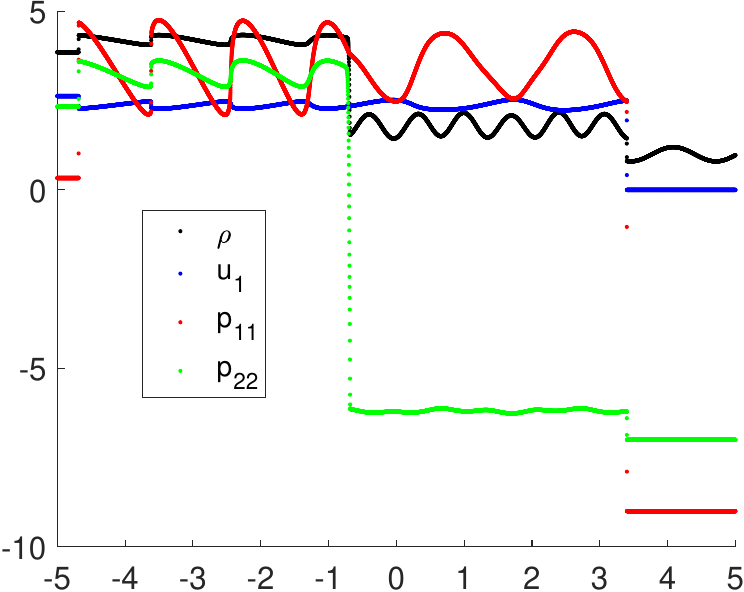}
\centering
\caption{Example \ref{shu-osher}: The comparison of the reference solutions $\rho$, $u_1$, $p_{11}$ and $p_{22}$ for Shu-Osher problem.}
\label{shu_osher_nr2}
\end{figure}

%

\begin{example}{2D Riemann problems}\label{2D_RP}
\rm

In this example, four 2D Riemann problems  are constructed for the first time to verify the performance of
the proposed GRP schemes.
The domain is taken as $[0,1]^2$ with the outflow boundary conditions.
The initial discontinuities are located on the line $x=0.5$ for the $x$-direction and on the line $y=0.5$ for the $y$-direction. Hence, the domain is divided into four subdomains, which are counterclockwise denoted by $\rm\uppercase\expandafter{\romannumeral1}$, $\rm\uppercase\expandafter{\romannumeral2}$, $\rm\uppercase\expandafter{\romannumeral3}$ and $\rm\uppercase\expandafter{\romannumeral4}$ as depicted in Figure \ref{2d_domain}. The proposed GRP scheme will be applied to simulate these four problems on a mesh of $200\times200$ cells up to different final time.

\begin{figure}[!htbp]
\includegraphics[width=6cm]{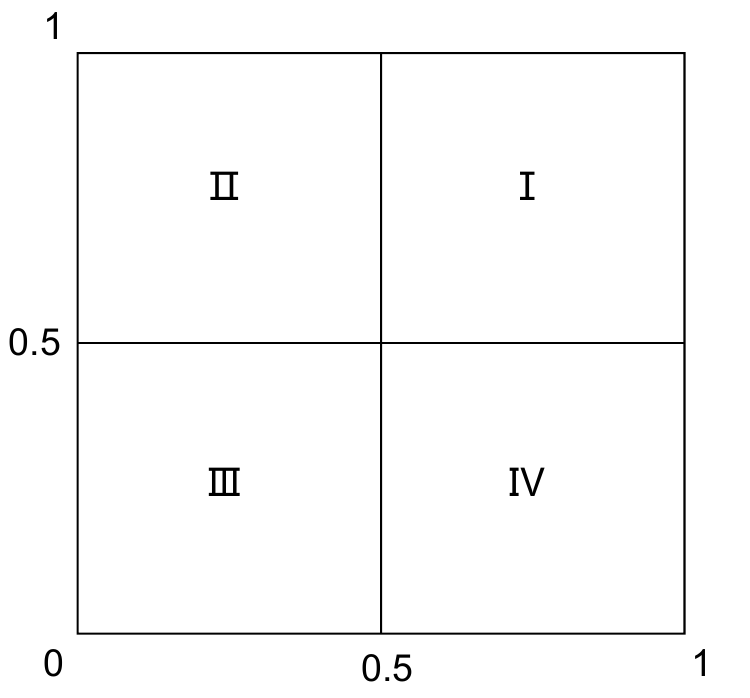}
\centering
\caption{The partition of the domain for the 2D Riemann problems in Example \ref{2D_RP}.}
\label{2d_domain}
\end{figure}

(i) 2D Riemann problem 1:

\begin{equation}\label{2D_RP1}
(\rho,u_1,u_2,p_{11},p_{12},p_{22})=\begin{cases}
                                      (1,0.8939,0.8939,1.0541337,0,1.0541337), & 0.5<x<1, ~ 0.5<y<1,  \\
                                      (1.0541337,0.8939,0.8,1.0541337,0,1.1716956),  & 0<x<0.5, ~ 0.5<y<1, \\
                                      (1,0.8939,0.8939,1,0,1), & 0<x<0.5, ~ 0<y<0.5, \\
                                      (1.0541337,0.8,0.8939,1.1716956,0,1.0541337),  & 0.5<x<1, ~ 0<y<0.5.
                                    \end{cases}
\end{equation}
For this problem, all the possible local wave patterns are elaborated as follows:
\begin{itemize}
\item Between $\rm\uppercase\expandafter{\romannumeral2}$ and $\rm\uppercase\expandafter{\romannumeral1}$: The wave shown by $\rho$ is a right-going contact discontinuity. $u_1$ and $p_{11}$ keep constant. The waves  shown by $u_2$ and $p_{12}$ both contain a left-going shock wave and a right-going shock wave. The waves  shown by $p_{22}$ include a left-going shock wave, a right-going contact discontinuity and a right-going shock wave. However, the two shock waves jumps observed in $p_{22}$ are both small and approximately equal to 0.0022628.

\item Between $\rm\uppercase\expandafter{\romannumeral3}$ and $\rm\uppercase\expandafter{\romannumeral2}$: $u_1$ and $p_{12}$ are constants. The waves  shown by $\rho$, $u_2$, $p_{11}$ and $p_{22}$ are all down-going shock waves.

\item Between $\rm\uppercase\expandafter{\romannumeral3}$ and $\rm\uppercase\expandafter{\romannumeral4}$: $u_2$ and $p_{12}$ are constants. The waves  shown by $\rho$, $u_1$, $p_{11}$ and $p_{22}$ are all left-going shock waves.

\item Between $\rm\uppercase\expandafter{\romannumeral4}$ and $\rm\uppercase\expandafter{\romannumeral1}$: The wave  shown by $\rho$ is an up-going contact discontinuity. $u_2$ and $p_{22}$ keep constant. The waves  shown by $u_1$ and $p_{12}$ both contain a down-going shock wave and an up-going shock wave. The waves shown by $p_{11}$ include a down-going shock wave, an up-going contact discontinuity and an up-going shock wave. However, the jumps across the two shock waves  shown by $p_{11}$ are both small and approximately equal to 0.0022628.
\end{itemize}
This problem is simulated up to $t=0.15$. The plots with 40 equally spaced contour lines of the primitive variables are presented in Figure \ref{2d_RP1_nr}. From Figures \ref{2d_RP1_nr_p11} and \ref{2d_RP1_nr_p22}, it seems that the shock waves  shown by $p_{11}$ and $p_{22}$ are lost. It is due to their small shock jumps and only 40 contour lines being displayed.  Actually, if setting many more equally spaced contour lines, the shock waves in $p_{11}$ and $p_{22}$ can be viewed, see Figure \ref{2d_RP1_nr2}.

\begin{figure}[!htbp]
\subfigure[$\rho$]{
\begin{minipage}[c]{0.3\linewidth}
\centering
\includegraphics[width=5cm]{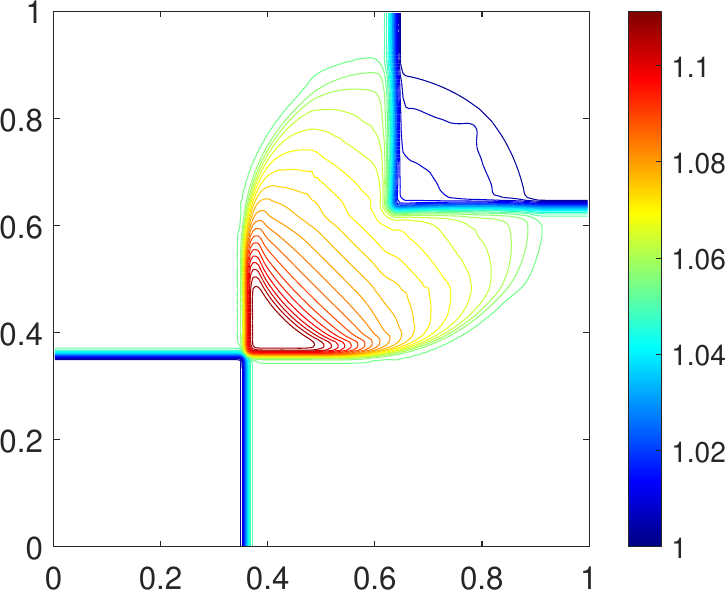}
\end{minipage}
}
\subfigure[$u_1$]{
\begin{minipage}[c]{0.3\linewidth}
\centering
\includegraphics[width=5cm]{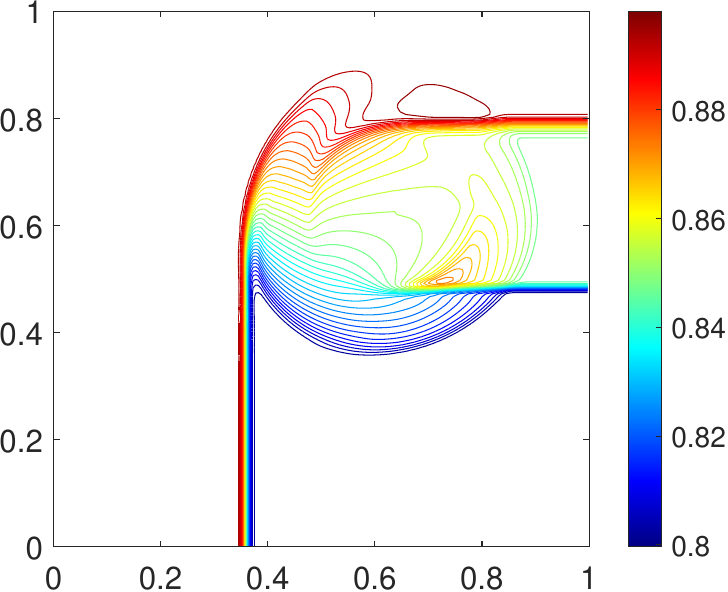}
\end{minipage}
}
\subfigure[$u_2$]{
\begin{minipage}[c]{0.3\linewidth}
\centering
\includegraphics[width=5cm]{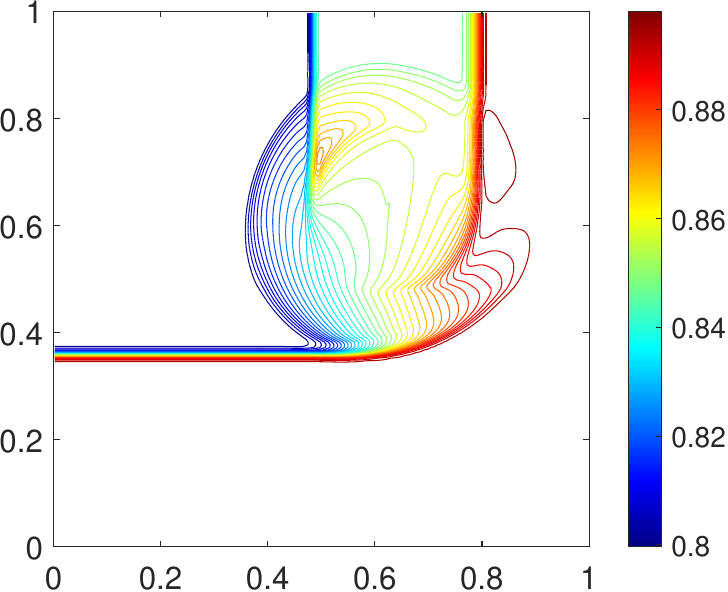}
\end{minipage}
}
\subfigure[$p_{11}$]{
\begin{minipage}[c]{0.3\linewidth}
\centering
\includegraphics[width=5cm]{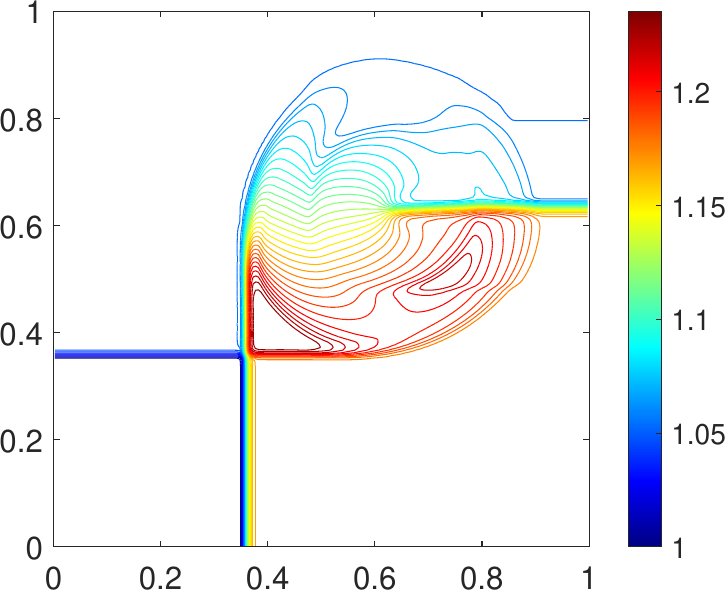}
\end{minipage}
\label{2d_RP1_nr_p11}
}
\subfigure[$p_{12}$]{
\begin{minipage}[c]{0.3\linewidth}
\centering
\includegraphics[width=5cm]{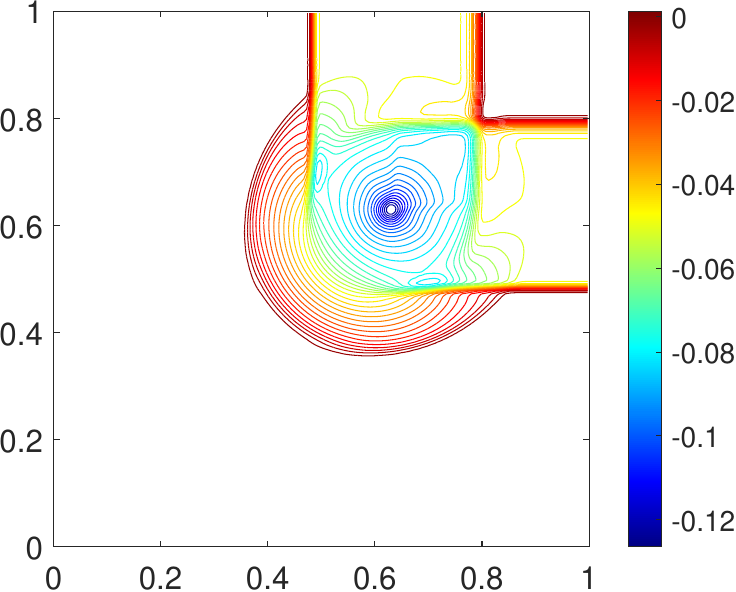}
\end{minipage}
}
\subfigure[$p_{22}$]{
\begin{minipage}[c]{0.3\linewidth}
\centering
\includegraphics[width=5cm]{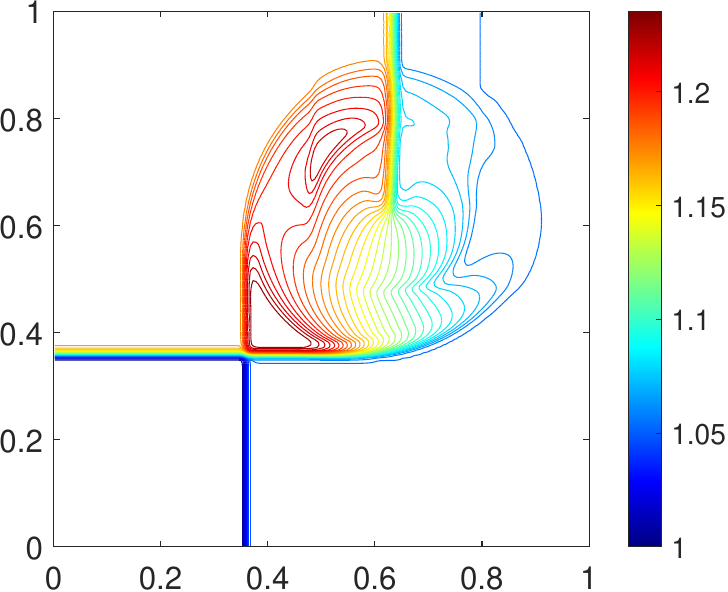}
\end{minipage}
\label{2d_RP1_nr_p22}
}
\centering
\caption{Example \ref{2D_RP}: The contour plots of solutions obtained by the GRP scheme for the 2D Riemann problem \eqref{2D_RP1} on a mesh of $200\times200$ cells. 40 equally spaced contour lines are displayed.}
\label{2d_RP1_nr}
\end{figure}

\begin{figure}[!htbp]
\subfigure[$p_{11}$]{
\begin{minipage}[c]{0.4\linewidth}
\centering
\includegraphics[width=6cm]{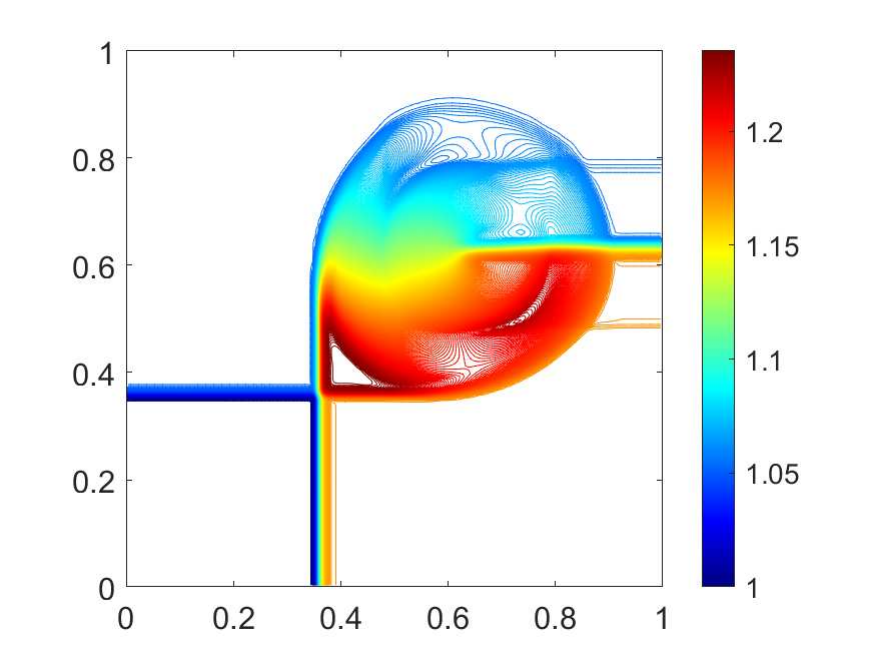}
\end{minipage}
}
\subfigure[$p_{22}$]{
\begin{minipage}[c]{0.4\linewidth}
\centering
\includegraphics[width=6cm]{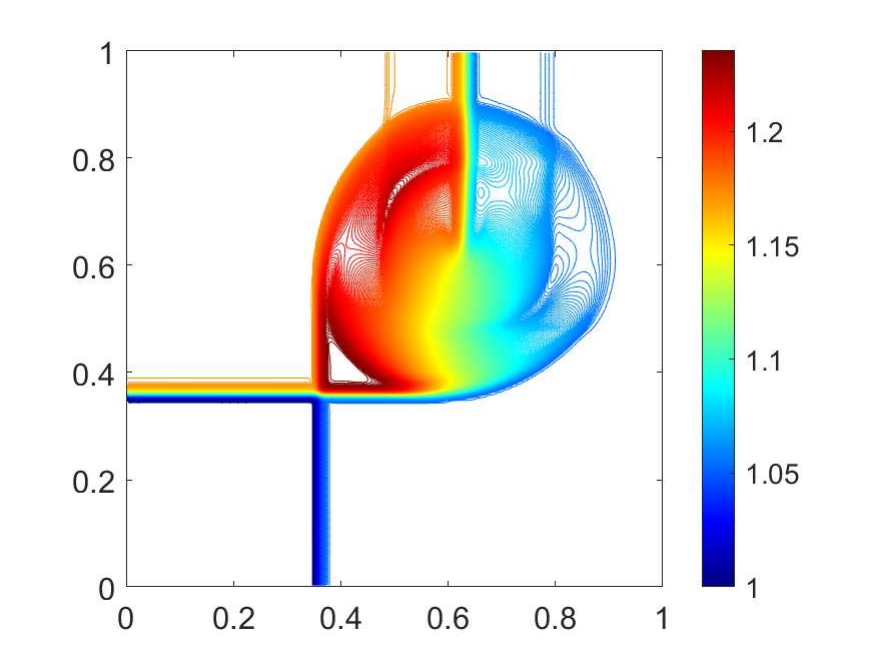}
\end{minipage}
}
\centering
\caption{Example \ref{2D_RP}: The contour plots of $p_{11}$ and $p_{22}$ obtained by the GRP scheme for the 2D Riemann problem \eqref{2D_RP1} on a mesh of $200\times200$ cells. 400 equally spaced contour lines are displayed.}
\label{2d_RP1_nr2}
\end{figure}

(ii) 2D Riemann problem 2:
\begin{equation}\label{2D_RP2}
(\rho,u_1,u_2,p_{11},p_{12},p_{22})=\begin{cases}
                                      (1,0.75,-0.5,1,0.5,1), & 0.5<x<1, ~ 0.5<y<1,  \\
                                      (1,0.75,0.5,1,-0.5,1),  & 0<x<0.5, ~ 0.5<y<1, \\
                                      (1,-0.25,0.5,1,0.5,1), & 0<x<0.5, ~ 0<y<0.5, \\
                                      (1,-0.25,-0.5,1,-0.5,1),  & 0.5<x<1, ~ 0<y<0.5.
                                    \end{cases}
\end{equation}
For this problem, the local wave patterns are demonstrated as follows:
\begin{itemize}
\item Between $\rm\uppercase\expandafter{\romannumeral2}$ and $\rm\uppercase\expandafter{\romannumeral1}$: $\rho$, $u_1$, $p_{11}$ and $p_{22}$ are all constants. Two waves in $u_2$ and $p_{12}$ are the left-going shear waves.

\item Between $\rm\uppercase\expandafter{\romannumeral3}$ and $\rm\uppercase\expandafter{\romannumeral2}$: $\rho$, $u_2$, $p_{11}$ and $p_{22}$ are all constants. Two  waves in $u_1$ and $p_{12}$ are the down-going shear waves.

\item Between $\rm\uppercase\expandafter{\romannumeral3}$ and $\rm\uppercase\expandafter{\romannumeral4}$: $\rho$, $u_1$, $p_{11}$ and $p_{22}$ are all constants. Two  waves in $u_2$ and $p_{12}$ are the right-going shear waves.

\item Between $\rm\uppercase\expandafter{\romannumeral4}$ and $\rm\uppercase\expandafter{\romannumeral1}$: $\rho$, $u_2$, $p_{11}$ and $p_{22}$ are all constants. Two  waves in $u_1$ and $p_{12}$ are the up-going shear waves.
\end{itemize}
The final time is $t=0.15$. The contour plots with 40 equally spaced contour lines are illustrated in Figure \ref{2d_RP2_nr}.

\begin{figure}[!htbp]
\subfigure[$\rho$]{
\begin{minipage}[c]{0.3\linewidth}
\centering
\includegraphics[width=5cm]{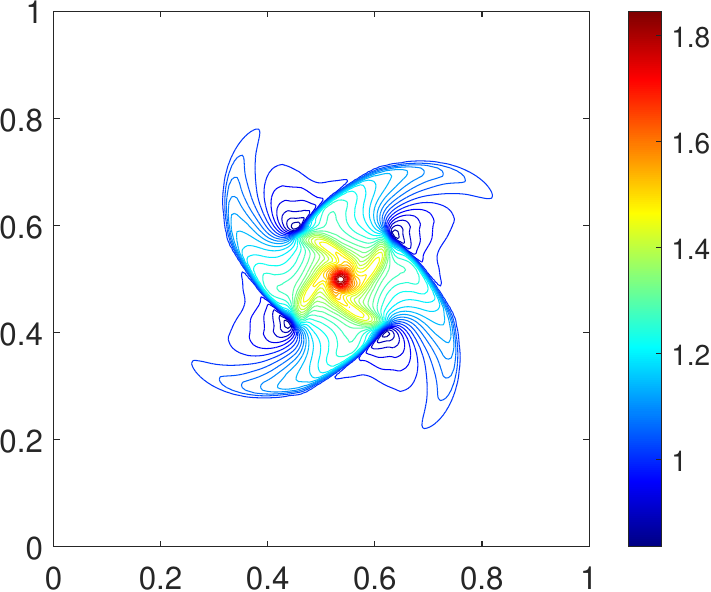}
\end{minipage}
}
\subfigure[$u_1$]{
\begin{minipage}[c]{0.3\linewidth}
\centering
\includegraphics[width=5cm]{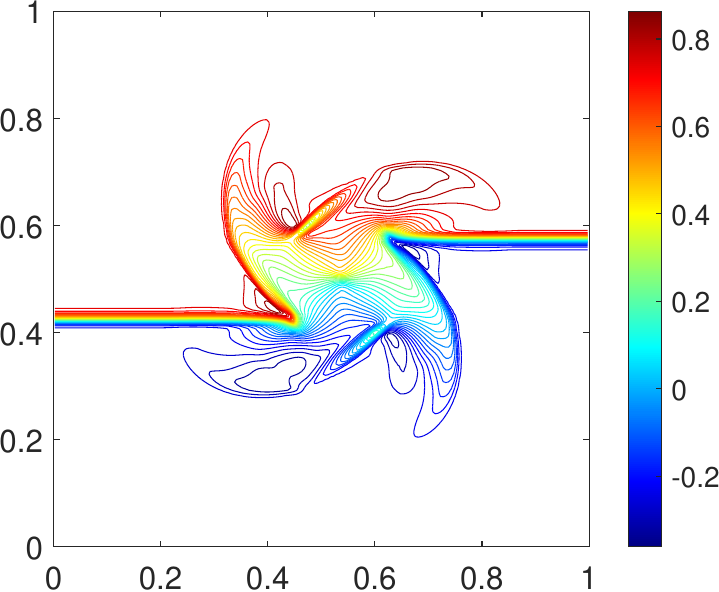}
\end{minipage}
}
\subfigure[$u_2$]{
\begin{minipage}[c]{0.3\linewidth}
\centering
\includegraphics[width=5cm]{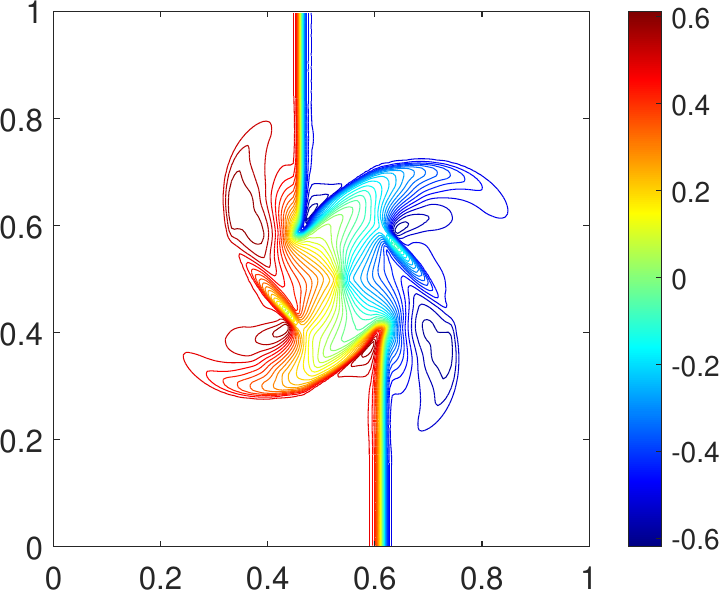}
\end{minipage}
}
\subfigure[$p_{11}$]{
\begin{minipage}[c]{0.3\linewidth}
\centering
\includegraphics[width=5cm]{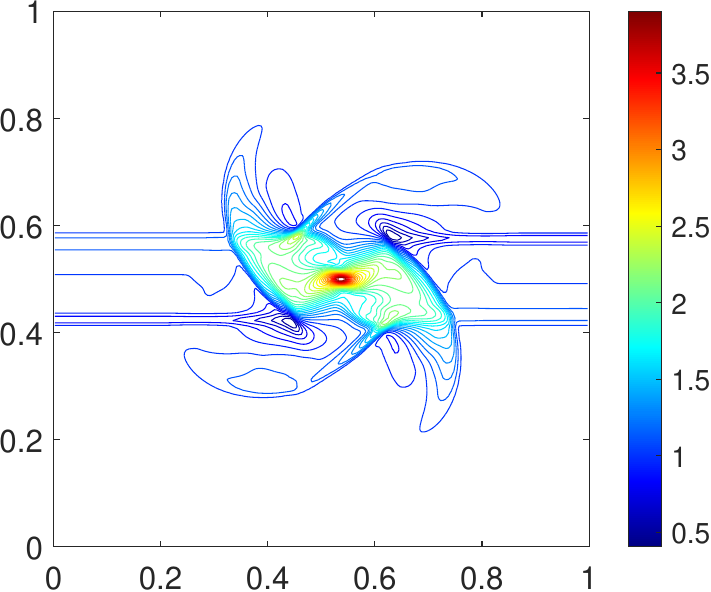}
\end{minipage}
}
\subfigure[$p_{12}$]{
\begin{minipage}[c]{0.3\linewidth}
\centering
\includegraphics[width=5cm]{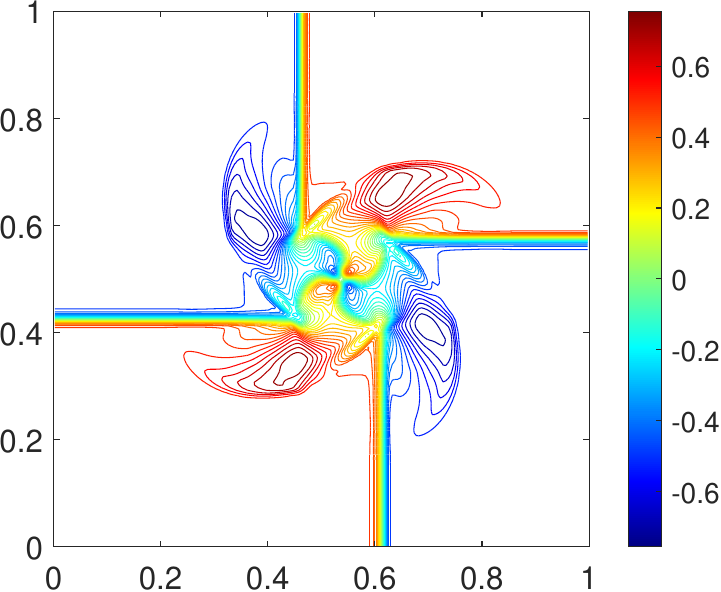}
\end{minipage}
}
\subfigure[$p_{22}$]{
\begin{minipage}[c]{0.3\linewidth}
\centering
\includegraphics[width=5cm]{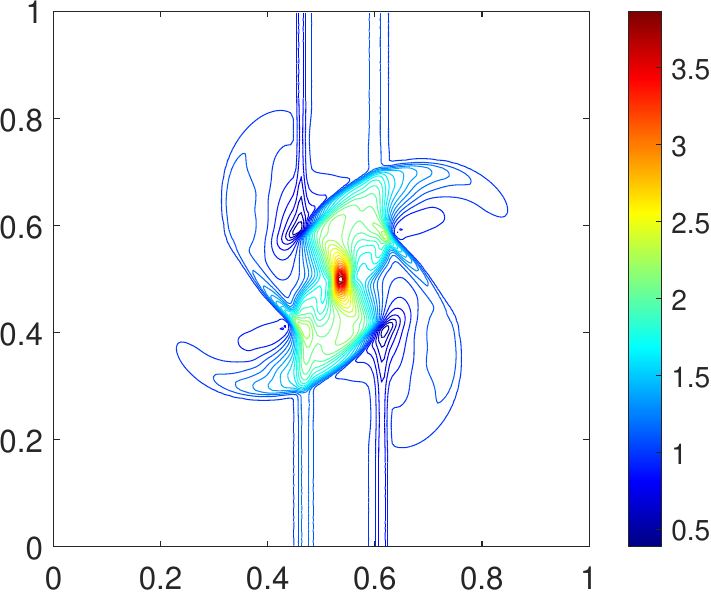}
\end{minipage}
}
\centering
\caption{Example \ref{2D_RP}: The contour plots of solutions obtained by the GRP scheme for the 2D Riemann problem \eqref{2D_RP2} on a mesh of $200\times200$ cells. 40 equally spaced contour lines are displayed.}
\label{2d_RP2_nr}
\end{figure}

(iii) 2D Riemann problem 3:

\begin{equation}\label{2D_RP3}
(\rho,u_1,u_2,p_{11},p_{12},p_{22})=\begin{cases}
                                      (1,-0.5,-0.5,1,0,1), & 0.5<x<1, ~ 0.5<y<1,  \\
                                      (0.9422650,-0.6,-0.5,0.8366024,0,0.9422650),  & 0<x<0.5, ~ 0.5<y<1, \\
                                      (1,-0.5,-0.5,0.9422650,0,0.9422650), & 0<x<0.5, ~ 0<y<0.5, \\
                                      (0.9422650,-0.5,-0.6,0.9422650,0,0.8366024),  & 0.5<x<1, ~ 0<y<0.5.
                                    \end{cases}
\end{equation}
For this problem, all the possible local wave patterns are detailed as follows:
\begin{itemize}
\item Between $\rm\uppercase\expandafter{\romannumeral2}$ and $\rm\uppercase\expandafter{\romannumeral1}$: $u_2$ and $p_{12}$ are constants. All   waves shown in $\rho$, $u_1$, $p_{11}$ and $p_{22}$ are the right-going rarefaction waves.

\item Between $\rm\uppercase\expandafter{\romannumeral3}$ and $\rm\uppercase\expandafter{\romannumeral2}$: The wave shown in $\rho$ is a down-going contact discontinuity. $u_2$ and $p_{22}$ keep constant. The waves shown in $u_1$ and $p_{12}$ both contain a down-going shock wave and an up-going shock wave. The waves in $p_{11}$ include a down-going shock wave, an up-going contact discontinuity and an up-going shock wave. However, the two shock wave jumps in  $p_{11}$ are both small and approximately equal to 0.0024262.

\item Between $\rm\uppercase\expandafter{\romannumeral3}$ and $\rm\uppercase\expandafter{\romannumeral4}$: The wave in $\rho$ is a left-going contact discontinuity. $u_1$ and $p_{11}$ keep constant. The waves in both $u_2$ and $p_{12}$  contain a left-going shock wave and a right-going shock wave. The waves in $p_{22}$ include a left-going shock wave, a right-going contact discontinuity and a right-going shock wave. However,  both two shock wave jumps in $p_{22}$ are small and approximately equal to 0.0024262.

\item Between $\rm\uppercase\expandafter{\romannumeral4}$ and $\rm\uppercase\expandafter{\romannumeral1}$: $u_1$ and $p_{12}$ are constants. All  waves in $\rho$, $u_2$, $p_{11}$ and $p_{22}$ are the up-going rarefaction waves.
\end{itemize}
This problem is simulated up to $t=0.2$. The contour plots with 40 equally spaced lines are presented in Figure \ref{2d_RP3_nr}. Similar with the 2D Riemann problem 1, due to the small shock jumps and only 40 contour lines being displayed, the shock waves in $p_{11}$ and $p_{22}$ seem to miss in Figures \ref{2d_RP3_nr_p11} and   \ref{2d_RP3_nr_p22}, which can be viewed by spacing many more contour lines as depicted in Figure \ref{2d_RP3_nr2}.

\begin{figure}[!htbp]
\subfigure[$\rho$]{
\begin{minipage}[c]{0.3\linewidth}
\centering
\includegraphics[width=5cm]{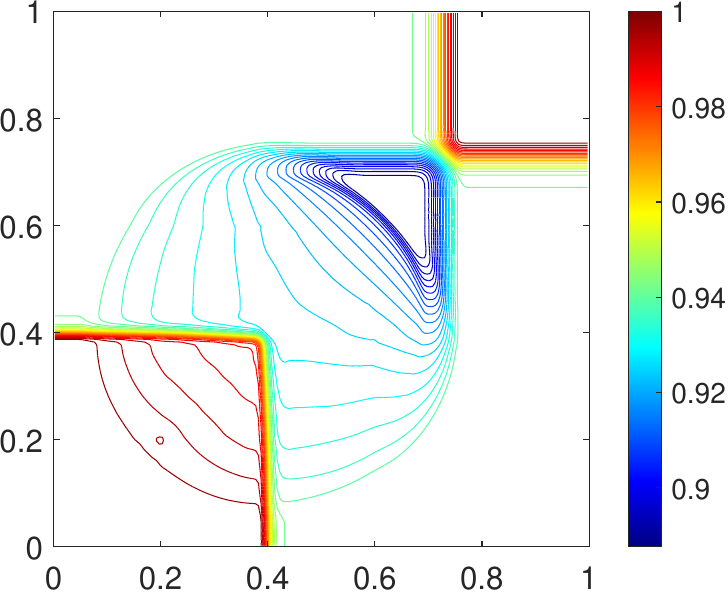}
\end{minipage}
}
\subfigure[$u_1$]{
\begin{minipage}[c]{0.3\linewidth}
\centering
\includegraphics[width=5cm]{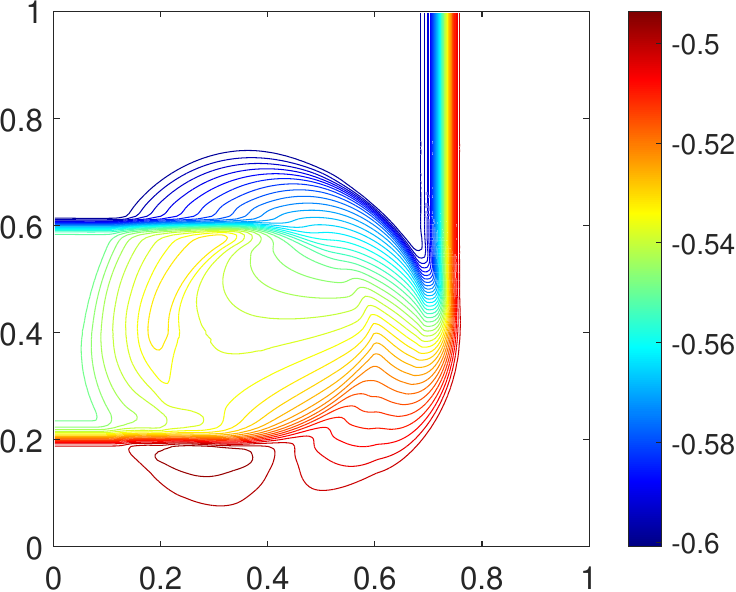}
\end{minipage}
}
\subfigure[$u_2$]{
\begin{minipage}[c]{0.3\linewidth}
\centering
\includegraphics[width=5cm]{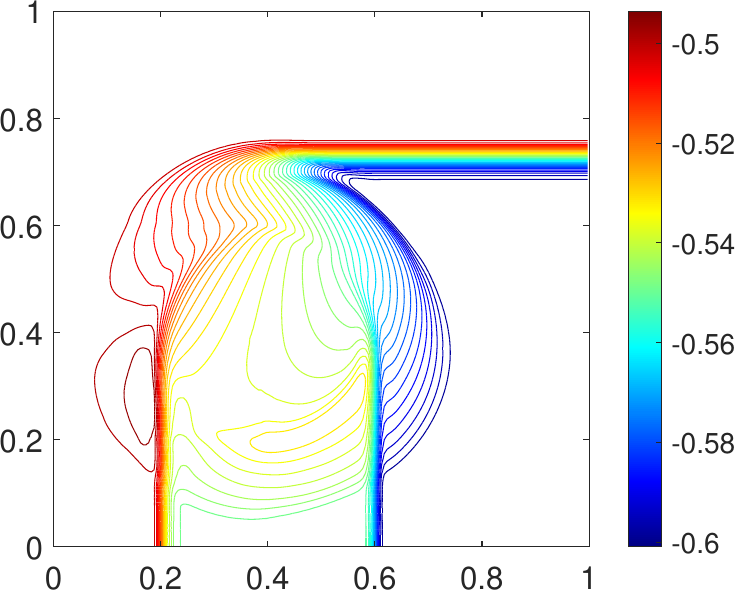}
\end{minipage}
}
\subfigure[$p_{11}$]{
\begin{minipage}[c]{0.3\linewidth}
\centering
\includegraphics[width=5cm]{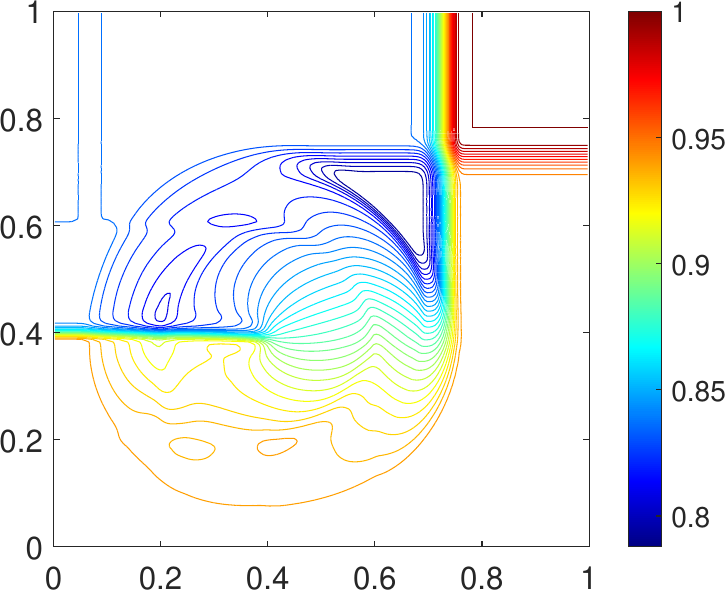}
\end{minipage}
\label{2d_RP3_nr_p11}
}
\subfigure[$p_{12}$]{
\begin{minipage}[c]{0.3\linewidth}
\centering
\includegraphics[width=5cm]{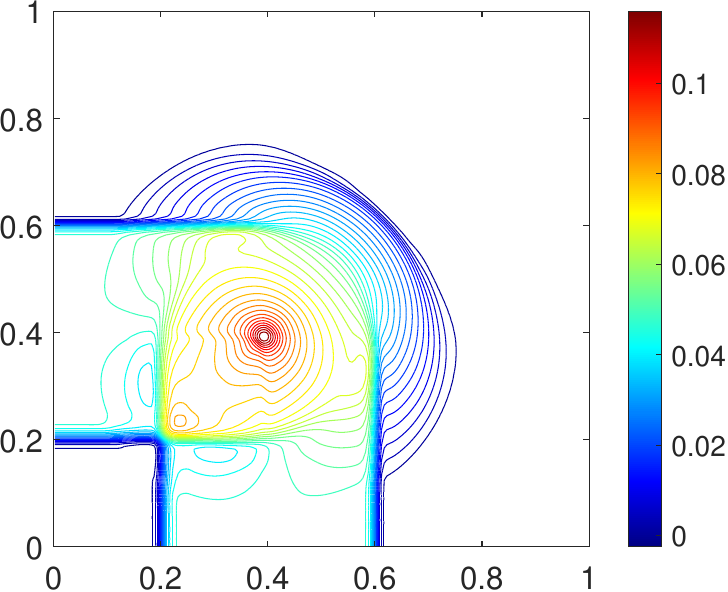}
\end{minipage}
}
\subfigure[$p_{22}$]{
\begin{minipage}[c]{0.3\linewidth}
\centering
\includegraphics[width=5cm]{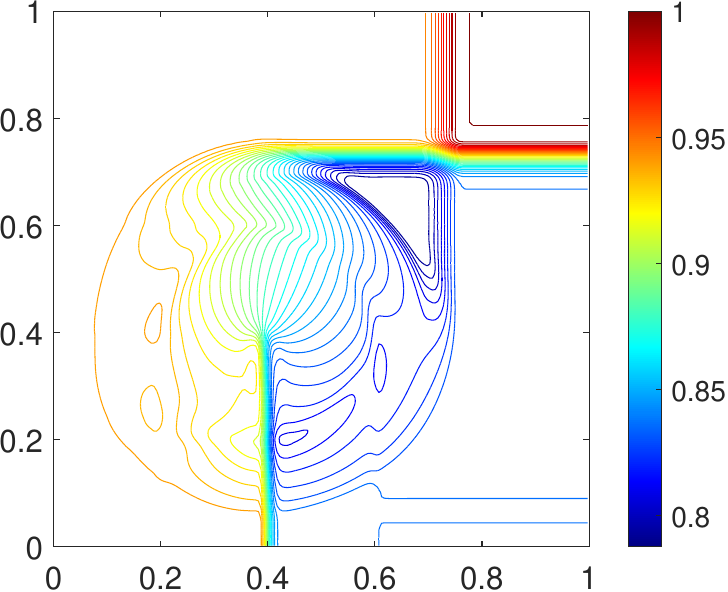}
\end{minipage}
\label{2d_RP3_nr_p22}
}
\centering
\caption{Example \ref{2D_RP}: The contour plots of solutions obtained by the GRP scheme for the 2D Riemann problem \eqref{2D_RP3} on a mesh of $200\times200$ cells. 40 equally spaced contour lines are displayed.}
\label{2d_RP3_nr}
\end{figure}

\begin{figure}[!htbp]
\subfigure[$p_{11}$]{
\begin{minipage}[c]{0.45\linewidth}
\centering
\includegraphics[width=6cm]{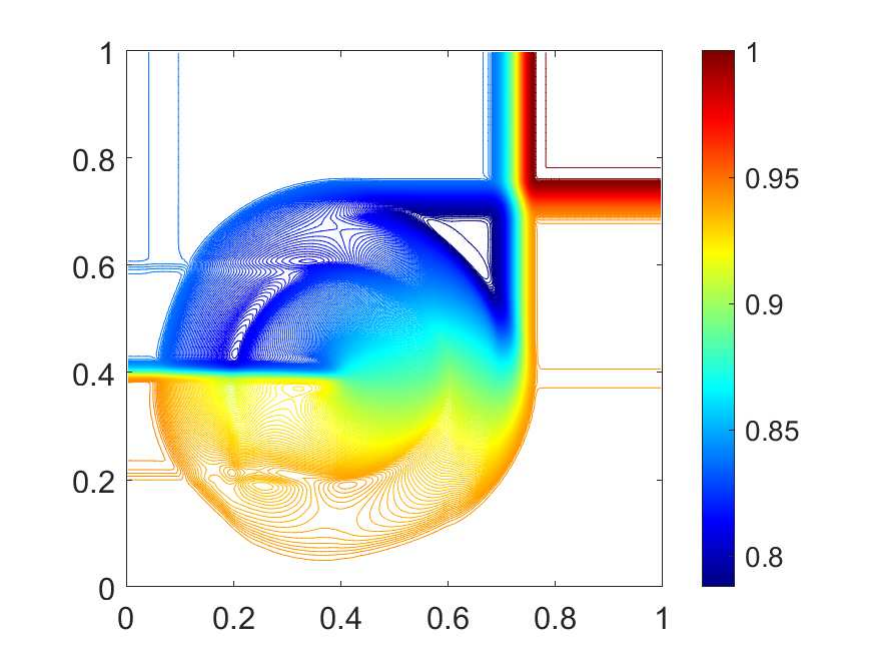}
\end{minipage}
}
\subfigure[$p_{22}$]{
\begin{minipage}[c]{0.45\linewidth}
\centering
\includegraphics[width=6cm]{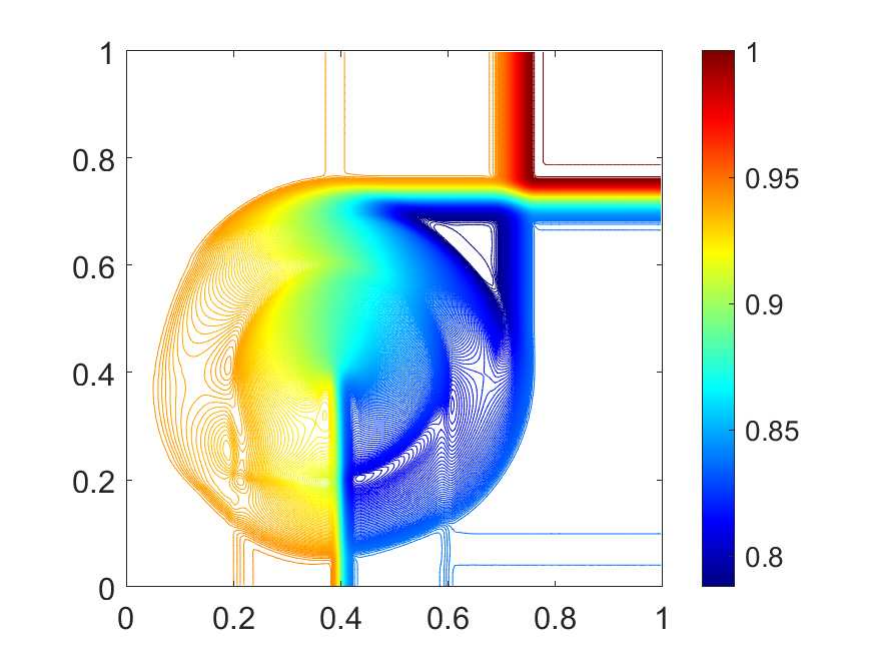}
\end{minipage}
}
\centering
\caption{Example \ref{2D_RP}: The contour plots of $p_{11}$ and $p_{22}$ obtained by the GRP scheme for the 2D Riemann problem \eqref{2D_RP3} on a mesh of $200\times200$ cells. 400 equally spaced contour lines are displayed.}
\label{2d_RP3_nr2}
\end{figure}

(iv) 2D Riemann problem 4:
\begin{equation}\label{2D_RP4}
(\rho,u_1,u_2,p_{11},p_{12},p_{22})=\begin{cases}
                                      (1.0909,0,0,1.0909,0,1.0909), & 0.5<x<1, ~ 0.5<y<1,  \\
                                      (0.5065,1.2024,0,0.3499,0,0.3499),  & 0<x<0.5, ~ 0.5<y<1, \\
                                      (1.0909,1.2024,1.2024,1.0909,0,1.0909), & 0<x<0.5, ~ 0<y<0.5, \\
                                      (0.5065,0,1.2024,0.3499,0,0.3499),  & 0.5<x<1, ~ 0<y<0.5.
                                    \end{cases}
\end{equation}
For this problem, all the possible local wave patterns are elaborated as follows:
\begin{itemize}
\item Between $\rm\uppercase\expandafter{\romannumeral2}$ and $\rm\uppercase\expandafter{\romannumeral1}$: The waves in $\rho$, $u_1$, $p_{11}$ and $p_{22}$ all contain a left-going shock wave and a right-going shock wave. Besides, the waves in $\rho$ and $p_{22}$ both contain a right-going contact discontinuity. $u_2$ and $p_{12}$ are constants.

\item Between $\rm\uppercase\expandafter{\romannumeral3}$ and $\rm\uppercase\expandafter{\romannumeral2}$: All waves in $\rho$, $u_2$, $p_{11}$ and $p_{22}$  contain a down-going shock wave and an up-going shock wave. Besides, the waves in both $\rho$ and $p_{11}$  contain an up-going contact discontinuity. $u_1$ and $p_{12}$ are constants.

\item Between $\rm\uppercase\expandafter{\romannumeral3}$ and $\rm\uppercase\expandafter{\romannumeral4}$: The wave patterns are similar with those between $\rm\uppercase\expandafter{\romannumeral2}$ and $\rm\uppercase\expandafter{\romannumeral1}$.

\item Between $\rm\uppercase\expandafter{\romannumeral4}$ and $\rm\uppercase\expandafter{\romannumeral1}$: The wave patterns are similar with those between $\rm\uppercase\expandafter{\romannumeral3}$ and $\rm\uppercase\expandafter{\romannumeral2}$.
\end{itemize}
The final time is $t=0.15$. The plots with 40 equally spaced contour lines of the primitive variables are presented in Figure \ref{2d_RP4_nr}.

From the above numerical results for the four 2D Riemann problems, one can find that the proposed GRP scheme obtains satisfactory results and captures these waves well, which include the shock wave, the rarefaction wave, the shear wave, the contact discontinuity and their interactions.

\begin{figure}[!htbp]
\subfigure[$\rho$]{
\begin{minipage}[c]{0.3\linewidth}
\centering
\includegraphics[width=5cm]{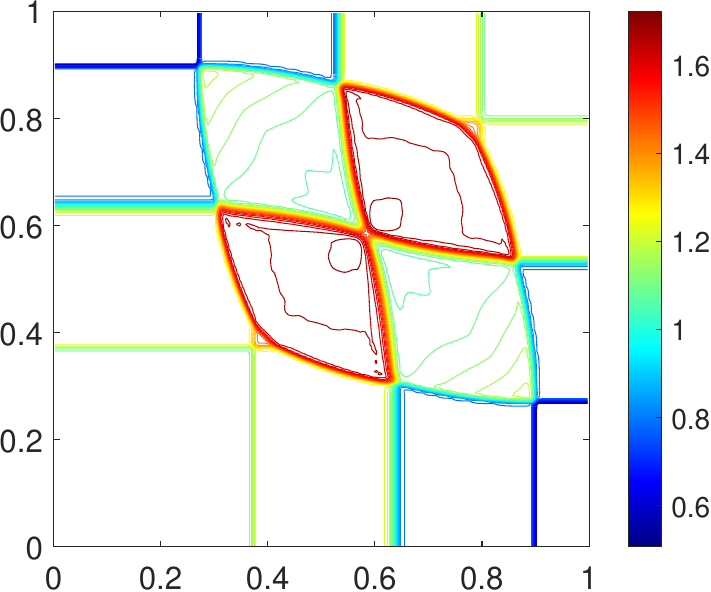}
\end{minipage}
}
\subfigure[$u_1$]{
\begin{minipage}[c]{0.3\linewidth}
\centering
\includegraphics[width=5cm]{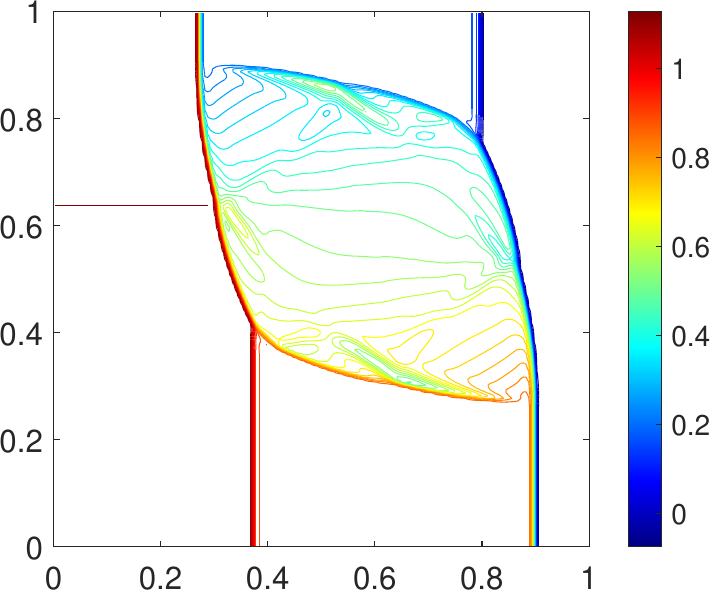}
\end{minipage}
}
\subfigure[$u_2$]{
\begin{minipage}[c]{0.3\linewidth}
\centering
\includegraphics[width=5cm]{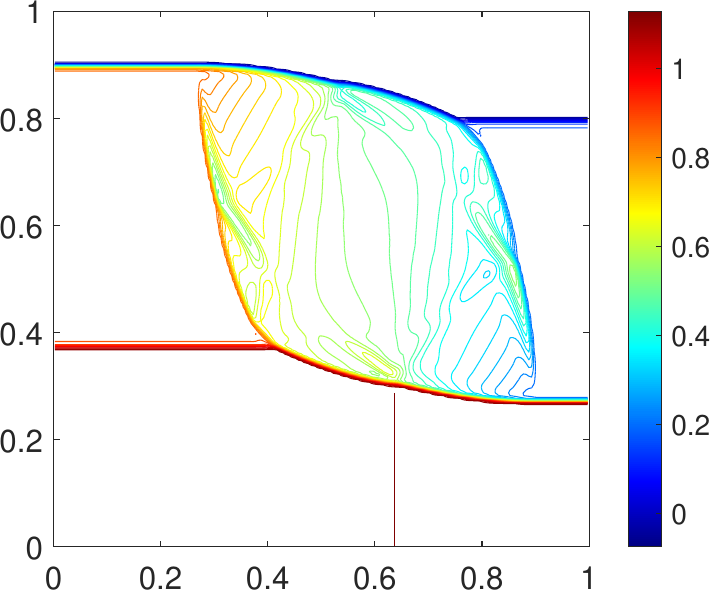}
\end{minipage}
}
\subfigure[$p_{11}$]{
\begin{minipage}[c]{0.3\linewidth}
\centering
\includegraphics[width=5cm]{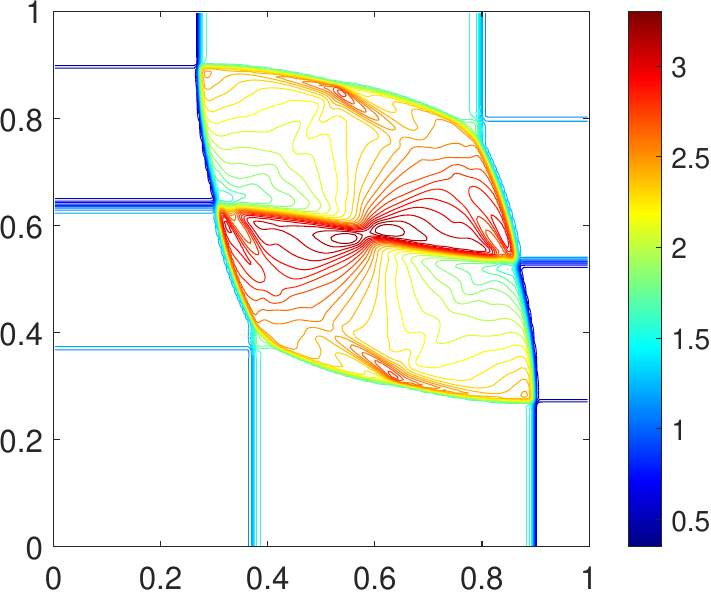}
\end{minipage}
}
\subfigure[$p_{12}$]{
\begin{minipage}[c]{0.3\linewidth}
\centering
\includegraphics[width=5cm]{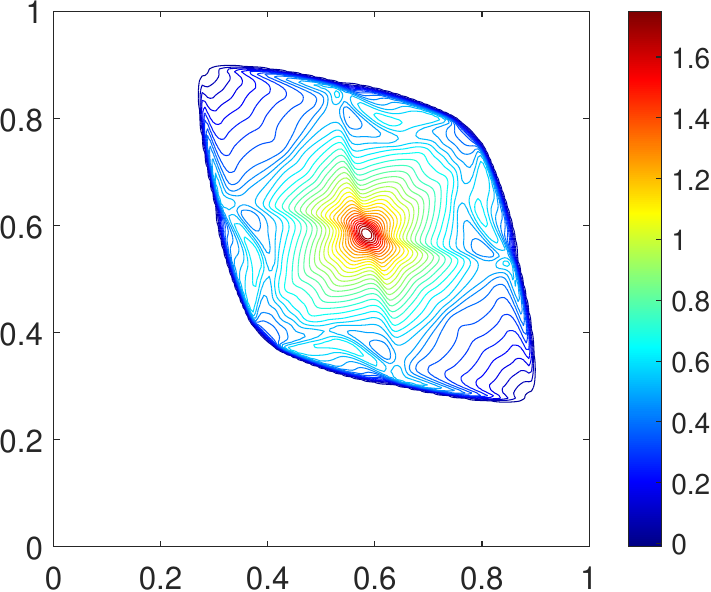}
\end{minipage}
}
\subfigure[$p_{22}$]{
\begin{minipage}[c]{0.3\linewidth}
\centering
\includegraphics[width=5cm]{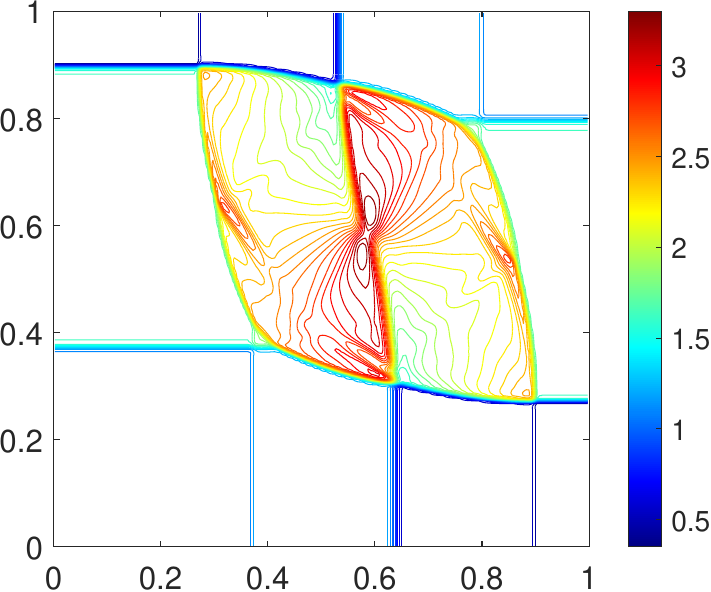}
\end{minipage}
}
\centering
\caption{Example \ref{2D_RP}: The contour plots of solutions obtained by the GRP scheme for the 2D Riemann problem \eqref{2D_RP4} on a mesh of $200\times200$ cells. 40 equally spaced contour lines are displayed.}
\label{2d_RP4_nr}
\end{figure}

\end{example}

\begin{example}{Uniform plasma state with 2D Gaussian source}\label{uniform_plasma}
\rm

This test is applied to evaluate the influence of a Gaussian source term on a 2D plasma model \cite{meena2017positivity,sen2018entropy,meena2020positivity}. The plasma is initially in a uniform state given by
\begin{equation*}
(\rho, u_1, u_2, p_{11}, p_{12}, p_{22}) = (0.1, 0, 0, 9, 7, 9),
\end{equation*}
with the potential
\begin{equation*}
W(x,y) = 25\exp\left(-200\left((x - 2)^2 + (y - 2)^2\right)\right)
\end{equation*}
over the spatial domain $[0, 4]^2$. The outflow boundary conditions are imposed. Figure \ref{uniform_plasma_nr} presents the numerical results at $t = 0.1$, obtained by the GRP scheme on a mesh consisting of $200 \times 200$ cells. One can observe the anisotropic changes in the density due to the Gaussian source's influence.

\end{example}

\begin{figure}[!htbp]
\subfigure[$\rho$]{
\begin{minipage}[c]{0.3\linewidth}
\centering
\includegraphics[width=5.1cm]{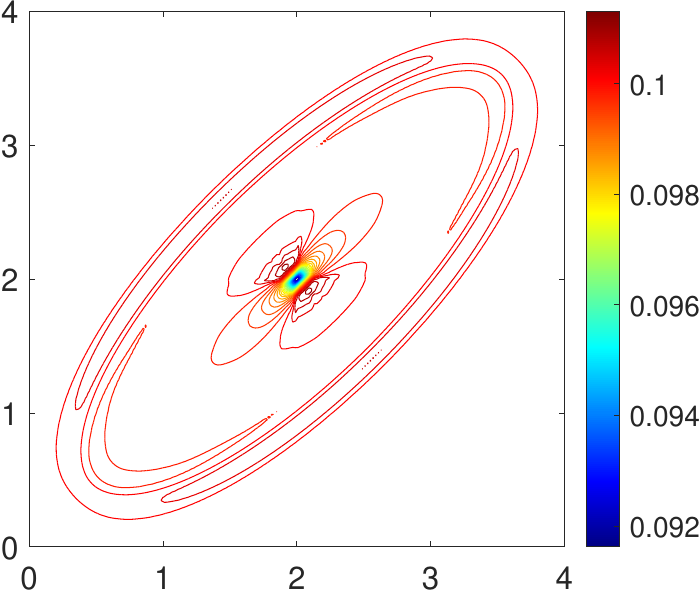}
\end{minipage}
\label{p2_rho}
}
\subfigure[${\rm trace}(\mathbf{p})$]{
\begin{minipage}[c]{0.3\linewidth}
\centering
\includegraphics[width=5cm]{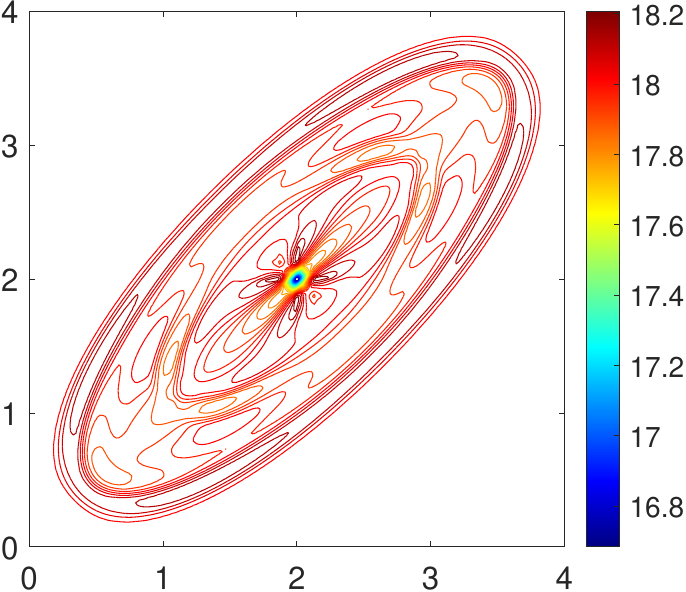}
\end{minipage}
\label{p2_tracep}
}
\subfigure[$\det(\mathbf{p})$]{
\begin{minipage}[c]{0.3\linewidth}
\centering
\includegraphics[width=4.8cm]{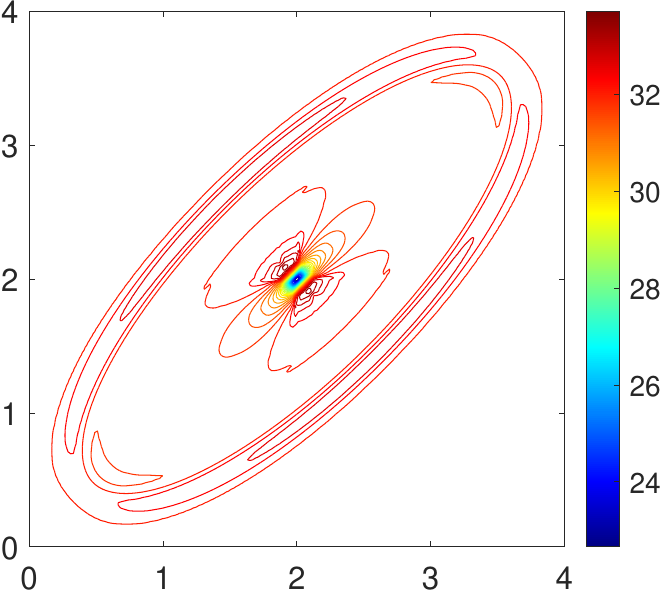}
\end{minipage}
\label{p2_detp}
}
\centering
\caption{Example \ref{uniform_plasma}: The contour plots of the uniform plasma state with Gaussian source at time $t=0.1$ on a mesh of $200\times200$ cells. 40 equally spaced contour lines are displayed.}
\label{uniform_plasma_nr}
\end{figure}

\begin{example}{Realistic simulation in two dimensions}\label{realistic_simulation}
\rm

In this  example \cite{berthon2015entropy,meena2017positivity,meena2020positivity}, a plasma state within the domain $[0,100]^2$, initially defined by
\begin{equation*}
(\rho, u_1, u_2, p_{11}, p_{12}, p_{22}) = (0.109885, 0, 0, 1, 0, 1),
\end{equation*}
is examined.
The potential is taken as
\begin{equation*}
W(x,y) = \exp\left(\frac{-(x-50)^2 - (y-50)^2}{100}\right),
\end{equation*}
which exerts an influence solely in the $x$-direction with $\mathbf{S}^y(\mathbf{U})=0$. Outflow boundary conditions are implemented.

This problem originally aimed to study the effects of inverse Bremsstrahlung absorption (IBA) \cite{berthon2015entropy}. To simulate the IBA in an anisotropic plasma, we augment the energy equation for component $E_{11}$ with an additional source term, $v_T\rho W$, where $v_T$ denotes the absorption coefficient with $v_T$ values of 0, 0.5, and 1.

By employing the GRP scheme, this problem is simulated up to $t = 0.5$ on a mesh with $200 \times 200$ cells.
Figure \ref{realistic_simulation_nr1} shows contour plots of $\rho$, ${\rm trace}(\mathbf{p})$, and $\det(\mathbf{p})$ for $v_T=0.5$. Figure \ref{realistic_simulation_nr2} illustrates the 1D profiles of $\rho$ and $p_{11}$ along the line $y = 50$. An increase in the absorption coefficient, $v_T$, is observed to raise the pressure component $p_{11}$ around the center. This, in turn, drives a more pronounced expulsion of particles from the region, leading to a reduction in density near the center. These observations are consistent with the results documented in  \cite{meena2017positivity,sen2018entropy,meena2020positivity}.

\end{example}

\begin{figure}[!htbp]
\subfigure[$\rho$]{
\begin{minipage}[c]{0.3\linewidth}
\centering
\includegraphics[width=5.3cm]{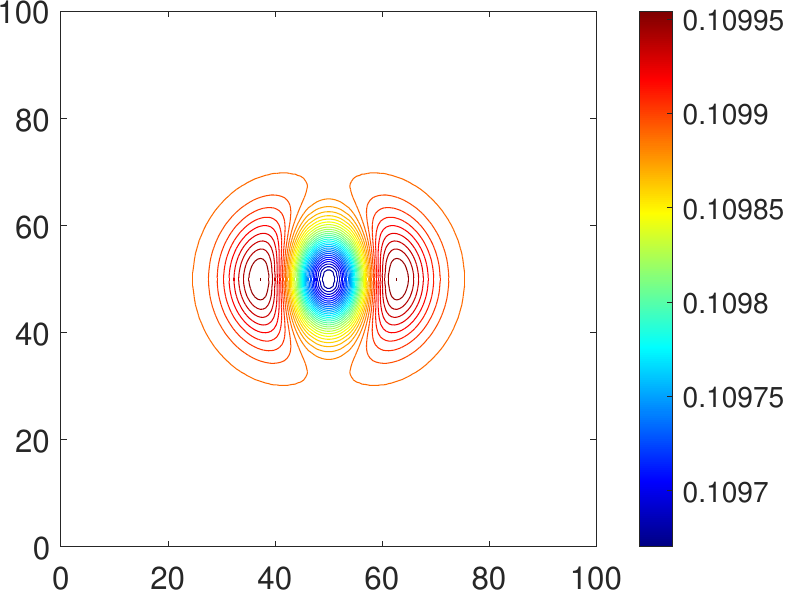}
\end{minipage}
}
\subfigure[${\rm trace}(\mathbf{p})$]{
\begin{minipage}[c]{0.3\linewidth}
\centering
\includegraphics[width=5cm]{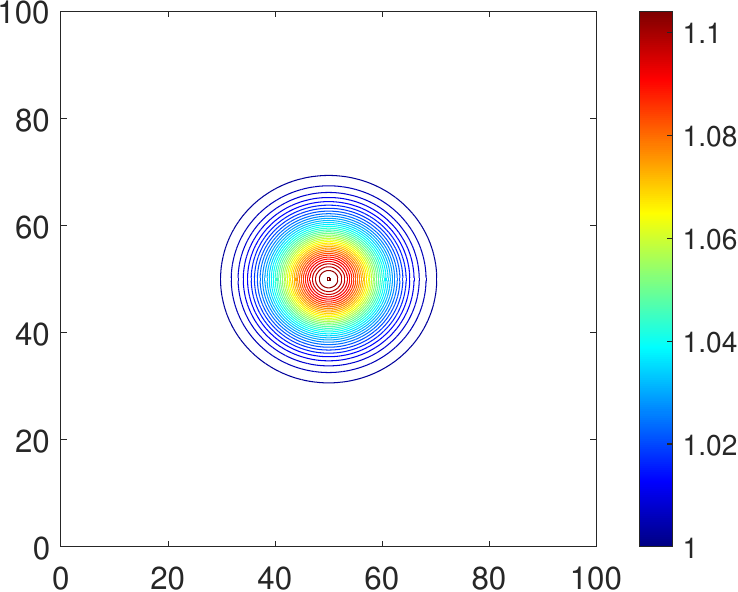}
\end{minipage}
}
\subfigure[$\det(\mathbf{p})$]{
\begin{minipage}[c]{0.3\linewidth}
\centering
\includegraphics[width=5cm]{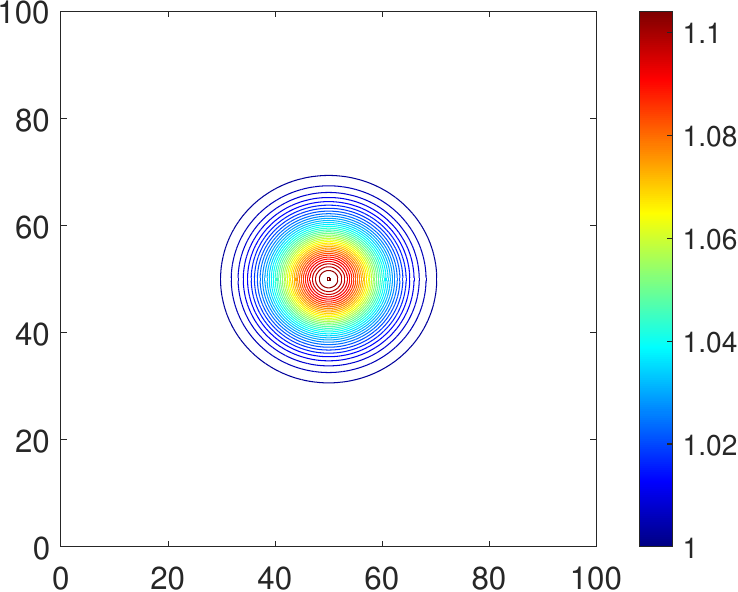}
\end{minipage}
}
\centering
\caption{Example \ref{realistic_simulation}: The contour plots of the realistic simulation at time $t=0.5$ on the meshes of $200\times200$ cells. 40 equally spaced contour lines are displayed.}
\label{realistic_simulation_nr1}
\end{figure}

\begin{figure}[!htbp]
\subfigure[$\rho$]{
\begin{minipage}[c]{0.45\linewidth}
\centering
\includegraphics[width=5cm]{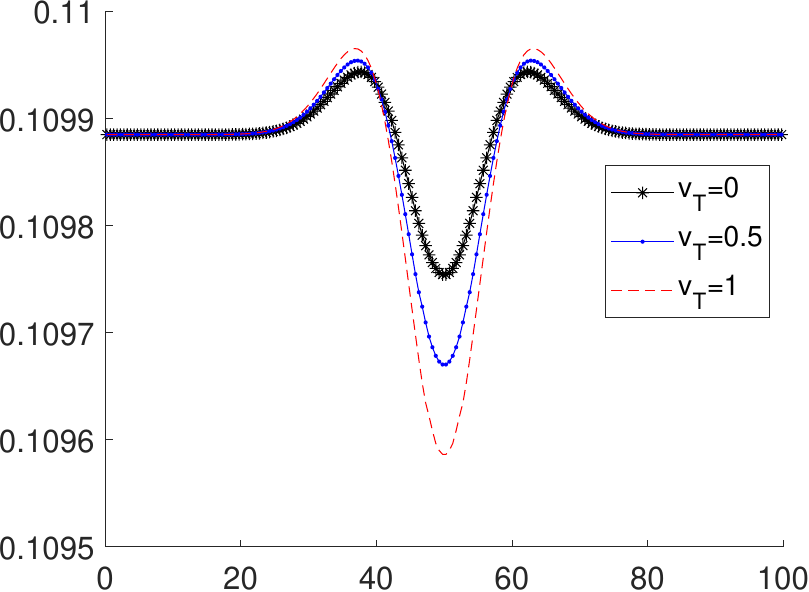}
\end{minipage}
}
\subfigure[$p_{11}$]{
\begin{minipage}[c]{0.45\linewidth}
\centering
\includegraphics[width=5cm]{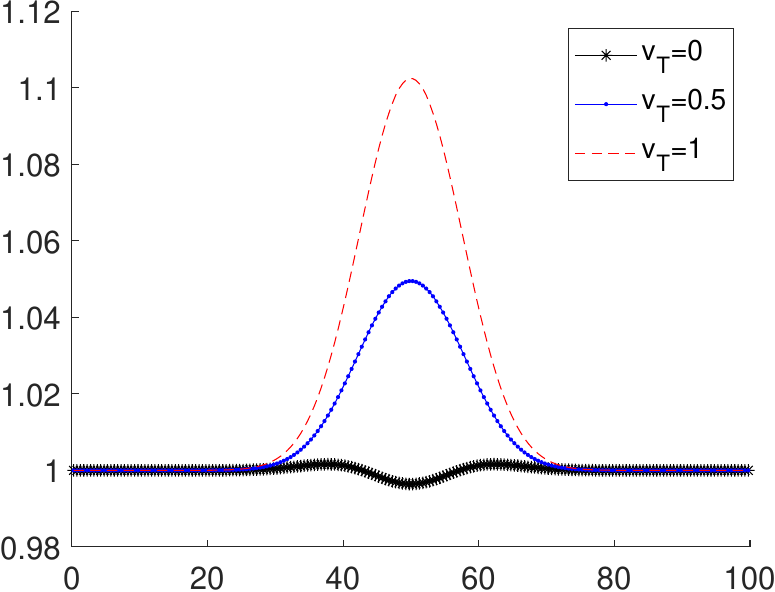}
\end{minipage}
}
\centering
\caption{Example \ref{realistic_simulation}: Comparison of $\rho$ and $p_{11}$ for different absorption coefficient $v_T=0$, $v_T=0.5$ and $v_T=1$ along the line $y=50$.}
\label{realistic_simulation_nr2}
\end{figure}

\section{Conclusion}\label{conclusion}

This paper developed the second-order accurate direct Eulerian generalized Riemann problem (GRP) scheme for the ten-moment Gaussian closure equations with source terms.
The generalized Riemann invariants associated with the rarefaction waves, the contact discontinuity and the shear waves are given, and the 1D  exact Riemann solver was obtained for the homogeneous system based on the characteristic analysis.  By directly using two main ingredients, the generalized Riemann invariants and the Rankine-Hugoniot jump conditions, the left and right nonlinear waves of the local GRP were resolved in the Eulerian formulation, and the limiting values of the time derivatives of the primitive variables along the cell interface were also obtained. It was noted that compared to some other systems, such as the Euler equations \cite{ben2006direct}, the shallow water equations \cite{li2006generalized}, the radiation hydrodynamical equations \cite{kuang2019second} and the blood flow model \cite{sheng2021direct} and so on, there were more physical variables and two more elementary waves (the left and right shear waves) for the ten-moment equations, which made the derivation of the GRP scheme much more complicated and nontrivial.  Four 2D Riemann problems  were constructed for the first time to verify the performance of
the proposed GRP schemes.  Some 1D and 2D numerical experiments were employed to demonstrate the accuracy and high resolution of the proposed GRP scheme.

 \section*{Acknowledgements}
The works were supported by the National Key R\&D Program of China (Project Number
2020YFA0712000 \& 2020YFE0204200),  and the National Natural Science Foundation of China (Nos.~12171227, 12326314, \& 12288101).
 \begin{appendix}

 \section{The right eigenvector matrix.}\label{R(U)}

 The right eigenvector matrix of the Jacobian matrix $\frac{\partial\mathbf{F(U)}}{\partial\mathbf{U}}$ used in this paper is given by
 \[
 \mathbf{R(U)}=\begin{pmatrix}
                 0 & 2 & 0 & 2\rho p_{11} & 0 & 2\rho p_{11} \\
                 0 & 2u_1 & 0 & -2p_{11}\sqrt{\rho}M_1 & 0 & 2p_{11}\sqrt{\rho}M_3 \\
                 0 & 2u_2 & \sqrt{\rho p_{11}} & -2\sqrt{\rho p_{11}}M_2 & \sqrt{\rho p_{11}} & 2\sqrt{\rho p_{11}}M_4 \\
                 0 & u_1^2 & 0 & p_{11}M_1^2 & 0 & p_{11}M_3^2 \\
                 0 & u_1u_2 & \frac{1}{2}(u_1\sqrt{\rho p_{11}}-p_{11}) & \sqrt{p_{11}}M_1M_2 & \frac{1}{2}(u_1\sqrt{\rho p_{11}}+p_{11}) & \sqrt{p_{11}}M_3M_4\\
                 1 & u_2^2 & u_2\sqrt{\rho p_{11}}-p_{12} & M_2^2+\det(\mathbf{p}) & u_2\sqrt{\rho p_{11}}+p_{12} & M_4^2+\det(\mathbf{p})
               \end{pmatrix},
 \]
 where $M_1:=\sqrt{3p_{11}}-u_1\sqrt{\rho}$, $M_2:=\sqrt{3}p_{12}-u_2\sqrt{\rho p_{11}}$, $M_3:=\sqrt{3p_{11}}+u_1\sqrt{\rho}$, and $M_4:=\sqrt{3}p_{12}+u_2\sqrt{\rho p_{11}}$.

 \section{Computing $\left(\frac{\partial u_1}{\partial t}\right)_{\ast K}$, $\left(\frac{\partial p_{11}}{\partial t}\right)_{\ast K}$ and $\left(\frac{\partial\rho}{\partial t}\right)_{\ast K}$ $(K=L,R)$ for all possible cases.}\label{u1_p11_rho_t_all_cases}

 In Subsection \ref{computation_u1t_p11t_rhot_astK}, only the case that the 1-rarefaction wave and the 6-shock wave is considered. Here, the computations of $\left(\frac{\partial u_1}{\partial t}\right)_{\ast K}$, $\left(\frac{\partial p_{11}}{\partial t}\right)_{\ast K}$ and $\left(\frac{\partial\rho}{\partial t}\right)_{\ast K}$ for all possible cases are directly presented.

 One can solve the following system
 \begin{equation*}
 \begin{cases}
 a_1\left(\frac{\mathcal{D}u_1}{\mathcal{D}t}\right)_{\ast}+b_1\left(\frac{\mathcal{D}p_{11}}{\mathcal{D}t}\right)_{\ast}=d_1, \\
 a_2\left(\frac{\mathcal{D}u_1}{\mathcal{D}t}\right)_{\ast}+b_2\left(\frac{\mathcal{D}p_{11}}{\mathcal{D}t}\right)_{\ast}=d_2,
 \end{cases}
 \end{equation*}
to obtain the values of $\left(\frac{\mathcal{D}u_1}{\mathcal{D}t}\right)_{\ast}$ and $\left(\frac{\mathcal{D}p_{11}}{\mathcal{D}t}\right)_{\ast}$,
where
\begin{align}
&(a_1,b_1,d_1)=\begin{cases}
                (a_L^{\text{rare}},b_L^{\text{rare}},d_L^{\text{rare}}), \quad \text{1-rarefaction}, \\
                (a_L^{\text{shock}},b_L^{\text{shock}},d_L^{\text{shock}}), \quad \text{1-shock},
               \end{cases} ~
(a_2,b_2,d_2)=\begin{cases}
                (a_R^{\text{rare}},b_R^{\text{rare}},d_R^{\text{rare}}), \quad \text{6-rarefaction}, \\
                (a_R^{\text{shock}},b_R^{\text{shock}},d_R^{\text{shock}}), \quad \text{6-shock}.
               \end{cases} \label{a12_b12_d12}
\end{align}
Then utilizing \eqref{u1_p11_t} gives the values of $\left(\frac{\partial u_1}{\partial t}\right)_{\ast K}$ and $\left(\frac{\partial p_{11}}{\partial t}\right)_{\ast K}$ with $K=L,R$. The $(a_K^{\text{rare}},b_K^{\text{rare}},d_K^{\text{rare}})$ and $(a_K^{\text{shock}},b_K^{\text{shock}},d_K^{\text{shock}})$ $(K=L,R)$ in \eqref{a12_b12_d12} are taken as
\begin{align*}
&a_K^{\text{rare}}=1+\delta_K\frac{u_{1,\ast}}{c_{\ast K}}, \quad b_K^{\text{rare}}=\frac{u_{1,\ast}}{3p_{11,\ast}}+\frac{\delta_K}{\rho_{\ast K}c_{\ast K}}, \\
&d_K^{\text{rare}}=\left[\delta_K\frac{\rho_K^2S_{1,K}'}{8c_K^3}(3c_K^2+c_{\ast K}^2)-(u_{1,K}+\delta_Kc_K)'\right](u_{1,K}+\delta_Kc_K)-\frac{1}{2}\left(1+\delta_K\frac{u_{1,\ast}}{c_{\ast K}}\right)W_x(0), \\
&a_K^{\text{shock}}=1+\delta_K\rho_{\ast K}(u_{1,\ast}-\sigma_K)\cdot\Phi_1^K, \quad b_K^{\text{shock}}=\frac{u_{1,\ast}-\sigma_K}{3p_{11,\ast}}+\delta_K\cdot\Phi_1^K, \\
&d_K^{\text{shock}}=L_{\rho}^K\cdot\rho_K'+L_{u_1}^K\cdot u_{1,K}'+L_{p_{11}}^K\cdot p_{11,K}'-\frac{1}{2}W_x(0)\cdot L_W^K,
\end{align*}
where
 \begin{align*}
 &\delta_K=\begin{cases}
            1, & \mbox{if } K=L, \\
            -1, & \mbox{if } K=R,
          \end{cases} \\
 &\sigma_K=\frac{\rho_{\ast K}u_{1,\ast}-\rho_Lu_{1,L}}{\rho_{\ast K}-\rho_K}, ~ \Phi_i^K=\Phi_i(p_{11,\ast};p_{11,K},\rho_K), ~ i=1,2,3, \\
 &L_{\rho}^K=-\delta_K(\sigma_K-u_{1,K})\cdot\Phi_3^K, ~ L_{u_1}^K=\sigma_K-u_{1,K}+3\delta_Kp_{11,K}\cdot\Phi_2^K+\delta_K\rho_K\cdot\Phi_3^K, \\
 &L_{p_{11}}^K=-\frac{1}{\rho_K}-\delta_K(\sigma_K-u_{1,K})\cdot\Phi_2^K, ~ L_{W}^K=\delta_K\rho_{\ast K}(u_{1,\ast}-\sigma_K)\cdot\Phi_1^K+1.
 \end{align*}
 The expressions of the functions $\Phi_i$ ($i=1,2,3$) are given in \eqref{Phi_123}.

 For the 1-shock wave, $\left(\frac{\partial\rho}{\partial t}\right)_{\ast L}$ is computed by
 \[
 g_{\rho}^L\cdot\left(\frac{\partial\rho}{\partial t}\right)_{\ast L}+g_{u_1}^L\cdot\left(\frac{\mathcal{D}u_1}{\mathcal{D}t}\right)_\ast+g_{p_{11}}^L\cdot\left(\frac{\mathcal{D}p_{11}}{\mathcal{D}t}\right)_\ast=u_{1,\ast}\cdot f_L,
\]
where
\begin{align*}
&g_{\rho}^L=u_{1,\ast}-\sigma_L, ~ g_{u_1}^L=\rho_{\ast L}u_{1,\ast}(\sigma_L-u_{1,\ast})\cdot H_1^L, ~ g_{p_{11}}^L=\frac{\sigma_L}{c_{\ast L}^2}-u_{1,\ast}\cdot H_1^L, \\
&f_L=(\sigma_L-u_{1,L})\cdot H_2^L\cdot p_{11,L}'+(\sigma_L-u_{1,L})\cdot H_3^L\cdot\rho_{L}'-\rho_L(H_3^L+c_L^2\cdot H_2^L)\cdot u_{1,L}'-\frac{1}{2}\rho_{\ast L}(\sigma_L-u_{1,\ast})W_x(0)\cdot H_1^L
\end{align*}
with $H_i^L=H_i(p_{11,\ast};p_{11,L},\rho_L)$ $(i=1,2,3)$. The expressions of the functions $H_i$ ($i=1,2,3$) are given in \eqref{H_123}.
For the 1-rarefaction wave, $\left(\frac{\partial\rho}{\partial t}\right)_{\ast L}$ is given by \eqref{rho_t_astL}.

For the 6-rarefaction wave, one has
\[
\left(\frac{\partial\rho}{\partial t}\right)_{\ast R}=\frac{1}{c_{\ast R}^2}\left[\left(\frac{\partial p_{11}}{\partial t}\right)_{\ast R}+\rho_R^2S_{1,R}'\rho_{\ast R}u_{1,\ast}\frac{c_{\ast R}^3}{c_R^3}\right].
\]
For the 6-shock wave, $\left(\frac{\partial\rho}{\partial t}\right)_{\ast R}$ is given by \eqref{rho_t_astR}.


 \section{Computing $\left(\frac{\partial u_2}{\partial t}\right)_{\ast L}$, $\left(\frac{\partial p_{12}}{\partial t}\right)_{\ast L}$ and $\left(\frac{\partial p_{22}}{\partial t}\right)_{\ast L}$ for the 1-shock wave.}\label{u2_p12_p22_astL_1shock}

 For the 1-shock wave, one can first solve the system
 \begin{align*}
 \begin{cases}
 a_{3,L}\left(\frac{\mathcal{D}u_2}{\mathcal{D}t}\right)_{\ast L}+b_{3,L}\left(\frac{\mathcal{D}p_{12}}{\mathcal{D}t}\right)_{\ast L}=d_{3,L}, \\
 a_{4,L}\left(\frac{\mathcal{D}u_2}{\mathcal{D}t}\right)_{\ast L}+b_{4,L}\left(\frac{\mathcal{D}p_{12}}{\mathcal{D}t}\right)_{\ast L}=d_{4,L},
 \end{cases}
 \end{align*}
 to get the values of $\left(\frac{\mathcal{D}u_2}{\mathcal{D}t}\right)_{\ast L}$ and $\left(\frac{\mathcal{D}p_{12}}{\mathcal{D}t}\right)_{\ast L}$, then substituting them into \eqref{u2_p12_t} gives the values of $\left(\frac{\partial u_2}{\partial t}\right)_{\ast L}$ and $\left(\frac{\partial p_{12}}{\partial t}\right)_{\ast L}$. In the above system,
 \begin{align*}
 &a_{3,L}=2\rho_{\ast L}(u_{1,\ast}-\sigma_L), \quad a_{4,L}=E_{11,\ast L}+\rho_{\ast L}(u_{1,\ast}-\sigma_L)^2-\frac{1}{2}\rho_{\ast L}\sigma_L^2, \\
 &b_{3,L}=1+\frac{\rho_{\ast L}(u_{1,\ast}-\sigma_L)^2}{p_{11,\ast}}, \quad b_{4,L}=\left(E_{11,\ast L}-\frac{1}{2}\rho_{\ast L}u_{1,\ast}\sigma_L\right)\frac{u_{1,\ast}-\sigma_L}{p_{11,\ast}}+u_{1,\ast}-\frac{1}{2}\sigma_L, \\
 &d_{3,L}=\frac{\mathcal{D}_{\sigma_L}\Gamma_{m_{2,L}}}{\mathcal{D}t}+\rho_{\ast L}u_{2,\ast L}\frac{\mathcal{D}_{\sigma_L}\sigma_L}{\mathcal{D}t}-u_{2,\ast L}(u_{1,\ast}-\sigma_L)\left(\frac{\mathcal{D}_{\sigma_L}\rho}{\mathcal{D}t}\right)_{\ast L}-\rho_{\ast L}u_{2,\ast L}\left(\frac{\mathcal{D}_{\sigma_L}u_1}{\mathcal{D}t}\right)_{\ast L} \\
 &\qquad+\frac{2\rho_{\ast L}p_{12,\ast L}(u_{1,\ast}-\sigma_L)^2}{3p_{11,\ast}^2}\left(\frac{\mathcal{D}p_{11}}{\mathcal{D}t}\right)_{\ast}, \\
 &d_{4,L}=\frac{\mathcal{D}_{\sigma_L}\Gamma_{E_{12,L}}}{\mathcal{D}t}+E_{12,\ast L}\frac{\mathcal{D}_{\sigma_L}\sigma_L}{\mathcal{D}t}-\frac{1}{2}u_{1,\ast}u_{2,\ast L}(u_{1,\ast}-\sigma_L)\left(\frac{\mathcal{D}_{\sigma_L}\rho}{\mathcal{D}t}\right)_{\ast L}
 -\frac{1}{2}u_{2,\ast L}\frac{\mathcal{D}_{\sigma_L}p_{11,\ast}}{\mathcal{D}t} \\
 &\qquad-\left(2E_{12,\ast L}-\frac{1}{2}\rho_{\ast L}u_{2,\ast L}\sigma_L\right)\left(\frac{\mathcal{D}_{\sigma_L}u_1}{\mathcal{D}t}\right)_{\ast L}
+\left(E_{11,\ast L}-\frac{1}{2}\rho_{\ast L}u_{1,\ast}\sigma_L\right)\frac{2p_{12,\ast L}(u_{1,\ast}-\sigma_L)}{3p_{11,\ast}^2}\left(\frac{\mathcal{D}p_{11}}{\mathcal{D}t}\right)_{\ast},
 \end{align*}
 where
 \begin{align*}
 &\frac{\mathcal{D}_{\sigma_L}\Gamma_{m_{2,L}}}{\mathcal{D}t}=u_{2,L}(u_{1,L}-\sigma_L)\frac{\mathcal{D}_{\sigma_L}\rho_L}{\mathcal{D}t}
 +\rho_Lu_{2,L}\frac{\mathcal{D}_{\sigma_L}u_{1,L}}{\mathcal{D}t}+\rho_L(u_{1,L}-\sigma_L)\frac{\mathcal{D}_{\sigma_L}u_{2,L}}{\mathcal{D}t}
 +\frac{\mathcal{D}_{\sigma_L}p_{12,L}}{\mathcal{D}t}-\rho_Lu_{2,L}\frac{\mathcal{D}_{\sigma_L}\sigma_L}{\mathcal{D}t}, \\
 &\frac{\mathcal{D}_{\sigma_L}\Gamma_{E_{12,L}}}{\mathcal{D}t}=\frac{1}{2}u_{1,L}u_{2,L}(u_{1,L}-\sigma_L)\frac{\mathcal{D}_{\sigma_L}\rho_L}{\mathcal{D}t}
 +\left(2E_{12,L}-\frac{1}{2}\rho_Lu_{2,L}\sigma_L\right)\frac{\mathcal{D}_{\sigma_L}u_{1,L}}{\mathcal{D}t}-E_{12,L}\frac{\mathcal{D}_{\sigma_L}\sigma_L}{\mathcal{D}t} \\
 &\qquad+\left(E_{11,L}-\frac{1}{2}\rho_Lu_{1,L}\sigma_L\right)\frac{\mathcal{D}_{\sigma_L}u_{2,L}}{\mathcal{D}t}+\frac{1}{2}u_{2,L}\frac{\mathcal{D}_{\sigma_L}p_{11,L}}{\mathcal{D}t}
 +\left(u_{1,L}-\frac{1}{2}\sigma_L\right)\frac{\mathcal{D}_{\sigma_L}p_{12,L}}{\mathcal{D}t}, \\
 &\frac{\mathcal{D}_{\sigma_L}\sigma_L}{\mathcal{D}t}=\frac{\mathcal{D}_{\sigma_L}u_{1,L}}{\mathcal{D}t}+\frac{\sqrt{2p_{11,\ast}+p_{11,L}}}{2\rho_L\sqrt{\rho_L}}\frac{\mathcal{D}_{\sigma_L}\rho_L}{\mathcal{D}t}
 -\frac{1}{\sqrt{\rho_L(2p_{11,\ast}+p_{11,L})}}\frac{\mathcal{D}_{\sigma_L}p_{11,\ast}}{\mathcal{D}t}-\frac{1}{2\sqrt{\rho_L(2p_{11,\ast}+p_{11,L})}}\frac{\mathcal{D}_{\sigma_L}p_{11,L}}{\mathcal{D}t}, \\
 &\left(\frac{\mathcal{D}_{\sigma_L}\rho}{\mathcal{D}t}\right)_{\ast L}=H_1^L\cdot\frac{\mathcal{D}_{\sigma_L}p_{11,\ast}}{\mathcal{D}t}
 +H_2^L\cdot\frac{\mathcal{D}_{\sigma_L}p_{11,L}}{\mathcal{D}t}+H_3^L\cdot\frac{\mathcal{D}_{\sigma_L}\rho_L}{\mathcal{D}t}, ~ \left(\frac{\mathcal{D}_{\sigma_L}u_1}{\mathcal{D}t}\right)_{\ast L}=\left(\frac{\partial u_1}{\partial t}\right)_{\ast L}-\frac{\sigma_L}{3p_{11,\ast}}\left(\frac{\mathcal{D}p_{11}}{\mathcal{D}t}\right)_{\ast}, \\
 &\frac{\mathcal{D}_{\sigma_L}p_{11,\ast}}{\mathcal{D}t}=\left(\frac{\mathcal{D}p_{11}}{\mathcal{D}t}\right)_{\ast}-\rho_{\ast L}(\sigma_L-u_{1,\ast})\left[\left(\frac{\mathcal{D}u_1}{\mathcal{D}t}\right)_{\ast}+\frac{1}{2}W_x(0)\right],
 \end{align*}
 with
 \begin{align*}
 &\frac{\mathcal{D}_{\sigma_L}\rho_L}{\mathcal{D}t}=(\sigma_L-u_{1,L})\rho_L'-\rho_Lu_{1,L}', \quad
 \frac{\mathcal{D}_{\sigma_L}u_{1,L}}{\mathcal{D}t}=(\sigma_L-u_{1,L})u_{1,L}'-\frac{p_{11,L}'}{\rho_L}-\frac{1}{2}W_x(0), \\
 &\frac{\mathcal{D}_{\sigma_L}u_{2,L}}{\mathcal{D}t}=(\sigma_L-u_{1,L})u_{2,L}'-\frac{p_{12,L}'}{\rho_L}, \quad
 \frac{\mathcal{D}_{\sigma_L}p_{11,L}}{\mathcal{D}t}=(\sigma_L-u_{1,L})p_{11,L}'-3p_{11,L}u_{1,L}', \\
 &\frac{\mathcal{D}_{\sigma_L}p_{12,L}}{\mathcal{D}t}=(\sigma_L-u_{1,L})p_{12,L}'-2p_{12,L}u_{1,L}'-p_{11,L}u_{2,L}'.
 \end{align*}

 The value of $\left(\frac{\partial p_{22}}{\partial t}\right)_{\ast L}$ is computed by
 \[
 g_{p_{22}}^{\ast L}\left(\frac{\partial p_{22}}{\partial t}\right)_{\ast L}=f_{p_{22}}^{\ast L},
 \]
 where
 \begin{align*}
 &g_{p_{22}}^{\ast L}=\frac{1}{2}(u_{1,\ast}-\sigma_L)^2, \\
 &f_{p_{22}}^{\ast L}=u_{1,\ast}\widetilde{f}_{p_{22}}^{\ast L}-\frac{(p_{11,\ast}p_{22,\ast L}-4p_{12,\ast L}^2)(u_{1,\ast}-\sigma_L)\sigma_L}{6p_{11,\ast}^2}\left(\frac{\mathcal{D}p_{11}}{\mathcal{D}t}\right)_{\ast}
 -\frac{p_{12,\ast L}\sigma_L(u_{1,\ast}-\sigma_L)}{p_{11,\ast}}\left(\frac{\mathcal{D}p_{12}}{\mathcal{D}t}\right)_{\ast L}, \\
 &\widetilde{f}_{p_{22}}^{\ast L}=\frac{\mathcal{D}_{\sigma_L}E_{22,L}}{\mathcal{D}t}+E_{22,\ast L}\frac{\mathcal{D}_{\sigma_L}\sigma_L}{\mathcal{D}t}-\frac{1}{2}u_{2,\ast L}^2(u_{1,\ast}-\sigma_L)\left(\frac{\mathcal{D}_{\sigma_L}\rho}{\mathcal{D}t}\right)_{\ast L}-E_{22,\ast L}\left(\frac{\mathcal{D}_{\sigma_L}u_1}{\mathcal{D}t}\right)_{\ast L} \\
 &\qquad-(2E_{12,\ast L}-\rho_{\ast L}u_{2,\ast L}\sigma_L)\left(\frac{\mathcal{D}_{\sigma_L}u_2}{\mathcal{D}t}\right)_{\ast L}
 -u_{2,\ast L}\left(\frac{\mathcal{D}_{\sigma_L}p_{12}}{\mathcal{D}t}\right)_{\ast L},
 \end{align*}
 with
 \begin{align*}
 &\frac{\mathcal{D}_{\sigma_L}E_{22,L}}{\mathcal{D}t}=\frac{1}{2}u_{2,L}^2(u_{1,L}-\sigma_L)\frac{\mathcal{D}_{\sigma_L}\rho_L}{\mathcal{D}t}
 +E_{22,L}\frac{\mathcal{D}_{\sigma_L}u_{1,L}}{\mathcal{D}t}+(2E_{12,L}-\rho_Lu_{2,L}\sigma_L)\frac{\mathcal{D}_{\sigma_L}u_{2,L}}{\mathcal{D}t} \\
 &\qquad+u_{2,L}\frac{\mathcal{D}_{\sigma_L}p_{12,L}}{\mathcal{D}t}+\frac{1}{2}(u_{1,L}-\sigma_L)\frac{\mathcal{D}_{\sigma_L}p_{22,L}}{\mathcal{D}t}
 -E_{22,L}\frac{\mathcal{D}_{\sigma_L}\sigma_L}{\mathcal{D}t}, \\
 &\left(\frac{\mathcal{D}_{\sigma_L}u_2}{\mathcal{D}t}\right)_{\ast L}=\left(\frac{\mathcal{D}u_2}{\mathcal{D}t}\right)_{\ast L}-\frac{\sigma_L-u_{1,\ast}}{p_{11,\ast}}\left[\left(\frac{\mathcal{D}p_{12}}{\mathcal{D}t}\right)_{\ast L}-\frac{2p_{12,\ast L}}{3p_{11,\ast}}\left(\frac{\mathcal{D}p_{11}}{\mathcal{D}t}\right)_{\ast}\right], \\
 &\left(\frac{\mathcal{D}_{\sigma_L}p_{12}}{\mathcal{D}t}\right)_{\ast L}=\left(\frac{\mathcal{D}p_{12}}{\mathcal{D}t}\right)_{\ast L}
 -\rho_{\ast L}(\sigma_L-u_{1,\ast})\left(\frac{\mathcal{D}u_2}{\mathcal{D}t}\right)_{\ast L}, \\
 &\frac{\mathcal{D}_{\sigma_L}p_{22,L}}{\mathcal{D}t}=(\sigma_L-u_{1,L})p_{22,L}'-p_{22,L}u_{1,L}'-2p_{12,L}u_{2,L}'.
 \end{align*}

 \section{Computing $\left(\frac{\partial u_2}{\partial t}\right)_{\ast R}$, $\left(\frac{\partial p_{12}}{\partial t}\right)_{\ast R}$ and $\left(\frac{\partial p_{22}}{\partial t}\right)_{\ast R}$ for the 6-rarefaction wave.}\label{u2_p12_p22_astR_6rarefaction}

Denote $\widehat{\beta}:=u_1+c$, $\varphi_1:=u_1-c$, $\varphi_2:=u_2-\frac{\sqrt{3}p_{12}}{\sqrt{\rho p_{11}}}$ and $\varphi_3:=\frac{p_{12}}{\rho^3}$. For the 6-rarefaction wave, one can obtain the values of $\left(\frac{\mathcal{D}u_2}{\mathcal{D}t}\right)_{\ast R}$ and $\left(\frac{\mathcal{D}p_{12}}{\mathcal{D}t}\right)_{\ast R}$ by solving the following system
\begin{align*}
\begin{cases}
a_{3,R}\left(\frac{\mathcal{D}u_2}{\mathcal{D}t}\right)_{\ast R}+b_{3,R}\left(\frac{\mathcal{D}p_{12}}{\mathcal{D}t}\right)_{\ast R}=d_{3,R}, \\
a_{4,R}\left(\frac{\mathcal{D}u_2}{\mathcal{D}t}\right)_{\ast R}+b_{4,R}\left(\frac{\mathcal{D}p_{12}}{\mathcal{D}t}\right)_{\ast R}=d_{4,R},
\end{cases}
\end{align*}
where
\begin{align*}
&a_{3,R}=1-\frac{\sqrt{3}\rho_{\ast R}u_{1,\ast}}{\sqrt{\rho_{\ast R}p_{11,\ast}}}+\frac{1}{c_{\ast R}}\left(\widehat{\beta}_{\ast R}+\frac{\widehat{\beta}_{\ast R}-\widehat{\beta}_{R}}{4}\varphi_{1,R}\right), \quad b_{3,R}=\frac{u_{1,\ast}}{p_{11,\ast}}-\frac{\sqrt{3}}{\sqrt{\rho_{\ast R}p_{11,\ast}}}, \\
&d_{3,R}=\left[-\frac{\Pi_{2,R}}{2c_R}-\varphi_{2,R}'+\frac{\Pi_{2,R}}{8c_R}(\widehat{\beta}_{\ast R}-\widehat{\beta}_{R})\right]\varphi_{1,R}+\frac{1}{2c_{\ast R}}\left(\widehat{\beta}_{\ast R}+\frac{\widehat{\beta}_{\ast R}-\widehat{\beta}_{R}}{4}\varphi_{1,R}\right)\left\{\frac{2p_{12,\ast R}}{p_{11,\ast}}\left[\left(\frac{\mathcal{D}u_1}{\mathcal{D}t}\right)_{\ast}+\frac{1}{2}W_x(0)\right]\right.\\
&\left.\qquad+\frac{\rho_{\ast R}^2p_{12,\ast R}S_{1,R}'c_{\ast R}}{2c_Rp_{11,\ast}}\right\}-\frac{\sqrt{3}p_{12,\ast R}}{2\rho_{\ast R}\sqrt{\rho_{\ast R}p_{11,\ast}}}\left(\frac{\partial\rho}{\partial t}\right)_{\ast R}
-\frac{\sqrt{3}p_{12,\ast R}}{2p_{11,\ast}\sqrt{\rho_{\ast R}p_{11,\ast}}}\left(\frac{\partial p_{11}}{\partial t}\right)_{\ast R}
+\frac{2u_{1,\ast}p_{12,\ast R}}{3p_{11,\ast}^2}\left(\frac{\mathcal{D}p_{11}}{\mathcal{D}t}\right)_{\ast}, \\
&a_{4,R}=\frac{1}{\rho_{\ast R}^2}\left[u_{1,\ast}-\frac{1}{2}\left(\widehat{\beta}_{\ast R}+\frac{\widehat{\beta}_{\ast R}+\widehat{\beta}_{R}}{4}\varphi_{1,R}\right)\right], \quad b_{4,R}=\frac{1}{\rho_{\ast R}^3}\left[1-\frac{1}{2c_{\ast R}}\left(\widehat{\beta}_{\ast R}+\frac{\widehat{\beta}_{\ast R}+\widehat{\beta}_{R}}{4}\varphi_{1,R}\right)\right], \\
&d_{4,R}=\left[-\frac{\Pi_{4,R}}{2c_R}-\varphi_{3,R}'+\frac{\Pi_{4,R}}{8c_R}(\widehat{\beta}_{\ast R}-\widehat{\beta}_{R})\right]\varphi_{1,R}+\frac{3p_{12,\ast R}}{\rho_{\ast R}^4}\left(\frac{\partial\rho}{\partial t}\right)_{\ast R}+\frac{1}{2c_{\ast R}}\left(\widehat{\beta}_{\ast R}+\frac{\widehat{\beta}_{\ast R}-\widehat{\beta}_{R}}{4}\varphi_{1,R}\right) \\
&\qquad\left\{-\frac{3p_{12,\ast R}}{\rho_{\ast R}^3c_{\ast R}}\left[\left(\frac{\mathcal{D}u_1}{\mathcal{D}t}\right)_{\ast}+\frac{1}{2}W_x(0)\right]-\frac{p_{12,\ast R}}{p_{11,\ast}\rho_{\ast R}^3}\left(\frac{\mathcal{D}p_{11}}{\mathcal{D}t}\right)_{\ast}-\frac{3p_{12,\ast R}S_{1,R}'}{\rho_{\ast R}c_R}\right\},
\end{align*}
with
\begin{align*}
&\Pi_{2,R}=\frac{2}{\rho_R}p_{12,R}'-\frac{3p_{12,R}}{2\rho_R^2}\rho_R'-\frac{3p_{12,R}}{2\rho_Rp_{11,R}}p_{11,R}', \\
&\Pi_{4,R}=-\frac{c_R}{\rho_R^3}p_{12,R}'+\frac{p_{12,R}}{\rho_R^3}u_{1,R}'-\frac{p_{11,R}}{\rho_R^3}u_{2,R}'+\frac{3p_{12,R}c_R}{\rho_R^4}\rho_R', \\
&\varphi_{2,R}'=u_{2,R}'-\frac{\sqrt{3}}{\sqrt{\rho_Rp_{11,R}}}p_{12,R}'+\frac{\sqrt{3}p_{12,R}}{2\rho_R\sqrt{\rho_Rp_{11,R}}}\rho_R'+\frac{\sqrt{3}p_{12,R}}{2p_{11,R}\sqrt{\rho_Rp_{11,R}}}p_{11,R}', \\
&\varphi_{3,R}'=\frac{1}{\rho_R^3}p_{12,R}'-\frac{3p_{12,R}}{\rho_R^4}\rho_R', ~ S_{1,R}'=\frac{1}{\rho_R^3}(p_{11,R}'-c_R^2\rho_R').
\end{align*}
After obtaining the values of $\left(\frac{\mathcal{D}u_2}{\mathcal{D}t}\right)_{\ast R}$ and $\left(\frac{\mathcal{D}p_{12}}{\mathcal{D}t}\right)_{\ast R}$, one can get the values of $\left(\frac{\partial u_2}{\partial t}\right)_{\ast R}$ and $\left(\frac{\partial p_{12}}{\partial t}\right)_{\ast R}$ by \eqref{u2_p12_t}.

The value of $\left(\frac{\partial p_{22}}{\partial t}\right)_{\ast R}$ can be obtained from
\[
\left(\frac{p_{11}}{\rho^4}\frac{\partial p_{22}}{\partial t}\right)_{\ast R}=-\frac{S_{2,R}'}{c_R}c_{\ast R}u_{1,\ast}-
\left(\frac{p_{22}}{\rho^4}\frac{\partial p_{11}}{\partial t}\right)_{\ast R}+\left(\frac{2p_{12}}{\rho^4}\frac{\partial p_{12}}{\partial t}\right)_{\ast R}+\left(\frac{4\det(\mathbf{p})}{\rho^5}\frac{\partial\rho}{\partial t}\right)_{\ast R},
\]
where
\[
S_{2,R}'=\frac{p_{11,R}}{\rho_R^4}p_{22,R}'+\frac{p_{22,R}}{\rho_R^4}p_{11,R}'-\frac{2p_{12,R}}{\rho_R^4}p_{12,R}'-\frac{4\det(\mathbf{p}_R)}{\rho_R^5}\rho_R'.
\]

\section{The sonic case when the $t$-axis is located inside the 6-rarefaction wave.}\label{sonic_case_6rarefaction}

For the sonic case that the $t$-axis is located inside the 6-rarefaction wave, one has
\begin{align*}
&\left(\frac{\partial u_1}{\partial t}\right)_0=\frac{1}{2}\left[\widetilde{d}_{R,0}-\frac{1}{2}W_x(0)\right], \\
&\left(\frac{\partial p_{11}}{\partial t}\right)_0=-\frac{\rho_0c_0}{2}\left[\widetilde{d}_{R,0}+\frac{1}{2}W_x(0)\right], \\
&\left(\frac{\partial\rho}{\partial t}\right)_0=\frac{1}{c_0^2}\left[\left(\frac{\partial p_{11}}{\partial t}\right)_0-\frac{S_{1,R}'}{c_R}\rho_0^3u_{1,0}^2\right],
\end{align*}
where
\[
\widetilde{d}_{R,0}=-\frac{\rho_R^2S_{1,R}'}{4c_R^3}u_{1,0}(3c_R^2+u_{1,0}^2)-2\varphi_{1,R}'u_{1,0}-\frac{1}{2}W_x(0),
\]
with
\[
\varphi_{1,R}'=u_{1,R}'-\frac{1}{\rho_Rc_R}p_{11,R}'-\frac{\rho_R^2}{2c_R}S_{1,R}'.
\]

The values of $\left(\frac{\partial u_2}{\partial t}\right)_0$ and $\left(\frac{\partial p_{12}}{\partial t}\right)_0$ are obtained by solving the following system
\begin{align*}
\begin{cases}
\widetilde{a}_{3,R,0}\left(\frac{\partial u_2}{\partial t}\right)_0+\widetilde{b}_{3,R,0}\left(\frac{\partial p_{12}}{\partial t}\right)_0=\widetilde{d}_{3,R,0}, \\
\widetilde{a}_{4,R,0}\left(\frac{\partial u_2}{\partial t}\right)_0+\widetilde{b}_{4,R,0}\left(\frac{\partial p_{12}}{\partial t}\right)_0=\widetilde{d}_{4,R,0},
\end{cases}
\end{align*}
where
\begin{align*}
&\widetilde{a}_{3,R,0}=1-\frac{\widehat{\beta}_R}{4}, \quad \widetilde{b}_{3,R,0}=\frac{u_{1,0}}{p_{11,0}}\left(1+\frac{\widehat{\beta}_R}{4}\right), \quad
\widetilde{a}_{4,R,0}=\frac{\widehat{\beta}_Rc_0}{4\rho_0^2}, \quad \widetilde{b}_{4,R,0}=\frac{1}{\rho_0^3}\left(1+\frac{\widehat{\beta}_R}{2}\right),\\
&\widetilde{d}_{3,R,0}=\left(-\frac{\Pi_{2,R}}{2c_R}-\varphi_{2,R}'-\frac{\Pi_{2,R}\widehat{\beta}_R}{8c_R}\right)\varphi_{1,R}
+\frac{\widehat{\beta}_R}{4}\left\{\frac{2p_{12,0}}{p_{11,0}}\left[\left(\frac{\partial u_1}{\partial t}\right)_0+\frac{1}{2}W_x(0)\right]+\frac{S_{1,R}'}{c_R}\left(\frac{c\rho^2p_{12}}{2p_{11}}\right)_0\right\} \\
&\qquad-\left(\frac{\sqrt{3}p_{12}}{2\rho\sqrt{\rho p_{11}}}\frac{\partial\rho}{\partial t}\right)_0-\left(\frac{\sqrt{3}p_{12}}{2p_{11}\sqrt{\rho p_{11}}}\frac{\partial p_{11}}{\partial t}\right)_0, \\
&\widetilde{d}_{4,R,0}=\left(-\frac{\Pi_{4,R}}{2c_R}-\varphi_{3,R}'-\frac{\Pi_{4,R}\widehat{\beta}_R}{8c_R}\right)\varphi_{1,R}+\frac{\widehat{\beta}_R}{4}\left\{\left(\frac{3p_{12}}{\rho^3u_1}\right)_0\left[\left(\frac{\partial u_1}{\partial t}\right)_0+\frac{1}{2}W_x(0)\right]-\frac{3p_{12,0}}{\rho_0}\frac{S_{1,R}'}{c_R}\right\}+\left(\frac{3p_{12}}{\rho^4}\frac{\partial\rho}{\partial t}\right)_0,
\end{align*}
with $\widehat{\beta}_R=u_{1,R}+c_R$.

The value of $\left(\frac{\partial p_{22}}{\partial t}\right)_0$ is computed by
\[
\left(\frac{p_{11}}{\rho^4}\frac{\partial p_{22}}{\partial t}\right)_0=\frac{S_{2,R}'}{c_R}c_0^2-\left(\frac{p_{22}}{\rho^4}\frac{\partial p_{11}}{\partial t}\right)_0+\left(\frac{2p_{12}}{\rho^4}\frac{\partial p_{12}}{\partial t}\right)_0+\left(\frac{4\det(\mathbf{p})}{\rho^5}\frac{\partial\rho}{\partial t}\right)_0.
\]

\end{appendix}

\section*{References}
\bibliographystyle{siamplain}

\bibliography{Ten_Moment_GRP}

\begin{thebibliography}{10}

\bibitem{ben1989generalized}
{\sc M.~Ben-Artzi}, {\em The generalized {R}iemann problem for reactive flows},
  Journal of Computational Physics, 81 (1989), pp.~70--101.

\bibitem{ben1984second}
{\sc M.~Ben-Artzi and J.~Falcovitz}, {\em A second-order {G}odunov-type scheme
  for compressible fluid dynamics}, Journal of Computational Physics, 55
  (1984), pp.~1--32.

\bibitem{ben2003generalized}
{\sc M.~Ben-Artzi and J.~Falcovitz}, {\em Generalized {R}iemann Problems in
 Computational Fluid Dynamics},   Cambridge University Press, 2003.

\bibitem{ben2007hyperbolic}
{\sc M.~Ben-Artzi and J.~Li}, {\em Hyperbolic balance laws: {R}iemann
  invariants and the generalized {R}iemann problem}, Numerische Mathematik, 106
  (2007), pp.~369--425.

\bibitem{ben2006direct}
{\sc M.~Ben-Artzi, J.~Li, and G.~Warnecke}, {\em A direct {E}ulerian {GRP}
  scheme for compressible fluid flows}, Journal of Computational Physics, 218
  (2006), pp.~19--43.

\bibitem{berthon2006numerical}
{\sc C.~Berthon}, {\em Numerical approximations of the 10-moment {G}aussian
  closure}, Mathematics of Computation, 75 (2006), pp.~1809--1831.

\bibitem{berthon2015entropy}
{\sc C.~Berthon, B.~Dubroca, and A.~Sangam}, {\em An entropy preserving
  relaxation scheme for ten-moments equations with source terms},
  Communications in Mathematical Sciences, 13 (2015), pp.~2119--2154.

\bibitem{biswas2021entropy}
{\sc B.~Biswas, H.~Kumar, and A.~Yadav}, {\em Entropy stable discontinuous
  {G}alerkin methods for ten-moment {G}aussian closure equations}, Journal of
  Computational Physics, 431 (2021),  110148.

\bibitem{brown1995numerical}
{\sc S.~L. Brown, P.~L. Roe, and C.~P.~T. Groth}, {\em Numerical solution of a
  10-moment model for nonequilibrium gasdynamics},  AIAA 12th Computational Fluid
  Dynamics Conference, San Diego,  USA, AIAA-95-1677-CP, 1995.

\bibitem{cheng2019two}
{\sc J.~Cheng, Z.~Du, X.~Lei, Y.~Wang, and J.~Li}, {\em A two-stage
  fourth-order discontinuous {G}alerkin method based on the {GRP} solver for
  the compressible {E}uler equations}, Computers \& Fluids, 181 (2019),
  pp.~248--258.

\bibitem{dong2019global}
{\sc C.~Dong, L.~Wang, A.~Hakim, A.~Bhattacharjee, J.~A. Slavin, G.~A.
  DiBraccio, and K.~Germaschewski}, {\em Global ten-moment multifluid
  simulations of the solar wind interaction with mercury: from the planetary
  conducting core to the dynamic magnetosphere}, Geophysical Research Letters,
  46 (2019), pp.~11584--11596.

\bibitem{du2023generalized}
{\sc Z.~Du}, {\em Generalized {R}iemann problem method for the {K}apila model
  of compressible multiphase flows {I}: Temporal-spatial coupling finite volume
  scheme}, arXiv preprint, arXiv:2310.08241,  2023.

\bibitem{dubroca2004magnetic}
{\sc B.~Dubroca, M.~Tchong, P.~Charrier, V.~Tikhonchuk, and J.-P. Morreeuw},
  {\em Magnetic field generation in plasmas due to anisotropic laser heating},
  Physics of Plasmas, 11 (2004), pp.~3830--3839.

\bibitem{han2010adaptive}
{\sc EE~Han, J.Q.~Li, and H.Z.~Tang}, {\em An adaptive {GRP} scheme for
  compressible fluid flows}, Journal of Computational Physics, 229 (2010),
  pp.~1448--1466.

\bibitem{han2011accuracy}
{\sc EE~Han, J.Q.~Li, and H.Z.~Tang}, {\em Accuracy of the adaptive {GRP} scheme
  and the simulation of 2-{D} {R}iemann problems for compressible {E}uler
  equations}, Communications in Computational Physics, 10 (2011), pp.~577--609.

\bibitem{huo2024grp}
{\sc Z.~Huo and Z.~Jia}, {\em A {GRP}-based tangential effects preserving, high
  resolution and efficient ghost fluid method for the simulation of
  two-dimensional multi-medium compressible flows}, Computers \& Fluids,
  (2024),  ~106261.

\bibitem{huo2023grp}
{\sc Z.~Huo and J.~Li}, {\em A {GRP}-based high resolution ghost fluid method
  for compressible multi-medium fluid flows {I}: One-dimensional case}, Applied
  Mathematics and Computation, 437 (2023),  ~127506.

\bibitem{johnson2012ten}
{\sc E.~A. Johnson and J.~A. Rossmanith}, {\em Ten-moment two-fluid plasma
  model agrees well with {PIC/V}lasov in {GEM} problem}, in {\em  Hyperbolic
  Problems: Theory, Numerics and Applications} edited by T.T. Li and S. Jiang , World Scientific,
  2012, pp.~461--468.

\bibitem{kuang2019second}
{\sc Y.Y.~Kuang and H.Z.~Tang}, {\em Second-order direct {E}ulerian {GRP} schemes
  for radiation hydrodynamical equations}, Computers \& Fluids, 179 (2019),
  pp.~163--177.

\bibitem{lei2021staggered}
{\sc X.~Lei and J.~Li}, {\em A staggered-projection {G}odunov-type method for
  the {B}aer-{N}unziato two-phase model}, Journal of Computational Physics, 437
  (2021),  ~110312.

\bibitem{levermore1996moment}
{\sc C.~D. Levermore}, {\em Moment closure hierarchies for kinetic theories},
  Journal of Statistical Physics, 83 (1996), pp.~1021--1065.

\bibitem{levermore1998gaussian}
{\sc C.~D. Levermore and W.~J. Morokoff}, {\em The {G}aussian moment closure
  for gas dynamics}, SIAM Journal on Applied Mathematics, 59 (1998),
  pp.~72--96.

\bibitem{li2006generalized}
{\sc J.~Li and G.~Chen}, {\em The generalized {R}iemann problem method for the
  shallow water equations with bottom topography}, International Journal for
  Numerical Methods in Engineering, 65 (2006), pp.~834--862.

\bibitem{li2016two}
{\sc J.~Li and Z.~Du}, {\em A two-stage fourth order time-accurate
  discretization for {L}ax--{W}endroff type flow solvers {I}. {H}yperbolic
  conservation laws}, SIAM Journal on Scientific Computing, 38 (2016),
  pp.~A3046--A3069.

\bibitem{li2011comparison}
{\sc J.~Li, Q.~Li, and K.~Xu}, {\em Comparison of the generalized {R}iemann
  solver and the gas-kinetic scheme for inviscid compressible flow
  simulations}, Journal of Computational Physics, 230 (2011), pp.~5080--5099.

\bibitem{li2009implementation}
{\sc J.~Li, T.~Liu, and Z.~Sun}, {\em Implementation of the {GRP} scheme for
  computing radially symmetric compressible fluid flows}, Journal of
  Computational Physics, 228 (2009), pp.~5867--5887.

\bibitem{li2013adaptive}
{\sc J.~Li and Y.~Zhang}, {\em The adaptive {GRP} scheme for compressible fluid
  flows over unstructured meshes}, Journal of Computational Physics, 242
  (2013), pp.~367--386.

\bibitem{meena2017robust}
{\sc A.~K. Meena and H.~Kumar}, {\em Robust {MUSCL} schemes for ten-moment
  {G}aussian closure equations with source terms}, International Journal on
  Finite Volumes,  14(2017), pp.~1--34.

\bibitem{meena2018well}
{\sc A.~K. Meena and H.~Kumar}, {\em A well-balanced scheme for ten-moment
  {G}aussian closure equations with source term}, Zeitschrift f{\"u}r
  angewandte Mathematik und Physik, 69 (2018), pp.~1--31.

\bibitem{meena2019robust}
{\sc A.~K. Meena and H.~Kumar}, {\em Robust numerical schemes for two-fluid
  ten-moment plasma flow equations}, Zeitschrift f{\"u}r angewandte Mathematik
  und Physik, 70 (2019), pp.~1--30.

\bibitem{meena2017positivity}
{\sc A.~K. Meena, H.~Kumar, and P.~Chandrashekar}, {\em Positivity-preserving
  high-order discontinuous {G}alerkin schemes for ten-moment {G}aussian closure
  equations}, Journal of Computational Physics, 339 (2017), pp.~370--395.

\bibitem{meena2020positivity}
{\sc A.~K. Meena, R.~Kumar, and P.~Chandrashekar}, {\em Positivity-preserving
  finite difference {WENO} scheme for ten-moment equations with source term},
  Journal of Scientific Computing, 82 (2020), p.~15.

\bibitem{morreeuw2006electron}
{\sc J.-P. Morreeuw, A.~Sangam, B.~Dubroca, P.~Charrier, and V.~Tikhonchuk},
  {\em Electron temperature anisotropy modeling and its effect on
  anisotropy-magnetic field coupling in an underdense laser heated plasma}, 
  Journal de Physique IV (Proceedings), 133(2006),
  pp.~295--300.

\bibitem{nkonga2022exact}
{\sc B.~Nkonga and P.~Chandrashekar}, {\em Exact solution for {R}iemann
  problems of the shear shallow water model}, ESAIM: Mathematical Modelling and
  Numerical Analysis, 56 (2022), pp.~1115--1150.

\bibitem{qian2014generalized}
{\sc J.~Qian, J.~Li, and S.~Wang}, {\em The generalized {R}iemann problems for
  compressible fluid flows: Towards high order}, Journal of Computational
  Physics, 259 (2014), pp.~358--389.

\bibitem{qian2023high}
{\sc J.~Qian and S.~Wang}, {\em High-order accurate solutions of generalized
  {R}iemann problems of nonlinear hyperbolic balance laws}, Science China
  Mathematics, 66 (2023), pp.~1609--1648.

\bibitem{sangam2008hllc}
{\sc A.~Sangam}, {\em An {HLLC} scheme for ten-moments approximation coupled
  with magnetic field}, International Journal of Computing Science and
  Mathematics, 2 (2008), pp.~73--109.

\bibitem{sangam2007anisotropic}
{\sc A.~Sangam, J.-P. Morreeuw, and V.~Tikhonchuk}, {\em Anisotropic
  instability in a laser heated plasma}, Physics of Plasmas, 14 (2007), ~053111.
\bibitem{sen2018entropy}
{\sc C.~Sen and H.~Kumar}, {\em Entropy stable schemes for ten-moment
  {G}aussian closure equations}, Journal of Scientific Computing, 75 (2018),
  pp.~1128--1155.

\bibitem{sheng2021direct}
{\sc W.~Sheng, Q.~Zhang, and Y.~Zheng}, {\em A direct {E}ulerian {GRP} scheme
  for a blood flow model in arteries}, SIAM Journal on Scientific Computing, 43
  (2021), pp.~A1975--A1996.

\bibitem{shu1989efficient}
{\sc C.-W. Shu and S.~Osher}, {\em Efficient implementation of essentially
  non-oscillatory shock-capturing schemes, {II}}, Journal of Computational
  Physics, 83 (1989), pp.~32--78.



\bibitem{ta1992generalized}
{\sc T.-T. Li, D.~Serre, and H. Zhang}, {\em The generalized {R}iemann problem
  for the motion of elastic strings}, SIAM Journal on Mathematical Analysis, 23
  (1992), pp.~1189--1203.

\bibitem{tang2003adaptive}
{\sc H.~Tang and T.~Tang}, {\em Adaptive mesh methods for one-and
  two-dimensional hyperbolic conservation laws}, SIAM Journal on Numerical
  Analysis, 41 (2003), pp.~487--515.

\bibitem{toro2013riemann}
{\sc E.~F. Toro}, {\em Riemann Solvers and Numerical Methods for Fluid
  Dynamics: A Practical Introduction}, Springer Science \& Business Media,
  2013.

\bibitem{van1974towards}
{\sc B.~van~Leer}, {\em Towards the ultimate conservative difference scheme.
  {II}. monotonicity and conservation combined in a second-order scheme},
  Journal of Computational Physics, 14 (1974), pp.~361--370.

\bibitem{wang2024high}
{\sc J.F.~Wang, H.Z.~Tang, and K.L.~Wu}, {\em High-order accurate
  positivity-preserving and well-balanced discontinuous {G}alerkin schemes for
  ten-moment {G}aussian closure equations with source terms}, arXiv preprint,
  arXiv:2402.15446,  2024.

\bibitem{wang2018electron}
{\sc L.~Wang, K.~Germaschewski, A.~Hakim, C.~Dong, J.~Raeder, and
  A.~Bhattacharjee}, {\em Electron physics in 3-{D} two-fluid 10-moment
  modeling of {G}anymede's magnetosphere}, Journal of Geophysical Research:
  Space Physics, 123 (2018), pp.~2815--2830.

\bibitem{wang2023stiffened}
{\sc Y.~Wang and J.~Li}, {\em Stiffened gas approximation and {GRP} resolution
  for compressible fluid flows of real materials}, Journal of Scientific
  Computing, 95 (2023), p.~22.

\bibitem{wang2015arbitrary}
{\sc Y.~Wang and S.~Wang}, {\em Arbitrary high order discontinuous {G}alerkin
  schemes based on the {GRP} method for compressible {E}uler equations},
  Journal of Computational Physics, 298 (2015), pp.~113--124.

\bibitem{wu2021geometric}
{\sc K.~Wu and C.-W. Shu}, {\em Geometric quasilinearization framework for
  analysis and design of bound-preserving schemes}, SIAM Review, 65 (2023),
  pp.~1031--1073.

\bibitem{wu2016direct}
{\sc K.~Wu and H.~Tang}, {\em A direct {E}ulerian {GRP} scheme for spherically
  symmetric general relativistic hydrodynamics}, SIAM Journal on Scientific
  Computing, 38 (2016), pp.~B458--B489.

\bibitem{wu2014third1}
{\sc K.~Wu, Z.~Yang, and H.~Tang}, {\em A third-order accurate direct
  {E}ulerian {GRP} scheme for one-dimensional relativistic hydrodynamics}, East
  Asian Journal on Applied Mathematics, 4 (2014), pp.~95--131.

\bibitem{wu2014third2}
{\sc K.~Wu, Z.~Yang, and H.~Tang}, {\em A third-order accurate direct
  {E}ulerian {GRP} scheme for the {E}uler equations in gas dynamics}, Journal
  of Computational Physics, 264 (2014), pp.~177--208.

\bibitem{xu2013direct}
{\sc J.~Xu, M.~Luo, J.~Hu, S.~Wang, B.~Qi, Z.~Qiao}, {\em A direct
  {E}ulerian {GRP} scheme for the prediction of gas-liquid two-phase flow in
  {HTHP} transient wells}, Abstract and Applied Analysis, 2013 (2013),
~171732.

\bibitem{yang2011direct}
{\sc Z.C.~Yang, P.~He, and H.Z.~Tang}, {\em A direct {E}ulerian {GRP} scheme for
  relativistic hydrodynamics: one-dimensional case}, Journal of Computational
  Physics, 230 (2011), pp.~7964--7987.

\bibitem{yang2012direct}
{\sc Z.C.~Yang and H.Z.~Tang}, {\em A direct {E}ulerian {GRP} scheme for
  relativistic hydrodynamics: two-dimensional case}, Journal of Computational
  Physics, 231 (2012), pp.~2116--2139.

\bibitem{yuan2020two}
{\sc Y.H.~Yuan and H.Z.~Tang}, {\em Two-stage fourth-order accurate time
  discretizations for 1{D} and 2{D} special relativistic hydrodynamics},
  Journal of Computational Mathematics, 38 (2020), pp.~768--796.

\bibitem{zhang2024generalized}
{\sc Q.~Zhang and W.~Sheng}, {\em The generalized {R}iemann problem scheme for
  a laminar two-phase flow model with two-velocities}, Journal of Computational
  Physics, 506 (2024), ~112929.

\bibitem{zhu2023high}
{\sc Z.~Zhu, Q.~Cui, and G.~Ni}, {\em A high-resolution scheme for axisymmetric
  hydrodynamics based on the 2{D} {GRP} solvers}, Computers \& Fluids, 264
  (2023), ~105961.

\bibitem{zou2021understand}
{\sc L.~Zou}, {\em Understand slope limiter--graphically}, arXiv preprint,
  arXiv:2102.04435,  2021.

\end{thebibliography}
\end{document}